\definecolor{Myblue}{rgb}{0,0,0.6}  
\pgfplotsset{width=7cm,compat=1.8}
\tikzset{
	string/.style={draw=#1, postaction={decorate}, decoration={markings,mark=at position .51 with {\arrow[color=#1]{>}}}},
	costring/.style={draw=#1, postaction={decorate}, decoration={markings,mark=at position .51 with {\arrow[draw=#1]{<}}}},
	ostring/.style={draw=#1, postaction={decorate}, decoration={markings,mark=at position .47 with {\arrow[draw=#1]{>}}}},
	ustring/.style={draw=#1, postaction={decorate}, decoration={markings,mark=at position .56 with {\arrow[draw=#1]{>}}}},
	oostring/.style={draw=#1, postaction={decorate}, decoration={markings,mark=at position .43 with {\arrow[draw=#1]{>}}}},
	uustring/.style={draw=#1, postaction={decorate}, decoration={markings,mark=at position .59 with {\arrow[draw=#1]{>}}}},
	directed/.style={string=blue!50!black}, 
	odirected/.style={ostring=blue!50!black}, 
	udirected/.style={ustring=blue!50!black}, 
	oodirected/.style={oostring=blue!50!black}, 
	uudirected/.style={uustring=blue!50!black},     
	redirected/.style={costring= blue!50!black},
	redirectedgreen/.style={costring= green!50!black},
	directedgreen/.style={string= green!50!black},
	redirectedlightgreen/.style={costring= green!65!black},
	directedlightgreen/.style={string= green!65!black},
	redirectedred/.style={costring= red!50!black},
	directedred/.style={string= red!50!black}%
}
\tikzset{-dot-/.style={decoration={
			markings,
			mark=at position 0.5 with {\fill circle (1.875pt);}},postaction={decorate}}}
\tikzset{
	Fdot/.style={circle, draw, fill, inner sep=0pt}, 
	Odot/.style={circle, draw, inner sep=0.1pt, minimum size=0.1cm}
}
\def\nicedashedcolourscheme{\shadedraw[top color=blue!22, bottom color=blue!22, draw=gray, dashed]}
\def\nicedashedpalecolourscheme{\shadedraw[top color=blue!12, bottom color=blue!12, draw=gray, dashed]}
\def\nicehalfpalecolourscheme{\shadedraw[top color=blue!22, bottom color=blue!22, draw=white]}
\def\nicenotpalecolourscheme{\shadedraw[top color=blue!32, bottom color=blue!32, draw=white]}
\def\nicecolourscheme{\shadedraw[top color=blue!22, bottom color=blue!22, draw=blue!22]}
\def\nicepalecolourscheme{\shadedraw[top color=blue!12, bottom color=blue!12, draw=white]}
\def\nicenocolourscheme{\shadedraw[top color=gray!2, bottom color=gray!25, draw=white]}
\def\nicereallynocolourscheme{\shadedraw[top color=white!2, bottom color=white!25, draw=white]}
\def\boringcolourscheme{\draw[fill=blue!20, dashed]}
\newcommand\tikzzbox[1]
\newcommand{\raisemath}[1]{\mathpalette{\raisem@th{#1}}}
\newcommand{\raisem@th}[3]{\raisebox{#1}{$#2#3$}}
\renewcommand{\H}{\mathcal H}
\newcommand{\A}{\mathcal{A}}
\newcommand{\Aca}{\mathcal{A}^{\mathcal{S}}}
\newcommand{\orb}{\mathcal{A}}
\newcommand{\sgn}{\mathrm{sgn}}
\renewcommand{\leq}{\leqslant}
\renewcommand{\geq}{\geqslant}
\newcommand{\rp}[1]{\widetilde{#1}}
\newcommand{\E}{\text{e}} 
\newcommand{\I}{\text{i}}
\newcommand{\B}{\mathcal{B}}
\newcommand{\Borb}{\B_{\mathrm{orb}}}
\newcommand{\Beq}{\B_{\mathrm{eq}}}
\newcommand{\C}{\mathds{C}}
\newcommand{\D}{\mathds{D}}
\newcommand{\Ee}{\mathds{E}}
\newcommand{\K}{\mathds{K}}
\newcommand{\M}{\mathds{M}}
\newcommand{\N}{\mathds{N}}
\newcommand{\Q}{\mathds{Q}}
\newcommand{\R}{\mathds{R}}
\newcommand{\Z}{\mathds{Z}}
\newcommand{\IP}{\mathds{P}}
\def\1{\ifmmode\mathrm{1\!l}\else\mbox{\(\mathrm{1\!l}\)}\fi}
\newcommand{\one}{\mathbbm{1}}
\newcommand{\be}{\begin{equation}}
\newcommand{\ee}{\end{equation}}
\newcommand{\bes}{\begin{equation*}}
\newcommand{\ees}{\end{equation*}}
\newcommand{\cc}[1] {\overline{#1}}
\newcommand{\inv}[0]{{-1}}
\newcommand{\DC}{\mathds{D}^{\mathcal C}}
\newcommand{\cat}{{\mathcal C}}
\newcommand{\inver}[0]{\times}
\newcommand{\chirel}[0]{\chi_{\mathrm{sym}}}
\newcommand{\Dec}[0]{\mathrm{Dec}}
\newcommand{\interior}[1]{%
	{\kern0pt#1}^{\mathrm{o}}%
}
\newcommand{\Strat}[0]{\mathrm{Strat}}
\newcommand{\Stratdef}[0]{\mathrm{Strat}^{\mathrm{def}}}
\newcommand{\sVir}{\mathsf{sVir}}
\newcommand{\MF}{\operatorname{MF}_{\operatorname{bi}}}
\newcommand{\MFW}{\operatorname{MF}_{\operatorname{bi}}(W)}
\newcommand{\MFR}{\operatorname{MF}^\text{R}_{\operatorname{bi}}}
\newcommand{\DG}{\operatorname{DG}_{\operatorname{bi}}}
\newcommand{\DGW}{\operatorname{DG}_{\text{bi}}(W)}
\newcommand{\DGR}{\operatorname{DG}^\text{R}_{\text{bi}}}
\newcommand{\id}{\text{id}}
\newcommand{\KMF}{K_{0}(\operatorname{MF}_{\text{bi}}}
\newcommand{\Ext}{\operatorname{Ext}}
\newcommand{\Hom}{\operatorname{Hom}}
\newcommand{\End}{\operatorname{End}}
\newcommand{\modu}{\operatorname{mod}}
\def\LG{\mathcal{LG}}
\def\LGgr{\mathcal{LG}^{\mathrm{gr}}}
\def\LGgrs{\mathcal{LG}'^{\mathrm{gr}}}
\def\LGgrso{\mathcal{LG}'^{\mathrm{gr}}_{\mathrm{orb}}}
\def\LGs{\mathcal{LG}'}
\def\LGsorb{\mathcal{LG}'_{\mathrm{orb}}}
\def\LGorb{\mathcal{LG}_{\mathrm{orb}}}
\def\LGeq{\mathcal{LG}_{\mathrm{eq}}}
\newcommand{\hmf}{\operatorname{hmf}}
\newcommand{\HMF}{\operatorname{HMF}}
\newcommand{\ev}{\operatorname{ev}}
\newcommand{\eval}{\operatorname{eval}}
\newcommand{\tev}{\widetilde{\operatorname{ev}}}
\newcommand{\coev}{\operatorname{coev}}
\newcommand{\tcoev}{\widetilde{\operatorname{coev}}}
\def\lra{\longrightarrow}
\def\lmt{\longmapsto}
\DeclareMathOperator{\tr}{tr}
\DeclareMathOperator{\str}{str}
\DeclareMathOperator{\Jac}{Jac}
\def\Re{R^{\operatorname{e}}}
\DeclareMathOperator{\Res}{Res}
\newcommand*{\longhookrightarrow}{\ensuremath{\lhook\joinrel\relbar\joinrel\rightarrow}}
\newcommand*{\twoheadlongrightarrow}{\ensuremath{\relbar\joinrel\twoheadrightarrow}}
\newcommand{\Ga}[1]{\Gamma_{\hspace{-2pt}#1}}
\newcommand{\HIA}{\Hom(I,A)}
\newcommand{\ZA}{Z_A(\Hom(I,A))}
\newcommand{\gZA}{\!Z_A^\gamma(\Hom(I,A))}
\newcommand{\gA}{{}_{\gamma_A}A}
\newcommand{\Aginv}{A_{\gamma_A^{-1}}}
\newcommand{\picc}{\pi^{(\text{c,c})}_A}
\newcommand{\pirr}{\pi^{\text{RR}}_A}
\newcommand{\tpirr}{{\widetilde\pi}^{\text{RR}}} 
\newcommand{\im}{\operatorname{im}}
\DeclareMathOperator*{\eq}{=}
\DeclareMathOperator*{\congscript}{\cong}
\newcommand{\specflow}{\mathcal U_{-\frac{1}{2},-\frac{1}{2}}}
\newcommand{\Hil}{\mathcal{H}}
\newcommand{\Hpcc}{\mathcal{H}'_{\text{(c,c)}}}
\newcommand{\Hprr}{\mathcal{H}'_{\text{RR}}}
\newcommand{\Hcc}{\mathcal{H}_{\text{(c,c)}}^A}
\newcommand{\Hrr}{\mathcal{H}_{\text{RR}}^A}
\newcommand{\HccnoA}{\mathcal{H}_{\text{(c,c)}}}
\newcommand{\HrrnoA}{\mathcal{H}_{\text{RR}}}
\newcommand{\Hrrbo}{\Hom_A(X,{}_{\gamma_A}X)}
\newcommand{\buboorb}{\beta_X^{\text{orb}}}
\newcommand{\bobuorb}{\beta^X_{\text{orb}}} 
\def\Cong{C_g}
\def\Centg{N_g}                 
\def\alphaKK{\alpha^{\{K\}}}
\def\alphagKg{\alpha_g^{K(g)}}
\newcommand{\AGC}{A_G^c}
\newcommand{\Bord}{\operatorname{Bord}}
\newcommand{\Bordor}{\operatorname{Bord}_{n}^{\mathrm{or}}}
\newcommand{\Borddef}{\operatorname{Bord}^{\mathrm{def}}}
\newcommand{\Borden}[1] {\widehat{\operatorname{Bord}}{}_{#1}}
\newcommand{\Borddc}{\widehat{\operatorname{Bord}}{}^{\mathrm{def}}_3(\DC)}
\newcommand{\Borddefn}[1] {\operatorname{Bord}^{\mathrm{def}}_{#1}}
\newcommand{\Bordribn}[1] {\operatorname{Bord}^{\mathrm{rib}}_{#1}}
\newcommand{\Bordriben}[1] {\widehat{\operatorname{Bord}}{}^{\mathrm{rib}}_{#1}}
\newcommand{\Borddefen}[1] {\widehat{\operatorname{Bord}}{}^{\mathrm{def}}_{#1}}
\newcommand{\Bordoc}[1] {\operatorname{Bord}^{\mathrm{oc}}_{#1}}
\newcommand{\Bords}{\operatorname{Bord}_{3}^{\APLstar}}
\newcommand{\Sphere}{\operatorname{Sphere}^{\mathrm{def}}}
\newcommand{\Nbh}{\operatorname{\mathcal{N}}}
\newcommand{\Cube}{\operatorname{Cube}}
\newcommand{\Cubed}{\operatorname{Cube}_{3}^{\mathrm{def}}(\mathds{D})}
\newcommand{\Fradj}{\operatorname{\mathcal C_{\mathds D}^{adj}}}
\newcommand{\Bordstrat}{\operatorname{Bord}^{\mathrm{strat}}}
\newcommand{\Bordstratn}[1]  {\operatorname{Bord}^{\mathrm{strat}}_{#1}}
\newcommand{\Borddlong}{\operatorname{Bord}_{3}^{\mathrm{def}}(D_3,D_2,D_1)_{s,t,f}}
\newcommand{\Bordd[1]}{\operatorname{Bord}_{#1}^{\mathrm{def}}(\mathds{D})}
\newcommand{\Strator}{\operatorname{Strat}_{n}^{\mathrm{or}}}
\newcommand{\G}{\mathcal{G}}
\newcommand{\tz}{\mathcal T_\zz}
\newcommand{\dz}{\mathcal D_\zz}
\newcommand{\tzp}{\mathcal T_{\mathcal Z'}}
\newcommand{\Data}{\mathds{D}}
\newcommand{\Obj}{\mathrm{Obj}}
\newcommand{\zz}{\mathcal{Z}}
\newcommand{\zzc}{\mathcal{Z}^{\mathcal C}}
\newcommand{\zrt}{\mathcal{Z}^{\text{RT,}\mathcal C}}
\newcommand{\zrtdef}{\mathcal{Z}_{\mathcal C}}
\newcommand{\Fdp}{\operatorname{\mathcal F}_{\textrm{d}}^{\textrm{p}}}
\newcommand{\zzd}{\mathcal{Z}^{\mathrm{def}}}
\newcommand{\zztriv}{\mathcal{Z}^{\mathrm{triv}}} 
\newcommand{\zzAtriv}{\mathcal{Z}_{\mathrm{triv}}^{\Cat{A}}} 
\newcommand{\euc}{\odot}
\newcommand{\euctwo}{\odot_{\geq 2}}
\newcommand{\ieuc}{\iota^\euc}
\newcommand{\peuc}{\pi^\euc}
\newcommand{\zzTVA}{\mathcal{Z}^{TV}_{\Cat{A}}} 
\newcommand{\zzRTC}{\mathcal{Z}^{RT}_{\Cat{C}}}
\newcommand{\tzztriv}{\mathcal{T}_{{\mathcal{Z}^{triv}}}}
\newcommand{\tzGamma}{\mathcal{T}_{{\mathcal{Z}^{\Gamma}}}}
\newcommand{\unit}{\operatorname{\mathbf{1}}}  
\newcommand{\bigslant}[2]{{\raisebox{.0em}{$#1$}\left/\raisebox{-.2em}{$#2$}\right.}}
\newcommand{\Cubedp}{\operatorname{Cube}_{3}^{\mathrm{def}}(\mathds{D}')}
\newcommand{\Borddp}{\operatorname{Bord}_{3}^{\mathrm{def}}(\mathds{D}')}
\newcommand{\Vect}{\operatorname{vect}}
\newcommand{\Vectk}{\operatorname{vect}}
\newcommand{\eps}{\varepsilon}
\newcommand{\al}{\alpha}
\newcommand{\alb}{\bar{\alpha}}
\newcommand{\T}{\mathcal{T}}
\newcommand{\Ss}{\mathcal{S}}
\newcommand{\X}{\mathcal{X}}
\newcommand{\Y}{\mathcal{Y}}
\newcommand{\sta}{\boxempty}
\newcommand{\fus}{\otimes}
\newcommand{\sd}{^{\star}}
\newcommand{\dagg}{^{\dagger}}
\newcommand{\hash}{^{\#}}
\newcommand{\Set}{\mathrm{Set}}
\newcommand{\Ball}{\mathrm{Ball}}
\newcommand{\Sph}{\mathsf{Sph}}
\newcommand{\fork}{\pitchfork }
\newcommand{\Cat}[1]         {\operatorname{\mathcal{#1}}}
\newcommand{\oprev}[1]         {\operatorname{\mathcal{#1}}^{\mathrm{op,rev}}}
\newcommand{\Catpre}[1]         {\operatorname{\mathcal{#1}^{\mathrm{pre}}}}
\newcommand{\dX}{{}^\dagger\hspace{-1.8pt}X}
\newcommand{\dA}{{}^\dagger\hspace{-1.8pt}A}
\newcommand{\dsX}{{}^\dagger\hspace{-1.8pt}\mathcal{X}}
\newcommand{\deqX}{{}^\star\hspace{-1.8pt}X} 
\newcommand{\dseqX}{{}^\star\hspace{-1.8pt}\mathcal{X}} 
\newcommand{\dY}{{}^\dagger\hspace{-0.3pt}Y}
\newcommand{\dphi}{{}^\dagger\hspace{-0.9pt}\phi}
\newcommand{\dPhi}{{}^\dagger\hspace{-0.9pt}\Phi}
\newcommand{\FEnd}{\mathcal{E}\hspace*{-.7pt}nd}
\newcommand{\SSFR}{$\Delta$-separable symmetric Frobenius algebra}
\newcommand{\SSFRs}{$\Delta$-separable symmetric Frobenius algebras}
\newcommand{\ATAA}{{}_{A}T_{AA}}
\newcommand{\ATAAi}{{}_{A}T_{A_{1}A_{2}}}
\newcommand{\all}{\alpha}
\newcommand{\allb}{{\bar\alpha}}
\newcommand{\opp}             {{\mathrm{op}}} 
\newcommand{\Alg}        {\operatorname{\mathsf{Alg}}}
\newcommand{\Frob}        {\operatorname{\mathsf{Frob}}}
\newcommand{\Lincat}        {\operatorname{\mathsf{Cat}^{ses}}}
\newcommand{\Deftqft}{\operatorname{TQFT}^{\mathrm{def}}}
\newcommand{\KVvect}   {\operatorname{KV-2Vect}}
\newcommand{\CYvect}   {\operatorname{CY-2Vect}}
\newcommand{\evx}[1]   {\operatorname{\mathsf{ev}}_{#1}}
\newcommand{\coevx}[1]   {\operatorname{\mathsf{coev}}_{#1}}
\newcommand{\evp}[1]   {\operatorname{\mathsf{ev}}_{#1}^{\prime}}
\newcommand{\coevp}[1]   {\operatorname{\mathsf{coev}}_{#1}^{\prime}}
\newcommand{\evc}[1]   {\operatorname{\mathsf{c-ev}}_{#1}}
\newcommand{\coevc}[1]   {\operatorname{\mathsf{c-coev}}_{#1}}
\newcommand{\evpc}[1]   {\operatorname{\mathsf{c-ev}}_{#1}^{\prime}}
\newcommand{\coevpc}[1]   {\operatorname{\mathsf{c-coev}}_{#1}^{\prime}}
\def\Fun              {{\mathsf{Fun}}}
\def\Funbilin              {{\mathsf{Fun}^{\mathsf{bilin}}}}
\newcommand{\rrr}[1]{{\color{red}{#1}}}
\newcommand{\rrR}[1]{{\color{red3}{#1}}}
\newcommand{\bbb}[1]{{\color{blue}{#1}}}
\definecolor{DarkViolet} {rgb}{0.580392,0.000000,0.827450}
\newcommand{\vio}[1]{{\color{DarkViolet}{#1}}}
\newcommand{\green}[1]{{\color{green}{#1}}}
\newcommand\nxt{\noindent\raisebox{.08em}{\rule{.44em}{.44em}}\hspace{.4em}}
\newcommand\arxiv[2]      {\href{http://arXiv.org/abs/#1}{#2}}
\newcommand\doi[2]        {\href{http://dx.doi.org/#1}{#2}}
\newcommand\httpurl[2]    {\href{http://#1}{#2}}
\renewcommand{\labelenumi}{(\roman{enumi})}
\theoremstyle{definition} 
\newtheorem{definition}{Definition}
\newtheorem{proposition}[definition]{Proposition}
\newtheorem{theorem}[definition]{Theorem}
\newtheorem{lemma}[definition]{Lemma}
\newtheorem{remark}[definition]{Remark}
\newtheorem{example}[definition]{Example}
\newtheorem{notation}[definition]{Notation}
\numberwithin{equation}{section}
\numberwithin{definition}{section}
\numberwithin{figure}{section}
\newcommand\void[1]{}
\begin{document}

\title{Orbifolds\\ 
	of Reshetikhin--Turaev TQFTs}

\author{%
	\!\!\!\!\!\!\!Nils Carqueville$^*$ \quad
	Ingo Runkel$^\#$ \quad
	Gregor Schaumann$^\vee$%
	\\[0.5cm]
	\hspace{-1.8cm}  \normalsize{\texttt{\href{mailto:nils.carqueville@univie.ac.at}{nils.carqueville@univie.ac.at}}} \\  %
	\hspace{-1.8cm}  \normalsize{\texttt{\href{mailto:ingo.runkel@uni-hamburg.de}{ingo.runkel@uni-hamburg.de}}} \\
	\hspace{-1.8cm}  \normalsize{\texttt{\href{mailto:gregor.schaumann@uni-wuerzburg.de}{gregor.schaumann@uni-wuerzburg.de}}}\\[0.1cm]
	\hspace{-1.2cm} {\normalsize\slshape $^*$Fakult\"at f\"ur Physik, Universit\"at Wien, Austria}\\[-0.1cm]
	\hspace{-1.2cm} {\normalsize\slshape $^\#$Fachbereich Mathematik, Universit\"{a}t Hamburg, Germany}\\[-0.1cm]
	\hspace{-1.2cm} {\normalsize\slshape $^\vee$Institut f\"{u}r Mathematik, Universit\"{a}t W\"{u}rzburg, Germany
	}\\[-0.1cm]
}

\date{}
\maketitle

\begin{abstract}
We construct three classes of generalised orbifolds of Reshetikhin--Turaev theory for a modular tensor category~$\Cat{C}$, using the language of defect TQFT from \cite{CRS1}: 
(i) 
spherical fusion categories give orbifolds for the ``trivial'' defect TQFT associated to $\Vectk$, 
(ii) 
$G$-crossed extensions of~$\Cat{C}$ give group orbifolds for any finite group~$G$, and 
(iii) 
we construct orbifolds from commutative \SSFRs\ in~$\Cat{C}$. 
We also explain how the Turaev--Viro state sum construction fits into our framework by proving that it is isomorphic to the orbifold of case~(i). 
Moreover, we treat the cases (ii) and (iii) in the more general setting of ribbon tensor categories. 
For case (ii) we show how Morita equivalence leads to isomorphic orbifolds, and we discuss Tambara--Yamagami categories as particular examples.

\medskip 

{\footnotesize 2020 Mathematics Subject Classification: 57K16, 18M20, 57R56}
\end{abstract}

\newpage

\tableofcontents

\section{Introduction  and summary}
\label{sec:intro}

For any modular tensor category~$\Cat{C}$, Reshetikhin and Turaev \cite{retu2, tur} constructed a 3-dimensional topological quantum field theory 
	 $\zrt \colon  \Borden{3} \longrightarrow \Vectk$. This construction 
is intimately related 
to 
	 the connection between the representation theory of quantum groups and knot theory \cite{tur}, 	
	 and rational conformal field theory \cite{FRS25}. 
The 
	symmetric monoidal 
functor $\zrt$ 
	 acts on diffeomorphism classes of bordisms 
with embedded ribbons that are labelled with data 
	 from~$\Cat{C}$, hence it assigns topological invariants to ribbon embeddings into 3-manifolds. 
In \cite{CRS2} we extended this by constructing a \textsl{Reshetikhin--Turaev defect TQFT} $\zzc \colon \Borddefen{3}(\D^{\Cat{C}}) \to \Vectk$ that assigns invariants to 
	equivalence classes of 
stratified bordisms whose 3-, 2- and 1-strata are respectively labelled by~$\Cat{C}$, certain Frobenius algebras in~$\Cat{C}$ and their cyclic modules.
	 The original functor~$\zrt$ is isomorphic to a restriction of~$\zzc$, as ribbons can be modelled by a combination of 1- and 2-strata, cf.\ \cite[Rem.\,5.9]{CRS2}. 
	 For an $n$-dimensional defect TQFT~$\zz$ with $n \in \{2,3\}$, the labelled strata (or ``defects'', a term used in physics to refer to regions in spacetime with certain properties that distinguish them from their surroundings) of codimension~$j$ are known to correspond to $j$-cells in the $n$-category associated to~$\zz$ \cite{dkr1107.0495, CMS}; this is also expected for $n\geqslant 4$. 
Defects in Reshetikhin--Turaev theory had previously been studied in \cite{ks1012.0911, fsv1203.4568, CMS}. 

In the present paper we construct orbifolds of Reshetikhin--Turaev TQFTs. 
Inspired by earlier work on rational 
	 conformal field theory 
\cite{ffrs0909.5013}, a 
	 (generalised) orbifold
theory was developed for 2-dimensional TQFTs in \cite{cr1210.6363}, which we then further generalised to arbitrary dimensions in \cite{CRS1}: 
Given an $n$-dimensional defect TQFT~$\zz$ (i.\,e.\ a symmetric monoidal functor on decorated stratified $n$-dimensional bordisms, cf.\ Section~\ref{sec:resh-tura-defect}) 
and an ``orbifold datum''~$\A$ (consisting of special labels for $j$-strata for all $j\in \{0,1,\dots,n\}$, cf.\ Section~\ref{sec:orbif-defect-tqfts}), 
the generalised orbifold construction produces a closed TQFT $\zz_\A \colon \Bord_n \to \Vect$ roughly as follows. 
On any given bordism, $\zz_\A$ acts by choosing a triangulation, decorating its dual stratification with the data~$\A$, evaluating with~$\zz$, and then applying a certain projector. 
	 The defining properties of orbifold data~$\A$ are such that $\zz_\A$ is independent of the choice of triangulation. 
	
In dimension $n=2$, orbifold data turn out to be certain Frobenius algebras in the 2-category associated to~$\zz$, and both 
	 state sum models \cite{dkr1107.0495} and ordinary group orbifolds \cite{cr1210.6363, BCP2} are examples of orbifold TQFTs~$\zz_\A$. 
	Here by ``group orbifolds'' we mean TQFTs $\zz_\A$, where~$\A$ is obtained from an action of a finite symmetry group on~$\zz$. (This is also the origin of our usage of the term ``orbifold'' TQFT: If $\zz^X$ is a TQFT obtained from a sigma model with target manifold~$X$ that comes with a certain action of a finite group~$G$, then there is an orbifold datum~$\A_G$ such that $(\zz^X)_{\A_G} \cong \zz^{X/\!\!/G}$ where $\zz^{X/\!\!/G}$ is a TQFT associated to the orbifold (in the geometric sense) $X/\!\!/G$.) 
There are also interesting 2-dimensional 
	 orbifold TQFTs 
that go beyond these classes of examples, cf.\ \cite{CRCR, nrc1509.08069, OEReck}. 

For general 3-dimensional defect TQFTs we worked out the defining conditions on orbifold data in \cite{CRS1}. 
In the present paper we focus on Reshetikhin--Turaev defect TQFTs~$\zzc$ and reformulate their orbifold conditions internally to the modular tensor 
	 category~$\Cat{C}$.  
This is achieved in Proposition~\ref{prop:internal-orb} which is the key technical result in our paper and is used to prove the two main theorems below.

\smallskip 

Our first main result 
	 (stated 
as Proposition~\ref{prop:orbidata} and Theorem~\ref{thm:ZTVZtrivorb}) concerns orbifolds of the ``trivial'' Reshetikhin--Turaev defect TQFT $\zz^{\textrm{triv}} := \zz^{\Vect}$, i.\,e.\ when the modular tensor category is simply $\Vect$. 
Recall that (as we review in Section~\ref{subsec:TVire}) 
from every spherical fusion category~$\Cat{S}$ one can construct a 3-dimensional state sum TQFT called Turaev--Viro theory 
	 $\zz^{\text{TV},\Cat{S}}$
\cite{TVmodel, bwTV1}. 

\medskip 

\noindent
\textbf{Theorem A. }
For every spherical fusion category~$\Cat{S}$ there is an orbifold datum~$\Aca$ for $\zz^{\textrm{triv}}$ such that 
$
\zz^{\text{triv}}_{\Aca} 
\cong 
\zz^{\text{TV},\Cat{S}}
$. 

\medskip 

	 This result, appearing as Theorem~\ref{thm:ZTVZtrivorb} in the main text, vindicates 
the slogan ``state sum models are orbifolds of the trivial theory'' 
in three dimensions. 
This can in fact be seen as a special case of the slogan ``3-dimensional orbifold data are spherical fusion categories internal to 3-categories with duals'', cf.\ Remark~\ref{rem:SFCinternal}. 

\smallskip 

Our second main result concerns group extensions of tensor categories.%
\footnote{A more geometric approach to group orbifolds of Reshetikhin--Turaev TQFTs and more generally of 3-2-1-extended TQFTs has been given in \cite{SchweigertWoike201802}.}
Recall that an extension of a tensor  category $\Cat{C}$ by a finite group $G$ is a tensor category~$\Cat{B}$ which is graded by~$G$ with neutral component $\Cat{B}_{1}=\Cat{C}$. 
To formulate
	 our result 
we note that the nondegeneracy condition on a modular tensor category~$\Cat{C}$ is not needed to define orbifold data~$\A$ for~$\zzc$, and hence one can speak of orbifold data in arbitrary ribbon categories~$\B$ 
(see Section~\ref{subsec:SODribbon} for details). 
In Section~\ref{sec:Gextensions} we prove 
	 Theorem~\ref{thm:Gcrossed} (see e.\,g.\ \cite{TuraevBook2010} for the notion of ``ribbon crossed $G$-category''), which we paraphrase as follows: 

\medskip 

\noindent
\textbf{Theorem B. }
Let~$\B$ be a ribbon fusion category and let~$G$ be a finite group. 
Every 
	 ribbon crossed $G$-category $\widehat{\B} = \bigoplus_{g\in G} \B_g$, such that the component~$\B_1$ labelled by the unit $1\in G$ satisfies $\B_1 = \B$, gives
rise to an orbifold datum for~$\B$. 

\medskip 

We will be particularly interested in the situation where $\B=\B_1$ is additionally a
full ribbon subcategory of a modular tensor category $\Cat{C}$, in which case an extension~$\widehat{\B}$ provides orbifold data in~$\Cat{C}$.
A special case of this is $\B = \Cat{C}$ and where $\widehat{\B} = \Cat{C}_G^\times$ is a $G$-crossed extension. 
An important source of examples for $G=\Z_2$ are Tambara--Yamagami categories, which are $\Z_2$-extensions of $H$-graded vector spaces for a finite abelian group~$H$. 
This is explained in Section~\ref{sec:Gextensions}, where we also discuss orbifold data for the modular tensor categories associated to the affine Lie algebras $\widehat{\mathfrak{sl}}(2)_k$. 
Moreover, we prove a version of Theorem~B that holds for certain non-semisimple ribbon categories~$\B$, cf.\ Remark~\ref{rem:GextNonSS}. 

Taken together, Theorems~A and~B say that orbifolds unify state sum models and group actions in three dimensions.%
\footnote{The unification of state sum models and group orbifolds in two dimensions is a corollary of \cite{dkr1107.0495, cr1210.6363, BCP2}.}

\smallskip 

The orbifold data in Theorem~B depend on certain choices, which are however all related by Morita equivalences that in turn lead to isomorphic orbifold TQFTs (when~$\B$ is a subcategory of a modular category~$\Cat{C}$), as we explain in Section~\ref{subsec:MoritaOD}.

	\smallskip
	
	As a third source of orbifold data for the Reshetikhin--Turaev defect TQFT~$\zzc$ we identify commutative $\Delta$-separable Frobenius algebras in~$\Cat{C}$ in Section~\ref{sec:commSSFR}. 
	
\medskip

For the whole paper we fix an algebraically closed field~$\Bbbk$ of characteristic zero, and we write the symmetric monoidal category of finite-dimensional $\Bbbk$-vector spaces simply as $\Vectk$.

\vspace{-0.1cm}

\subsubsection*{Acknowledgements} 

We would like to thank 
	Ehud Meir, 
	Daniel Scherl and 
	Michael M\"uger 
for helpful discussions.
The work of N.\,C.~is partially supported by a grant from the Simons Foundation. 
N.\,C.~and G.\,S.~are partially supported by the stand-alone project P\,27513-N27 of the Austrian Science Fund. 
The authors acknowledge support by the Research Training Group 1670 of the German Research Foundation.

\section{TQFTs with defects and orbifolds}
\label{sec:backgground}
 
In this section we briefly review the general notions of 3-dimensional defect TQFTs and their orbifolds from \cite{CMS, CRS1}, and the extension of Reshetikhin--Turaev theory to a defect TQFT with surface defects from \cite{CRS2}. 

\medskip 

We start by recalling three types of tensor categories over~$\Bbbk$ (see e.\,g.~\cite{tur, EGNO-book} for details).  
A \textsl{spherical fusion category}~$\Cat{S}$ is a semisimple $\Bbbk$-linear pivotal monoidal category with finitely many isomorphism classes of simple objects $i\in\Cat{S}$, such that left and right traces coincide and $\End_{\Cat{S}}(\one) = \Bbbk$. 
Pivotality implies that~$\Cat{S}$ has coherently isomorphic left and right duals, and sphericality implies that the associated left and right dimensions are equal. 
The \textsl{global dimension} of~$\Cat{S}$ is the sum $\dim{\Cat{S}} = \sum_{i} \dim(i)^2$ over a choice of representatives~$i$ of the isomorphism classes of simple objects in~$\Cat{S}$. 
Since $\text{char}(\Bbbk) = 0$ by assumption, we have that $\dim{\Cat{S}} \neq 0$ \cite{ENO1}. 
A \textsl{ribbon fusion category} is  a braided spherical fusion category.
A \textsl{modular tensor category} is a ribbon fusion category with nondegenerate braiding.

\subsection{Reshetikhin--Turaev defect TQFTs}
\label{sec:resh-tura-defect}

Let $\Cat{C}$ be a modular tensor category over~$\Bbbk$. 
There is an associated (typically anomalous) 3-dimensional TQFT:
\begin{equation}
  \label{eq:3d-tqft}
  \zrt \colon  \Borden{3} \longrightarrow \Vectk \, ,
\end{equation}
called Reshitikhin-Turaev theory. 
Here $\Borden{3}$ is a certain extension of the symmetric monoidal category $\Bord_{3}$ of 3-dimensional bordisms, which is needed to deal with the anomaly. 
For all details we refer to \cite{retu2,tur}; the constructions in the present paper do not require dealing with the anomaly in an explicit way.

In \cite{CRS2} we constructed surface and line defects for $\zrt$ from \SSFRs{} and their (cyclic) modules.  
We briefly recall the notion of a 3-dimensional defect TQFT from \cite{CMS, CRS1}, and the extension~$\zzc$ of~\eqref{eq:3d-tqft} to a full defect TQFT from \cite{CRS2}.

\bigskip
\noindent
\textbf{\textsl{Conventions. }}
We adopt the conventions from \cite{CRS2}, in particular we read string diagrams for~$\Cat{C}$ from bottom to top. 
For instance the braiding $c_{X,Y} \colon X \otimes Y \to Y \otimes X$ and its inverse are written as 
\be
c_{X,Y}
= 
\begin{tikzpicture}[very thick,scale=0.5,color=blue!50!black, baseline]
\draw[color=blue!50!black] (-1,-1) node[below] (A1) {{\scriptsize$X$}};
\draw[color=blue!50!black] (1,-1) node[below] (A2) {{\scriptsize$Y$}};
\draw[color=blue!50!black] (-1,1) node[above] (B1) {{\scriptsize$Y$}};
\draw[color=blue!50!black] (1,1) node[above] (B2) {{\scriptsize$X$}};
\draw[color=blue!50!black] (A2) -- (B1);
\draw[color=white, line width=4pt] (A1) -- (B2);
\draw[color=blue!50!black] (A1) -- (B2);
\end{tikzpicture} 
\, , \quad
c^{-1}_{X,Y}
= 
\begin{tikzpicture}[very thick,scale=0.5,color=blue!50!black, baseline]
\draw[color=blue!50!black] (-1,-1) node[below] (A1) {{\scriptsize$Y$}};
\draw[color=blue!50!black] (1,-1) node[below] (A2) {{\scriptsize$X$}};
\draw[color=blue!50!black] (-1,1) node[above] (B1) {{\scriptsize$X$}};
\draw[color=blue!50!black] (1,1) node[above] (B2) {{\scriptsize$Y$}};
\draw[color=blue!50!black] (A1) -- (B2);
\draw[color=white, line width=4pt] (A2) -- (B1);
\draw[color=blue!50!black] (A2) -- (B1);
\end{tikzpicture} 
\, , 
\ee
and we denote the twist isomorphism on an object $U \in \Cat{C}$ by~$\theta_U$.

An algebra in $\Cat{C}$ is an object $A\in\Cat{C}$ together with morphisms $\mu\colon A \otimes A \to A$ and $\eta \colon \one \to A$ satisfying associativity and unit conditions. 
If $(A_1,\mu_1,\eta_1)$ and $(A_2,\mu_2,\eta_2)$ are algebras in~$\mathcal C$ then the tensor product $A_1 \otimes A_2$ also carries an algebra structure; our convention is that $A_1\otimes A_2$ has the multiplication
\be
\mu_{A_1 \otimes A_2} = ( \mu_1 \otimes \mu_2 ) \circ ( 1_{A_1} \otimes c_{A_2,A_1} \otimes 1_{A_2}) 
= 
\begin{tikzpicture}[very thick,scale=0.75,color=blue!50!black, baseline]
\draw[color=green!50!black] (-1,-1) node[below] (A1) {{\scriptsize$A_1$}};
\draw[color=green!50!black] (0,-1) node[below] (A2) {{\scriptsize$A_2$}};
\draw[color=green!50!black] (1,-1) node[below] (A1r) {{\scriptsize$A_1$}};
\draw[color=green!50!black] (2,-1) node[below] (A2r) {{\scriptsize$A_2$}};
\draw[color=green!50!black] (0,1) node[above] (A1up) {{\scriptsize$A_1$}};
\draw[color=green!50!black] (1,1) node[above] (A2up) {{\scriptsize$A_2$}};
\fill[color=green!50!black] (0,0) circle (2.5pt) node (mult1) {};
\fill[color=green!50!black] (1,0) circle (2.5pt) node (mult2) {};
\draw[color=green!50!black] (A1) -- (0,0);
\draw[color=green!50!black] (A1r) -- (0,0);
\draw[color=white, line width=4pt] (A2) -- (1,0);
\draw[color=green!50!black] (A2) -- (1,0);
\draw[color=green!50!black] (A2r) -- (1,0);
\fill[color=green!50!black] (1,0) circle (2.5pt) node (mult2) {};
\draw[color=green!50!black] (0,0) -- (A1up);
\draw[color=green!50!black] (1,0) -- (A2up);
\end{tikzpicture} 
\ee
and unit $\eta_{A_1 \otimes A_2} = \eta_1 \otimes \eta_2$.

Let $A,B$ be algebras in~$\mathcal C$, 
let~$M$ be a right $A$-module and~$N$ a right $B$-module. 
From \cite[Expl.\,2.13(ii)]{CRS2} we obtain that $M \otimes N$ is an $(A \otimes B)$-module with component actions 
\be
\begin{tikzpicture}[very thick,scale=0.75,color=blue!50!black, baseline,xscale=-1]
\draw[color=green!50!black] (-1,-1) node[below] (A1) {};
\draw[color=green!50!black] (-0.5,-1) node[below] (A2) {};
\draw (0,-1) -- (0,1); 
\draw[color=green!50!black] (-1,-1) .. controls +(0,0.5) and +(-0.5,-0.5) .. (0,0.5);
\draw (0,-1) node[below] (M) {{\scriptsize$M\otimes N$}};
\draw[color=green!50!black] (-1,-1) node[below] (A1) {{\scriptsize$A$}};
\fill[color=black] (0,0.5) circle (2.9pt) node[right] (meet2) {};
\fill[color=black] (-0.25,0.6) circle (0pt) node (M) {{\scriptsize$1$}};
\end{tikzpicture} 
:= 
\begin{tikzpicture}[very thick,scale=0.75,color=blue!50!black, baseline,xscale=-1]
\draw[color=green!50!black] (-1,-1) node[below] (A1) {};
\draw[color=green!50!black] (-0.5,-1) node[below] (A2) {};
\draw[color=green!50!black] (-1,-1) .. controls +(0,0.5) and +(-0.5,-0.5) .. (0.75,0.5);
\draw[color=white, line width=4pt] (0,-1) -- (0,1);
\draw (0,-1) -- (0,1); 
\draw (0.75,-1) -- (0.75,1); 
\draw (0,-1) node[below] (M) {{\scriptsize$N$}};
\draw (0.75,-1) node[below] (M) {{\scriptsize$M$}};
\draw[color=green!50!black] (-1,-1) node[below] (A1) {{\scriptsize$A$}};
\fill[color=black] (0.75,0.5) circle (2.9pt) node[right] (meet2) {};
\end{tikzpicture} 
\, , \quad
\begin{tikzpicture}[very thick,scale=0.75,color=blue!50!black, baseline,xscale=-1]
\draw[color=green!50!black] (-1,-1) node[below] (A1) {};
\draw[color=green!50!black] (-0.5,-1) node[below] (A2) {};
\draw (0,-1) -- (0,1); 
\draw[color=green!50!black] (-1,-1) .. controls +(0,0.5) and +(-0.5,-0.5) .. (0,0.5);
\draw (0,-1) node[below] (M) {{\scriptsize$M\otimes N$}};
\draw[color=green!50!black] (-1,-1) node[below] (A1) {{\scriptsize$B$}};
\fill[color=black] (0,0.5) circle (2.9pt) node[right] (meet2) {};
\fill[color=black] (-0.25,0.6) circle (0pt) node (M) {{\scriptsize$2$}};
\end{tikzpicture} 
:= 
\label{eq:actions-MN}
\begin{tikzpicture}[very thick,scale=0.75,color=blue!50!black, baseline,xscale=-1]
\draw[color=green!50!black] (-1,-1) node[below] (A1) {};
\draw[color=green!50!black] (-0.5,-1) node[below] (A2) {};
\draw (0,-1) -- (0,1); 
\draw (0.75,-1) -- (0.75,1); 
\draw[color=green!50!black] (-1,-1) .. controls +(0,0.5) and +(-0.5,-0.5) .. (0,0.5);
\draw (0,-1) node[below] (M) {{\scriptsize$N$}};
\draw (0.75,-1) node[below] (M) {{\scriptsize$M$}};
\draw[color=green!50!black] (-1,-1) node[below] (A1) {{\scriptsize$B$}};
\fill[color=black] (0,0.5) circle (2.9pt) node[right] (meet2) {};
\end{tikzpicture} 
\, .
\ee
Analogously, $M  \otimes N$ becomes a $(B \otimes A)$-module with the actions 
\be
\label{eq:actions-M-rev}
\begin{tikzpicture}[very thick,scale=0.75,color=blue!50!black, baseline,xscale=-1]
\draw[color=green!50!black] (-1,-1) node[below] (A1) {};
\draw[color=green!50!black] (-0.5,-1) node[below] (A2) {};
\draw (0,-1) -- (0,1); 
\draw (0.75,-1) -- (0.75,1); 
\draw[color=white, line width=4pt] (-1,-1) .. controls +(0,0.5) and +(-0.5,-0.5) .. (0.75,0.5);
\draw[color=green!50!black] (-1,-1) .. controls +(0,0.5) and +(-0.5,-0.5) .. (0.75,0.5);
\draw (0,-1) node[below] (M) {{\scriptsize$N$}};
\draw (0.75,-1) node[below] (M) {{\scriptsize$M$}};
\draw[color=green!50!black] (-1,-1) node[below] (A1) {{\scriptsize$A$}};
\fill[color=black] (0.75,0.5) circle (2.9pt) node[right] (meet2) {};
\end{tikzpicture} 
\, , \quad
\begin{tikzpicture}[very thick,scale=0.75,color=blue!50!black, baseline,xscale=-1]
\draw[color=green!50!black] (-1,-1) node[below] (A1) {};
\draw[color=green!50!black] (-0.5,-1) node[below] (A2) {};
\draw (0,-1) -- (0,1); 
\draw (0.75,-1) -- (0.75,1); 
\draw[color=green!50!black] (-1,-1) .. controls +(0,0.5) and +(-0.5,-0.5) .. (0,0.5);
\draw (0,-1) node[below] (M) {{\scriptsize$N$}};
\draw (0.75,-1) node[below] (M) {{\scriptsize$M$}};
\draw[color=green!50!black] (-1,-1) node[below] (A1) {{\scriptsize$B$}};
\fill[color=black] (0,0.5) circle (2.9pt) node[right] (meet2) {};
\end{tikzpicture} 
\, .
\ee

\bigskip 
\noindent
\textbf{\textsl{3-dimensional defect TQFT. }}
We recall from \cite{CMS, CRS1} that a  3-dimensional defect TQFT is a symmetric monoidal functor
\be
\label{eq:def-TQFT}
\zz \colon \Borddefn{3}(\D) \longrightarrow \Vectk \, ,
\ee
where the source category consists of stratified and decorated bordisms with orientations. 
For details we refer to \cite{CRS1}, but the main ingredients are as follows: 
A bordism $N\colon \Sigma \rightarrow \Sigma'$ between to stratified surfaces $\Sigma,\Sigma'$ has 3-, 2- and 1-strata in the interior, while on the boundary also 0-strata are allowed. 
The possible decorations for the strata are specified  by a set of 3-dimensional \textsl{defect data} $\D$  which is 
	a 
tuple 
\be
	\D = \big(D_3,D_2,D_1;s,t,j\big) \, .
\ee
Here $D_i$, $i\in \{ 1,2,3 \}$, are sets whose elements label the $i$-dimensional strata of bordisms; the case $i=0$ can naturally be added by a universal construction, see Remark~\ref{rem:completions}.
The \textsl{source, target} and \textsl{junction maps} $s,t \colon D_2 \to D_3$ and $j \colon D_1 \to \text{(cyclic lists of elements of $D_2$)}$
specify the adjacency conditions for the decorated strata. This is best described in an example:
\be\label{eq:starlikecylinder}
\tikzzbox{\begin{tikzpicture}[very thick,scale=1.0,color=blue!50!black, baseline=-1.9cm]
	\fill [blue!15,
	opacity=0.5, 
	left color=blue!15, 
	right color=white] 
	(-1.25,0) -- (-1.25,-3) arc (180:360:1.25 and 0.5) -- (1.25,0) arc (0:180:1.25 and -0.5);
	\fill [blue!35,opacity=0.1] (-1.25,-3) arc (180:360:1.25 and 0.5) -- (1.25,-3) arc (0:180:1.25 and 0.5);
	\fill [blue!25,opacity=0.5] (-1.25,0) arc (180:360:1.25 and 0.5) -- (1.25,0) arc (0:180:1.25 and 0.5);
	%
	\fill [red,opacity=0.4] (0,0) -- (0,-3) -- (1.25,-3) -- (1.25,0);
	\fill [pattern=north west lines, opacity=0.3] (0,0) -- ($(0,0)+(120:1.25 and 0.5)$) -- ($(0,-3)+(120:1.25 and 0.5)$) -- (0,-3) -- (0,0);
	\fill [red,opacity=0.4] (0,0) -- ($(0,0)+(120:1.25 and 0.5)$) -- ($(0,-3)+(120:1.25 and 0.5)$) -- (0,-3) -- (0,0);
	\fill [red,opacity=0.4] (0,0) -- ($(0,0)+(245:1.25 and 0.5)$) -- ($(0,-3)+(245:1.25 and 0.5)$) -- (0,-3) -- (0,0);
	\draw[
	color=green!50!black, 
	>=stealth,
	decoration={markings, mark=at position 0.5 with {\arrow{>}},
	}, postaction={decorate}
	] 
	(0,-3) -- (0,0);
	\draw[line width=1] (0.6, -1.5) node[line width=0pt] (beta) {{\footnotesize  $\circlearrowleft$}};
	\draw[line width=1]  ($(0,-1.8)+(245:0.6 and 0)$)  node[line width=0pt] (beta) {{\footnotesize  $\circlearrowleft$}};
	\draw[line width=1]  ($(0,0)+(120:0.8 and 0)$)  node[line width=0pt] (beta) {{\footnotesize  $\circlearrowright$}};
	\draw[line width=1] (0.94, -1.5) node[line width=0pt] (beta) {{\footnotesize  $A_1$}};
	\draw[line width=1]  ($(0,-2.3)+(245:0.6 and 0)$)  node[line width=0pt] (beta) {{\footnotesize  $A_3$}};
	\draw[line width=1]  ($(0,-0.3)+(120:0.8 and 0)$)  node[line width=0pt] (beta) {{\footnotesize  $A_2$}};
	\draw[line width=1]  ($(0.4,-3.2)+(100:0.8 and 0)$)  node[line width=0pt] (beta) {{\footnotesize  $u$}};
	\draw[line width=1]  ($(0.4,+0.3)+(100:0.8 and 0)$)  node[line width=0pt] (beta) {{\footnotesize  $v$}};
	\draw[line width=1]  ($(-0.7,-3.0)+(100:0.8 and 0)$)  node[line width=0pt] (beta) {{\footnotesize  $w$}};
	\draw[line width=1, color=green!50!black]  ($(0.4,-1.7)+(100:0.8 and 0)$)  node[line width=0pt] (beta) {{\footnotesize  $T$}};
	\end{tikzpicture}}
\, .
\ee
Here, $u,v,w \in D_{3}$ decorate 3-strata, $A_1,A_2,A_3 \in D_{2}$ decorate oriented 2-strata such that for example $s(A_1)=u$ and $t(A_1)=v$.  
Drawing a 2-stratum with a stripy pattern indicates that its orientation is opposite to that of the paper plane. 
To take also orientation reversal into account we extend the source and target maps 
to maps $s,t \colon D_2 \times \{ \pm \} \to D_3$ and similarly for the junction map~$j$,
see \cite{CRS2} for the full definition and more details.
Finally $T \in D_{1}$ labels the 1-stratum, and the junction map 
applied to~$T$ is the cyclic set of the decorations of incident 2-strata, $j(T)=( (A_1,+),\,(A_2,+),\,(A_3,-))/\!\sim$. 

A set of 3-dimensional defect data $\D$ yields the category  $ \Borddefn{3}(\D)$ of decorated 3-dimensional bordisms: The  objects are stratified decorated surfaces, where each $i$-stratum, $i \in \{0,1,2\}$, is decorated by an element from $D_{i+1}$ such that applying the maps $s,t$ or~$j$ to the label of a given 1- or 0-stratum, respectively, gives the decorations for the incident 2- and 1-strata. 
A morphism $N \colon \Sigma \to \Sigma'$ between objects $\Sigma,\Sigma'$ is a compact  stratified $3$-manifold $N$, 
with a decoration that is compatible with $s,t,j$ 
and an isomorphism $\Sigma^{\mathrm{op}} \sqcup \Sigma' \to \partial N$ of  decorated stratified 2-manifolds. 
Here, $\Sigma^{\mathrm{op}}$ is~$\Sigma$ with reversed orientation for all strata (but with the same decorations).
The bordisms  are considered up to isomorphism of stratified decorated manifolds relative to the boundary. 

\medskip 

\begin{remark}
\label{rem:completions}
There are two completion procedures 
for a defect TQFT $\zz \colon \Borddefn{3}(\D) \to \Vectk$ that will be important for us. 
First, one can also allow point defects in the interior of a bordism. 
The 
	maximal set of 
possible decorations~$D_{0}$ for such 0-strata 
	 turns out to be comprised of 
the elements in the vector space that~$\zz$ assigns to a small sphere~$S$ around the given defect point, subject to an invariance condition (that will however be irrelevant for the present paper), see \cite{CMS} and \cite[Sect.\,2.4]{CRS1}. 
The resulting defect TQFT is called \textsl{$D_{0}$-complete}.

Second, one can allow for certain \textsl{point insertions} on strata (called ``Euler defects'' in \cite{CRS1}). 
Point insertions are constructed from 
	 elements $\psi \in D_0$ 
that live on $i$-strata~$N_i$ for $i\in \{2,3\}$
	(which means that there are no 1-strata adjacent to the 0-stratum labelled~$\psi$) 
and which are invertible with respect to a natural multiplication on the associated vector spaces $\zz(S)$. 
Evaluating~$\zz$ on a bordism with point insertions is by definition given by inserting $\psi^{\chi_{\mathrm{sym}}(N_{i})}$, where $\chi_{\mathrm{sym}}(N_{i})$ is the ``symmetric'' Euler characteristic $2 \chi(M_{j})- \chi(\partial M_{j})$, with~$\chi$ the usual Euler characteristic, see \cite[Sect.\,2.5]{CRS1}.
\end{remark}

\smallskip
\noindent
\textbf{\textsl{Reshetikhin--Turaev defect TQFT. }}
In \cite{CRS2} we constructed a defect extension~$\zzc$ of the Reshetikhin--Turaev TQFT~$\zrt$ for every modular tensor category $\Cat{C}$. 
 The associated defect data $\DC \equiv (D_1^{\Cat C},D_2^{\Cat C},D_3^{\Cat C},s,t,j)$ are as follows. 
We have $D_3^{\Cat C} := \{ \Cat C \}$, meaning that all 3-strata are labelled by~$\Cat{C}$, and the label set for surface defects is 
\be
D_2^{\Cat C} := \big\{ \Delta\text{-separable symmetric Frobenius algebras in } \Cat C \big\} \, .
\ee
We recall that a \textsl{$\Delta$-separable symmetric Frobenius algebra~$A$} in $\Cat{C}$ is a tuple $(A,\mu,\eta,\Delta,\eps)$ consisting of an associative unital algebra $(A,\mu,\eta)$ and a coassociative counital coalgebra $(A,\Delta,\eps)$ such that 
\begin{align}
& 
\begin{tikzpicture}[very thick,scale=0.4,color=green!50!black, baseline=0cm]
\draw[-dot-] (0,0) .. controls +(0,-1) and +(0,-1) .. (1,0);
\draw[-dot-] (0,0) .. controls +(0,1) and +(0,1) .. (1,0);
\draw (0.5,-0.8) -- (0.5,-1.5); 
\draw (0.5,0.8) -- (0.5,1.5); 
\end{tikzpicture}
\, = \, 
\begin{tikzpicture}[very thick,scale=0.4,color=green!50!black, baseline=0cm]
\draw (0.5,-1.5) -- (0.5,1.5); 
\end{tikzpicture}
\, , \qquad\qquad 
\begin{tikzpicture}[very thick,scale=0.4,color=green!50!black, baseline=0cm]
\draw[-dot-] (0,0) .. controls +(0,1) and +(0,1) .. (-1,0);
\draw[directedgreen, color=green!50!black] (1,0) .. controls +(0,-1) and +(0,-1) .. (0,0);
\draw (-1,0) -- (-1,-1.5); 
\draw (1,0) -- (1,1.5); 
\draw (-0.5,1.2) node[Odot] (end) {}; 
\draw (-0.5,0.8) -- (end); 
\end{tikzpicture}
= 
\begin{tikzpicture}[very thick,scale=0.4,color=green!50!black, baseline=0cm]
\draw[redirectedgreen, color=green!50!black] (0,0) .. controls +(0,-1) and +(0,-1) .. (-1,0);
\draw[-dot-] (1,0) .. controls +(0,1) and +(0,1) .. (0,0);
\draw (-1,0) -- (-1,1.5); 
\draw (1,0) -- (1,-1.5); 
\draw (0.5,1.2) node[Odot] (end) {}; 
\draw (0.5,0.8) -- (end); 
\end{tikzpicture}
\, , \qquad\qquad 
\tikzzbox{\begin{tikzpicture}[very thick,scale=0.4,color=green!50!black, baseline=0cm]
	\draw[-dot-] (0,0) .. controls +(0,-1) and +(0,-1) .. (-1,0);
	\draw[-dot-] (1,0) .. controls +(0,1) and +(0,1) .. (0,0);
	\draw (-1,0) -- (-1,1.5); 
	\draw (1,0) -- (1,-1.5); 
	\draw (0.5,0.8) -- (0.5,1.5); 
	\draw (-0.5,-0.8) -- (-0.5,-1.5); 
	\end{tikzpicture}}
=
\tikzzbox{\begin{tikzpicture}[very thick,scale=0.4,color=green!50!black, baseline=0cm]
	\draw[-dot-] (0,1.5) .. controls +(0,-1) and +(0,-1) .. (1,1.5);
	\draw[-dot-] (0,-1.5) .. controls +(0,1) and +(0,1) .. (1,-1.5);
	\draw (0.5,-0.8) -- (0.5,0.8); 
	\end{tikzpicture}}
=
\tikzzbox{\begin{tikzpicture}[very thick,scale=0.4,color=green!50!black, baseline=0cm]
	\draw[-dot-] (0,0) .. controls +(0,1) and +(0,1) .. (-1,0);
	\draw[-dot-] (1,0) .. controls +(0,-1) and +(0,-1) .. (0,0);
	\draw (-1,0) -- (-1,-1.5); 
	\draw (1,0) -- (1,1.5); 
	\draw (0.5,-0.8) -- (0.5,-1.5); 
	\draw (-0.5,0.8) -- (-0.5,1.5); 
	\end{tikzpicture}}
\, .
\\
&
\hspace{-0.8cm}
\text{(``$\Delta$-separable'')}
\hspace{0.8cm}
\text{(``symmetric'')} 
\hspace{2.1cm}
\text{(``Frobenius'')} 
\nonumber
\end{align}

As decorations for the line defects we take 
\be
D_1^{\Cat C} := \bigsqcup_{n \in \mathbb{Z}_{\geqslant 0}} L_n \ ,
\ee 
where $L_0 = \big\{ X \in \Cat{C} \,\big|\, \theta_X = \id_X \big\}$,  
and, for $n>0$,
\begin{align*}
	L_n =\, &\big\{ 
	\big((A_1,\eps_1), (A_2,\eps_2), \dots, (A_n,\eps_n), M\big) \;\big|\; 
					A_i \in D_2^{\Cat C} , \; 
					\eps_i \in \{ \pm \} , \; 
	 \\ & \qquad
					M \text{ is a cyclic multi-module for }
					\big((A_1,\eps_1), (A_2,\eps_2), \dots, (A_n,\eps_n)\big)
					\big\} \, . 
\end{align*}
A \textsl{multi-module} over $((A_1,\eps_1), \dots, (A_n,\eps_n))$ is an $(A_{1}^{\eps_1} \otimes \dots \otimes A_{n}^{\eps_n})$-module~$M$, where $A_i^+ = A_i$ and $A_i^-$ denotes the opposite algebra $A_i^{\mathrm{op}}$.
A multi-module is cyclic if it is equivariant with respect to cyclic permutations which leave the list $((A_1,\eps_1), \dots, (A_n,\eps_n))$ invariant, 
see \cite[Def.\,5.1]{CRS2} for the precise definition. 
The multi-modules that we consider in the present paper all have only trivial cyclic symmetry, so they are all automatically equivariant and there exists only one equivariant structure. 
Hence we will have no need to pay attention to this equivariance.

We furthermore have
$s(A,\pm) \stackrel{\text{def}}{=} \Cat C \stackrel{\text{def}}{=} t(A,\pm)$ 
for all $A \in D_2^{\Cat C}$, 
and $j(M)\stackrel{\text{def}}{=}  \Cat C$ for $M \in L_{0}$, while  
\be
j \big( ((A_1,\eps_1), \dots, (A_n,\eps_n), M) \big) 
\stackrel{\text{def}}{=} 
\big((A_1,\eps_1), \dots, (A_n,\eps_n)\big)/\!\sim
\ee
for $M \in L_{n}$ with $n>0$, where as before $(\cdots)/\sim$ denotes cyclic sets. 

It is shown in \cite[Thm.\,5.8\,\&\,Rem.\,5.9]{CRS2}, that the TQFT $\zrt$ is naturally extended to a 3-dimensional defect TQFT 
\begin{equation}
\label{eq:RT-defTQFT}
\zzc \colon \Borddefen{3}(\D^{\Cat{C}}) \longrightarrow \Vectk 
\end{equation}
that we call \textsl{Reshetikhin--Turaev defect TQFT}. 
The definition of the functor $\zzc$ is roughly as follows. 
For a closed 3-bordism~$N$ pick an oriented triangulation of each 2-stratum relative to its boundary. 
The Poincar\'e dual of the triangulation gives a ribbon graph in~$N$ that is decorated by the data of the corresponding Frobenius algebra. 
By definition, evaluating~$\zzc$ on~$N$ is evaluating~$\zrt$ on the bordism which is~$N$ augmented by the ribbon graphs; this is independent of the choice of triangulation by the properties of \SSFRs. 
On objects and general bordisms our functor~$\zzc$ is defined in terms of a standard limit construction which is detailed in \cite[Sect.\,5]{CRS2}.

\subsection{Orbifolds of defect TQFTs}
\label{sec:orbif-defect-tqfts}

As recalled in the introduction, there is a general notion of orbifolds of $n$-dimensional TQFTs for any $n\geqslant 1$. 
Already for $n=3$, this produces a large list of axioms, and for practical purposes we define ``special'' 3-dimensional orbifold data to reduce the number of axioms, as recalled next.  

Fix a 3-dimensional defect TQFT $\zz\colon  \Bordd[3] \to \Vectk$. 
A \textsl{special orbifold datum~$\orb$ for~$\zz$} is a list of elements $\orb_j \in D_j$ for $j \in \{ 1,2,3 \}$ as well as $\orb_0^+,\orb_0^- \in D_0$ together with point insertions~$\psi$ and~$\phi$ for $\A_2$-labelled 2-strata and $\A_3$-labelled 3-strata, respectively (the ``Euler defects'' as recalled above), subject to the constraints below. 
In anticipation of our application to Reshetikhin--Turaev theory, we will use the notation
\be\label{eq:orb-data-notation}
\orb_3 = \Cat{C}
\,,\quad
\orb_2 = A
\,,\quad
\orb_1 = T
\,,\quad
\orb_0^+ = \alpha
\,,\quad
\orb_0^- = \bar\alpha \, , 
\ee
where ``$A$'' is for ``algebra'' and ``$T$'' is for ``tensor product''. 
The labels for 0-strata are are elements in the vector space that~$\zz$ assigns to a sphere around a 0-stratum which is dual to a 3-simplex (recall the $D_0$-completion mentioned in Remark~\ref{rem:completions}), 
\vspace{-0.8cm}
\be
\alpha \in 
\zz \Bigg( \!\!\!\!\!\!\!\!\!
\tikzzbox{
}
\, .  
\end{align}
In~\eqref{eq:351} in the first picture, all 2-strata have the same orientation as the paper plane, while in the second and third picture the rear, respectively front,  hemispheres have opposite orientation.
	We note that in the published version of \cite{CRS1} the identities corresponding to~\eqref{eq:351} incorrectly feature insertions of~$\phi$ and not the correct~$\phi^2$. 

We remark that any 3-dimensional defect TQFT~$\zz$ naturally gives rise to a Gray category with duals~$\tz$ as shown in \cite{CMS}; in \cite{CRS1} the definition of orbifold data is generalised to a notion internal to any such 3-category. 

\medskip 

Given an orbifold datum~$\A$ for a defect TQFT $\zz\colon \Bordd[3] \to \Vectk$, the associated \textsl{$\A$-orbifold theory} is a closed TQFT 
\be 
\zz_\A \colon \Bord_3 \lra \Vectk 
\ee
constructed in \cite[Sect.\,3.2]{CRS1}.
	On an object~$\Sigma$, $\zz_\A$ is evaluated by considering the cylinder bordism $\Sigma \times [0,1]$ and proceeding roughly as follows:
	For every triangulation~$\tau$ with total order of~$\Sigma$, denote the Poincar\'{e} dual stratification by $\Sigma^{\tau}$. By decorating with the orbifold datum $\A$ we obtain
	an  object $\Sigma^{\tau, \orb} \in \Bordd[3]$. 
	For two such triangulations $\tau,\tau'$ of~$\Sigma$, the cylinder $C_\Sigma = \Sigma \times [0,1]$, regarded  as a bordism $\Sigma \to \Sigma$ in $\Bord_3$, has an oriented triangulation~$t$ extending the triangulations $\tau$ and $\tau'$ on the ingoing and outgoing boundaries, respectively.  
	Again decorating the Poincar\'{e} dual $C_\Sigma^t$ with the orbifold datum~$\A$ we obtain a morphism $C_\Sigma^{t,\orb}\colon \Sigma^{\tau, \orb} \lra \Sigma^{\tau', \orb}$. 
	By triangulation independence we get a projective system
	\begin{equation}
	\label{eq:proj-system-orb-object}
	\zz( C_\Sigma^{t,\orb})\colon \zz(\Sigma^{\tau, \orb}) \lra \zz(\Sigma^{\tau', \orb})
	\end{equation}
	 whose limit is by definition $\zz_\A(\Sigma)$.
	 
	On a bordism~$N\colon \Sigma_{1} \rightarrow \Sigma_{2}$, the functor $\zz_\A$ is evaluated by 
	(i) choosing oriented triangulations $\tau_{1}, \tau_{2}$ of $\Sigma_{1}, \Sigma_{2}$ and extending them to an oriented triangulation of~$N$,   
	(ii) decorating the Poincar\'{e} dual stratification with the data~$\A$ to obtain a morphism $N^{t,\A}$ in $\Bordd[3]$, 
	(iii) evaluating~$\zz$ on $N^{t,\A}$, to obtain a morphism of projective systems
	\begin{equation}
	\label{eq:proj-system-orb-bord}
	\zz( N^{t,\A})\colon \zz(\Sigma_{1}^{\tau_{1}, \A}) \lra \zz(\Sigma_{2}^{\tau_{2}, \A}) \, ,
	\end{equation}
	and (iv) taking the limit to make the construction independent of choices of triangulations. 
	Note that by the construction in \cite[Sect.\,2.5]{CRS1}, for a bordism~$N$ with triangulation~$t$, on each 2- and 3-stratum adjacent to the boundary of $M^{t,\A}$, there is one inserted point defect $\psi$ and $\phi$, respectively, while 2- and 3-strata in the interior have $\psi^2$- and $\phi^2$-insertions. 
 
For
	more 
details we refer to \cite{CRS1}, but we note that in the case of a closed 3-manifold $N\colon \emptyset \to \emptyset$ we have $\zz_\A(N) = \zz(N^{t,\A})$ for all triangulations: 
in this case step (iv) above is unnecessary since by the defining property of the orbifold datum~$\A$ the value of~$\zz$ on $N^{t,\A}$ is invariant under change of triangulation.

\section{Orbifold data for Reshetikhin--Turaev theory}
\label{sec:orbif-data-resh}

We now specialise to the case of the defect TQFT~$\zzc$ from Section \ref{sec:resh-tura-defect}. 
Since the defect data for~$\zzc$ are described internal to the given modular tensor category~$\Cat{C}$, it is desirable to describe also orbifold data and their constraints internal to~$\Cat{C}$.
This internal formulation can be stated in any (not necessarily semisimple) ribbon category (Definition~\ref{def:sodribbon}). 
We will show that commutative \SSFRs\ provide examples of orbifold data, and we describe relations (such as Morita equivalence) 
between orbifold data that lead to isomorphic orbifold TQFTs.

\subsection{State spaces for spheres}

We will need to express the state spaces assigned to spheres with two and four 0-strata, respectively, and a network of 1-strata, in terms of Hom spaces in the modular tensor category $\Cat{C}$. 
For two 0-strata, the following result was proven in \cite[Lem.\,5.10]{CRS2}.

\begin{lemma}
  \label{lemma:def-points}
Let $M,N$ be two cyclic multi-modules over a list $(A_{1}, \ldots, A_{n})$ of $\Delta$-separable symmetric Frobenius algebras $A_{i}$. 
The vector space $\zzc(S_{M,N})$ associated to the 2-sphere $S_{M,N}$ with $M,N$ on its South and North pole, respectively, is given by the space of maps of multi-modules, 
\begin{equation}
  \label{eq:tft-multi}
  \zzc(S_{M,N})=\Hom_{A_{1}, \ldots, A_{n}}(M,N) \, . 
\end{equation}
\end{lemma}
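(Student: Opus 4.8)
Set $\hat A := A_1^{\eps_1}\otimes\cdots\otimes A_n^{\eps_n}$, which is again a $\Delta$-separable symmetric Frobenius algebra in~$\Cat C$ (the braiding-twisted tensor product of such algebras is of the same type, as recorded in~\cite{CRS2}), and recall that a map of multi-modules is precisely an $\hat A$-module map compatible with the cyclic structure, so that $\Hom_{A_1,\dots,A_n}(M,N)\subseteq\Hom_{\hat A}(M,N)$ (with equality in the cases relevant below, where the cyclic symmetry is trivial). The plan is to write down an explicit linear map $\Phi\colon\Hom_{A_1,\dots,A_n}(M,N)\to\zzc(S_{M,N})$ and then show it is an isomorphism by evaluating a natural bilinear pairing on its source and target.

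First I would construct, for every multi-module map $f\colon M\to N$, a decorated bordism $B_f\colon\emptyset\to S_{M,N}$ in $\Borddefen{3}(\DC)$: take the $3$-ball bounded by~$S_{M,N}$, fill in its $n$ meridian $1$-strata by $n$ half-disc $2$-strata labelled $A_1,\dots,A_n$ that all meet along a single interior arc joining the two poles, decorate that arc by~$M$ on the $M$-pole side and by~$N$ on the $N$-pole side, and separate the two halves by a single interior $0$-stratum labelled by~$f$ (a legal $0$-stratum after the $D_0$-completion of Remark~\ref{rem:completions}, since the small sphere around it carries exactly the data that~$\zzc$ evaluates). Then $\Phi(f):=\zzc(B_f)$ is linear in~$f$ by construction. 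For surjectivity I would use that, by the limit construction of~$\zzc$ on objects in~\cite[Sect.\,5]{CRS2}, $\zzc(S_{M,N})$ is spanned by the vectors $\zzc(H)$ for decorated $3$-balls~$H$ bounding~$S_{M,N}$, and then bring the interior stratification of any such~$H$ into the normal form of some~$B_f$ by the defect-line fusion move (merging parallel $A_i$-labelled $1$-strata into one labelled~$\hat A$, legitimate since the occurring multi-modules have trivial cyclic symmetry) together with the $\Delta$-separability, Frobenius and (co)unit identities for~$\hat A$ — that is, the very moves that establish triangulation independence of~$\zzc$.

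For injectivity I would compute the pairing $\langle\Phi(f),\Phi(g)\rangle = \zzc\big(B_f\cup_{S_{M,N}}\overline{B_g}\big)$. The glued manifold is a closed decorated $3$-sphere: an embedded circle meeting $n$ spherical membranes labelled~$A_i$, carrying two $0$-strata labelled~$f$ and the reflection of~$g$. Evaluating~$\zzc$ means triangulating the membranes, passing to the Poincar\'{e}-dual ribbon graph decorated by the Frobenius data of~$\hat A$, and applying~$\zrt$; $\Delta$-separability collapses the membrane ribbon graph to a single copy of the module-averaging idempotent~$p$ on each side, so the value is the categorical trace of $p\circ f\circ p\circ g$, i.e.\ a fixed nonzero multiple (controlled by $\dim\hat A\neq 0$ and the normalisation $\mu\circ\Delta=\id$) of the composition/trace pairing on $\Hom_{\hat A}(M,N)\times\Hom_{\hat A}(N,M)$. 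Since~$\Cat C$ is semisimple this pairing is non-degenerate, so $\langle\Phi(f),\Phi(g)\rangle=0$ for all~$g$ forces $f=0$; together with surjectivity this gives the identification~\eqref{eq:tft-multi}.

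\textbf{Main obstacle.}
The delicate part is the bookkeeping in the surjectivity and pairing steps: the orientations~$\eps_i$ and the cyclic order of the~$A_i$ around the interior arc, and — crucially — the point insertions~$\psi$ on $2$-strata and~$\phi$ on $3$-strata forced by the $D_0$- and Euler-completions recalled in Remark~\ref{rem:completions}, must be tracked so that they assemble into exactly the separability idempotent~$p$ with the correct scalar and nothing extraneous (the RT anomaly only contributes invertible scalars here and can be suppressed). Concretely one has to redo the triangulation-independence argument of~\cite{CRS2} relative to the two marked $0$-strata; once that is in place, the identification of~$\zzc(S_{M,N})$ with the Hom space is forced.
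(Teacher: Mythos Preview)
The paper does not actually prove this lemma; immediately before the statement it notes that the result ``was proven in \cite[Lem.\,5.10]{CRS2}'' and simply quotes it. So there is no in-paper argument to compare against, and your proposal is an attempt to reconstruct the proof from the cited reference.

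Your overall strategy --- build a map by filling the ball with an $f$-labelled arc, then establish bijectivity via a trace pairing on the glued sphere --- is a legitimate route and close in spirit to how such statements are handled in \cite{CRS2}. A somewhat more direct version, and likely what the cited proof does, bypasses the explicit $\Phi$ and the pairing: by the limit definition of $\zzc$ on objects, $\zzc(S_{M,N})$ is the image of the idempotent $\zzc(C)$ where $C=S_{M,N}\times[0,1]$ is the cylinder. Triangulating the $n$ longitudinal $2$-strata of $C$ and passing to the dual ribbon graph produces, after applying $\zrt$, precisely the module-averaging projector $p_{M,N}$ of \eqref{eq:proj-ssfr} for each $A_i$ acting on $\Hom_{\Cat C}(M,N)$; its image is $\Hom_{A_1,\dots,A_n}(M,N)$ by $\Delta$-separability. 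This avoids the surjectivity-by-normal-form step and the gluing computation altogether.

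One genuine confusion to flag: in your ``main obstacle'' paragraph you speak of point insertions $\psi$ on $2$-strata and $\phi$ on $3$-strata being ``forced'' by the completions of Remark~\ref{rem:completions}. These insertions are part of an \emph{orbifold datum} (Section~\ref{sec:orbif-defect-tqfts}) and play no role in computing the bare state space $\zzc(S_{M,N})$; the Euler-defect mechanism in that remark is an optional enrichment, not something imposed here. The only algebraic input you need is the Frobenius data of the $A_i$ coming from the triangulation of $2$-strata, which already assembles into the separability idempotent without any extra $\psi,\phi$. Removing that red herring, your bookkeeping concerns reduce to tracking orientations $\eps_i$ and the cyclic order, which is routine.
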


More complicated state spaces are obtained by invoking the tensor product over algebras in~$\Cat{C}$:
Recall that for an algebra $A \in \Cat{C}$, a right $A$-module $(M_{A}, \rho_{M})$ and a left $A$-module $({}_{A}N, \rho_{N})$, the tensor product over~$A$, denoted by $M \otimes_{A} N$, is the coequaliser of 
\be
\begin{tikzpicture}[
baseline=(current bounding box.base), 
>=stealth,
descr/.style={fill=white,inner sep=2.5pt}, 
normal line/.style={->}
] 
\matrix (m) [matrix of math nodes, row sep=3em, column sep=2.0em, text height=1.5ex, text depth=0.25ex] {%
	M \otimes A \otimes N & & M \otimes N \, .  \\ };
\path[font=\scriptsize] (m-1-1) edge[->, transform canvas={yshift=0.5ex}] node[auto] {$ \id_{M} \otimes \rho_{N} $} (m-1-3)
(m-1-1) edge[->, transform canvas={yshift=-0.5ex}] node[auto, swap] {$ \rho_{M} \otimes \id_{N} $} (m-1-3); 
\end{tikzpicture}
\ee
For a \SSFR~$A$, we can compute the tensor product  $M \otimes_{A} N$ as the image of the projector
\begin{equation}
\label{eq:proj-ssfr}
p_{M,N} 
:= 
(\rho_{M} \otimes \rho_{N}) \circ \big(\id_{M} \otimes (\Delta \circ \eta) \otimes \id_{N}\big)
=
\begin{tikzpicture}[very thick,scale=0.75,color=blue!50!black, baseline=0cm]
\draw (0,-1.5) -- (0,1.5); 
\draw (2,-1.5) -- (2,1.5); 
%
\draw[-dot-, color=green!50!black] (0.5,0) .. controls +(0,-1) and +(0,-1) .. (1.5,0);
\draw[color=green!50!black] (0.5,0) to[out=90, in=-45] (0,1);
\draw[color=green!50!black] (1.5,0) to[out=90, in=-135] (2,1); 
\draw[color=green!50!black] (1,-1.25) node[Odot] (end) {}; 
\draw[color=green!50!black] (1,-1.2) -- (1,-0.7);
%
\fill (0,1) circle (2.9pt) node[left] (meet) {};
\fill (2,1) circle (2.9pt) node[left] (meet) {};
%
\fill (-0.25,-1.5) circle (0pt) node (M) {{\scriptsize$M$}};
\fill (2.2,-1.5) circle (0pt) node (M) {{\scriptsize$N$}};
\fill[color=green!50!black] (0.7,0.2) circle (0pt) node (M) {{\scriptsize$A$}};
\end{tikzpicture} 
. 
\end{equation}
It follows that for \SSFRs\ $A,B$ and modules $M_{A}, {}_{A}N, M'_{B}, {}_{B}N'$ we have 
\begin{align}
& \Hom_{\Cat{C}}(M \otimes_{A}N, M' \otimes_{B} N')
\nonumber 
\\ 
& \qquad = \big\{f \in \Hom_{\Cat{C}}(M \otimes N, M'\otimes N') \,\big|\, 
  f \circ p_{M,N}=f=p_{M',N'} \circ f \big\} \,.
\end{align}
The proof of the next lemma is analogous to the proof of Lemma~\ref{lemma:def-points} in \cite[Lem.\,5.10]{CRS2}. 
It basically amounts to the fact that in the definition of the defect TQFT~$\zzc$ sketched in Section~\ref{sec:resh-tura-defect}, the dual of a triangulation of an $A$-labelled 2-stratum produces projectors to tensor products over~$A$.

\begin{lemma}
\label{lemma:hor-comp}
Let $A_{1}, \ldots, A_{6}$ be \SSFRs\ in~$\Cat{C}$, 
and let 
	$_{A_{4}}K_{A_{2} \otimes A_{5}}$, 
	${}_{A_{1}}L_{A_{4} \otimes A_{6}}$, 
	${}_{A_{3}}M_{A_{5} \otimes A_{6}}$ 
	and 
	$_{A_{1}}N_{A_{2} \otimes A_{3}}$ 
be modules. 
The vector space $\zzc(\Sigma )$ associated to the defect 2-sphere 
\vspace{-1.4cm}
\begin{equation}
\Sigma \;= \hspace{-0.7cm}
\tikzzbox{\begin{tikzpicture}[very thick,scale=2.1,color=green!60!black=-0.1cm, >=stealth, baseline=0]
	\fill[ball color=white!95!blue] (0,0) circle (0.95 cm);
	\coordinate (v1) at (-0.4,-0.6);
	\coordinate (w1) at (-0.45,-0.6);
	\coordinate (v2) at (0.4,-0.6);
	\coordinate (w2) at (0.45,-0.6);
	\coordinate (v3) at (0.4,0.6);
	\coordinate (w3) at (0.32,0.65);
	\coordinate (v4) at (-0.4,0.6);
	\coordinate (w4) at (-0.5,0.65);
	\coordinate (a1) at (-0.1,-0.7);
	\coordinate (a2) at (-0.52,-0.0);
	\coordinate (a3) at (-0.1,-0.0);
	\coordinate (a4) at (0.3,-0.0);
	\coordinate (a5) at (0.0,0.62);
	\coordinate (a6) at (-0.8,0.25);
	\draw[color=red!80!black, very thick, rounded corners=0.5mm, postaction={decorate}, decoration={markings,mark=at position .5 with {\arrow[draw=red!80!black]{<}}}] 
	(v2) .. controls +(0,-0.25) and +(0,-0.25) .. (v1);
	\draw[color=red!80!black, very thick, rounded corners=0.5mm, postaction={decorate}, decoration={markings,mark=at position .62 with {\arrow[draw=red!80!black]{<}}}] 
	(v4) .. controls +(0,0.15) and +(0,0.15) .. (v3);
	\draw[color=red!80!black, very thick, rounded corners=0.5mm, postaction={decorate}, decoration={markings,mark=at position .5 with {\arrow[draw=red!80!black]{<}}}] 
	(v4) .. controls +(0.25,-0.1) and +(-0.05,0.5) .. (v2);
	\draw[color=red!80!black, very thick, rounded corners=0.5mm, postaction={decorate}, decoration={markings,mark=at position .58 with {\arrow[draw=red!80!black]{<}}}] 
	(v3) .. controls +(-0.45,1.0) and +(-1.0,0.6) .. (v1);
	\draw[color=red!80!black, very thick, rounded corners=0.5mm, postaction={decorate}, decoration={markings,mark=at position .5 with {\arrow[draw=red!80!black]{<}}}] 
	(v1) .. controls +(-0.15,0.5) and +(-0.15,-0.5) .. (v4);
	\draw[color=red!80!black, very thick, rounded corners=0.5mm, postaction={decorate}, decoration={markings,mark=at position .5 with {\arrow[draw=red!80!black]{<}}}] 
	(v3) .. controls +(0.25,-0.5) and +(0.25,0.5) .. (v2);
	\fill (v1) circle (1.6pt) node[black, opacity=0.6, right, font=\scriptsize] { $N$ };
	\fill (w1) circle (0pt) node[black, opacity=0.6, below, font=\scriptsize] { $-$ };
	\fill (v2) circle (1.6pt) node[black, opacity=0.6, right, font=\scriptsize] { $M$ };
	\fill (w2) circle (0pt) node[black, opacity=0.6, below, font=\scriptsize] { $-$ };
	\fill (v3) circle (1.6pt) node[black, opacity=0.6, right, font=\scriptsize] { $K$ };
	\fill (w3) circle (0pt) node[black, opacity=0.6, below, font=\scriptsize] { $+$ };
	\fill (v4) circle (1.6pt) node[black, opacity=0.6, right, font=\scriptsize] { $L$ };
	\fill (w4) circle (0pt) node[black, opacity=0.6, below, font=\scriptsize] { $+$ };
	\fill (a1) circle (0pt) node[black, opacity=0.6, right, font=\scriptsize] { $A_{3}$ };
	\fill (a2) circle (0pt) node[black, opacity=0.6, right, font=\scriptsize] { $A_{1}$ };
	\fill (a3) circle (0pt) node[black, opacity=0.6, right, font=\scriptsize] { $A_{6}$ };
	\fill (a4) circle (0pt) node[black, opacity=0.6, right, font=\scriptsize] { $A_{5}$ };
	\fill (a5) circle (0pt) node[black, opacity=0.6, right, font=\scriptsize] { $A_{4}$ };
	\fill (a6) circle (0pt) node[black, opacity=0.6, right, font=\scriptsize] { $A_{2}$ };
	\end{tikzpicture}}
\end{equation}
is isomorphic to 
\begin{equation}
\label{eq:hor-comp-S}
\Hom_{A_{1},A_{2}\otimes A_{5} \otimes A_{6}} \big({}_{A_{1}}N_{A_{2} \otimes A_{3}} \otimes_{A_{3}} {}_{A_{3}}M_{A_{5}\otimes A_{6}},
{}_{A_{1}}L_{A_{4} \otimes A_{6}} \otimes_{A_{4}} {}_{A_{4}}K_{A_{2} \otimes A_{5}}\big) \, .
\end{equation}
\end{lemma}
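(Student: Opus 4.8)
The plan is to follow the same pattern as the proof of Lemma~\ref{lemma:def-points} in \cite[Lem.\,5.10]{CRS2}: cut the defect 2-sphere~$\Sigma$ along an embedded circle into two half-discs, identify the value that~$\zzc$ assigns to each half-disc with a relative tensor product of multi-modules, and recognise $\zzc(\Sigma)$ as the space of module morphisms between the two resulting modules. First I would choose a separating circle $\gamma\subset\Sigma$ that divides the four 0-strata into $\{N,M\}$ and $\{K,L\}$, meeting the 1-strata transversally and precisely in the four arcs labelled $A_{1},A_{2},A_{5},A_{6}$; by the tetrahedral adjacency pattern of the network (with $A_{3}$ joining $N$ to $M$ and $A_{4}$ joining $K$ to $L$), such a~$\gamma$ leaves $A_{3}$ in the interior of the half-disc $D^{-}$ containing $N,M$ and leaves $A_{4}$ in the interior of the half-disc $D^{+}$ containing $K,L$. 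This presents $\Sigma$ as $D^{-}\cup_{\gamma}D^{+}$ over the decorated circle~$\gamma$, which carries the four marked points $(A_{1},\eps_{1}),(A_{2},\eps_{2}),(A_{5},\eps_{5}),(A_{6},\eps_{6})$ with signs determined by the orientations.

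Next I would evaluate the two half-discs exactly as in \cite[Lem.\,5.10]{CRS2}. For a chosen triangulation, the Poincar\'e dual of the internal $A_{3}$-labelled 1-stratum inserts, in the limit defining~$\zzc$ on objects, the idempotent $p_{N,M}$ of~\eqref{eq:proj-ssfr} whose image is $N\otimes_{A_{3}}M$, while the four boundary arcs supply the residual left action of~$A_{1}$ and right action of $A_{2}\otimes A_{5}\otimes A_{6}$; hence $D^{-}$ evaluates to ${}_{A_{1}}N_{A_{2}\otimes A_{3}}\otimes_{A_{3}}{}_{A_{3}}M_{A_{5}\otimes A_{6}}$ as an $(A_{1},A_{2}\otimes A_{5}\otimes A_{6})$-module, and symmetrically $D^{+}$ evaluates to ${}_{A_{1}}L_{A_{4}\otimes A_{6}}\otimes_{A_{4}}{}_{A_{4}}K_{A_{2}\otimes A_{5}}$ as an $(A_{1},A_{6}\otimes A_{2}\otimes A_{5})$-module. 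Gluing $D^{-}$ and $D^{+}$ back along~$\gamma$ then identifies $\zzc(\Sigma)$ with the space of morphisms of $A_{1}^{\eps_{1}}\otimes A_{2}^{\eps_{2}}\otimes A_{5}^{\eps_{5}}\otimes A_{6}^{\eps_{6}}$-modules from the first object to the second; after reordering the external factors $A_{2}\otimes A_{5}\otimes A_{6}$ versus $A_{6}\otimes A_{2}\otimes A_{5}$ by the braiding of~$\Cat{C}$, this is precisely~\eqref{eq:hor-comp-S}. Triangulation independence of~$\zzc$ ensures that the identification does not depend on the choices of~$\gamma$ and of triangulations.

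The only real work is the bookkeeping in the middle step. One has to match the source, target and junction data of the stratification (the signs $\eps_{i}\in\{\pm\}$ and the cyclic orders of the 2-strata incident to each arc) against the prescribed left and right module structures of $K,L,M,N$, verify that the two internal arcs $A_{3},A_{4}$ produce the coequalisers $\otimes_{A_{3}}$ and $\otimes_{A_{4}}$ rather than tensor products over the opposite algebras, and keep track of the order of the external factors so that the two sides of~\eqref{eq:hor-comp-S} literally coincide. Conceptually nothing new happens beyond \cite[Lem.\,5.10]{CRS2}; the genuinely new feature here is the pair of internal arcs, which by the description of the relative tensor product in~\eqref{eq:proj-ssfr} contribute exactly the two balancing idempotents $p_{N,M}$ and $p_{L,K}$.
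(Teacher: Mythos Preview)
Your proposal is correct and matches the paper's own argument: the paper simply states that the proof is analogous to \cite[Lem.\,5.10]{CRS2} and that the dual of a triangulation of an $A$-labelled 2-stratum produces the projectors onto tensor products over~$A$. Your write-up spells out precisely this strategy (cutting~$\Sigma$ along~$\gamma$, identifying the internal arcs $A_{3},A_{4}$ with the idempotents $p_{N,M},p_{L,K}$ of~\eqref{eq:proj-ssfr}, and gluing), so it is an expanded version of the same proof rather than a different one.
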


\subsection{Special orbifold data internal to ribbon fusion categories}
\label{subsec:SODribbon}

In this section we translate the orbifold data from Section~\ref{sec:orbif-defect-tqfts} to data and axioms internal to a given modular tensor category~$\Cat{C}$. 
In fact we will find that this notion does not require the nondegeneracy  
of~$\Cat{C}$ and thus makes sense in arbitrary ribbon fusion categories (Definition~\ref{def:sodribbon}), which will be relevant in our applications to $G$-extensions in Section~\ref{sec:Gextensions}. 

\medskip

Recall the notation introduced in \eqref{eq:orb-data-notation}. 
We will now describe the data $\Cat{C}, A, T, \alpha, \bar\alpha, \psi, \phi$, and the conditions these need to satisfy, for special orbifold data in Reshetikhin--Turaev TQFTs.
We start with the first three elements $\Cat{C}, A, T$. 
According to the definition of~$\DC$ as recalled in Section~\ref{sec:resh-tura-defect}, 
we have 

\begin{enumerate}
\item $\Cat{C}$ is a modular tensor category (from which the TQFT~$\zrt$ is constructed),
\item $A$ is a \SSFR\ in $\Cat{C}$, 
\item $T=\ATAA$ is an $(A, A\otimes A)$-bimodule. 
\end{enumerate}

To keep track of the various $A$-actions, we sometimes denote the bimodule~$T$ as $\ATAAi$; the corresponding 3-dimensional picture then is 
\begin{equation}
\label{eq:T-expl}
\tikzzbox{\begin{tikzpicture}[thick,scale=2.321,color=blue!50!black, baseline=0.0cm, >=stealth, 
	style={x={(-0.6cm,-0.4cm)},y={(1cm,-0.2cm)},z={(0cm,0.9cm)}}]
	\pgfmathsetmacro{\yy}{0.2}
	\coordinate (T) at (0.5, 0.4, 0);
	\coordinate (L) at (0.5, 0, 0);
	\coordinate (R1) at (0.3, 1, 0);
	\coordinate (R2) at (0.7, 1, 0);
	\coordinate (1T) at (0.5, 0.4, 1);
	\coordinate (1L) at (0.5, 0, 1);
	\coordinate (1R1) at (0.3, 1, );
	\coordinate (1R2) at (0.7, 1, );
	%
	\coordinate (p3) at (0.1, 0.1, 0.5);
	\coordinate (p2) at (0.5, 0.95, 0.5);
	\coordinate (p1) at (0.9, 0.1, 0.5);
	%
	\fill [red!50,opacity=0.545] (L) -- (T) -- (1T) -- (1L);
	\fill [red!50,opacity=0.545] (R1) -- (T) -- (1T) -- (1R1);
	\fill [red!50,opacity=0.545] (R2) -- (T) -- (1T) -- (1R2);
	\fill[color=blue!60!black] (0.5,0.25,0.15) circle (0pt) node[left] (0up) { {\scriptsize$ A$} };
	\fill[color=blue!60!black] (0.15,0.95,0.07) circle (0pt) node[left] (0up) { {\scriptsize$ A_1$} };
	\fill[color=blue!60!black] (0.55,0.95,0.05) circle (0pt) node[left] (0up) { {\scriptsize$ A_2$} };
	%
	\draw[string=green!60!black, very thick] (T) -- (1T);
	\fill[color=green!60!black] (0.5,0.43,0.5) circle (0pt) node[left] (0up) { {\scriptsize$T$} };
	%
	%
	\fill[color=black] (0.5,1.14,0.04) circle (0pt) node[left] (0up) { {\scriptsize$\mathcal C$} };
	\fill[color=black] (0.7,0.5,0.05) circle (0pt) node[left] (0up) { {\scriptsize$\mathcal C$} };
	\fill[color=black] (0.3,0.5,1.02) circle (0pt) node[left] (0up) { {\scriptsize$\mathcal C$} };
	%
	\draw [black,opacity=1, very thin] (1T) -- (1L) -- (L) -- (T);
	\draw [black,opacity=1, very thin] (1T) -- (1R1) -- (R1) -- (T);
	\draw [black,opacity=1, very thin] (1T) -- (1R2) -- (R2) -- (T);
	\end{tikzpicture}}
\, .
\end{equation}
Consistently with the 3-dimensional picture, the right $(A \otimes A)$-action 
is equivalently described by two right $A$-actions on~$T$, denoted with the corresponding number on the $A$-strings. 
These $A$-actions commute in the following sense: 
\be
\label{eq:A1A2comp}
\begin{tikzpicture}[very thick,scale=0.75,color=blue!50!black, baseline,xscale=-1]
\draw (0,-1) node[below] (X) {{\scriptsize$T$}};
\draw[color=green!50!black] (-0.5,-1) node[below] (X) {{\scriptsize$A_{2}$}};
\draw[color=green!50!black] (-1,-1) node[below] (X) {{\scriptsize$A_{1}$}};
\draw (0,1) node[right] (Xu) {};
\draw (0,-1) -- (0,1); 
\draw[color=green!50!black] (-0.5,-1) .. controls +(0,0.25) and +(-0.25,-0.25) .. (0,-0.25);
\draw[color=green!50!black] (-1,-1) .. controls +(0,0.5) and +(-0.5,-0.5) .. (0,0.6);
\fill[color=blue!50!black] (0,-0.25) circle (2.9pt) node (meet) {};
\fill[color=blue!50!black] (0,0.6) circle (2.9pt) node (meet2) {};
\fill[color=black] (-0.25,-0.25) circle (0pt) node (M) {{\scriptsize$2$}};
\fill[color=black] (-0.25,0.6)  circle (0pt) node (M) {{\scriptsize$1$}};
\end{tikzpicture} 
=
\begin{tikzpicture}[very thick,scale=0.75,color=blue!50!black, baseline,xscale=-1]
\draw (0,-1) node[below] (X) {{\scriptsize$T$}};
\draw[color=green!50!black] (-0.5,-1) node[below] (A1) {{\scriptsize$A_2$}};
\draw[color=green!50!black] (-1,-1) node[below] (A2) {{\scriptsize$A_1$}};
\draw (0,1) node[right] (Xu) {};
\draw[color=green!50!black] (A2) .. controls +(0,0.5) and +(-0.25,-0.25) .. (0,-0.25);
\draw[color=white, line width=4pt] (A1) .. controls +(0,0.5) and +(-0.5,-0.5) .. (0,0.6);
\draw[color=green!50!black] (A1) .. controls +(0,0.5) and +(-0.5,-0.5) .. (0,0.6);
\draw (0,-1) -- (0,1); 
\fill[color=blue!50!black] (0,-0.25) circle (2.9pt) node (meet)  {};
\fill[color=blue!50!black] (0,0.6) circle (2.9pt) node (meet2) {};
\fill[color=black] (-0.25,-0.25) circle (0pt) node (M) {{\scriptsize$1$}};
\fill[color=black] (-0.25,0.6)  circle (0pt) node (M) {{\scriptsize$2$}};
\end{tikzpicture} 
,
\ee
see \cite[Lemma\,2.1]{CRS2}; of course both actions commute with the left $A$-action. 

Next we turn to the data~$\al$ and~$\alb$. 
They correspond to certain maps of tensor products over~$A$ of multi-modules, and to keep track of the actions we enumerate the $A$-actions. 
From Lemma~\ref{lemma:hor-comp} we obtain that 

\begin{enumerate}
\setcounter{enumi}{3}           
\item 
$\all$ is a map of multi-modules 
\begin{equation}
\label{eq:all-explic}
\all\colon {}_{A_{1}}\big({}_{A_{1}}T_{A_{2}A_{3}} \otimes  {}_{A_{3}}T_{A_{5}A_{6}} \big)_{A_{2}A_{5}A_{6}} 
\longrightarrow 
{}_{A_{1}}\big({}_{A_{1}}T_{A_{4}A_{6}} \otimes  {}_{A_{4}}T_{A_{2}A_{5}} \big)_{A_{2}A_{5}A_{6}} 
\end{equation}
such that $p_{1} \circ \all=\all=\all \circ p_{2}$, where $p_{1}$ is the projector with respect to the action of $A_{4}$ in the second term, see Equation \eqref{eq:proj-ssfr}, while $p_{2}$ is the projector with respect to the action of $A_{3}$ in the first term.
\item 
$\allb$ is a map of multi-modules 
\begin{equation}
\label{eq:allb-explic}
\allb \colon
{}_{A_{1}}\big({}_{A_{1}}T_{A_{4}A_{6}} \otimes  {}_{A_{4}}T_{A_{2}A_{5}} \big)_{A_{2}A_{5}A_{6}} 
\longrightarrow 
{}_{A_{1}}\big({}_{A_{1}}T_{A_{2}A_{3}} \otimes  {}_{A_{3}}T_{A_{5}A_{6}} \big)_{A_{2}A_{5}A_{6}} 
\end{equation}
such that $p_{2} \circ \allb=\allb=\allb \circ p_{1}$.
\end{enumerate}
Here we used the conventions as in
\eqref{eq:actions-MN}--\eqref{eq:actions-M-rev}
for the actions on the three-fold tensor product $A_2 \otimes A_5 \otimes A_6$.
The conditions for $\all$ and $\allb$ are more accessible when expressed graphically. In the pictures it is unambiguous to work only with labels $1,2$ for the actions of  
$\ATAAi$. The condition on $\all \colon T \otimes T \rightarrow T \otimes T$ to be a map of multi-modules reads 
\be
\label{eq:alphaactioncomp}
\begin{tikzpicture}[very thick,scale=0.75,color=blue!50!black, baseline=0cm]
\draw (-0.25,0) -- (-0.25,2); 
\draw (0.25,0) -- (0.25,2); 
\draw (-0.25,-2) -- (-0.25,0); 
\draw (0.25,-2) -- (0.25,0); 
%
%
\fill[color=white] (0,0) node[inner sep=4pt,draw, rounded corners=1pt, fill, color=white] (R2) {{\scriptsize$\;\alpha\;$}};
\draw[line width=1pt, color=black] (0,0) node[inner sep=4pt, draw, semithick, rounded corners=1pt] (R) {{\scriptsize$\;\alpha\;$}};
\draw[color=green!50!black] (-0.75,-2) to[out=90, in=220] (-0.25,-0.75);
\fill (-0.25,-0.75) circle (2.9pt) node[left] (meet) {};
\draw[color=green!50!black] (1.5,-2) to[out=90, in=0] (0.25,-0.75);
\draw[color=green!50!black] (1.1,-2) to[out=90, in=0] (0.25,-1.25);
\draw[color=green!50!black] (0.7,-2) to[out=90, in=-40] (-0.25,-1.25);
\draw[color=white, line width=4pt] (0.25,-2) -- (0.25,-0.5); 
\draw (0.25,-2) -- (0.25,-0.5);
\fill (-0.25,-0.75) circle (2.9pt) node[left] (meet) {};
\fill (0.25,-0.75) circle (2.9pt) node[left] (meet) {};
\fill (0.25,-1.25) circle (2.9pt) node[left] (meet) {};
\fill (-0.25,-1.25) circle (2.9pt) node[left] (meet) {};
%
\fill[color=black] (0.45,-0.55) circle (0pt) node (M) {{\scriptsize$2$}};
\fill[color=black] (0.45,-1.05) circle (0pt) node (M) {{\scriptsize$1$}};
\fill[color=black] (-0.05,-1.05) circle (0pt) node (M) {{\scriptsize$1$}};
\end{tikzpicture} 
= 
\begin{tikzpicture}[very thick,scale=0.75,color=blue!50!black, baseline=0cm]
\draw (-0.25,0) -- (-0.25,2); 
\draw (0.25,0) -- (0.25,2); 
\draw (-0.25,-2) -- (-0.25,0); 
\draw (0.25,-2) -- (0.25,0); 
%
%
\fill[color=white] (0,0) node[inner sep=4pt,draw, rounded corners=1pt, fill, color=white] (R2) {{\scriptsize$\;\alpha\;$}};
\draw[line width=1pt, color=black] (0,0) node[inner sep=4pt, draw, semithick, rounded corners=1pt] (R) {{\scriptsize$\;\alpha\;$}};
\draw[color=green!50!black] (-1,-2) to[out=90, in=230] (-0.25,1.25);
\draw[color=white, line width=4pt] (2,-2) to[out=90, in=0] (-0.25,1.75);
\draw[color=green!50!black] (2,-2) to[out=90, in=0] (-0.25,1.75);
\draw[color=green!50!black] (1.5,-2) to[out=90, in=-30] (0.25,1.25);
\draw[color=green!50!black] (1,-2) to[out=90, in=-40] (0.25,0.75);
\fill (-0.25,1.25) circle (2.9pt) node[left] (meet) {};
\fill (-0.25,1.75) circle (2.9pt) node[left] (meet) {};
\fill (0.25,1.25) circle (2.9pt) node[left] (meet) {};
\fill (0.25,0.75) circle (2.9pt) node[left] (meet) {};
%
\fill[color=black] (0.45,1.4) circle (0pt) node (M) {{\scriptsize$2$}};
\fill[color=black] (0.45,0.9) circle (0pt) node (M) {{\scriptsize$1$}};
\fill[color=black] (-0.05,1.95) circle (0pt) node (M) {{\scriptsize$2$}};
\end{tikzpicture} 
\ee 
while the conditions involving the projectors is
\be 
\label{eq:alphamodulemap}
\begin{tikzpicture}[very thick,scale=0.75,color=blue!50!black, baseline=0cm]
\draw (0,-3) -- (0,3); 
\draw (2,-3) -- (2,3); 
%
\draw[-dot-, color=green!50!black] (0.5,0) .. controls +(0,-1) and +(0,-1) .. (1.5,0);
\draw[color=green!50!black] (0.5,0) to[out=90, in=-45] (0,1);
\draw[color=green!50!black] (1.5,0) to[out=90, in=-135] (2,1); 
\draw[color=green!50!black] (1,-1.25) node[Odot] (end) {}; 
\draw[color=green!50!black] (1,-1.2) -- (1,-0.7);
%
\fill (0,1) circle (2.9pt) node[left] (meet) {};
\fill (2,1) circle (2.9pt) node[left] (meet) {};
%
\fill[color=black] (0.2,1.1) circle (0pt) node (meet) {{\tiny$1$}};
%
%
\fill[color=white] (1,-2) node[inner sep=6pt,draw, rounded corners=1pt, fill, color=white] (R2) {{\scriptsize$\hspace{0.6cm}\alpha\hspace{0.6cm}$}};
\draw[line width=1pt, color=black] (1,-2) node[inner sep=6pt, draw, semithick, rounded corners=1pt] (R) {{\scriptsize$\hspace{0.6cm}\alpha\hspace{0.6cm}$}};
\end{tikzpicture} 
\; = \;
\begin{tikzpicture}[very thick,scale=0.75,color=blue!50!black, baseline=0cm]
\draw (0,-3) -- (0,3); 
\draw (2,-3) -- (2,3); 
%
%
\fill[color=white] (1,0) node[inner sep=6pt,draw, rounded corners=1pt, fill, color=white] (R2) {{\scriptsize$\hspace{0.6cm}\alpha\hspace{0.6cm}$}};
\draw[line width=1pt, color=black] (1,0) node[inner sep=6pt, draw, semithick, rounded corners=1pt] (R) {{\scriptsize$\hspace{0.6cm}\alpha\hspace{0.6cm}$}};
\end{tikzpicture} 
\; = \;
\begin{tikzpicture}[very thick,scale=0.75,color=blue!50!black, baseline=0cm]
\draw (0,-3) -- (0,3); 
\draw (2,-3) -- (2,3); 
%
\draw[-dot-, color=green!50!black] (0.5,0) .. controls +(0,-1) and +(0,-1) .. (1.5,0);
\draw[color=green!50!black] (0.5,0) to[out=90, in=-45] (0,1);
\draw[color=green!50!black] (1.5,0) to[out=90, in=-135] (2,1); 
\draw[color=green!50!black] (1,-1.25) node[Odot] (end) {}; 
\draw[color=green!50!black] (1,-1.2) -- (1,-0.7);
%
\fill (0,1) circle (2.9pt) node[left] (meet) {};
\fill (2,1) circle (2.9pt) node[left] (meet) {};
%
\fill[color=black] (0.2,1.1) circle (0pt) node (meet) {{\tiny$2$}};
%
%
\fill[color=white] (1,2) node[inner sep=6pt,draw, rounded corners=1pt, fill, color=white] (R2) {{\scriptsize$\hspace{0.6cm}\alpha\hspace{0.6cm}$}};
\draw[line width=1pt, color=black] (1,2) node[inner sep=6pt, draw, semithick, rounded corners=1pt] (R) {{\scriptsize$\hspace{0.6cm}\alpha\hspace{0.6cm}$}};
\end{tikzpicture} 
\, , 
\ee 
and analogously for~$\bar\alpha$. 

From Lemma~\ref{lemma:hor-comp} we furthermore obtain: 
\begin{enumerate}
\setcounter{enumi}{5}
\item 
The point insertion $\psi$ is an invertible morphism $\psi \in \End_{AA}(A)$. 
\item 
The point insertion $\phi$ is an invertible morphism $\phi \in
\End_{\Cat{C}}(\one)$. 
\end{enumerate}

To express the axioms for the orbifold datum  $\A \equiv (\Cat{C},A,T,\al,\alb, \psi, \phi)$ internal to $\Cat{C}$, it is convenient to  consider  for an $A$-module ${}_{A}M$ the map 
\begin{equation}
  \label{eq:induction}
 \Hom_{A,A}({}_{A}A_{A},{}_{A}A_{A}) \longrightarrow \Hom_{A}({}_{A}M,{}_{A}M) \, , 
\end{equation}
which sends~$\psi$ to $\psi_0 := \rho_M \circ (\id_M \otimes (\psi \circ \eta_A) )$, or graphically 
\be 
 
\, . 
\end{align}
\end{proposition}

\begin{proof}  
As recalled in Section~\ref{sec:resh-tura-defect}, to evaluate the defect TQFT $\zzc$ we need to pick triangulations for all 2-strata and use the data of the Frobenius algebra~$A$ to label the dual graphs. 
To verify the axioms, note that all 2-strata are discs, thus it is enough to consider one attached $A$-line per 2-stratum neighbouring a given 1-stratum. 
This translates the conditions \eqref{eq:347}--\eqref{eq:351} into those of \eqref{eq:347n}--\eqref{eq:351n}.
\end{proof}

The data and conditions on a special orbifold datum can be formulated for general ribbon categories~$\Cat{B}$, without assumptions such as $\Bbbk$-linearity or semisimplicity. 
This is useful, as such orbifold data can then be placed in a modular tensor category via a ribbon functor. 
Let us describe this in more detail.

\begin{definition}
\label{def:sodribbon}
Let~$\Cat{B}$ be a ribbon category. 
A \textsl{special orbifold datum in $\Cat{B}$} is a tuple $(A,T,\alpha,\bar\alpha,\psi,\phi)$ as in (ii)--(vii) above (with~$\Cat{C}$ replaced by~$\B$), subject to the conditions \eqref{eq:347n}--\eqref{eq:351n}.
\end{definition}

We can use ribbon functors to transport special orbifold data. 
The following result is immediate and we omit its proof.

\begin{proposition}
\label{proposition:push-forward}
Let $F\colon \Cat{B} \rightarrow \Cat{B}'$ be a ribbon functor and $\mathcal A=(A,T,\all,\allb,\psi,\phi)$ a special orbifold datum for $\Cat{B}$. 
Then $F(\mathcal A) :=(F(A),F(T),F(\all),F(\allb),F(\psi),\phi)$ is a special orbifold datum for $\Cat{B}'$ (where we suppress the coherence isomorphism of~$F$ in the notation).
\end{proposition}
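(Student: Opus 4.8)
The plan is to verify the statement in two stages: first that every entry of $F(\mathcal A)=(F(A),F(T),F(\all),F(\allb),F(\psi),\phi)$ has the type demanded in (ii)--(vii) of Proposition~\ref{prop:internal-orb} (with $\Cat C$ replaced by $\Cat B'$), and then that the axioms \eqref{eq:347n}--\eqref{eq:351n} for $F(\mathcal A)$ are obtained by applying $F$ to the corresponding axioms for $\mathcal A$. Throughout I would write $\varphi_{X,Y}\colon F(X)\otimes F(Y)\xrightarrow{\sim}F(X\otimes Y)$ and $\varphi_0\colon\one\xrightarrow{\sim}F(\one)$ for the monoidal coherence isomorphisms of $F$, which are the data suppressed in the statement.

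For the first stage, a strong monoidal functor sends an algebra to an algebra and a coalgebra to a coalgebra, so $F(A)$ carries multiplication $F(\mu)\circ\varphi^{-1}_{A,A}$, unit $F(\eta)\circ\varphi_0$, comultiplication $\varphi_{A,A}\circ F(\Delta)$ and counit $\varphi_0^{-1}\circ F(\eps)$; since the $\Delta$-separability, Frobenius and symmetry relations are string-diagram identities in $\Cat B$ built from $\mu,\eta,\Delta,\eps$ and the duality morphisms, and a ribbon functor preserves duals, (co)evaluations, braiding and twist up to canonical isomorphism, the $F$-images of these identities are the same relations for $F(A)$, giving (ii). The identical argument shows that $F(T)$, equipped with the images of the left and two right actions of $\ATAAi$, is an $(F(A),F(A)\otimes F(A))$-bimodule satisfying the commutativity relation \eqref{eq:A1A2comp}, giving (iii); that $F(\psi)$, the image under a functor of an invertible element of $\End_{AA}(A)$, is an invertible element of $\End_{F(A)F(A)}(F(A))$, giving (vi); and that $\phi$ (i.e.\ $F(\phi)$ with $\varphi_0$ suppressed) lies in $\End_{\Cat B'}(\one)$, giving (vii). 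For (iv) and (v) the point is that the idempotent $p_{M,N}$ of~\eqref{eq:proj-ssfr} is assembled from action maps together with $\Delta$ and $\eta$, so that $F(p_{M,N})=p_{F(M),F(N)}$; combining this with functoriality, $F(\all)$ and $F(\allb)$ satisfy the module-map conditions \eqref{eq:alphaactioncomp}--\eqref{eq:alphamodulemap} and their $\allb$-analogues. Since these conditions are all phrased at the level of $T\otimes T$, one never needs $\Cat B'$ to be idempotent-complete.

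For the second stage I would first record that the auxiliary constructions appearing in \eqref{eq:347n}--\eqref{eq:351n} are preserved: the induced endomorphisms $\psi_0,\psi_1,\psi_2$ are built from $\psi$, $\eta$ and the $A$-actions via~\eqref{eq:induction}, so $F$ sends them to the analogous endomorphisms for $F(\psi)$; the dual $T^\dagger$ and the evaluation/coevaluation used in \eqref{eq:349n}--\eqref{eq:351n} are carried by the ribbon functor to the dual and (co)evaluation of $F(T)$; and the scalar factor $\phi^{-2}$ in~\eqref{eq:351n} is respected because $F$ intertwines the action of $\End(\one)$ on Hom-spaces with that of $\End_{\Cat B'}(\one)$ via $\varphi_0$. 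With these compatibilities in hand, applying $F$ to each equation in \eqref{eq:347n}--\eqref{eq:351n} and inserting coherence isomorphisms reproduces exactly the same equation with $\mathcal A$ replaced by $F(\mathcal A)$; since the equations hold in $\Cat B$, the transported ones hold in $\Cat B'$, so $F(\mathcal A)$ is a special orbifold datum for $\Cat B'$.

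I expect no hard step: the only thing requiring care is the systematic bookkeeping of the coherence isomorphisms $\varphi_{X,Y},\varphi_0$ and the check that inserting them is consistent, which follows from the hexagon, pentagon and ribbon-compatibility axioms satisfied by $F$. This is precisely the content that the statement suppresses, and it is why the result is called immediate.
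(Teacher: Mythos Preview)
Your proposal is correct and matches the paper's approach: the paper simply states that the result is immediate and omits the proof, and your write-up is exactly the routine verification one would give if asked to spell out that ``immediate'' step. Your closing remark that the only content is coherence bookkeeping is precisely the paper's reason for omitting the argument.
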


\subsection{Morita equivalent orbifold data}
\label{subsec:MoritaOD}

In this section we investigate the interplay between Morita equivalence and special orbifold data in ribbon fusion categories. 
We explain how such orbifold data can be ``transported along Morita equivalences'', and we prove that the corresponding orbifold TQFTs are isomorphic. 

\medskip

Recall that two algebras $A,B$ in a pivotal tensor category are \textsl{Morita equivalent} if there exists an $A$-$B$-bimodule~$X$ together with bimodule isomorphisms 
\be
\label{eq:Moritaisos}
X^* \otimes_A X \cong B
\, , \quad 
X \otimes _B X^* \cong A \, . 
\ee 
By a \textsl{Morita equivalence} between algebras $A,B$ we mean a choice of such a bimodule~$X$ (called \textsl{Morita module}) and isomorphisms as in~\eqref{eq:Moritaisos}. 

\begin{notation}
	Let~$A$ be a $\Delta$-separable symmetric Frobenius algebra, $M$ a left $A$-module and~$N$ a right $A$-module. 
	We sometimes denote the projector~$p_{M,N}$ of~\eqref{eq:proj-ssfr} string-diagrammatically by colouring the region between the $M$- and $N$-lines: 
	\be 
	\label{eq:pMNgreen}
	p_{M,N} 
	= 
	\begin{tikzpicture}[very thick,scale=0.75,color=blue!50!black, baseline=0cm]
	\draw (0,-1.5) -- (0,1.5); 
	\draw (2,-1.5) -- (2,1.5); 
	%
	\draw[-dot-, color=green!50!black] (0.5,0) .. controls +(0,-1) and +(0,-1) .. (1.5,0);
	\draw[color=green!50!black] (0.5,0) to[out=90, in=-45] (0,1);
	\draw[color=green!50!black] (1.5,0) to[out=90, in=-135] (2,1); 
	\draw[color=green!50!black] (1,-1.25) node[Odot] (end) {}; 
	\draw[color=green!50!black] (1,-1.2) -- (1,-0.7);
	%
	\fill (0,1) circle (2.9pt) node[left] (meet) {};
	\fill (2,1) circle (2.9pt) node[left] (meet) {};
	%
	\fill (-0.25,-1.5) circle (0pt) node (M) {{\scriptsize$M$}};
	\fill (2.2,-1.5) circle (0pt) node (M) {{\scriptsize$N$}};
	\fill[color=green!50!black] (0.7,0.2) circle (0pt) node (M) {{\scriptsize$A$}};
	\end{tikzpicture} 
	\equiv 
	\begin{tikzpicture}[very thick,scale=0.75,color=blue!50!black, baseline=0cm]
	\fill [green!50!black,opacity=0.1] (0,-1.5) -- (0,1.5) -- (2,1.5) -- (2,-1.5);
	\draw (0,-1.5) -- (0,1.5); 
	\draw (2,-1.5) -- (2,1.5); 
	%
	\fill (-0.25,-1.5) circle (0pt) node (M) {{\scriptsize$M$}};
	\fill (2.2,-1.5) circle (0pt) node (M) {{\scriptsize$N$}};
	\end{tikzpicture} 
	\, . 
	\ee
	Moreover, we sometimes identify the right-hand side of~\eqref{eq:pMNgreen} with $\id_{M\otimes_A N}$ or with $M\otimes_A N$ itself. 
	For example, we employ this convention in~\eqref{eq:TX} below. 
	We stress that coloured regions always represent projectors of relative tensor products over Frobenius algebras; hence in the example of $ X^* \otimes_A T \otimes_{AA} (X\otimes X)$ in~\eqref{eq:TX}, the rightmost coloured region represents a projector corresponding to the $A$-action on~$T$ and the right $X$-factor (and not between the two modules~$X$ on the right, as their product is not relative over~$A$). 
\end{notation}

\begin{definition}
	\label{def:Moritatransport}
	Let $\A = (A,T,\alpha,\bar\alpha,\psi,\phi)$ be a special orbifold datum in a ribbon fusion category~$\B$, and let $B\in\B$ be an algebra that is Morita equivalent to~$A$ with Morita module~$X$. 
	The \textsl{Morita transport of~$\A$ along~$X$} is the tuple 
	\be 
	X(\A) := \big(B,T^X,\alpha^X,\bar\alpha^X,\psi^X,\phi\big) 
	\ee 
	where
	\begin{align}	
	T^X & := X^* \otimes_A T \otimes_{AA} (X\otimes X) 
	\equiv 
	\begin{tikzpicture}[very thick,scale=0.75,color=blue!50!black, baseline=0cm]
	\fill [green!50!black,opacity=0.1] (-0.75,1) -- (-0.75,-1) -- (1.5,-1) -- (1.5,1);
	\draw[decoration={markings, mark=at position 0.5 with {\arrow{>}},}, postaction={decorate}] (0,-1) -- (0,1);
	\draw[color=orange!80!black, decoration={markings, mark=at position 0.5 with {\arrow{>}},}, postaction={decorate}] (0.75,-1) -- (0.75,1);
	\draw[color=orange!80!black, decoration={markings, mark=at position 0.5 with {\arrow{>}},}, postaction={decorate}] (1.5,-1) -- (1.5,1);
	\draw[color=orange!80!black, decoration={markings, mark=at position 0.6 with {\arrow{>}},}, postaction={decorate}] (-0.75,1) -- (-0.75,-1);
	\end{tikzpicture} 
	\, , \label{eq:TX}
	\\ 
	\label{eq:alphaX}
	\alpha^X & := 
	\begin{tikzpicture}[very thick,scale=0.75,color=blue!50!black, baseline=0cm]
	\draw[color=orange!80!black, decoration={markings, mark=at position 0.3 with {\arrow{>}},}, postaction={decorate}] (-1,-1.5) -- (2,1.5); 
	\draw[color=orange!80!black, decoration={markings, mark=at position 0.5 with {\arrow{>}},}, postaction={decorate}] (-0.5,-1.5) .. controls +(0,0.75) and +(0,0.75) .. (1,-1.5);
	\draw[color=orange!80!black, decoration={markings, mark=at position 0.5 with {\arrow{<}},}, postaction={decorate}] (-1,1.5) .. controls +(0,-1) and +(0,-1) .. (1,1.5);
	\draw[color=orange!80!black, decoration={markings, mark=at position 0.5 with {\arrow{>}},}, postaction={decorate}] (2,-1.5) -- (2.5,1.5); 
	\draw[color=orange!80!black, decoration={markings, mark=at position 0.5 with {\arrow{>}},}, postaction={decorate}] (-2,1.5) -- (-2,-1.5); 
	\draw (0,0) -- (-1.5,1.5); 
	\draw (0,0) -- (1.5,1.5); 
	\draw (0,0) -- (-1.5,-1.5); 
	\draw (0,0) -- (1.5,-1.5); 
	\draw[color=white, line width=4pt] (0,0) -- (1.5,-1.5);  
	\draw (0,0) -- (1.5,-1.5); 
	\draw[color=white, line width=4pt] (2.5,-1.5) -- (-0.5,1.5); 
	\draw[color=orange!80!black, decoration={markings, mark=at position 0.5 with {\arrow{>}},}, postaction={decorate}] (2.5,-1.5) -- (-0.5,1.5); 
	\fill [green!50!black,opacity=0.1] (-2,1.5) -- (-2,-1.5) -- (2,-1.5) -- (2.5,1.5);
	\fill [green!50!black,opacity=0.1] (2.0,-1.5) -- (2.5,-1.5) -- (2.06,-1.1);
	%
	\fill[color=white] (0,0) node[inner sep=3pt,draw, rounded corners=1pt, fill, color=white] (R2) {{\scriptsize$\alpha$}};
	\draw[line width=1pt, color=black] (0,0) node[inner sep=3pt, draw, semithick, rounded corners=1pt] (R) {{\scriptsize$\alpha$}};
	\end{tikzpicture} 
	\, , \quad 
	\bar\alpha^X := 
	\begin{tikzpicture}[very thick,scale=0.75,color=blue!50!black, baseline=0cm]
	\draw[color=orange!80!black, decoration={markings, mark=at position 0.5 with {\arrow{>}},}, postaction={decorate}] (-1,-1.5) .. controls +(0,1) and +(0,1) .. (1,-1.5);
	\draw[color=orange!80!black, decoration={markings, mark=at position 0.5 with {\arrow{<}},}, postaction={decorate}] (-0.5,1.5) .. controls +(0,-0.75) and +(0,-0.75) .. (1,1.5);
	\draw[color=orange!80!black, decoration={markings, mark=at position 0.3 with {\arrow{>}},}, postaction={decorate}] (2,-1.5) -- (-1,1.5); 
	\draw[color=orange!80!black, decoration={markings, mark=at position 0.5 with {\arrow{>}},}, postaction={decorate}] (-2,1.5) -- (-2,-1.5); 
	\draw[color=orange!80!black, decoration={markings, mark=at position 0.5 with {\arrow{>}},}, postaction={decorate}] (2.5,-1.5) -- (2,1.5); 
	\draw (0,0) -- (-1.5,1.5); 
	\draw (0,0) -- (-1.5,-1.5); 
	\draw (0,0) -- (1.5,-1.5); 
	\draw[color=white, line width=4pt] (0,0) -- (1.5,1.5); 
	\draw (0,0) -- (1.5,1.5); 
	\draw[color=white, line width=4pt] (-0.5,-1.5) -- (2.5,1.5); 
	\draw[color=orange!80!black, decoration={markings, mark=at position 0.7 with {\arrow{>}},}, postaction={decorate}] (-0.5,-1.5) -- (2.5,1.5); 
	\fill [green!50!black,opacity=0.1] (-2,1.5) -- (-2,-1.5) -- (2.5,-1.5) -- (2,1.5);
	\fill [green!50!black,opacity=0.1] (2.0,1.5) -- (2.5,1.5) -- (2.06,1.1);
	%
	\fill[color=white] (0,0) node[inner sep=3pt,draw, rounded corners=1pt, fill, color=white] (R2) {{\scriptsize$\bar\alpha$}};
	\draw[line width=1pt, color=black] (0,0) node[inner sep=3pt, draw, semithick, rounded corners=1pt] (R) {{\scriptsize$\bar\alpha$}};
	\end{tikzpicture} 
	\end{align}
	and $\psi^X \in \End_{BB}(B)$ is a choice of square root of 
	\be 
	\label{eq:psiXsquared}
	\big( \psi^X \big)^2 := 
	\begin{tikzpicture}[very thick,scale=0.75,color=orange!80!black, baseline]
	%
	\draw[decoration={markings, mark=at position 0.5 with {\arrow{>}},}, postaction={decorate}] 
	([shift=(0:1)]1,0) arc (0:180:1);
	\draw[decoration={markings, mark=at position 0.5 with {\arrow{<}},}, postaction={decorate}] 
	([shift=(0:1)]1,0) arc (0:-180:1);
	%
	\draw[color=green!50!black] (2.5,0) to[out=90, in=-45] ([shift=(40:1)]1,0);
	\draw[-dot-, color=green!50!black] (2.5,0) .. controls +(0,-0.5) and +(0,-0.5) .. (3,0);
	\draw[color=green!50!black] (2.75,-0.3) -- (2.75,-1.5);
	\draw[color=green!50!black] (3,0) -- (3,1.5);
	%
	\fill[color=black] ([shift=(0:1)]1,0) circle (2.9pt) node[left] (meet) {{\scriptsize$\!\psi_0^{2}$}};
	\fill ([shift=(-40:1)]1,0) circle (0pt) node[right] (meet) {{\scriptsize$\!X$}};
	\fill ([shift=(40:1)]1,0) circle (2.9pt) node[right] (meet) {};
	%
	\fill[color=green!50!black] (2.95,-1.5) circle (0pt) node (M) {{\scriptsize$B$}};
	\fill[color=green!50!black] (3.2,1.5) circle (0pt) node (M) {{\scriptsize$B$}};
	\end{tikzpicture} 
	\, . 
	\ee 
\end{definition}

For algebras $A,B$ as in Definition~\ref{def:Moritatransport} we can choose a decomposition into \textsl{simple} $\Delta$-separable symmetric Frobenius algebras~$A_i$ and~$B_i$, respectively, 
\be 
\label{eq:ABdecomp}
A \cong \bigoplus_i A_i 
\, , \quad 
B \cong \bigoplus_i B_i \, , 
\ee 
such that~$A_i$ and~$B_i$ are Morita equivalent with bimodule isomorphisms  
\be 
\label{eq:Moritacomponents}
X_i^* \otimes_{A_i} X_i \cong B_i
\, , \quad 
X_i \otimes _{B_i} X_i^* \cong A_i 
\, , \quad 
X \cong \bigoplus_i X_i \, . 
\ee 

\begin{proposition}
\label{prop:Moritatransport}
Let $\B, \A, B, X$ be as in Definition~\ref{def:Moritatransport} such that $\dim(A_i) \neq 0 \neq \dim(B_i)$ for all simple algebras in~\eqref{eq:ABdecomp}. 
Then $X(\A)$ is a special orbifold datum in~$\B$. 
\end{proposition}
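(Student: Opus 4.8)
The plan is to observe that the statement is entirely internal to~$\B$: by Definition~\ref{def:sodribbon} we must show that $X(\A)=(B,T^X,\alpha^X,\bar\alpha^X,\psi^X,\phi)$ consists of data of the types~(ii)--(vii) and satisfies the identities~\eqref{eq:347n}--\eqref{eq:351n}. The guiding picture is that a Morita equivalence~$X$ is an adjoint equivalence $1$-morphism $A\to B$ in the bicategory of $\Delta$-separable symmetric Frobenius algebras in~$\B$, their bimodules and bimodule maps, which is a full sub-bicategory of the Gray category with duals~$\tz$ attached to~$\zzc$ in~\cite{CMS,CRS1}, the unit and counit being the isomorphisms~\eqref{eq:Moritaisos}. ``Conjugating'' the orbifold datum~$\A$ by this equivalence -- replacing $A$ by~$B$, the tensor square $A\otimes A$ by $B\otimes B$ via the Morita equivalence $X\otimes X$ with dual $X^*\otimes X^*$, $T$ by~$T^X$, and $\alpha,\bar\alpha$ by~$\alpha^X,\bar\alpha^X$ -- is compatible with composition, tensor products, duals and traces, and hence sends the defining equations of an orbifold datum to the same equations for~$X(\A)$. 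Concretely, the plan is to make this explicit by reducing each item and each axiom for~$X(\A)$ to the corresponding item or axiom for~$\A$, inserting the isomorphisms~\eqref{eq:Moritaisos} and their inverses to cancel the patterns $X\otimes_B X^*\leftrightarrow A$ and $X^*\otimes_A X\leftrightarrow B$ that arise once the definitions~\eqref{eq:TX}--\eqref{eq:psiXsquared} are substituted.

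First one would dispose of the data. The algebra $B\cong X^*\otimes_A X$ carries the canonical $\Delta$-separable symmetric Frobenius structure transported from~$A$ along the Morita module~$X$ (a standard fact about Frobenius algebras and module categories, cf.\ \cite{CRS2}), giving~(ii). That $T^X$ is a $(B,B\otimes B)$-bimodule whose two right $B$-actions commute -- the analogue of~\eqref{eq:A1A2comp} -- follows from the bimodule structure of~$X$ and the commuting right $A$-actions on~$T$, using the conventions~\eqref{eq:actions-MN}--\eqref{eq:actions-M-rev} for the $(B\otimes B)$-module $X\otimes X$; this gives~(iii), and one identifies $(T^X)^\dagger$ with the transport of~$T^\dagger$, as needed for~\eqref{eq:349n}--\eqref{eq:350n}. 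That $\alpha^X$ and $\bar\alpha^X$ are well defined, i.e.\ descend to the relative tensor products appearing in~\eqref{eq:alphaX} and are maps of multimodules -- equivalently, that they satisfy~\eqref{eq:alphaactioncomp} and~\eqref{eq:alphamodulemap} with~$A$ replaced by~$B$ -- follows from the corresponding properties of~$\alpha,\bar\alpha$ together with naturality of the projectors~\eqref{eq:proj-ssfr} under the $X$-sandwich; here one must respect the convention from the Notation that each coloured region denotes the projector of the specific relative product it bounds. Finally $\phi$ is unchanged, hence still invertible, giving~(vii).

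Next one would treat~$\psi^X$, which is where the hypothesis $\dim(A_i)\neq0\neq\dim(B_i)$ enters. One checks that the morphism in~\eqref{eq:psiXsquared} lies in $\End_{BB}(B)$ (from $\psi\in\End_{AA}(A)$ and the construction). Using the decomposition~\eqref{eq:ABdecomp} and simplicity of the~$B_i$ one has $\End_{BB}(B)\cong\bigoplus_i\End_{B_iB_i}(B_i)\cong\bigoplus_i\Bbbk$, so it suffices to show $(\psi^X)^2$ restricts to a nonzero scalar on each summand $B_i\cong X_i^*\otimes_{A_i}X_i$; algebraic closedness of~$\Bbbk$ then yields a square root $\psi^X\in\End_{BB}(B)^\times$, giving~(vi). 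On~$B_i$ that scalar is, up to the nonzero factor $\psi|_{A_i}$, the value of the closed $X_i$-loop in~\eqref{eq:psiXsquared}; since~$X_i$ is an invertible $A_i$-$B_i$-bimodule one has a relation of the shape $\dim(X_i)\dim(X_i^*)=\dim(A_i)\dim(B_i)$, nonzero by hypothesis, so the loop scalar is nonzero. It is worth recording that in each of~\eqref{eq:347n}--\eqref{eq:351n} the point insertion~$\psi$ occurs only through even powers $\psi^{\pm2}_{\bullet}$; hence the axioms for~$X(\A)$ depend on~$\psi^X$ only through $(\psi^X)^2$, and the choice of square root is immaterial.

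It then remains to verify~\eqref{eq:347n}--\eqref{eq:351n} for~$X(\A)$: for each of the six families one substitutes the definitions~\eqref{eq:TX}--\eqref{eq:psiXsquared}, and the resulting string diagram in~$\B$ contains a configuration of $X$- and $X^*$-strands which, by repeated use of the (inverse) isomorphisms~\eqref{eq:Moritaisos} to collapse $X\otimes_B X^*$ to~$A$ and $X^*\otimes_A X$ to~$B$, reduces to the corresponding $\A$-diagram, up to closed $X_i$-loops that are exactly the ones absorbed into the definition~\eqref{eq:psiXsquared} of~$\psi^X$ and, in~\eqref{eq:351n}, into~$\phi$. The identity produced this way is precisely~\eqref{eq:347n}--\eqref{eq:351n} for~$\A$, which holds by assumption, so by Definition~\ref{def:sodribbon} (reading the translation of Proposition~\ref{prop:internal-orb}) $X(\A)$ is a special orbifold datum. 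The main obstacle will be the bookkeeping in this last reduction: after inserting the $X$-sandwiches one must match up which coloured region corresponds to which relative tensor product, track the Euler-defect powers of~$\psi$ and~$\phi$ through~\eqref{eq:351n} (respecting the $\phi^2$-correction noted below~\eqref{eq:351}), and check that the $X_i$-loop scalars cancel uniformly in~$i$ against those built into~$\psi^X$ and~$\phi$. Organising the computation along the conjugation-by-an-equivalence picture keeps the combinatorics manageable, but the verification is ultimately a lengthy but routine exercise in the graphical calculus of~$\B$.
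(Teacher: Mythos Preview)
Your proposal is correct and takes essentially the same approach as the paper: both reduce the axioms for $X(\A)$ to those for~$\A$ by substituting \eqref{eq:TX}--\eqref{eq:psiXsquared} and tracking the $X$-loop factors, with the key computational input being the relation $\psi_{B_i}^2/\psi_{A_i}^2 = d_{X_i}/d_{B_i} = d_{A_i}/d_{X_i}$ (your ``$X_i$-loop scalars cancel uniformly in~$i$ against those built into~$\psi^X$'', the paper's~\eqref{eq:psiratio}). Your conjugation-by-equivalence framing is a helpful organising principle, but the paper proceeds by direct componentwise computation using the dimension identity~\eqref{eq:moddims} and verifies~\eqref{eq:351n} and the first identity in~\eqref{eq:348n} explicitly.
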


\begin{proof}
We have a decomposition $T \cong \bigoplus_{i,j,k} {}_kT_{i,j}$ of~$T$ into $A_k$-$(A_i\otimes A_j)$-bimodules, and up to the isomorphisms~\eqref{eq:ABdecomp} the maps $\psi, \psi^X$ are diagonal matrices with entries 
\be 
\psi_{A_i} \in \End_{A_i A_i}(A_i) \cong \Bbbk 
\, , \quad 
\psi_{B_i} \equiv (\psi^X)_{B_i}
\in \End_{B_i B_i}(B_i) \cong \Bbbk \, , 
\ee 
respectively. 
	
To prove that $X(\A)$ is a special orbifold datum we have to verify that the conditions \eqref{eq:347n}--\eqref{eq:351n} are satisfied. 
Using the above decompositions and identities of the form 
	\be 
	\label{eq:moddims}
	\begin{tikzpicture}[very thick,scale=0.75,color=blue!50!black, baseline]
	%
	\draw[decoration={markings, mark=at position 0.5 with {\arrow{>}},}, postaction={decorate}] 
	([shift=(0:1)]1,0) arc (0:180:1);
	\draw[decoration={markings, mark=at position 0.5 with {\arrow{<}},}, postaction={decorate}] 
	([shift=(0:1)]1,0) arc (0:-180:1);
	%
	\draw[color=green!50!black] (2.5,0) to[out=90, in=-45] ([shift=(40:1)]1,0);
	\draw[-dot-, color=green!50!black] (2.5,0) .. controls +(0,-0.5) and +(0,-0.5) .. (3,0);
	\draw[color=green!50!black] (2.75,-0.3) -- (2.75,-1.5);
	\draw[color=green!50!black] (3,0) -- (3,1.5);
	%
	\fill ([shift=(-40:1)]1,0) circle (0pt) node[right] (meet) {{\scriptsize$\!M$}};
	\fill ([shift=(40:1)]1,0) circle (2.9pt) node[right] (meet) {};
	%
	\fill[color=green!50!black] (3,-1.5) circle (0pt) node (M) {{\scriptsize$A_i$}};
	\fill[color=green!50!black] (3.25,1.5) circle (0pt) node (M) {{\scriptsize$A_i$}};
	\end{tikzpicture} 
	= \;
	\frac{\dim(M)}{\dim(A_i)} \cdot \id_{A_i}
	\ee 
	for simple~$A_i$ and an $A_i$-module~$M$, these checks mostly become tedious exercises in string diagram manipulations. 
	We provide the details for the first condition in~\eqref{eq:348n} and for~\eqref{eq:351n}, the remaining conditions are treated analogously.
	
We start by verifying condition~\eqref{eq:351n}. 
	Abbreviating $d_U := \dim(U)$ for all $U\in\B$ we first note that~\eqref{eq:moddims} in particular implies 
	\be 
	\label{eq:psiratio}
	\frac{\psi_{B_i}^2}{\psi_{A_i}^2} 
	= \frac{d_{X_i}}{d_{B_i}}
	= \frac{d_{A_i}}{d_{X_i}} \, . 
	\ee 
	Note that $d_{A_i} \neq 0 \neq d_{B_i}$ together with~\eqref{eq:Moritacomponents} and~\eqref{eq:moddims} implies that $d_{X_i} \neq 0$ for all~$i$. 
	Thus $\psi_{A_i} \neq 0 \neq \psi_{B_i}$ for all~$i$. 
	Moreover, 
	\begin{align}
	\sum_{i,j,k} \psi_{A_i}^2 \psi_{A_j}^2 \frac{d_{{}_kT_{i,j}}}{d_{A_k}} \cdot \varepsilon_{A_k} 
	& \stackrel{\text{\eqref{eq:moddims}}}{=} 
	\sum_{i,j,k} \psi_{A_i}^2 \psi_{A_j}^2 
	\begin{tikzpicture}[very thick,scale=0.75,color=blue!50!black, baseline]
	%
	\draw[decoration={markings, mark=at position 0.5 with {\arrow{<}},}, postaction={decorate}] 
	([shift=(0:1)]1,0) arc (0:180:1);
	\draw[decoration={markings, mark=at position 0.5 with {\arrow{>}},}, postaction={decorate}] 
	([shift=(0:1)]1,0) arc (0:-180:1);
	%
	\draw[color=green!50!black] (-1,-1.5) to[out=90, in=225] ([shift=(140:1)]1,0);
	%
	\fill ([shift=(140:1)]1,0) circle (2.9pt) node[right] (meet) {};
	%
	\fill[color=green!50!black] (-0.7,-1.5) circle (0pt) node (M) {{\scriptsize$A_k$}};
	\fill (1.75,1.1) circle (0pt) node (M) {{\scriptsize${}_kT_{i,j}$}};
	\end{tikzpicture} 
	= 
	\begin{tikzpicture}[very thick,scale=0.75,color=blue!50!black, baseline]
	%
	\draw[decoration={markings, mark=at position 0.5 with {\arrow{<}},}, postaction={decorate}] 
	([shift=(0:1)]1,0) arc (0:180:1);
	\draw[decoration={markings, mark=at position 0.5 with {\arrow{>}},}, postaction={decorate}] 
	([shift=(0:1)]1,0) arc (0:-180:1);
	%
	\draw[color=green!50!black] (-1,-1.5) to[out=90, in=225] ([shift=(140:1)]1,0);
	%
	\fill[color=black] ([shift=(-180:1)]1,0) circle (2.9pt) node[right] (meet) {{\scriptsize$\!\psi_1^{2}$}};
	\fill[color=black] ([shift=(-140:1)]1,0) circle (2.9pt) node[right] (meet) {{\scriptsize$\!\psi_2^{2}$}};
	\fill ([shift=(140:1)]1,0) circle (2.9pt) node[right] (meet) {};
	%
	\fill[color=green!50!black] (-0.8,-1.5) circle (0pt) node (M) {{\scriptsize$A$}};
	\fill (1.75,1.1) circle (0pt) node (M) {{\scriptsize$T$}};
	\end{tikzpicture} 
	\nonumber
	\\ 
	& \stackrel{\text{\eqref{eq:351n}}}{=} 
		 \phi^{-2}
	\cdot 
	\begin{tikzpicture}[very thick,scale=0.75,color=blue!50!black, baseline=-0.5cm]
	%
	\draw[color=green!50!black] (0,-1.5) -- (0,0.5);
	\draw[color=green!50!black] (0,0.5) node[Odot] (end) {}; 
	%
	\fill[color=black] (0,-0.5) circle (2.9pt) node[left] (meet) {{\scriptsize$\!\psi^{2}$}};
	%
	\fill[color=green!50!black] (0.2,-1.5) circle (0pt) node (M) {{\scriptsize$A$}};
	\end{tikzpicture} 
	= \;
	\sum_k 
	 	 \phi^{-2}
	\psi_{A_k}^2 \cdot \varepsilon_{A_k} 
	\end{align}
	and hence for all~$k$: 
	\be 
	\label{eq:351comp} 
	\sum_{i,j} \psi_{A_i}^2 \psi_{A_j}^2 \frac{d_{{}_kT_{i,j}}}{d_{A_k}}
	= 
		 \phi^{-2} 
	\psi_{A_k}^2 \, . 
	\ee 
	
	Now we compute 
	\begin{align}
	\begin{tikzpicture}[very thick,scale=0.75,color=blue!50!black, baseline]
	%
	\draw[decoration={markings, mark=at position 0.5 with {\arrow{<}},}, postaction={decorate}] 
	([shift=(0:1)]1,0) arc (0:180:1);
	\draw[decoration={markings, mark=at position 0.5 with {\arrow{>}},}, postaction={decorate}] 
	([shift=(0:1)]1,0) arc (0:-180:1);
	%
	\draw[color=green!50!black] (-1,-1.5) to[out=90, in=225] ([shift=(140:1)]1,0);
	%
	\fill[color=black] ([shift=(-180:1)]1,0) circle (2.9pt) node[right] (meet) {{\scriptsize$\!(\psi^X_1)^{2}$}};
	\fill[color=black] ([shift=(-140:1)]1,0) circle (2.9pt) node[right] (meet) {{\scriptsize$\!(\psi^X_2)^{2}$}};
	\fill ([shift=(140:1)]1,0) circle (2.9pt) node[right] (meet) {};
	%
	\fill[color=green!50!black] (-0.8,-1.5) circle (0pt) node (M) {{\scriptsize$B$}};
	\fill (1.75,1.1) circle (0pt) node (M) {{\scriptsize$T^X$}};
	\end{tikzpicture} 
	& = 
	\sum_{i,j,k} \psi_{B_i}^2 \psi_{B_j}^2 
	\begin{tikzpicture}[very thick,scale=0.75,color=blue!50!black, baseline]
	\fill[green!50!black,opacity=0.15] (1.5,0) circle (2.2);
	%
	\draw[color=green!50!black] (-1.5,-2.5) to[out=90, in=225] ($(140:2.2) + (1.5,0)$);
	%
	\draw[color=orange!80!black, decoration={markings, mark=at position 0.5 with {\arrow{>}},}, postaction={decorate}] 
	([shift=(0:1.5)]2.2,0) arc (0:180:2.2);
	\draw[color=orange!80!black, decoration={markings, mark=at position 0.5 with {\arrow{<}},}, postaction={decorate}] 
	([shift=(0:1.5)]2.2,0) arc (0:-180:2.2);
	%
	\draw[decoration={markings, mark=at position 0.5 with {\arrow{<}},}, postaction={decorate}] 
	([shift=(0:1.5)]1.5,0) arc (0:180:1.5);
	\draw[decoration={markings, mark=at position 0.5 with {\arrow{>}},}, postaction={decorate}] 
	([shift=(0:1.5)]1.5,0) arc (0:-180:1.5);
	%
	\draw[color=orange!80!black, decoration={markings, mark=at position 0.5 with {\arrow{<}},}, postaction={decorate}] 
	([shift=(0:1.5)]0.4,0) arc (0:180:0.4);
	\draw[color=orange!80!black, decoration={markings, mark=at position 0.5 with {\arrow{>}},}, postaction={decorate}] 
	([shift=(0:1.5)]0.4,0) arc (0:-180:0.4);
	%
	\draw[color=orange!80!black, decoration={markings, mark=at position 0.5 with {\arrow{<}},}, postaction={decorate}] 
	([shift=(0:1.5)]0.9,0) arc (0:180:0.9);
	\draw[color=orange!80!black, decoration={markings, mark=at position 0.5 with {\arrow{>}},}, postaction={decorate}] 
	([shift=(0:1.5)]0.9,0) arc (0:-180:0.9);
	%
	\fill[color=orange!80!black] ($(140:2.2) + (1.5,0)$) circle (2.9pt) node[right] (meet) {};
	\fill (1.85,1.2) circle (0pt) node (M) {{\scriptsize${}_kT_{i,j}$}};
	\fill[color=orange!80!black] (1.5,0) circle (0pt) node (M) {{\scriptsize$X_j$}};
	\fill[color=orange!80!black] (1.4,-1.15) circle (0pt) node (M) {{\scriptsize$X_i$}};
	\fill[color=orange!80!black] (1.4,1.9) circle (0pt) node (M) {{\scriptsize$X_k$}};
	\fill[color=green!50!black] (-1.2,-2.5) circle (0pt) node (M) {{\scriptsize$B_k$}};
	\end{tikzpicture} 
	\nonumber 
	\\ 
	& \stackrel{\text{\eqref{eq:moddims}}}{=} 
	\sum_{i,j,k} \psi_{B_i}^2 \psi_{B_j}^2 \frac{d_{X_i}}{d_{A_i}} \frac{d_{X_j}}{d_{A_j}} \frac{d_{{}_kT_{i,j}}}{d_{A_k}} \frac{d_{X_k}}{d_{B_k}} \cdot 
	\begin{tikzpicture}[very thick,scale=0.75,color=blue!50!black, baseline=0cm]
	%
	\draw[color=green!50!black] (0,-0.5) -- (0,0.5);
	\draw[color=green!50!black] (0,0.5) node[Odot] (end) {}; 
	%
	\fill[color=green!50!black] (0.3,-0.5) circle (0pt) node (M) {{\scriptsize$B_k$}};
	\end{tikzpicture} 
	\nonumber
	\\ 
	& \stackrel{\text{\eqref{eq:psiratio}}}{=}   
	\sum_k \Big(
	 \sum_{i,j} 
	 \psi_{A_i}^2 \psi_{A_j}^2 \frac{d_{{}_kT_{i,j}}}{d_{A_k}} \Big)  \frac{d_{X_k}}{d_{B_k}} \cdot \varepsilon_{B_k}
	\nonumber 
	\\ 
	& \stackrel{\text{\eqref{eq:351comp}}}{=}  
	\sum_k 
		 \phi^{-2}
	\psi_{A_k}^2 \frac{d_{X_k}}{d_{B_k}} \cdot \varepsilon_{B_k} 
	\stackrel{\text{\eqref{eq:psiratio}}}{=}  
	\sum_k 
	 \phi^{-2}
	\psi_{B_k}^2 \cdot \varepsilon_{B_k} 
	= 
		 \phi^{-2}
	\cdot 
	\begin{tikzpicture}[very thick,scale=0.75,color=blue!50!black, baseline=0cm]
	%
	\draw[color=green!50!black] (0,-0.5) -- (0,0.5);
	\draw[color=green!50!black] (0,0.5) node[Odot] (end) {}; 
	%
	\fill[color=black] (0,0) circle (2.9pt) node[left] (meet) {{\scriptsize$\!(\psi^X)^{2}$}};
	%
	\fill[color=green!50!black] (0.25,-0.5) circle (0pt) node (M) {{\scriptsize$B$}};
	\end{tikzpicture} 
	. 
	\end{align}
	The other identities in~\eqref{eq:351n} are checked analogously for $X(\A)$. 
	
	Next we check the first identity in~\eqref{eq:348n} for $X(\A)$. 
	Using~\eqref{eq:348n} for~$\A$ and~\eqref{eq:psiratio}, its left-hand side is 
	\be 
	\sum_{i,j,k,l,m,n,p} 
	\begin{tikzpicture}[very thick,scale=0.75,color=blue!50!black, baseline=-1.2cm]
	\draw[color=orange!80!black] (-1,-1.5) to[out=90, in=-135] (-0.8,-1.2) to[out=45, in=-90] (2,1.5); 
	\draw[color=orange!80!black, decoration={markings, mark=at position 0.5 with {\arrow{>}},}, postaction={decorate}] (-0.5,-1.5) .. controls +(0,0.75) and +(0,0.75) .. (1,-1.5);
	\draw[color=orange!80!black, decoration={markings, mark=at position 0.5 with {\arrow{<}},}, postaction={decorate}] (-1,1.5) .. controls +(0,-1) and +(0,-1) .. (1,1.5);
	\draw[color=orange!80!black, decoration={markings, mark=at position 0.01 with {\arrow{>}},}, postaction={decorate}] (2,-1.5) to[out=90, in=-90] (2.5,1.5); 
	\draw[color=orange!80!black, decoration={markings, mark=at position 0.5 with {\arrow{>}},}, postaction={decorate}] (-2,1.5) -- (-2,-4.5); 
	\draw (0,0) -- (-1.5,1.5); 
	\draw (0,0) -- (1.5,1.5); 
	\draw (0,0) to[out=-120, in=90] (-1.5,-1.5); 
	\draw[color=white, line width=4pt] (0,0) to[out=-60, in=90] (1.5,-1.5);  
	\draw (0,0) to[out=-60, in=90] (1.5,-1.5); 
	\draw[color=white, line width=4pt] (2.5,-1.5) to[out=90, in=-45] (-0.5,1.5); 
	\draw[color=orange!80!black] (2.5,-1.5) to[out=90, in=-45] (-0.5,1.5); 
	\fill [green!50!black,opacity=0.1] (-2,1.5) -- (-2,-1.5) -- (2,-1.5) -- (2.5,1.5);
	%
	\fill[color=white] (0,0) node[inner sep=3pt,draw, rounded corners=1pt, fill, color=white] (R2) {{\scriptsize$\alpha$}};
	\draw[line width=1pt, color=black] (0,0) node[inner sep=3pt, draw, semithick, rounded corners=1pt] (R) {{\scriptsize$\alpha$}};
	%
	%
	\draw[color=orange!80!black] (2.5,-4.5) to[out=90, in=-90] (2,-1.5); 
	\draw[color=orange!80!black, decoration={markings, mark=at position 0.5 with {\arrow{>}},}, postaction={decorate}] (-1,-4.5) .. controls +(0,1) and +(0,1) .. (1,-4.5);
	\draw[color=orange!80!black, decoration={markings, mark=at position 0.5 with {\arrow{<}},}, postaction={decorate}] (-0.5,-1.5) .. controls +(0,-0.75) and +(0,-0.75) .. (1,-1.5);
	\draw[color=orange!80!black, decoration={markings, mark=at position 0.9 with {\arrow{>}},}, postaction={decorate}] (2,-4.5) to[out=90, in=-45] (-0.8,-1.8) to[out=135, in=-90] (-1,-1.5); 
	\draw (0,-3) to[out=120, in=-90] (-1.5,-1.5); 
	\draw (0,-3) -- (-1.5,-4.5); 
	\draw (0,-3) -- (1.5,-4.5); 
	\draw[color=white, line width=4pt] (0,-3) to[out=60, in=-90] (1.5,-1.5); 
	\draw (0,-3) to[out=60, in=-90] (1.5,-1.5); 
	\draw[color=white, line width=4pt] (-0.5,-4.5) to[out=45, in=-90] (2.5,-1.5); 
	\draw[color=orange!80!black, decoration={markings, mark=at position 0.99 with {\arrow{>}},}, postaction={decorate}] (-0.5,-4.5) to[out=45, in=-90] (2.5,-1.5); 
	\fill [green!50!black,opacity=0.1] (-2,-1.5) -- (-2,-4.5) -- (2.5,-4.5) to[out=90, in=-90] (2,-1.5);
	\fill [green!50!black,opacity=0.1] (2.14,-2.52) to[out=102, in=-102] (2.18,-0.50) to[out=-60, in=55] (2.14,-2.52);
	%
	\fill[color=white] (0,-3) node[inner sep=3pt,draw, rounded corners=1pt, fill, color=white] (R2) {{\scriptsize$\bar\alpha$}};
	\draw[line width=1pt, color=black] (0,-3) node[inner sep=3pt, draw, semithick, rounded corners=1pt] (R) {{\scriptsize$\bar\alpha$}};
	\fill[color=black] (1,-1.5) circle (2.9pt) node[left] (meet) {{\scriptsize$\psi_{B_m}^{2}\!\!\!$}};
	%
	\fill[color=blue!50!black] (-1.05,0.5) circle (0pt) node (M) {{\scriptsize${}_pT_{n,i}$}};
	\fill[color=blue!50!black] (1.3,0.8) circle (0pt) node (M) {{\scriptsize${}_nT_{k,j}$}};%
	\fill[color=blue!50!black] (-1.1,-3.5) circle (0pt) node (M) {{\scriptsize${}_pT_{l,i}$}};
	\fill[color=blue!50!black] (-1.1,-2.5) circle (0pt) node (M) {{\scriptsize${}_pT_{k,m}$}};
	\fill[color=blue!50!black] (1.1,-2.5) circle (0pt) node (M) {{\scriptsize${}_mT_{j,i}$}};
	\fill[color=blue!50!black] (1.3,-3.7) circle (0pt) node (M) {{\scriptsize${}_lT_{k,j}$}};
	\fill[color=orange!80!black] (-1,1.67) circle (0pt) node (M) {{\scriptsize$X_n$}};%
	\fill[color=orange!80!black] (-0.17,-1.5) circle (0pt) node (M) {{\scriptsize$X_m$}};%
	\fill[color=orange!80!black] (-1.9,-4.7) circle (0pt) node (M) {{\scriptsize$X_p$}};
	\fill[color=orange!80!black] (-1,-4.7) circle (0pt) node (M) {{\scriptsize$X_l$}};%
	\fill[color=orange!80!black] (-0.5,-4.7) circle (0pt) node (M) {{\scriptsize$X_i$}};%
	\fill[color=orange!80!black] (2.1,-4.7) circle (0pt) node (M) {{\scriptsize$X_k$}};%
	\fill[color=orange!80!black] (2.6,-4.7) circle (0pt) node (M) {{\scriptsize$X_j$}};%
	\end{tikzpicture} 
	\; = \; 
	\sum_{i,j,k,l,p} \psi_{A_l}^{-2} \cdot 
	\begin{tikzpicture}[very thick,scale=0.75,color=blue!50!black, baseline=-1.2cm]
	\draw[color=orange!80!black, decoration={markings, mark=at position 0.5 with {\arrow{<}},}, postaction={decorate}] (-1,1.5) .. controls +(0,-1) and +(0,-1) .. (1,1.5);
	\draw[color=orange!80!black, decoration={markings, mark=at position 0.5 with {\arrow{>}},}, postaction={decorate}] (2,1.5) -- (2,-4.5); 
	\draw[color=orange!80!black, decoration={markings, mark=at position 0.5 with {\arrow{>}},}, postaction={decorate}] (2.5,1.5) -- (2.5,-4.5); 
	\draw[color=orange!80!black, decoration={markings, mark=at position 0.5 with {\arrow{>}},}, postaction={decorate}] (-2,1.5) -- (-2,-4.5); 
	\draw[color=orange!80!black, decoration={markings, mark=at position 0.5 with {\arrow{>}},}, postaction={decorate}] (-1,-4.5) .. controls +(0,1) and +(0,1) .. (1,-4.5);
	\draw (-1.5,-4.5) -- (-1.5,1.5); 
	\draw (1.5,-4.5) -- (1.5,1.5); 
	\draw[color=white, line width=4pt] (-0.5,-4.5) -- (-0.5,1.5);  
	\draw[color=orange!80!black, decoration={markings, mark=at position 0.5 with {\arrow{>}},}, postaction={decorate}] (-0.5,-4.5) -- (-0.5,1.5); 
	\fill [green!50!black,opacity=0.1] (-2,1.5) -- (-2,-4.5) -- (2.5,-4.5) -- (2.5,1.5);
	%
	%
	\fill[color=blue!50!black] (-1.05,-3) circle (0pt) node (M) {{\scriptsize${}_pT_{l,i}$}};
	\fill[color=blue!50!black] (1.05,-3) circle (0pt) node (M) {{\scriptsize${}_lT_{k,j}$}};
	\fill[color=orange!80!black] (-1,1.67) circle (0pt) node (M) {{\scriptsize$X_l$}};%
	\fill[color=orange!80!black] (-1.9,-4.7) circle (0pt) node (M) {{\scriptsize$X_p$}};
	\fill[color=orange!80!black] (-1,-4.7) circle (0pt) node (M) {{\scriptsize$X_l$}};%
	\fill[color=orange!80!black] (-0.5,-4.7) circle (0pt) node (M) {{\scriptsize$X_i$}};%
	\fill[color=orange!80!black] (2.1,-4.7) circle (0pt) node (M) {{\scriptsize$X_k$}};%
	\fill[color=orange!80!black] (2.6,-4.7) circle (0pt) node (M) {{\scriptsize$X_j$}};%
	\end{tikzpicture} 
	. 
	\ee 
	To see that this indeed equals the right-hand side of the first identity in~\eqref{eq:348n} for $X(\A)$, i.\,e. 
	\be 
	\begin{tikzpicture}[very thick,scale=0.75,color=blue!50!black, baseline=0.75cm]
	\draw (0,-1.5) -- (0,3.5); 
	\draw (2,-1.5) -- (2,3.5); 
	%
	\draw[-dot-, color=green!50!black] (0.5,0) .. controls +(0,-1) and +(0,-1) .. (1.5,0);
	\draw[color=green!50!black] (0.5,0) to[out=90, in=-45] (0,1);
	\draw[color=green!50!black] (1.5,0) to[out=90, in=-135] (2,1); 
	\draw[color=green!50!black] (1,-1.25) node[Odot] (end) {}; 
	\draw[color=green!50!black] (1,-1.2) -- (1,-0.7);
	%
	\fill[color=black] (2,2) circle (2.9pt) node[left] (meet) {{\scriptsize$(\psi_0^X)^{-2}\!$}};
	\fill (0,1) circle (2.9pt) node[left] (meet) {};
	\fill (2,1) circle (2.9pt) node[left] (meet) {};
	%
	\fill[color=black] (0.2,1.1) circle (0pt) node (meet) {{\tiny$1$}};
	\fill (-0.3,-1.5) circle (0pt) node (M) {{\scriptsize$T^X$}};
	\fill (2.4,-1.5) circle (0pt) node (M) {{\scriptsize$T^X$}};
	\fill (-0.3,3.5) circle (0pt) node (M) {{\scriptsize$T^X$}};
	\fill (2.4,3.5) circle (0pt) node (M) {{\scriptsize$T^X$}};
	\fill[color=green!50!black] (0.7,0) circle (0pt) node (M) {{\scriptsize$B$}};
	\end{tikzpicture} 
	= 
	\begin{tikzpicture}[very thick,scale=0.75,color=blue!50!black, baseline=0.75cm]
	\fill [green!50!black,opacity=0.1] (0,-1.5) -- (0,3.5) -- (2,3.5)-- (2,-1.5);
	\draw (0,-1.5) -- (0,3.5); 
	\draw (2,-1.5) -- (2,3.5); 
	%
	\draw[-dot-, color=green!50!black] (0.5,0) .. controls +(0,-1) and +(0,-1) .. (1.5,0);
	\draw[color=green!50!black] (0.5,0) to[out=90, in=-45] (0,1);
	\draw[color=green!50!black] (1.5,0) to[out=90, in=-135] (2,1); 
	\draw[color=green!50!black] (1,-1.25) node[Odot] (end) {}; 
	\draw[color=green!50!black] (1,-1.2) -- (1,-0.7);
	%
	\fill[color=black] (2,2) circle (2.9pt) node[left] (meet) {{\scriptsize$(\psi_0^X)^{-2}\!$}};
	\fill (0,1) circle (2.9pt) node[left] (meet) {};
	\fill (2,1) circle (2.9pt) node[left] (meet) {};
	%
	\fill[color=black] (0.2,1.1) circle (0pt) node (meet) {{\tiny$1$}};
	\fill (-0.3,-1.5) circle (0pt) node (M) {{\scriptsize$T^X$}};
	\fill (2.4,-1.5) circle (0pt) node (M) {{\scriptsize$T^X$}};
	\fill (-0.3,3.5) circle (0pt) node (M) {{\scriptsize$T^X$}};
	\fill (2.4,3.5) circle (0pt) node (M) {{\scriptsize$T^X$}};
	\fill[color=green!50!black] (0.7,0) circle (0pt) node (M) {{\scriptsize$B$}};
	\end{tikzpicture} 
	= 
	\sum_{i,j,k,l,p} \psi_{B_l}^{-2} \cdot 
	\begin{tikzpicture}[very thick,scale=0.75,color=blue!50!black, baseline=-0.75cm]
	\draw[color=orange!80!black, decoration={markings, mark=at position 0.5 with {\arrow{<}},}, postaction={decorate}] (-1,1.5) -- (-1,-3.5);
	\draw[color=orange!80!black, decoration={markings, mark=at position 0.5 with {\arrow{>}},}, postaction={decorate}] (2,1.5) -- (2,-3.5); 
	\draw[color=orange!80!black, decoration={markings, mark=at position 0.5 with {\arrow{>}},}, postaction={decorate}] (2.5,1.5) -- (2.5,-3.5); 
	\draw[color=orange!80!black, decoration={markings, mark=at position 0.5 with {\arrow{>}},}, postaction={decorate}] (-2,1.5) -- (-2,-3.5); 
	\draw[color=orange!80!black, decoration={markings, mark=at position 0.5 with {\arrow{>}},}, postaction={decorate}] (1,1.5) -- (1,-3.5); 
	\draw (-1.5,-3.5) -- (-1.5,1.5); 
	\draw (1.5,-3.5) -- (1.5,1.5); 
	%
	%
	\draw[-dot-, color=green!50!black] (-0.25,0) .. controls +(0,-1) and +(0,-1) .. (0.25,0);
	\draw[color=green!50!black] (-0.25,0) to[out=90, in=-45] (-1,1);
	\draw[color=green!50!black] (0.25,0) to[out=90, in=-135] (1,1); 
	\draw[color=green!50!black] (0,-1.25) node[Odot] (end) {}; 
	\draw[color=green!50!black] (0,-1.2) -- (0,-0.7);
	\fill[color=orange!80!black] (-1,1) circle (2.9pt) node[left] (meet) {};
	\fill[color=orange!80!black] (1,1) circle (2.9pt) node[left] (meet) {};
	\draw[color=white, line width=4pt] (-0.5,-3.5) -- (-0.5,1.5);  
	\draw[color=orange!80!black, decoration={markings, mark=at position 0.5 with {\arrow{<}},}, postaction={decorate}] (-0.5,1.5) -- (-0.5,-3.5); 
	\fill [green!50!black,opacity=0.1] (-2,1.5) -- (-2,-3.5) -- (2.5,-3.5) -- (2.5,1.5);
	%
	%
	\fill[color=blue!50!black] (-1.5,1.7) circle (0pt) node (M) {{\scriptsize${}_pT_{l,i}$}};
	\fill[color=blue!50!black] (1.5,1.7) circle (0pt) node (M) {{\scriptsize${}_lT_{k,j}$}};
	\fill[color=green!50!black] (0.55,0) circle (0pt) node (M) {{\scriptsize$B_l$}};
	\fill[color=orange!80!black] (-1.9,-3.7) circle (0pt) node (M) {{\scriptsize$X_p$}};
	\fill[color=orange!80!black] (-1,-3.7) circle (0pt) node (M) {{\scriptsize$X_l$}};%
	\fill[color=orange!80!black] (1,-3.7) circle (0pt) node (M) {{\scriptsize$X_l$}};%
	\fill[color=orange!80!black] (-0.5,-3.7) circle (0pt) node (M) {{\scriptsize$X_i$}};%
	\fill[color=orange!80!black] (2.1,-3.7) circle (0pt) node (M) {{\scriptsize$X_k$}};%
	\fill[color=orange!80!black] (2.6,-3.7) circle (0pt) node (M) {{\scriptsize$X_j$}};%
	\end{tikzpicture} 
	, 
	\ee
	we pre-compose both sides with 
	\be 
	\sum_{i,j,k,l,p} 
	\begin{tikzpicture}[very thick,scale=0.75,color=blue!50!black, baseline=0cm]
	\draw[color=orange!80!black, decoration={markings, mark=at position 0.5 with {\arrow{>}},}, postaction={decorate}] (2,1) -- (2,-1); 
	\draw[color=orange!80!black, decoration={markings, mark=at position 0.5 with {\arrow{>}},}, postaction={decorate}] (2.5,1) -- (2.5,-1); 
	\draw[color=orange!80!black, decoration={markings, mark=at position 0.5 with {\arrow{>}},}, postaction={decorate}] (-2,1) -- (-2,-1); 
	\draw[color=orange!80!black, decoration={markings, mark=at position 0.5 with {\arrow{<}},}, postaction={decorate}] (-1,1) .. controls +(0,-2) and +(0,-2) .. (1,1);
	\draw (-1.5,-1) -- (-1.5,1); 
	\draw (1.5,-1) -- (1.5,1); 
	\draw[color=white, line width=4pt] (-0.5,-1) -- (-0.5,1);  
	\draw[color=orange!80!black, decoration={markings, mark=at position 0.5 with {\arrow{<}},}, postaction={decorate}] (-0.5,1) -- (-0.5,-1); 
	\fill [green!50!black,opacity=0.1] (-2,1) -- (-2,-1) -- (2.5,-1) -- (2.5,1);
	%
	%
	\fill[color=blue!50!black] (-1.5,1.2) circle (0pt) node (M) {{\scriptsize${}_pT_{l,i}$}};
	\fill[color=blue!50!black] (1.5,1.2) circle (0pt) node (M) {{\scriptsize${}_lT_{k,j}$}};
	\fill[color=orange!80!black] (-1.9,-1.2) circle (0pt) node (M) {{\scriptsize$X_p$}};
	\fill[color=orange!80!black] (0.7,-0.55) circle (0pt) node (M) {{\scriptsize$X_l$}};%
	\fill[color=orange!80!black] (-0.5,-1.2) circle (0pt) node (M) {{\scriptsize$X_i$}};%
	\fill[color=orange!80!black] (2.1,-1.2) circle (0pt) node (M) {{\scriptsize$X_k$}};%
	\fill[color=orange!80!black] (2.6,-1.2) circle (0pt) node (M) {{\scriptsize$X_j$}};%
	\end{tikzpicture} 
	\ee 
	and use~\eqref{eq:psiratio} again to see that they are equal. 
\end{proof}

\begin{remark}
	We note that the expressions for $T^X, \alpha^X, \bar\alpha^X, (\psi^X)^2$ in Definition~\ref{def:Moritatransport} have a simple origin: 
	they are obtained by the rule to ``draw an $X$-line parallel to the $T$-lines on every 2-stratum in the neighbourhoods of $T,\alpha,\bar\alpha$ in~\eqref{eq:ATalphaalphabarpic}.'' 
	This rule immediately produces~\eqref{eq:TX} and~\eqref{eq:alphaX}, while~\eqref{eq:psiXsquared} is motivated by wrapping an $X$-line around a $\psi^2$-insertion on an interior 2-stratum. 
\end{remark}

\begin{remark}
\label{rem:MoritaTransportNonsemisimple}
The proof of Proposition~\ref{prop:Moritatransport} shows that the following more general result holds: 
Let $\B, \A, B, X$ be as above, except that~$\B$ is not necessarily semisimple, but still the algebras $A,B$ decompose into simple summands $A_i, B_i$ of non-zero dimension as in~\eqref{eq:ABdecomp}, and the images of the projectors as in~\eqref{eq:pMNgreen} for $A_i, B_i$ exist in~$\B$. 
Then $X(\A)$ is a special orbifold datum in the (possibly non-semisimple) ribbon category~$\B$. 
\end{remark}

\begin{proposition}
	\label{prop:Mtransportequi}
	Let $\B= \Cat{C}$ be a modular tensor category, and let $\A, X$ be as above. 
	Then the orbifold TQFTs corresponding to~$\A$ and~$X(\A)$ are isomorphic: 
	\be 
	(\zzc)_{\mathcal A} \cong (\zzc)_{X(\A)} \, . 
	\ee 
\end{proposition}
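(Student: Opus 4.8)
The plan is to exhibit the isomorphism by a family of \emph{transition bordisms} in $\Bordd[3]$ that implement the Morita equivalence~$X$ between~$A$ and~$B$, and then to verify that these bordisms are invertible, natural and monoidal. Recall from Section~\ref{sec:orbif-defect-tqfts} that $(\zzc)_\A(\Sigma)$ is, by construction, the limit of the projective system~\eqref{eq:proj-system-orb-object} with one term $\zzc(\Sigma^{\tau,\A})$ per triangulation~$\tau$ of~$\Sigma$, the connecting maps being $\zzc$ of $\A$-decorated cylinders interpolating between triangulations; the value on a bordism is the morphism of such systems described in~\eqref{eq:proj-system-orb-bord}, and on a closed $3$-manifold no limit is needed. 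Since $\dim(A_i)\neq 0 \neq \dim(B_i)$ by hypothesis, Proposition~\ref{prop:Moritatransport} already guarantees that $X(\A)$ is a special orbifold datum, so that $(\zzc)_{X(\A)}$ is defined.

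First I would construct, for each triangulation~$\tau$ of~$\Sigma$, a transition bordism $C^{X,\tau}\colon\Sigma^{\tau,\A}\to\Sigma^{\tau,X(\A)}$ supported in a collar inside $\Sigma\times[0,1]$: there every $A$-labelled $2$-stratum is continued into a $B$-labelled one across a $1$-stratum labelled by the Morita bimodule~$X$, and every $T$-labelled $1$-stratum is continued into a $T^X$-labelled one across a $0$-stratum that creates the parallel $X$-ribbons of~\eqref{eq:TX}; the matching of the point insertions~$\psi$,~$\psi^X$ and~$\phi$ is dictated by the square-root choice~\eqref{eq:psiXsquared} together with the usual Euler-characteristic bookkeeping for Euler defects. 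Reversing the collar and replacing~$X$ by~$X^*$ gives a bordism $C^{X^*,\tau}$ in the opposite direction, and composing the two should collapse---using the Morita isomorphisms~\eqref{eq:Moritaisos} and the $\Delta$-separability of~$A$ and~$B$---to the identity cylinders, so that $\zzc(C^{X,\tau})$ is an isomorphism $\zzc(\Sigma^{\tau,\A})\xrightarrow{\ \sim\ }\zzc(\Sigma^{\tau,X(\A)})$.

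Next I would assemble these into a natural monoidal isomorphism. For triangulations $\tau,\tau'$ with an interpolating triangulation~$t$, the two composites of a triangulation-change cylinder with a transition collar differ only by a decoration-preserving diffeomorphism rel boundary---slide the collar past the triangulation change---so the isomorphisms $\zzc(C^{X,\tau})$ intertwine the connecting maps of the two projective systems, using here the triangulation independence built into an orbifold datum. Passing to the limits yields an isomorphism $\eta_\Sigma\colon(\zzc)_\A(\Sigma)\xrightarrow{\ \sim\ }(\zzc)_{X(\A)}(\Sigma)$. For a bordism $N\colon\Sigma_1\to\Sigma_2$ one inserts the transition collars in collar neighbourhoods of $\partial N$; triangulation independence of~$\zzc$ on the $\A$- and $X(\A)$-decorated copies of~$N$ then forces the square $\eta_{\Sigma_2}\circ(\zzc)_\A(N)=(\zzc)_{X(\A)}(N)\circ\eta_{\Sigma_1}$ to commute, and compatibility of the collars with disjoint union makes~$\eta$ monoidal. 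Hence $(\zzc)_\A\cong(\zzc)_{X(\A)}$.

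The main obstacle will be the first step. One has to check that $\zzc(C^{X,\tau})$ is independent of the auxiliary choices made when drawing the $X$-ribbon network on the $2$-strata, and that the composite with the $X^*$-collar really contracts to the identity rather than merely to something triangulation-equivalent to it. This amounts to a string-diagram computation in~$\Cat{C}$ verifying that the ``parallel $X$-line'' rule underlying the definitions of $T^X,\alpha^X,\bar\alpha^X,\psi^X$ in Definition~\ref{def:Moritatransport} is coherent with the Morita structure maps; I expect it to run along the same lines as, and to reuse, the dimension identities~\eqref{eq:moddims}, the relation~\eqref{eq:351comp}, and the diagrammatic bookkeeping from the proof of Proposition~\ref{prop:Moritatransport}. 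Restricted to a closed $3$-manifold~$N$ this reasoning should in particular reproduce the scalar identity $\zzc(N^{t,\A})=\zzc(N^{t,X(\A)})$ for every triangulation~$t$.
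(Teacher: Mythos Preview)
Your proposal follows essentially the same strategy as the paper: build a transition bordism that inserts $X$-lines in a collar to interpolate between the $\A$- and $X(\A)$-decorations, and then show the induced maps assemble into a monoidal natural isomorphism.

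There are two points where the paper's argument is more efficient than yours. First, you plan to prove invertibility by exhibiting an explicit inverse collar using~$X^*$ and checking that the composite collapses to the identity. The paper sidesteps this entirely: once~$\eta$ is shown to be a \emph{monoidal} natural transformation between TQFTs, invertibility is automatic (the paper cites \cite[Lem.\,A.2]{CR4}). So the ``main obstacle'' you anticipate does not actually need to be overcome. Second, your naturality argument (``slide the collar past the triangulation change'' and ``triangulation independence'') is vaguer than what the paper does. The paper's mechanism is concrete: to pass from one side of the naturality square to the other, one replaces each interior $\psi_A^2$-insertion on an $A$-labelled $2$-stratum by a small circular $X$-labelled $1$-stratum enclosing a $\psi_B^2$-insertion, and this replacement is justified precisely by the identity~\eqref{eq:psiratio}. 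This is the string-diagram input you correctly suspected would be needed, but it enters for naturality, not for invertibility.
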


\begin{proof}
We will construct a monoidal natural isomorphism $\nu \colon (\zzc)_{\mathcal A} \to (\zzc)_{X(\A)}$. 
Recall from \cite[Sect.\,3.2]{CRS1} and Section~\ref{sec:orbif-defect-tqfts} that for $\Sigma \in \Bord_3$, the vector space $(\zzc)_\A(\Sigma)$ is defined as the limit of a projective system that is built from $\A$-decorated dual triangulations of cylinders over~$\Sigma$. 
Let~$\tau$ be an oriented triangulation of~$\Sigma$ and decorate the dual stratification with the data~$\A$ to obtain $\Sigma^{\tau,\A} \in \Borddefen{3}(\D^{\Cat{C}})$. 
Extend~$\tau$ to an oriented triangulation~$t$ of the cylinder $C_\Sigma = \Sigma \times [0,1]$ and decorate the stratification dual to~$t$ with~$\A$ to obtain a morphism $C_\Sigma^{t,\A} \colon \Sigma^{\tau,\A} \to \Sigma^{\tau,\A}$ in $\Borddefen{3}(\D^{\Cat{C}})$. 
Then $(\zzc)_\A(\Sigma) \cong \textrm{Im} \zzc(C_\Sigma^{t,\A})$. 
	
We will obtain the components~$\nu_\Sigma$ by modifying $C_\Sigma^{t,\A}$ only near its outgoing boundary $\Sigma \times \{ 1 \}$. 
The 2-strata in $C_\Sigma^{t,\A}$ have the topology of discs and are labelled by $A$. We will only be concerned with 2-strata that intersect the 
outgoing boundary component $\Sigma \times \{ 1 \}$. Let $D$ be such a 2-stratum.

In $D$ insert a semi-circular 1-stratum which starts and ends on $\Sigma \times \{ 1 \}$, which is oriented clockwise with respect to the orientation of $D$, and which is labelled by $X$. 
This splits $D$ into two disc-shaped connected components~$D_{\textrm{i}}$ and~$D_{\textrm{o}}$ (``inner'' and ``outer''). 
The disc~$D_{\textrm{i}}$ is bounded by the $X$-labelled line and a single interval on the boundary $D\cap (\Sigma \times \{ 1 \})$, while $D_{\textrm{o}}$ is bounded by two disjoint intervals in $D\cap (\Sigma \times \{ 1 \})$, as well as by $X$- and $T$-labelled 1-strata and 0-strata labelled by~$\alpha$ or~$\bar\alpha$. 
The 2-stratum $D_{\textrm{o}}$ keeps its label~$A$ while the label of~$D_{\textrm{i}}$ is changed from $A$ to $B$.

Note that by construction, each positively oriented $T$-labelled 0-stratum in the outgoing boundary $\Sigma \times \{ 1 \}$ has one $X^*$- and two $X$-labelled 0-strata in its vicinity, and vice versa for a negatively oriented 0-stratum.

Recall from Section~\ref{sec:resh-tura-defect} the construction of the ribbon  graph corresponding to this stratified bordism. 
This results in an $A$-network in $D_{\textrm{o}}$ and a $B$-network in $D_{\textrm{i}}$. Furthermore, $D_{\textrm{i}}$ gets an insertion of $\psi_B$, while $D_{\textrm{o}}$ does not get any $\psi_A$-insertion since the corresponding Euler characteristic is zero.

We choose $\varepsilon > 0$ and enlarge the underlying cylinder~$C_\Sigma$ to $C_{\Sigma,\varepsilon} = \Sigma \times [0,1+\varepsilon]$ and construct ${\widetilde C}_{\Sigma,\varepsilon}^{t,\A,X}$ as follows: it is identical to the bordism with embedded ribbon graph constructed above along the interval $[0,1]$, and it is a cylinder along $[1,1+\varepsilon]$ except 
that we insert the projectors $X^* \otimes T \otimes X \otimes X \twoheadrightarrow T^X$ 
and embeddings $T^X \hookrightarrow X^* \otimes T \otimes X \otimes X$ for all $T$-lines in $\Sigma \times [1,1+\varepsilon]$, depending on the direction of $T$, such that all ribbons ending on  $\Sigma \times \{ 1+\eps \}$ are labelled either $T^X$ or $B$.

In this way we obtain a bordism-with-ribbon-graph ${\widetilde C}_{\Sigma, \varepsilon}^{t,\A,X}$ in $\Borden{3}$ which represents a defect bordism
$\Sigma^{\tau,\A} \to \Sigma^{\tau,X(\A)}$ in $\Borddefen{3}(\D^{\Cat{C}})$, and which (after applying $\zrt$ and the projection to the limit) defines the component~$\nu_\Sigma$ of the natural transformation~$\nu$.

To verify that this is indeed a natural transformation, one writes out the naturality square and notes that one can pass from one path to the other by replacing each $\psi_A^2$ inserted on an interior 2-stratum by a small circular 1-stratum labelled~$X$ and a $\psi_B^2$-insertion in its interior, using the identity \eqref{eq:psiratio}.
Furthermore, it follows directly from the construction that~$\nu$ is monoidal (and hence an isomorphism, see e.\,g.\ \cite[Lem.\,A.2]{CR4}). 
\end{proof}

Later in Section~\ref{sec:Gextensions} we will have need to combine Morita transports with the following notion of isomorphisms of orbifold data: 

\begin{definition}
	\label{definition:T-iso}
	Let $\A = (A,T,\all,\allb,\psi,\phi)$ and $\widetilde\A = (A,\widetilde{T},\widetilde{\all},\widetilde{\allb},\psi,\phi)$ be special orbifold data in a ribbon fusion category. 
	A \textsl{T-compatible isomorphism} from~$\A$ to~$\widetilde\A$ is an isomorphism $\rho\colon T\rightarrow \widetilde{T}$ of multi-modules such that 
	\begin{equation}
	\label{eq:T-iso}
	(  \rho \otimes \rho) \circ \all= \widetilde{\all} \circ (\rho \otimes \rho) \, . 
	\end{equation}
\end{definition}

\begin{lemma}
	\label{lem:Tisomequi}
	A $T$-compatible isomorphism $\rho$ from $\mathcal A$ to $\widetilde{\mathcal A}$ induces an isomorphism between the corresponding orbifold TQFTs: $(\zzc)_{\mathcal A} \cong (\zzc)_{\widetilde{\mathcal A}}$.
\end{lemma}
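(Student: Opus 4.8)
The plan is to construct an explicit monoidal natural isomorphism $\nu\colon(\zzc)_{\mathcal A}\to(\zzc)_{\widetilde{\mathcal A}}$ by a local modification of decorated bordisms near their outgoing boundary, closely mirroring the proof of Proposition~\ref{prop:Mtransportequi}. Recall that for $\Sigma\in\Bord_3$ and an oriented triangulation~$\tau$, $(\zzc)_{\mathcal A}(\Sigma)$ is the limit of the projective system built from $\mathcal A$-decorated dual triangulations of cylinders $C_\Sigma=\Sigma\times[0,1]$, and similarly for $\widetilde{\mathcal A}$. Since $\mathcal A$ and $\widetilde{\mathcal A}$ share the same $A,\psi,\phi$ and differ only by $T\rightsquigarrow\widetilde T$, $\all\rightsquigarrow\widetilde\all$, $\allb\rightsquigarrow\widetilde\allb$, the two projective systems have the same index set; the isomorphism $\rho\colon T\to\widetilde T$ of multi-modules lets us compare the objects term by term.

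First I would fix a triangulation~$\tau$ of~$\Sigma$, extend it to an oriented triangulation~$t$ of $C_\Sigma$, and — near the outgoing boundary $\Sigma\times\{1\}$ — replace, on every $T$-labelled 1-stratum that meets $\Sigma\times\{1\}$, the label~$T$ by~$\widetilde T$, inserting the isomorphism~$\rho$ (or~$\rho^{-1}$ according to orientation) at the point where it crosses into a small collar $\Sigma\times[1,1+\varepsilon]$. Concretely, enlarge the cylinder to $\Sigma\times[0,1+\varepsilon]$, keep it $\mathcal A$-decorated on $[0,1]$, and on the collar insert $\rho$ on every outgoing $T$-line so that all ribbons ending on $\Sigma\times\{1+\varepsilon\}$ carry the $\widetilde{\mathcal A}$-data. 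Applying~$\zrt$ and passing to the limit defines the component~$\nu_\Sigma$. The key point making this well-defined along the projective system is precisely~\eqref{eq:T-iso}: when one compares two triangulations of~$\Sigma$ via a triangulation of the cylinder, the $\all$-decorated 0-strata appearing in the connecting morphisms get conjugated by $\rho\otimes\rho$, and~\eqref{eq:T-iso} says this conjugate is exactly~$\widetilde\all$; the same holds for~$\allb$ since~$\allb$ is determined by~$\all$ through the axioms (or one checks the analogous compatibility directly). Hence the modified bordisms assemble into a morphism of projective systems, giving a well-defined linear map~$\nu_\Sigma$, and the inverse is built identically from~$\rho^{-1}$, so each~$\nu_\Sigma$ is an isomorphism.

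Naturality is checked by writing out the naturality square for a bordism $N\colon\Sigma_1\to\Sigma_2$: both composites amount to the same $\zrt$-evaluation of $N^{t,\mathcal A}$ with $\rho$-insertions pushed to the $\widetilde{\mathcal A}$-side, the two orders of performing ``decorate, then insert $\rho$'' versus ``insert $\rho$, then evaluate'' agreeing because $\rho$ is a morphism of multi-modules — so it commutes past the projectors $p_{M,N}$ of~\eqref{eq:proj-ssfr} and past the $A$-network labels that are unchanged — and because interior $\all$-insertions are handled by~\eqref{eq:T-iso}. Finally, monoidality of~$\nu$ is immediate from the construction: the local modification on a disjoint union $\Sigma\sqcup\Sigma'$ is the disjoint union of the modifications, and $\zrt$ is symmetric monoidal; a monoidal natural transformation between (anomalous) TQFTs valued in $\Vectk$ whose components are isomorphisms is automatically a monoidal natural isomorphism, cf.\ \cite[Lem.\,A.2]{CR4}.

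The main obstacle is the bookkeeping in the middle step: verifying that the $\rho$-insertions on outgoing $T$-lines genuinely intertwine the connecting morphisms of the two projective systems, i.e.\ that~\eqref{eq:T-iso} (together with its $\allb$-counterpart and the $\psi_i$-compatibility, which hold because $\psi$ is the same on both sides and~$\rho$ is a multi-module map) suffices to cover \emph{all} local moves relating triangulations — the Pachner-type moves whose invariance is encoded in axioms~\eqref{eq:347n}--\eqref{eq:351n}. This is entirely parallel to the corresponding verification in Proposition~\ref{prop:Mtransportequi}, so I would present it at the same level of detail, indicating that one passes from one side of each naturality/compatibility square to the other by applying $\rho\otimes\rho$ and invoking~\eqref{eq:T-iso}, and leaving the remaining diagrammatic identities to the reader as routine.
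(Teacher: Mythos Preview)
Your proposal is correct and follows essentially the same route as the paper: enlarge the cylinder by a collar, insert~$\rho$ on the $T$-lines meeting the outgoing boundary to obtain a morphism $\Sigma^{\tau,\mathcal A}\to\Sigma^{\tau,\widetilde{\mathcal A}}$, and verify naturality by rewriting each interior $\alpha$ as $(\rho^{-1}\otimes\rho^{-1})\circ\widetilde\alpha\circ(\rho\otimes\rho)$ via~\eqref{eq:T-iso} and cancelling the $\rho$'s along internal $T$-lines. One small correction: on inward-oriented 1-strata the label is~$T^\dagger$, so the correct insertion is $(\rho^{-1})^*$ rather than~$\rho^{-1}$; with that adjustment your argument matches the paper's.
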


\begin{proof}
	The construction of a monoidal natural isomorphism $\kappa \colon (\zzc)_{\mathcal A} \to (\zzc)_{\widetilde{\mathcal A}}$ is analogous to the construction in the proof of the previous proposition:
	Consider for $\Sigma \in \Bord_3$ a morphism $C_\Sigma^{t,\A} \colon \Sigma^{\tau,\A} \to \Sigma^{\tau,\A}$ in $\Borddefen{3}(\D^{\Cat{C}})$ 
	as above. Again we enlarge the cylinder to $C_{\Sigma,\varepsilon} = \Sigma \times [0,1+\varepsilon]$ and construct a morphism ${ C}_{\Sigma,\varepsilon}^{t,\A,\widetilde{\mathcal A}} \colon 
	\Sigma^{\tau,\A} \to \Sigma^{\tau,\widetilde{\mathcal A}}
	$ in $\Borddefen{3}(\D^{\Cat{C}})$ as  $C_\Sigma^{t,\A}$ along $[0,1]$ and as a cylinder along 
	$[1,1+\varepsilon]$, where we insert the isomorphism $\rho$ on each outwards oriented 1-stratum
	and the isomorphism $(\rho^{-1})^{*}$ on each inward oriented 1-stratum. 
	Thus ${ C}_{\Sigma,\varepsilon}^{t,\A,\widetilde{\mathcal A}} \colon 
	\Sigma^{\tau,\A} \to \Sigma^{\tau,\widetilde{\mathcal A}}
	$ is a well-defined morphism in $\Borddefen{3}(\D^{\Cat{C}})$ and we define $\kappa_{\Sigma}$ to be the map from $(\zzc)_\A(\Sigma)$ to $(\zzc)_{\widetilde{\mathcal A}}(\Sigma)$ that is induced by 
	$\zzc( C_{\Sigma,\varepsilon}^{t,\A,{\widetilde{\mathcal A}}} )$. 
	It is monoidal by construction, and for a bordism $M\colon \Sigma \rightarrow \Sigma'$ with triangulation and $\A$-decoration $M^{t',\A}$ we can replace each $\all$ by $ (\rho^{-1} \otimes \rho^{-1})\circ \widetilde{\all} \circ (\rho \otimes  \rho)$ without changing $\zzc(M^{t',\A})$. On each inner $T$-line we can then cancel  the $\rho$ with the $\rho^{-1}$ decoration to obtain a decoration by ${\widetilde{\mathcal A}}$ in the interior composed with a cylinder $ { C}_{\Sigma,\varepsilon}^{t,\A,\widetilde{\mathcal A}}$ as above and its inverse on the boundaries. After evaluating with $\zzc$ this shows the naturality of $\kappa$.
\end{proof}


\corollary\label{cor:morita-general}
	Let $\A$, $\B$, $X$ be as in Definition~\ref{def:Moritatransport}. 
	Let $\Cat{C}$ be a modular tensor category and $F \colon \B \to \Cat{C}$ a ribbon functor. 
	Then
	$(\zzc)_{F(\mathcal A)}
	\cong 
	(\zzc)_{F(X(\mathcal A))}$.
\endcorollary

\begin{proof}
		 This follows from Lemma~\ref{lem:Tisomequi} and Propositions~\ref{proposition:push-forward} and~\ref{prop:Mtransportequi}, 
	together with the observation that there is a $T$-compatible isomorphism $F(X(\mathcal A)) \cong 
	F(X)(F(\mathcal A))$.
\end{proof}

\subsection{Commutative Frobenius algebras give orbifold data}
\label{sec:commSSFR}

A simple example of orbifold data can be obtained as follows. 
Let $\Cat{B}$ be a ribbon category and let $A$ be a commutative \SSFR\ in $\Cat{B}$. Commutativity and symmetry together imply that the twist on $A$ is trivial, $\theta_A=\id_A$. 
For the bimodule $\ATAA$ we take $T=A$ with all actions given by the multiplication on $A$. For the remaining data we choose
\be\label{eq:special-orb-comm}
	\alpha = \bar\alpha = \Delta \circ \mu
	\, , \quad
	\psi = \id_A
	\, , \quad
	\phi = 1 \, .
\ee

\begin{proposition}\label{prop:comm-alg}
The tuple $(A,T,\alpha,\bar\alpha,\psi,\phi)$ described in \eqref{eq:special-orb-comm} is a special orbifold datum in $\Cat{B}$.
\end{proposition}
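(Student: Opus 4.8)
The strategy is to verify directly that the tuple in~\eqref{eq:special-orb-comm} satisfies the conditions \eqref{eq:347n}--\eqref{eq:351n} which, by Proposition~\ref{prop:internal-orb} (and Definition~\ref{def:sodribbon} for general ribbon categories), characterise special orbifold data. The first step is to record the drastic simplifications produced by the choices $\psi = \id_A$, $\phi = 1$ and $T = A$ with all actions equal to~$\mu$. Since $\psi = \id_A$, every insertion $\psi_i^{\pm 2}$ appearing in \eqref{eq:347n}--\eqref{eq:351n} equals the identity (using unitality of~$A$), and $\phi^{-2} = 1$ trivialises the scalar in~\eqref{eq:351n}. Because all $A$-actions on $T = A$ are multiplications and $A$ is commutative, every $A$-line attached to a $T$-line may be slid and reconnected at will; $\Delta$-separability gives $\mu\circ\Delta = \id_A$, so the projector $p_{T,T}$ of~\eqref{eq:proj-ssfr} equals $\Delta\circ\mu$ and $T\otimes_A T \cong A$ with mutually inverse structural maps $\mu$ and $\Delta$. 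Finally commutativity together with symmetry of the Frobenius structure forces $\theta_A = \id_A$ and $\mu\circ c_{A,A} = \mu$, which is exactly what is needed to unknot the crossings in the ``$T^\dagger$''-pictures of \eqref{eq:349n}--\eqref{eq:351n}.

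After these reductions, each of \eqref{eq:347n}--\eqref{eq:351n} becomes an equation between morphisms in~$\Cat{B}$ assembled from $\mu,\Delta,\eta,\eps$ and coherence isomorphisms. The multi-module conditions on $\alpha = \bar\alpha = \Delta\circ\mu$ (the instances of~\eqref{eq:alphaactioncomp}--\eqref{eq:alphamodulemap}) follow at once from (co)associativity and the Frobenius relations. The conditions \eqref{eq:348n}, \eqref{eq:349n} and \eqref{eq:350n} express that $\bar\alpha$ and $\alpha$ are mutually inverse up to the relevant projectors; here $\bar\alpha\circ\alpha = \Delta\mu\Delta\mu = \Delta\mu = p_{T,T}$ by $\Delta$-separability, and symmetrically $\alpha\circ\bar\alpha = p_{T,T}$, while $p_{T,T}\circ\alpha = \alpha = \alpha\circ p_{T,T}$ for the same reason, so these hold. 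Condition~\eqref{eq:351n} collapses, once $\psi$ and $\phi$ are trivialised, to a closed-diagram identity which unwinds to $\eps_A\circ\mu\circ(\eta_A\otimes\id_A) = \eps_A$, i.e.\ to left unitality.

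The only condition requiring genuine work, and hence the main obstacle, is the ``pentagon'' identity~\eqref{eq:347n}: it is the three-dimensional associativity constraint and features three copies of $\alpha$ on its right-hand side against two on the left. With $T = A$ and $\alpha = \Delta\circ\mu$ both sides are morphisms built from iterated multiplications and comultiplications of~$A$, and the identity is an instance of the general fact that such a ``flow network'' for a $\Delta$-separable Frobenius algebra depends only on the topology of the underlying surface. Concretely I would reduce both sides to a common normal form using coassociativity, associativity, the Frobenius relations $(\mu\otimes\id)\circ(\id\otimes\Delta) = \Delta\circ\mu = (\id\otimes\mu)\circ(\Delta\otimes\id)$ and $\mu\circ\Delta = \id_A$, checking that the $\Delta$-separability ``bubbles'' created along the way cancel, and invoking commutativity only to reconcile the two orderings of the three outgoing $A$-actions on the right-hand side of~\eqref{eq:347n}. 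Once~\eqref{eq:347n} is established, the remaining conditions are routine string-diagram manipulations of the same kind, and one concludes that $(A,T,\alpha,\bar\alpha,\psi,\phi)$ is a special orbifold datum in~$\Cat{B}$.
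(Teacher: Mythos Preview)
Your approach is correct and matches the paper's: both verify \eqref{eq:347n}--\eqref{eq:351n} directly from the commutative \SSFR\ axioms after noting that $\psi=\id_A$ and $\phi=1$ remove all scalar insertions. The paper's treatment is equally brief, checking only the first identity of~\eqref{eq:348n} and~\eqref{eq:349n} as samples and declaring the rest analogous.

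One difference in emphasis is worth noting. You single out the pentagon~\eqref{eq:347n} as the main obstacle, while the paper does not discuss it explicitly. Conversely, the paper spells out the key manoeuvre for~\eqref{eq:349n} that you only allude to: the $T^\dagger$-strands are genuine $A^*$-strands with rigidity (co)evaluations, and symmetry of the Frobenius structure is what allows you to replace $\ev_A$ and $\coev_A$ by $\eps\circ\mu$ and $\Delta\circ\eta$, thereby turning the $A^*$-strand into an $A$-strand. Once that substitution is made, your ``$\bar\alpha\circ\alpha = p_{T,T}$'' reasoning goes through (with commutativity to remove the residual crossings). Your remark that ``$\mu\circ c_{A,A}=\mu$ and $\theta_A=\id_A$ unknot the crossings'' covers the second half of this step but not the first; it would strengthen the write-up to state the duality-to-Frobenius-pairing replacement explicitly, since without it the $T^\dagger$-conditions \eqref{eq:349n}--\eqref{eq:350n} do not literally reduce to the same shape as~\eqref{eq:348n}. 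Similarly, your reduction of~\eqref{eq:351n} to unitality also uses $\Delta$-separability along the way (to collapse the loop after the substitution), not only unitality.
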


\begin{proof}
Note that the commutativity of~$A$ is needed for~$A$ to be a right $(A \otimes A)$-module. 
	 The fact that the conditions in Proposition~\ref{prop:internal-orb} are all satisfied follows from commutativity and the \SSFR\ properties of~$A$. 
We will check the first identity in \eqref{eq:348n} and condition \eqref{eq:349n} as examples. 

For the first condition \eqref{eq:348n}, the left-hand side becomes $\Delta \circ \mu \circ \Delta \circ \mu = \Delta \circ \mu$, using $\Delta$-separability. 
The right-hand side can be rewritten as $(\id \otimes \mu) \circ (\Delta \otimes \id)$, and the required equality is then just the Frobenius property.

As for \eqref{eq:349n}, using symmetry one can replace the duality maps on $A$ by $\eps \circ \mu$ and $\Delta \circ \eta$, thereby also replacing the $A^*$-labelled strand by its orientation-reversed version (which is then labelled $A$). 
After this, the computation boils down to using symmetry and commutativity a number of times, as well as $\Delta$-separability.
\end{proof}

In the case~$\Cat{B}$ is a modular tensor category and~$A$ is in addition simple, 
	 the orbifold theory for an orbifold datum as in Proposition~\ref{prop:comm-alg} is equivalent to the Reshetikhin--Turaev TQFT
obtained from the category of local $A$-modules in~$\Cat{B}$, which is again modular
\cite[Thm.\,4.5]{Kirillov:2001ti}. 
	 Based on the work \cite{MuleRunk} this result will be explained in \cite{CMRSS}.

\section{Turaev--Viro theory}
\label{sec:TVtheory}

In this section we explain how ``state sum models are orbifolds of the trivial theory'' in three dimensions: 
We start in Section~\ref{subsec:TVire} by reviewing Turaev--Viro theory (which constructs a state sum TQFT~$\zz^{\text{TV},\Cat{S}}$ for every spherical fusion category~$\Cat{S}$ over an algebraically closed field~$\Bbbk$), in the formulation of Turaev--Virelizier. 
Then, independently of Section~\ref{subsec:TVire}, in Section~\ref{subsec:orbidatatrivial} we define the ``trivial'' 3-dimensional defect TQFT~$\zz^{\text{triv}}$, and for every spherical fusion category~$\Cat{S}$ we construct a special orbifold datum~$\Aca$ for~$\zz^{\text{triv}}$. 
In Section~\ref{subsec:TVorbifolds} we prove that  $\zz^{\text{TV},\Cat{S}} \cong (\zz^{\text{triv}})_{\Aca}$. 
Altogether, this establishes our first main result (Theorem~A from the introduction).

\subsection{Turaev--Virelizier construction}
\label{subsec:TVire}

Let $\Cat{S} \equiv (\Cat{S},\otimes,\one)$ be a spherical fusion category. 
We choose a set~$I$ of representatives of the isomorphism classes of simple objects in~$\Cat{S}$ such that $\one \in I$, 
and we denote their quantum dimensions as 
\be
d_i := \dim(i) \in \End_{\Cat{S}}(\one) = \Bbbk 
	\qquad \text{for all } i \in I \, . 
\ee

For all $i,j,k \in I$ we say that two bases~$\Lambda$ of $\Hom_{\Cat{S}}(i\otimes j,k)$ and $\widehat{\Lambda}$ of $\Hom_{\Cat{S}}(k,i\otimes j)$ are \textsl{dual} to each other if they are dual with respect to the trace pairing 
\be\label{eq:tracepairing}
\Hom_{\Cat{S}}(i\otimes j,k) \times \Hom_{\Cat{S}}(k,i\otimes j) \supset \Lambda \times \widehat{\Lambda} \ni (\lambda, \widehat{\mu}) \lmt 
\tikzzbox{\begin{tikzpicture}[very thick,scale=0.4,color=blue!50!black, baseline=-0.1cm]
	\draw[-dot-] (0,0) .. controls +(0,1) and +(0,1) .. (1,0);
	\draw[-dot-] (1,0) .. controls +(0,-1) and +(0,-1) .. (0,0);
	\draw (0.5,0.7) -- (0.5,1.2);
	\draw[directed] (0.5,1.2) .. controls +(0,1) and +(0,1) .. (-1,1.2);
	\draw (-1,1.2) -- (-1,-1.2);
	\draw[directed] (-1,-1.2) .. controls +(0,-1) and +(0,-1) .. (0.5,-1.2);
	\draw (0.5,-0.7) -- (0.5,-1.2);
	%
	\fill (1,-1.3) circle (0pt) node (meet2) {{\scriptsize$\widehat{\mu}$}};
	\fill (1,1.2) circle (0pt) node (meet2) {{\scriptsize$\lambda$}};
	\end{tikzpicture}}
\in \Bbbk \, . 
\ee
By useful abuse of notation we then write 
\be
\tikzzbox{\begin{tikzpicture}[very thick,scale=0.4,color=blue!50!black, baseline=-0.1cm]
	\draw[-dot-] (0,0) .. controls +(0,1) and +(0,1) .. (1,0);
	\draw[-dot-] (1,0) .. controls +(0,-1) and +(0,-1) .. (0,0);
	\draw (0.5,0.7) -- (0.5,1.2);
	\draw[directed] (0.5,1.2) .. controls +(0,1) and +(0,1) .. (-1,1.2);
	\draw (-1,1.2) -- (-1,-1.2);
	\draw[directed] (-1,-1.2) .. controls +(0,-1) and +(0,-1) .. (0.5,-1.2);
	\draw (0.5,-0.7) -- (0.5,-1.2);
	%
	\fill (1,-1.3) circle (0pt) node (meet2) {{\scriptsize$\widehat{\mu}$}};
	\fill (1,1.2) circle (0pt) node (meet2) {{\scriptsize$\lambda$}};
	\end{tikzpicture}}
= \delta_{\lambda,\mu} 
\qquad \text{for all } \lambda \in \Lambda, \, \widehat{\mu} \in \widehat{\Lambda}
\ee
and we will always denote the dual basis element of~$\lambda$ by $\widehat{\lambda}$. 
Note that the simple objects $i,j,k$ are suppressed in the notation for the basis elements $\lambda, \widehat{\mu}$, and we will infer the former from the context. 

\begin{lemma}\label{lem:recall}
\begin{enumerate}
	\item 
	For all $i,j,j',k,k' \in I$ we have 
	\be
	\label{eq:basis-sum-rule-i}
\tikzzbox{\begin{tikzpicture}[very thick,scale=0.4,color=blue!50!black, baseline=0cm]
	\draw[-dot-] (0,0) .. controls +(0,1) and +(0,1) .. (1,0);
	\draw[-dot-] (1,0) .. controls +(0,-1) and +(0,-1) .. (0,0);
	\draw (0.5,0.7) -- (0.5,2);
	\draw (0.5,-0.7) -- (0.5,-2);
	%
	\fill (1,-1.3) circle (0pt) node (meet2) {{\scriptsize$\widehat{\mu}$}};
	\fill (1,1.2) circle (0pt) node (meet2) {{\scriptsize$\lambda$}};
	\fill (0.1,2) circle (0pt) node (meet2) {{\scriptsize$k'$}};
	\fill (0.1,-2) circle (0pt) node (meet2) {{\scriptsize$k$}};
	\fill (-0.3,0) circle (0pt) node (meet2) {{\scriptsize$i$}};
	\fill (1.35,0) circle (0pt) node (meet2) {{\scriptsize$j$}};
	\end{tikzpicture}}
	= \frac{1}{d_k} \, \delta_{\lambda, \mu} \delta_{k,k'} \cdot 
		\tikzzbox{\begin{tikzpicture}[very thick,scale=0.4,color=blue!50!black, baseline=0cm]
			\draw (0.5,-2) -- (0.5,2);
			\fill (0.1,-2) circle (0pt) node (meet2) {{\scriptsize$k$}};
			\end{tikzpicture}}
	 \, , 
	\qquad 
	\tikzzbox{\begin{tikzpicture}[very thick,scale=0.4,color=blue!50!black, baseline=0cm]
		\draw[directed] (0,-1) .. controls +(0,-1) and +(0,-1) .. (1,-1);
		\draw[-dot-] (1,-1) .. controls +(0,0.5) and +(0,0.5) .. (2,-1);
		\draw (2,-1) -- (2,-2);
		\draw (0,-1) -- (0,1);
		\draw[-dot-] (2,1) .. controls +(0,-0.5) and +(0,-0.5) .. (1,1);
		\draw[directed] (1,1) .. controls +(0,1) and +(0,1) .. (0,1);
		\draw (2,1) -- (2,2);
		\draw (1.5,-0.5) -- (1.5,0.5);
		%
		\fill (1.5,1.1) circle (0pt) node (meet2) {{\scriptsize$\widehat{\mu}$}};
		\fill (1.5,-1.1) circle (0pt) node (meet2) {{\scriptsize$\lambda$}};
		\fill (2.4,2) circle (0pt) node (meet2) {{\scriptsize$j'$}};
		\fill (2.3,-2) circle (0pt) node (meet2) {{\scriptsize$j$}};
		\fill (1.2,0) circle (0pt) node (meet2) {{\scriptsize$k$}};
		\fill (1.05,-1.7) circle (0pt) node (meet2) {{\scriptsize$i$}};
		\end{tikzpicture}}
	= \frac{1}{d_j} \, \delta_{\lambda, \mu} \delta_{j,j'} \cdot 
		\tikzzbox{\begin{tikzpicture}[very thick,scale=0.4,color=blue!50!black, baseline=0cm]
			\draw (0.5,-2) -- (0.5,2);
			\fill (0.1,-2) circle (0pt) node (meet2) {{\scriptsize$j$}};
			\end{tikzpicture}}
	 \, .
	\ee
	\item 
	For all $i,j,a,b \in I$ we have 
	\be
	\label{eq:basis-sum-rule-ii}
	\sum_{k,\lambda} d_k \cdot 
		\tikzzbox{\begin{tikzpicture}[very thick,scale=0.4,color=blue!50!black, baseline=-0.1cm]
			\draw[-dot-] (1,-1) .. controls +(0,0.5) and +(0,0.5) .. (2,-1);
			\draw (2,-1) -- (2,-2);
			\draw[-dot-] (2,1) .. controls +(0,-0.5) and +(0,-0.5) .. (1,1);
			\draw (2,1) -- (2,2);
			\draw (1.5,-0.5) -- (1.5,0.5);
			\draw (1,1) -- (1,2);
			\draw (1,-1) -- (1,-2);
			%
			\fill (1.5,1.2) circle (0pt) node (meet2) {{\scriptsize$\widehat{\lambda}$}};
			\fill (1.5,-1.2) circle (0pt) node (meet2) {{\scriptsize$\lambda$}};
			\fill (2.4,2) circle (0pt) node (meet2) {{\scriptsize$j$}};
			\fill (2.3,-2) circle (0pt) node (meet2) {{\scriptsize$j$}};
			\fill (0.7,2) circle (0pt) node (meet2) {{\scriptsize$i\vphantom{j}$}};
			\fill (0.7,-2) circle (0pt) node (meet2) {{\scriptsize$i\vphantom{j}$}};
			\fill (1.2,0) circle (0pt) node (meet2) {{\scriptsize$k$}};
			\end{tikzpicture}}
	= 
		\tikzzbox{\begin{tikzpicture}[very thick,scale=0.4,color=blue!50!black, baseline=-0.1cm]
		\draw (0.5,-2) -- (0.5,2);
		\fill (0.1,-2) circle (0pt) node (meet2) {{\scriptsize$i\vphantom{j}$}};
		\draw (1.5,-2) -- (1.5,2);
		\fill (2,-2) circle (0pt) node (meet2) {{\scriptsize$j$}};
		\end{tikzpicture}}
	\, , 
	\qquad 
	\sum_{l,\mu} d_l \cdot 
		\tikzzbox{\begin{tikzpicture}[very thick,scale=0.4,color=blue!50!black, baseline=-0.1cm]
			\draw[-dot-] (0,-1) .. controls +(0,-0.5) and +(0,-0.5) .. (1,-1);
			\draw[directed] (1,-1) .. controls +(0,0.5) and +(0,0.5) .. (2,-1);
			\draw (2,-1) -- (2,-2);
			\draw (0,-1) -- (0,1);
			\draw[directed] (2,1) .. controls +(0,-0.5) and +(0,-0.5) .. (1,1);
			\draw[-dot-] (1,1) .. controls +(0,0.5) and +(0,0.5) .. (0,1);
			\draw (2,1) -- (2,2);
			\draw (0.5,1.3) -- (0.5,2);
			\draw (0.5,-1.3) -- (0.5,-2);
			%
			\fill (0.5,0.8) circle (0pt) node (meet2) {{\scriptsize$\mu$}};
			\fill (0.5,-0.8) circle (0pt) node (meet2) {{\scriptsize$\widehat{\mu}$}};
			\fill (2.45,2) circle (0pt) node (meet2) {{\scriptsize$b^*$}};
			\fill (2.45,-2) circle (0pt) node (meet2) {{\scriptsize$b^*$}};
			\fill (0.2,-2) circle (0pt) node (meet2) {{\scriptsize$a\vphantom{b^*}$}};
			\fill (0.2,2) circle (0pt) node (meet2) {{\scriptsize$a\vphantom{b^*}$}};
			\fill (-0.3,0) circle (0pt) node (meet2) {{\scriptsize$l$}};
			\end{tikzpicture}}
	= 
	\tikzzbox{\begin{tikzpicture}[very thick,scale=0.4,color=blue!50!black, baseline=-0.1cm]
		\draw (0.5,-2) -- (0.5,2);
		\fill (0.1,-2) circle (0pt) node (meet2) {{\scriptsize$a\vphantom{b^*}$}};
		\draw (1.5,-2) -- (1.5,2);
		\fill (2.1,-2) circle (0pt) node (meet2) {{\scriptsize$b^*$}};
	\end{tikzpicture}}
	\ee
	where the first sum is over all $k\in I$ and (for fixed~$k$) all elements~$\lambda$ of a chosen basis of $\Hom_{\Cat{S}}(i\otimes j,k)$, and similarly for the second sum. 
	\item 
	For all $i,j,k\in I$, $\Gamma \in \Hom_{\Cat{S}}(\one, k^* \otimes i \otimes j)$ and $\Gamma' \in \Hom_{\Cat{S}}(k^* \otimes i \otimes j, \one)$ we have 
\be 
\label{eq:basis-sum-rule-iii}
\sum_\lambda
\tikzzbox{\begin{tikzpicture}[very thick,scale=0.6,color=blue!50!black, baseline=-1.6cm]
	\draw[-dot-] (1,0) .. controls +(0,-1) and +(0,-1) .. (0,0);
	\draw[directed] (-1,-1.2) .. controls +(0,-1) and +(0,-1) .. (0.5,-1.2);
	\draw (0.5,-0.7) -- (0.5,-1.2);
	\draw (-1,-1.2) -- (-1,0.5);
	\draw (0,-0) -- (0,0.5);
	\draw (1,-0) -- (1,0.5);
	\draw[-dot-] (1,-5) .. controls +(0,1) and +(0,1) .. (0,-5);
	\draw[directed] (0.5,-3.8) .. controls +(0,1) and +(0,1) .. (-1,-3.8);
	\draw (0.5,-4.3) -- (0.5,-3.8);	
	\draw (-1,-3.8) -- (-1,-5.5);
	\draw (0,-5) -- (0,-5.5);
	\draw (1,-5) -- (1,-5.5);
	%
	\fill (0.5,-0.3) circle (0pt) node (meet2) {{\scriptsize$\widehat{\lambda}$}};
	\draw (0,0.7) node[white, fill=white, inner sep=4pt,draw, rounded corners=1pt] (R) {{\scriptsize$\hspace{0.4cm}\Gamma'\hspace{0.4cm}$}};
	\draw[line width=1pt, color=black] (0,0.7) node[inner sep=4pt,draw, rounded corners=1pt] (R) {{\scriptsize$\hspace{0.4cm}\Gamma'\hspace{0.4cm}$}};
	\fill (0.5,-4.6) circle (0pt) node (meet2) {{\scriptsize$\lambda$}};
	\draw (0,-5.7) node[white, fill=white, inner sep=4pt,draw, rounded corners=1pt] (R) {{\scriptsize$\hspace{0.434cm}\Gamma\hspace{0.434cm}$}};
	\draw[line width=1pt, color=black] (0,-5.7) node[inner sep=4pt,draw, rounded corners=1pt] (R) {{\scriptsize$\hspace{0.434cm}\Gamma\hspace{0.434cm}$}};
	\fill (-0.62,-5) circle (0pt) node (meet2) {{\scriptsize$k^*\vphantom{jk}$}};
	\fill (0.2,-5) circle (0pt) node (meet2) {{\scriptsize$i\vphantom{jk}$}};
	\fill (0.8,-5) circle (0pt) node (meet2) {{\scriptsize$j\vphantom{jk}$}};
	\fill (-0.62,0) circle (0pt) node (meet2) {{\scriptsize$k^*\vphantom{jk}$}};
	\fill (-0.2,0) circle (0pt) node (meet2) {{\scriptsize$i\vphantom{jk}$}};
	\fill (1.2,0) circle (0pt) node (meet2) {{\scriptsize$j\vphantom{jk}$}};
	\end{tikzpicture}}
\; = \;\;\;\;
\tikzzbox{\begin{tikzpicture}[very thick,scale=0.6,color=blue!50!black, baseline=-1.6cm]
	\draw (-1,-5.5) -- (-1,0.5);
	\draw (0,-5.5) -- (0,0.5);
	\draw (1,-5.5) -- (1,0.5);
	%
	\draw (0,0.7) node[white, fill=white, inner sep=4pt,draw, rounded corners=1pt] (R) {{\scriptsize$\hspace{0.4cm}\Gamma'\hspace{0.4cm}$}};
	\draw[line width=1pt, color=black] (0,0.7) node[inner sep=4pt,draw, rounded corners=1pt] (R) {{\scriptsize$\hspace{0.4cm}\Gamma'\hspace{0.4cm}$}};
	\draw (0,-5.7) node[white, fill=white, inner sep=4pt,draw, rounded corners=1pt] (R) {{\scriptsize$\hspace{0.434cm}\Gamma\hspace{0.434cm}$}};
	\draw[line width=1pt, color=black] (0,-5.7) node[inner sep=4pt,draw, rounded corners=1pt] (R) {{\scriptsize$\hspace{0.434cm}\Gamma\hspace{0.434cm}$}};
	\fill (-0.62,-5) circle (0pt) node (meet2) {{\scriptsize$k^*\vphantom{jk}$}};
	\fill (0.2,-5) circle (0pt) node (meet2) {{\scriptsize$i\vphantom{jk}$}};
	\fill (0.8,-5) circle (0pt) node (meet2) {{\scriptsize$j\vphantom{jk}$}};
	\end{tikzpicture}}
	\, . 
	\ee
\end{enumerate}
In parts (i)--(iii), the vertically reflected versions of the identities hold as well.
\end{lemma}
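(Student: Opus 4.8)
The plan is to deduce all three parts from two structural facts about a spherical fusion category~$\Cat{S}$: Schur's lemma, i.e.\ $\Hom_{\Cat{S}}(k,k') = \delta_{k,k'}\,\Bbbk\cdot\id_k$ for simple $k,k'$ together with $\tr(\id_k) = d_k$; and the semisimple resolution of the identity $\id_{U} = \sum_{k\in I}\sum_\alpha u_\alpha^{(k)}\circ v_\alpha^{(k)}$ on any object~$U$, where the $v_\alpha^{(k)}$ form a basis of $\Hom_{\Cat{S}}(U,k)$ and $v_\alpha^{(k)}\circ u_\beta^{(k)} = \delta_{\alpha\beta}\id_k$. Sphericality will be used only to guarantee that traces (hence the pairing~\eqref{eq:tracepairing}) are invariant under rotating basis morphisms with the pivotal isomorphisms, so that all the ``rotated'' pairings appearing below coincide with the one in~\eqref{eq:tracepairing}.

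For part~(i), the left-hand side of the first identity is a morphism $k\to k'$ between simple objects, obtained by composing~$\lambda$ with~$\widehat\mu$ along the~$k$-strand; hence it is zero unless $k\cong k'$, in which case it equals $c\cdot\id_k$ for some $c\in\Bbbk$. Closing the~$k$-strand into a loop multiplies~$c$ by~$d_k$ and yields precisely the trace pairing $\langle\lambda,\widehat\mu\rangle = \delta_{\lambda,\mu}$, so $c = \tfrac1{d_k}\delta_{\lambda,\mu}\delta_{k,k'}$, which is the claim. The second identity is proved the same way after rotating $\lambda\in\Hom_{\Cat{S}}(i\otimes j,k)$ and~$\widehat\mu$ to morphisms in $\Hom_{\Cat{S}}(i,k\otimes j^*)$ and $\Hom_{\Cat{S}}(k\otimes j^*,i)$: the resulting map $j\to j'$ is a multiple of~$\id_j$, and closing the~$j$-strand produces a rotated trace pairing which by sphericality again equals $\delta_{\lambda,\mu}$.

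For part~(ii), the first identity is just the resolution of the identity on $U = i\otimes j$: with $v_\alpha$ the chosen basis of $\Hom_{\Cat{S}}(i\otimes j,k)$, part~(i) shows that the composition-dual basis is $u_\alpha = d_k\,\widehat v_\alpha$, so $\id_{i\otimes j} = \sum_{k,\lambda}d_k\,\widehat\lambda\circ\lambda$, which is the displayed diagram. The second identity is the same statement for $U = a\otimes b^*$, obtained from the first by rotating the $b$-leg with the pivotal structure (and relabelling $k\mapsto l$, $j\mapsto b^*$); sphericality guarantees that the trace-dual basis transforms correctly under this rotation.

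Finally, part~(iii) follows by using the relation from part~(ii) to carry out the sum over~$\lambda$: reading the~$k$- and $k^*$-strands through the (co)evaluations implicit in the picture, $\sum_\lambda\widehat\lambda\circ(-)\circ\lambda$ collapses by part~(i) to a multiple of the identity on the $k$-isotypic component, the residual~$k$-loop contributing a factor~$d_k$ that cancels the $\tfrac1{d_k}$ of part~(i); what remains is exactly the right-hand side built from~$\Gamma$ and~$\Gamma'$. The vertically reflected versions in (i)--(iii) are obtained by reading every diagram upside down, which merely interchanges the roles of~$\lambda$ and~$\widehat\lambda$. The only real work is bookkeeping---tracking which pivotal isomorphism is used in each rotation and checking that the powers of~$d_k$ and the Kronecker deltas come out exactly as displayed; sphericality is precisely what makes the competing normalisation conventions agree, so no genuine obstacle arises.
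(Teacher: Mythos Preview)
Your argument is correct and matches the paper's approach closely. Parts~(i) and~(ii) are handled identically (Schur's lemma plus trace, then the semisimple resolution of the identity checked against basis vectors). For part~(iii) there is a small difference in perspective: the paper starts from the right-hand side, inserts the first identity of~(ii) on the $i\otimes j$ strands (introducing a spurious sum over all~$k'$), and then uses the second identity of~(ii) together with $\Hom_{\Cat S}(\one,l)=0$ for $l\neq\one$ to kill the $k'\neq k$ terms and produce the cap/cup; you instead work from the left-hand side, stacking the two scalar diagrams and recognising $\coev_k\circ\tev_k$ as $d_k$ times the projector onto the $\one$-component of $k^*\otimes k$, so that the $d_k$ cancels against the $\tfrac1{d_k}$ in $\sum_\lambda\widehat\lambda\circ\lambda$. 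Both routes are short and use the same ingredients; yours is slightly more direct in that it avoids introducing and then eliminating the extra sum over~$k'$.
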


\begin{proof}
All these identities follow from simple manipulations with bases: for part (i) take quantum traces on both sides; in part (ii) post-compose both sides with the same basis vector and use part (i); part (iii) follows from inserting the first identity in (ii) applied to $i \otimes j$ on the right-hand side and then using the second identity in (ii) together with the observation that $\Hom_{\Cat{S}}(\one, l)=\{0\}$ unless $l=\one$, and that the $l=\one$ summand in (ii) can be written as $\frac{1}{d_a}\delta_{a,b} \coev_a \circ \tev_a$. 
\end{proof}

We define the \textsl{F-matrix} elements $F^{\lambda \lambda'}_{\mu\mu'}$ in terms of the chosen bases as follows:
\begin{align}
\tikzzbox{\begin{tikzpicture}[very thick,scale=0.4,color=blue!50!black, baseline=0.3cm]
	\draw[-dot-] (1,0) .. controls +(0,1) and +(0,1) .. (2,0);
	\draw[-dot-] (0,0.7) .. controls +(0,1) and +(0,1) .. (1.5,0.7);
	\draw (0,0.7) -- (0,0);
	\draw (0.75,1.4) -- (0.75,2.5);
	\draw (0,0) -- (0,-0.5);
	\draw (1,0) -- (1,-0.5);
	\draw (2,0) -- (2,-0.5);
	%
	\fill (0.75,0.9) circle (0pt) node (meet2) {{\scriptsize$\lambda$}};
	\fill (1.5,0.2) circle (0pt) node (meet2) {{\scriptsize$\mu$}};
	\fill (-0.3,-0.5) circle (0pt) node (meet2) {{\scriptsize$a\vphantom{bj}$}};
	\fill (0.7,-0.5) circle (0pt) node (meet2) {{\scriptsize$b\vphantom{bj}$}};
	\fill (2.3,-0.5) circle (0pt) node (meet2) {{\scriptsize$j\vphantom{bj}$}};
	\fill (1.3,1.6) circle (0pt) node (meet2) {{\scriptsize$c$}};
	\fill (0.4,2.5) circle (0pt) node (meet2) {{\scriptsize$k$}};
	%
	\end{tikzpicture}}
& = 
\sum_{d,\lambda',\mu'} 
	F^{\lambda \lambda'}_{\mu\mu'} 
\cdot 
\tikzzbox{\begin{tikzpicture}[very thick,scale=0.4,color=blue!50!black, baseline=0.3cm]
	\draw[-dot-] (0,0) .. controls +(0,1) and +(0,1) .. (1,0);
	\draw[-dot-] (0.5,0.7) .. controls +(0,1) and +(0,1) .. (2,0.7);
	\draw (2,0.7) -- (2,0);
	\draw (1.25,1.4) -- (1.25,2.5);
	\draw (0,0) -- (0,-0.5);
	\draw (1,0) -- (1,-0.5);
	\draw (2,0) -- (2,-0.5);
	%
	\fill (1.25,1) circle (0pt) node (meet2) {{\scriptsize$\lambda'$}};
	\fill (0.5,0.2) circle (0pt) node (meet2) {{\scriptsize$\mu'$}};
	\fill (-0.3,-0.5) circle (0pt) node (meet2) {{\scriptsize$a\vphantom{bj}$}};
	\fill (1.3,-0.5) circle (0pt) node (meet2) {{\scriptsize$b\vphantom{bj}$}};
	\fill (2.3,-0.5) circle (0pt) node (meet2) {{\scriptsize$j\vphantom{bj}$}};
	\fill (1.6,2.5) circle (0pt) node (meet2) {{\scriptsize$k$}};
	\fill (0.55,1.65) circle (0pt) node (meet2) {{\scriptsize$d$}};
	\end{tikzpicture}}
\, , \label{eq:Fsummation}
\\
\tikzzbox{\begin{tikzpicture}[very thick,scale=0.4,color=blue!50!black, baseline=0.3cm]
	\draw[-dot-] (0,0) .. controls +(0,1) and +(0,1) .. (1,0);
	\draw[-dot-] (0.5,0.7) .. controls +(0,1) and +(0,1) .. (2,0.7);
	\draw (2,0.7) -- (2,0);
	\draw (1.25,1.4) -- (1.25,2.5);
	\draw (0,0) -- (0,-0.5);
	\draw (1,0) -- (1,-0.5);
	\draw (2,0) -- (2,-0.5);
	%
	\fill (1.25,1) circle (0pt) node (meet2) {{\scriptsize$\lambda'$}};
	\fill (0.5,0.2) circle (0pt) node (meet2) {{\scriptsize$\mu'$}};
	\fill (-0.3,-0.5) circle (0pt) node (meet2) {{\scriptsize$a\vphantom{bj}$}};
	\fill (1.3,-0.5) circle (0pt) node (meet2) {{\scriptsize$b\vphantom{bj}$}};
	\fill (2.3,-0.5) circle (0pt) node (meet2) {{\scriptsize$j\vphantom{bj}$}};
	\fill (0.55,1.65) circle (0pt) node (meet2) {{\scriptsize$d$}};
	\fill (1.6,2.5) circle (0pt) node (meet2) {{\scriptsize$k$}};
	\end{tikzpicture}}
& = 
\sum_{c,\lambda'',\mu''} (F^{-1})^{\lambda' \lambda''}_{\mu'\mu''}
\cdot 
\tikzzbox{\begin{tikzpicture}[very thick,scale=0.4,color=blue!50!black, baseline=0.3cm]
	\draw[-dot-] (1,0) .. controls +(0,1) and +(0,1) .. (2,0);
	\draw[-dot-] (0,0.7) .. controls +(0,1) and +(0,1) .. (1.5,0.7);
	\draw (0,0.7) -- (0,0);
	\draw (0.75,1.4) -- (0.75,2.5);
	\draw (0,0) -- (0,-0.5);
	\draw (1,0) -- (1,-0.5);
	\draw (2,0) -- (2,-0.5);
	%
	\fill (0.75,0.9) circle (0pt) node (meet2) {{\scriptsize$\lambda''$}};
	\fill (1.5,0.2) circle (0pt) node (meet2) {{\scriptsize$\mu''$}};
	\fill (-0.3,-0.5) circle (0pt) node (meet2) {{\scriptsize$a\vphantom{bj}$}};
	\fill (0.7,-0.5) circle (0pt) node (meet2) {{\scriptsize$b\vphantom{bj}$}};
	\fill (2.3,-0.5) circle (0pt) node (meet2) {{\scriptsize$j\vphantom{bj}$}};
	\fill (0.4,2.5) circle (0pt) node (meet2) {{\scriptsize$k$}};
	\fill (1.3,1.6) circle (0pt) node (meet2) {{\scriptsize$c$}};
	\end{tikzpicture}}
\, .
\label{eq:Finvsummation}
\end{align}
Using Lemma~\ref{lem:recall}(i), these can be expressed in terms of closed string diagrams as
\begin{align}
\label{eq:Fmat}
F^{\lambda \lambda'}_{\mu\mu'} = d_d \cdot 
\tikzzbox{\begin{tikzpicture}[very thick,scale=0.4,color=blue!50!black, baseline=0cm]
	\draw[-dot-] (0,0) .. controls +(0,1) and +(0,1) .. (-1,0);
	\draw[-dot-] (-1,0) .. controls +(0,-1) and +(0,-1) .. (-2,0);
	\draw[-dot-] (-0.5,0.7) .. controls +(0,1.25) and +(0,1.25) .. (-2,0.7);
	\draw (-2,0.7) -- (-2,0);
	\draw[-dot-] (0,-0.7) .. controls +(0,-1.25) and +(0,-1.25) .. (-1.5,-0.7);
	\draw (0,-0.7) -- (0,0);
	\draw[directed] (-1.25,1.5) .. controls +(0,1) and +(0,1) .. (-2.75,1.5);
	\draw (-2.75,1.5) -- (-2.75,-1.6);
	\draw[directed] (-2.75,-1.6) .. controls +(0,-1.25) and +(0,-1.25) .. (-0.75,-1.6);
	\fill (-0.2,-2.05) circle (0pt) node (meet2) {{\scriptsize$\widehat{\lambda'}$}};
	\fill (-2.1,-1.1) circle (0pt) node (meet2) {{\scriptsize$\widehat{\mu'}$}};
	\fill (-0.8,2.1) circle (0pt) node (meet2) {{\scriptsize$\lambda$}};
	\fill (-0.5,0.2) circle (0pt) node (meet2) {{\scriptsize$\mu$}};
        \fill (-1,-1.2) circle (0pt) node (meet2) {{\scriptsize$d$}};
	\end{tikzpicture}}
\, , \qquad 
(F^{-1})^{\lambda' \lambda''}_{\mu'\mu''} = d_c \cdot 
\tikzzbox{\begin{tikzpicture}[very thick,scale=0.4,color=blue!50!black, baseline=0cm]
	\draw[-dot-] (0,0) .. controls +(0,1) and +(0,1) .. (1,0);
	\draw[-dot-] (1,0) .. controls +(0,-1) and +(0,-1) .. (2,0);
	\draw[-dot-] (0.5,0.7) .. controls +(0,1.25) and +(0,1.25) .. (2,0.7);
	\draw (2,0.7) -- (2,0);
	\draw[-dot-] (0,-0.7) .. controls +(0,-1.25) and +(0,-1.25) .. (1.5,-0.7);
	\draw (0,-0.7) -- (0,0);
	\draw[directed] (1.25,1.5) .. controls +(0,1.25) and +(0,1.25) .. (-0.75,1.5);
	\draw (-0.75,1.5) -- (-0.75,-1.6);
	\draw[directed] (-0.75,-1.6) .. controls +(0,-1) and +(0,-1) .. (0.75,-1.6);
	%
	\fill (1.2,-2.05) circle (0pt) node (meet2) {{\scriptsize$\widehat{\lambda''}$}};
	\fill (2.1,-1.1) circle (0pt) node (meet2) {{\scriptsize$\widehat{\mu''}$}};
	\fill (1.7,2.1) circle (0pt) node (meet2) {{\scriptsize$\lambda'$}};
	\fill (0.5,0.2) circle (0pt) node (meet2) {{\scriptsize$\mu'$}};
	\fill (1,-1.2) circle (0pt) node (meet2) {{\scriptsize$c$}};
	\end{tikzpicture}}
\, . 
\end{align} 
The pentagon identity satisfied by the associator translates into an identity for $F$-matrix elements as follows. 
One computes the change-of-basis matrix $M_{\lambda\mu\nu,\lambda'\mu'\nu'}$ in 
\be\label{eq:pentagon-two-bases-1}
\tikzzbox{\begin{tikzpicture}[very thick,scale=0.4,color=blue!50!black, baseline=0.8cm]
	\draw (0,0) -- (4,4);
	\draw (6,0) -- (7,1);
	\draw (4,0) -- (6,2);
	\draw (8,0) -- (4,4);
	\draw (4,4) -- (4,5);
	\fill (0,-0.4) circle (0pt) node (meet2) {{\scriptsize$a\vphantom{bp}$}};
	\fill (4,-0.4) circle (0pt) node (meet2) {{\scriptsize$b\vphantom{bp}$}};
	\fill (6,-0.4) circle (0pt) node (meet2) {{\scriptsize$c\vphantom{bp}$}};
	\fill (8,-0.4) circle (0pt) node (meet2) {{\scriptsize$d\vphantom{bp}$}};
	\fill (4,5.3) circle (0pt) node (meet2) {{\scriptsize$e\vphantom{bp}$}};
	\fill (6.15,1.15) circle (0pt) node (meet2) {{\scriptsize$x$}};
	\fill (4.65,2.65) circle (0pt) node (meet2) {{\scriptsize$y$}};
	\fill (7,1) circle (5pt) node (meet2) {};
	\fill (7.6,1.3) circle (0pt) node (meet2) {{\scriptsize$\nu$}};
	\fill (6,2) circle (5pt) node (meet2) {};
	\fill (6.6,2.3) circle (0pt) node (meet2) {{\scriptsize$\mu$}};
	\fill (4,4) circle (5pt) node (meet2) {};
	\fill (4.6,4.2) circle (0pt) node (meet2) {{\scriptsize$\lambda$}};
	\end{tikzpicture}}
= \sum_{x',y',\lambda',\mu',\nu'} M_{\lambda\mu\nu,\lambda'\mu'\nu'} \cdot 
\tikzzbox{\begin{tikzpicture}[very thick,scale=0.4,color=blue!50!black, baseline=0.8cm]
	\draw (0,0) -- (4,4);
	\draw (2,0) -- (1,1);
	\draw (4,0) -- (2,2);
	\draw (8,0) -- (4,4);
	\draw (4,4) -- (4,5);
	\fill (0,-0.4) circle (0pt) node (meet2) {{\scriptsize$a\vphantom{bp}$}};
	\fill (2,-0.4) circle (0pt) node (meet2) {{\scriptsize$b\vphantom{bp}$}};
	\fill (4,-0.4) circle (0pt) node (meet2) {{\scriptsize$c\vphantom{bp}$}};
	\fill (8,-0.4) circle (0pt) node (meet2) {{\scriptsize$d\vphantom{bp}$}};
	\fill (4,5.3) circle (0pt) node (meet2) {{\scriptsize$e\vphantom{bp}$}};
	\fill (1.75,1.25) circle (0pt) node (meet2) {{\scriptsize$x'$}};
	\fill (3.4,2.75) circle (0pt) node (meet2) {{\scriptsize$y'$}};
	\fill (1,1) circle (5pt) node (meet2) {};
	\fill (0.6,1.4) circle (0pt) node (meet2) {{\scriptsize$\nu'$}};
	\fill (2,2) circle (5pt) node (meet2) {};
	\fill (1.7,2.3) circle (0pt) node (meet2) {{\scriptsize$\mu'$}};
	\fill (4,4) circle (5pt) node (meet2) {};
	\fill (3.5,4.1) circle (0pt) node (meet2) {{\scriptsize$\lambda'$}};
	\end{tikzpicture}}
\ee
in two ways. The resulting two expressions for $M_{\lambda\mu\nu,\lambda'\mu'\nu'}$  must be equal, giving the identity
\be\label{eq:pentagon-two-bases-2}
\sum_\delta F_{\mu\nu'}^{\lambda\delta} F_{\nu\mu'}^{\delta\lambda'}
=
\sum_{z,\eps,\phi,\kappa} 
F_{\nu\phi}^{\mu\eps} 
F_{\eps\kappa}^{\lambda\lambda'}
F_{\phi\nu'}^{\kappa\mu'} \, ,
\ee
where the indices take values as prescribed by \eqref{eq:pentagon-two-bases-1}, and~$z$ labels the edge between the vertices with labels $\phi$ and $\varepsilon$.

\bigskip

In the remainder of Section~\ref{subsec:TVire} we review the Turaev--Virelizier construction 
	 \cite{TVire} 
of the Turaev--Viro TQFT
\be 
\zz^{\text{TV},\Cat{S}} \colon \Bord_3 \lra \Vectk 
\ee
for a spherical fusion category~${\Cat{S}}$. 
We only provide the details we need for the proof of Theorem~\ref{thm:ZTVZtrivorb} in Section~\ref{subsec:TVorbifolds}. 

Let $\Sigma \in \Bord_3$ and let~$M$ be a 3-bordism. 
Recall from 
	 \cite[Ch.\,11]{TVire}
the notions of an \textsl{oriented graph}~$\Gamma$ in $\Sigma$, and of an \textsl{oriented stratified 2-polyhedron}~$P$ in~$M$. 
We will exclusively consider the special cases where~$\Gamma$ is the Poincar\'{e} dual of a triangulation of~$\Sigma$ with chosen orientations for the 1-strata of~$\Gamma$ (called edges), and where~$P$ is dual to a triangulation of~$M$ with chosen orientations for the 2-strata of~$P$ (called regions). 
We will denote the sets of $j$-strata of~$\Gamma$ and~$P$ by~$\Gamma_j$ and~$P_j$, respectively. 	

For an oriented graph~$\Gamma$ in~$\Sigma$ as above, let~$c$ be an \textsl{${\Cat{S}}$-colouring} of~$\Gamma$, i.\,e.\ a map $c\colon \Gamma_1 \to I$. 
For a vertex $v\in \Gamma_0$ consider the cyclic set of edges $(e_1,\dots,e_n)$ incident on~$v$ as determined by the opposite orientation of~$\Sigma$. 
Set $\varepsilon(e_i) = +$ if $e_i$ is oriented towards~$v$, and $\varepsilon(e_i) = -$ otherwise, and then 
\be \label{eq:H_ei-def}
H_{e_i} = \Hom_{\Cat{S}}\big(\one, c(e_i)^{\varepsilon(e_i)} \otimes \dots  \otimes  c(e_n)^{\varepsilon(e_n)} \otimes c(e_1)^{\varepsilon(e_1)} \otimes \dots \otimes c(e_{i-1})^{\varepsilon(e_{i-1})} \big)
\ee
where we use the notation $u^+=u$ and $u^-=u^*$ for all $u\in{\Cat{S}}$. 

The duality morphisms of~${\Cat{S}}$ induce isomorphisms~$\{f\}$ between the $H_{e_i}$ which form a projective system, and in \cite{TVire} the vector space $H(E_c^v)$ assigned to the data $E_c^v = ((e_1,\dots,e_n), c, \varepsilon)$ is defined to be the projective limit.
One can also use the duality morphisms of~${\Cat{S}}$ to obtain isomorphisms~$\{g\}$ that move tensor factors between the arguments of $\Hom_{\Cat{S}}(-,-)$, for example 
\be 
\Hom_{\Cat{S}}\big( \one, c(e_k)^+ \otimes c(e_j)^- \otimes c(e_i)^- \big) 
\cong 
\Hom_{\Cat{S}}\big( c(e_i) \otimes c(e_j), c(e_k) \big) \, . 
\ee
The projective limit of the system $\{ f,g \}$ is uniquely isomorphic to the limit of $\{ f \}$, hence we can and will work with the former as $H(E_c^v)$. 
In terms of these we set 
\be\label{eq:HGcS}
H(\Gamma, c; \Sigma) = \bigotimes_{v\in\Gamma_0} H(E_c^v)
\ee 
for $\Sigma, \Gamma, c$ as above, and we note that there is a canonical isomorphism $H(\Gamma^{\text{op}}, c; \Sigma^{\text{op}})^* \cong H(\Gamma, c; \Sigma)$, where $(-)^{\text{op}}$ signifies opposite orientation. 
For example in \eqref{eq:H_ei-def} for $i=1$ one pairs 
$\Hom_{\Cat{S}}(\one, c(e_1)^{\varepsilon(e_1)} \otimes \cdots \otimes c(e_{n})^{\varepsilon(e_{n})} )$ 
with 
$\Hom_{\Cat{S}}(\one, c(e_n)^{-\varepsilon(e_n)} \otimes \cdots \otimes c(e_{1})^{-\varepsilon(e_{1})})$ using the duality morphisms.

Let now $\Sigma = S^2$ be endowed with an oriented graph~$\Gamma$ with an ${\Cat{S}}$-colouring~$c$. 
Using the cone isomorphisms $\{ f \}$ and sphericality one can associate to these data a functional (see
	 \cite[Sect.\,12.2]{TVire})
\be 
\mathds{F}_{\Cat{S}}(\Gamma, c) \in H(\Gamma, c; \Sigma)^* \, . 
\ee
The idea is that locally around every vertex of~$\Gamma$ one can interpret it as a slot into which one can insert elements of the tensor factors in~\eqref{eq:HGcS}. 
This tensor product is then evaluated to the corresponding string diagram in~${\Cat{S}}$. 
We will have need for only two types of graph~$\Gamma$, to which we turn next; the associated functionals will be defined in~\eqref{eq:FCGinv} below. 

Consider a 3-bordism~$M$ with an oriented stratified 2-polyhedron~$P$ that comes from a triangulation of~$M$ as discussed above. 
All vertices in~$P$ correspond to oriented tetrahedra, hence locally they look like 
\be 
\tikzzbox{\begin{tikzpicture}[thick,scale=2.321,color=blue!50!black, baseline=0.0cm, >=stealth, 
	style={x={(-0.6cm,-0.4cm)},y={(1cm,-0.2cm)},z={(0cm,0.9cm)}}]
	\pgfmathsetmacro{\yy}{0.2}
	\coordinate (P) at (0.5, \yy, 0);
	\coordinate (R) at (0.625, 0.5 + \yy/2, 0);
	\coordinate (L) at (0.5, 0, 0);
	\coordinate (R1) at (0.25, 1, 0);
	\coordinate (R2) at (0.5, 1, 0);
	\coordinate (R3) at (0.75, 1, 0);
	\coordinate (Pt) at (0.5, \yy, 1);
	\coordinate (Rt) at (0.375, 0.5 + \yy/2, 1);
	\coordinate (Lt) at (0.5, 0, 1);
	\coordinate (R1t) at (0.25, 1, 1);
	\coordinate (R2t) at (0.5, 1, 1);
	\coordinate (R3t) at (0.75, 1, 1);
	\coordinate (alpha) at (0.5, 0.5, 0.5);
	%
	\draw[green!60!black, very thick] (alpha) -- (Rt);
	\fill [red!50,opacity=0.545] (L) -- (P) -- (alpha) -- (Pt) -- (Lt);
	\fill [red!50,opacity=0.545] (Pt) -- (Rt) -- (alpha);
	\fill [red!50,opacity=0.545] (Rt) -- (R1t) -- (R1) -- (P) -- (alpha);
	\fill [red!50,opacity=0.545] (Rt) -- (R2t) -- (R2) -- (R) -- (alpha);
	\draw[green!60!black, very thick] (alpha) -- (Rt);
	\fill [red!50,opacity=0.545] (Pt) -- (R3t) -- (R3) -- (R) -- (alpha);
	\fill [red!50,opacity=0.545] (P) -- (R) -- (alpha);
	%
	\draw[green!60!black, very thick] (P) -- (alpha);
	\draw[green!60!black, very thick] (R) -- (alpha);
	\draw[green!60!black, very thick] (alpha) -- (Pt);
	%
	\fill[color=green!60!black] (alpha) circle (1.2pt) node[right] (0up) { {\scriptsize$\!x^+$} };
	%
	\draw [black,opacity=1, very thin] (Pt) -- (Lt) -- (L) -- (P);
	\draw [black,opacity=1, very thin] (Pt) -- (Rt);
	\draw [black,opacity=1, very thin] (Rt) -- (R1t) -- (R1) -- (P);
	\draw [black,opacity=1, very thin] (Rt) -- (R2t) -- (R2) -- (R);
	\draw [black,opacity=1, very thin] (Pt) -- (R3t) -- (R3) -- (R);
	\draw [black,opacity=1, very thin] (P) -- (R);
	\end{tikzpicture}}
\qquad\text{or}\qquad
\tikzzbox{\begin{tikzpicture}[thick,scale=2.321,color=blue!50!black, baseline=0.0cm, >=stealth, 
	style={x={(-0.6cm,-0.4cm)},y={(1cm,-0.2cm)},z={(0cm,0.9cm)}}]
	\pgfmathsetmacro{\yy}{0.2}
	\coordinate (P) at (0.5, \yy, 0);
	\coordinate (R) at (0.375, 0.5 + \yy/2, 0);
	\coordinate (L) at (0.5, 0, 0);
	\coordinate (R1) at (0.25, 1, 0);
	\coordinate (R2) at (0.5, 1, 0);
	\coordinate (R3) at (0.75, 1, 0);
	\coordinate (Pt) at (0.5, \yy, 1);
	\coordinate (Rt) at (0.625, 0.5 + \yy/2, 1);
	\coordinate (Lt) at (0.5, 0, 1);
	\coordinate (R1t) at (0.25, 1, 1);
	\coordinate (R2t) at (0.5, 1, 1);
	\coordinate (R3t) at (0.75, 1, 1);
	\coordinate (alpha) at (0.5, 0.5, 0.5);
	%
	\draw[green!60!black, very thick] (alpha) -- (Rt);
	\fill [red!50,opacity=0.545] (L) -- (P) -- (alpha) -- (Pt) -- (Lt);
	\fill [red!50,opacity=0.545] (Pt) -- (R1t) -- (R1) -- (R) -- (alpha);
	\fill [red!50,opacity=0.545] (Rt) -- (R2t) -- (R2) -- (R) -- (alpha);
	\fill [red!50,opacity=0.545] (Pt) -- (Rt) -- (alpha);
	\fill [red!50,opacity=0.545] (P) -- (R) -- (alpha);
	\draw[green!60!black, very thick] (R) -- (alpha);
	\fill [red!50,opacity=0.545] (Rt) -- (R3t) -- (R3) -- (P) -- (alpha);
	%
	\draw[green!60!black, very thick] (alpha) -- (Rt);
	%
	\draw[green!60!black, very thick] (P) -- (alpha);
	\draw[green!60!black, very thick] (alpha) -- (Pt);
	%
	\fill[color=green!60!black] (alpha) circle (1.2pt) node[right] (0up) { {\scriptsize$x^{-}$} };
	%
	\draw [black,opacity=1, very thin] (Pt) -- (Lt) -- (L) -- (P) ;
	\draw [black,opacity=1, very thin] (Pt) -- (R1t) -- (R1) -- (R);
	\draw [black,opacity=1, very thin] (Rt) -- (R2t) -- (R2) -- (R);
	\draw [black,opacity=1, very thin] (Pt) -- (Rt);
	\draw [black,opacity=1, very thin] (P) -- (R);
	\draw [black,opacity=1, very thin] (Rt) -- (R3t) -- (R3) -- (P);
	\end{tikzpicture}}
\ee 
where only the 2-dimensional regions are oriented (all inducing the counter-clockwise orientation in the paper plane). 
On the boundary $\partial B_{x} \cong S^{2}$ of a small ball~$B_{x}$ around such vertices, oriented as induced from $M \setminus B_{x}$ according to the construction of \cite{TVire}, the above stratifications induce the oriented graphs 
\be 
\Gamma_{x^+} = 
\tikzzbox{\begin{tikzpicture}[very thick,scale=0.4,color=blue!50!black, baseline=0cm]
	\coordinate (ol) at (0,2);
	\coordinate (or) at (2,2.5);
	\coordinate (ul) at (0,-2);
	\coordinate (ur) at (2,-2.5);
	\draw[directed] (ul) .. controls +(-2,0) and +(-2,0) .. (ol);
	\draw[directed] (or) .. controls +(5,2) and +(3,1) .. (ul);
	\draw[color=white, line width=4pt] (2.6,2.5) .. controls +(3.5,0) and +(1,0.2) .. (2.8,-2.4);
	\draw (or) -- (2.6,2.5);
	\draw[directed] (2.6,2.5) .. controls +(3.5,0) and +(1,0.2) .. (2.8,-2.4);
	\draw[color=white, line width=4pt] (ol) .. controls +(4,-1) and +(7,-2) .. (ur);
	\draw[directed] (ol) .. controls +(4,-1) and +(7,-2) .. (ur);
	\draw (2.8,-2.4) -- (ur);
	\draw[directed] (ol) -- (or);
	\draw[directed] (ur) -- (ul);
	\fill (ol) circle (4.5pt) node (ols) {};
	\fill (or) circle (4.5pt) node (osdr) {};
	\fill (ul) circle (4.5pt) node (udsl) {};
	\fill (ur) circle (4.5pt) node (usr) {};
	\end{tikzpicture}}
\!\!\!\!\text{and}\qquad
\Gamma_{x^-} = 
\tikzzbox{\begin{tikzpicture}[very thick,scale=0.4,color=blue!50!black, baseline=0cm]
	\coordinate (ol) at (0,2.5);
	\coordinate (or) at (2,2);
	\coordinate (ul) at (0,-2.5);
	\coordinate (ur) at (2,-2);
	\draw[directed] (ul) .. controls +(-2,0) and +(-2,0) .. (ol);
	\draw[directed] (ol) .. controls +(6,2) and +(7,2) .. (ur);
	\draw[color=white, line width=4pt] (2.5,2.2) .. controls +(4,1.6) and +(6,-3) .. (2.6,-2.3);
	\draw[directed] (2.5,2.2) .. controls +(4,1.6) and +(6,-3) .. (2.6,-2.3);
	\draw[color=white, line width=4pt] (or) .. controls +(5,0) and +(5,-3) .. (ul);
	\draw[directed] (or) .. controls +(5,0) and +(5,-3) .. (ul);
	\draw[directed] (ol) -- (or);
	\draw[directed] (ur) -- (ul);
	\draw (2.5,2.2) --(or);
	\draw (ur) --(2.6,-2.3);
	\fill (ol) circle (4.5pt) node (ols) {};
	\fill (or) circle (4.5pt) node (osdr) {};
	\fill (ul) circle (4.5pt) node (udsl) {};
	\fill (ur) circle (4.5pt) node (usr) {};
	\end{tikzpicture}}
\hspace{-0.85cm} ,
\ee
respectively. 
For every ${\Cat{S}}$-colouring~$c$ of the edges in the graphs $\Gamma_{x^\pm}$, we obtain functionals $\mathds{F}_{\Cat{S}}(\Gamma_{x^\pm}, c) \in H(\Gamma_{x^\pm}, c; S^2)^*$. 
Denoting the values of the colouring by  $i,j,k,l,m,n \in I$ we may employ the cone isomorphisms of the projective system defining $H(\Gamma_{x^\pm}, c; S^2)$ to obtain the explicit functionals (where here and below we suppress the choice of $i,j,k,l,m,n$ in the notation)
\be\label{eq:FCGinv}
\mathds{F}_{\Cat{S}}(\Gamma_{x^+}) = 
\tikzzbox{\begin{tikzpicture}[very thick,scale=1,color=blue!50!black, baseline=0cm]
	\draw[-dot-] (0,0) .. controls +(0,1) and +(0,1) .. (-1,0);
	\draw[-dot-] (-1,0) .. controls +(0,-1) and +(0,-1) .. (-2,0);
	\draw[-dot-] (-0.5,0.7) .. controls +(0,1.25) and +(0,1.25) .. (-2,0.7);
	\draw (-2,0.7) -- (-2,0);
	\draw[-dot-] (0,-0.7) .. controls +(0,-1.25) and +(0,-1.25) .. (-1.5,-0.7);
	\draw (0,-0.7) -- (0,0);
	\draw[directed] (-1.25,1.5) .. controls +(0,1) and +(0,1) .. (-2.75,1.5);
	\draw (-2.75,1.5) -- (-2.75,-1.6);
	\draw[directed] (-2.75,-1.6) .. controls +(0,-1.25) and +(0,-1.25) .. (-0.75,-1.6);
	%
	\fill (-0.85,0.11) circle (0pt) node (meet2) {{\scriptsize$n$}};
	\fill (-1.5,-1.4) circle (0pt) node (meet2) {{\scriptsize$m$}};
	\fill (-0.15,-0.5) circle (0pt) node (meet2) {{\scriptsize$l$}};
	\fill (-1.4,2.2) circle (0pt) node (meet2) {{\scriptsize$k$}};
	\fill (-1.85,0.5) circle (0pt) node (meet2) {{\scriptsize$j$}};
	\fill (-0.5,1.4) circle (0pt) node (meet2) {{\scriptsize$i$}};
	\draw (-1.25,1.6) node[white, fill=white, inner sep=4pt,draw, rounded corners=1pt] (R) {{\scriptsize$\;-\;$}};
	\draw[line width=1pt, color=black] (-1.25,1.6) node[inner sep=4pt,draw, rounded corners=1pt] (R) {{\scriptsize$\;-\;$}};
	\draw (-0.5,0.7) node[white, fill=white, inner sep=4pt,draw, rounded corners=1pt] (R) {{\scriptsize$\;-\;$}};
	\draw[line width=1pt, color=black] (-0.5,0.7) node[inner sep=4pt,draw, rounded corners=1pt] (R) {{\scriptsize$\;-\;$}};
	\draw (-1.5,-0.75) node[white, fill=white, inner sep=4pt,draw, rounded corners=1pt] (R) {{\scriptsize$\;-\;$}};
	\draw[line width=1pt, color=black] (-1.5,-0.75) node[inner sep=4pt,draw, rounded corners=1pt] (R) {{\scriptsize$\;-\;$}};
	\draw (-0.75,-1.6) node[white, fill=white, inner sep=4pt,draw, rounded corners=1pt] (R) {{\scriptsize$\;-\;$}};
	\draw[line width=1pt, color=black] (-0.75,-1.6) node[inner sep=4pt,draw, rounded corners=1pt] (R) {{\scriptsize$\;-\;$}};
	\end{tikzpicture}}
\; , \qquad 
\mathds{F}_{\Cat{S}}(\Gamma_{x^-}) = 
\tikzzbox{\begin{tikzpicture}[very thick,scale=1,color=blue!50!black, baseline=0cm]
	\draw[-dot-] (0,0) .. controls +(0,1) and +(0,1) .. (1,0);
	\draw[-dot-] (1,0) .. controls +(0,-1) and +(0,-1) .. (2,0);
	\draw[-dot-] (0.5,0.7) .. controls +(0,1.25) and +(0,1.25) .. (2,0.7);
	\draw (2,0.7) -- (2,0);
	\draw[-dot-] (0,-0.7) .. controls +(0,-1.25) and +(0,-1.25) .. (1.5,-0.7);
	\draw (0,-0.7) -- (0,0);
	\draw[directed] (1.25,1.5) .. controls +(0,1.25) and +(0,1.25) .. (-0.75,1.5);
	\draw (-0.75,1.5) -- (-0.75,-1.6);
	\draw[directed] (-0.75,-1.6) .. controls +(0,-1) and +(0,-1) .. (0.75,-1.6);
	%
	\fill (0.85,0-0.11) circle (0pt) node (meet2) {{\scriptsize$n$}};
	\fill (1.5,-1.4) circle (0pt) node (meet2) {{\scriptsize$m$}};
	\fill (0.15,-0.5) circle (0pt) node (meet2) {{\scriptsize$l$}};
	\fill (1.1,2.3) circle (0pt) node (meet2) {{\scriptsize$k$}};
	\fill (1.85,0.5) circle (0pt) node (meet2) {{\scriptsize$j$}};
	\fill (0.5,1.4) circle (0pt) node (meet2) {{\scriptsize$i$}};
	\draw (1.25,1.6) node[white, fill=white, inner sep=4pt,draw, rounded corners=1pt] (R) {{\scriptsize$\;-\;$}};
	\draw[line width=1pt, color=black] (1.25,1.6) node[inner sep=4pt,draw, rounded corners=1pt] (R) {{\scriptsize$\;-\;$}};
	\draw (0.5,0.7) node[white, fill=white, inner sep=4pt,draw, rounded corners=1pt] (R) {{\scriptsize$\;-\;$}};
	\draw[line width=1pt, color=black] (0.5,0.7) node[inner sep=4pt,draw, rounded corners=1pt] (R) {{\scriptsize$\;-\;$}};
	\draw (1.5,-0.75) node[white, fill=white, inner sep=4pt,draw, rounded corners=1pt] (R) {{\scriptsize$\;-\;$}};
	\draw[line width=1pt, color=black] (1.5,-0.75) node[inner sep=4pt,draw, rounded corners=1pt] (R) {{\scriptsize$\;-\;$}};
	\draw (0.75,-1.6) node[white, fill=white, inner sep=4pt,draw, rounded corners=1pt] (R) {{\scriptsize$\;-\;$}};
	\draw[line width=1pt, color=black] (0.75,-1.6) node[inner sep=4pt,draw, rounded corners=1pt] (R) {{\scriptsize$\;-\;$}};
\end{tikzpicture}}
\ee 
which are respectively elements of 
\begin{align}
& \big( \!\Hom_{\Cat{S}}(j\otimes i, k) \otimes_\Bbbk \Hom_{\Cat{S}}(n\otimes l,i) \otimes_\Bbbk \Hom_{\Cat{S}}(m,j\otimes n) \otimes_\Bbbk \Hom_{\Cat{S}}(k,m\otimes l) \big)^* \, , 
\nonumber
\\ 
& \big( \!\Hom_{\Cat{S}}(i\otimes j, k) \otimes_\Bbbk \Hom_{\Cat{S}}(l\otimes n,i) \otimes_\Bbbk \Hom_{\Cat{S}}(m,n\otimes j) \otimes_\Bbbk \Hom_{\Cat{S}}(k,l\otimes m) \big)^* \, . 
\end{align}

The final ingredient for the Turaev--Viro invariant in the construction of \cite{TVire} are the contraction maps~$*_e$. 
We describe them for the case of interest to us: 
Let $M,P$ be as before. 
Choose an ${\Cat{S}}$-colouring $c\colon P_2 \to I$ and decorate each region $r\in P_2$ with the object $c(r)$. 
Hence every internal edge~$e$ of~$P$ looks like
\be\label{eq:unorie}
\tikzzbox{\begin{tikzpicture}[thick,scale=2.321,color=blue!50!black, baseline=0.0cm, >=stealth, 
	style={x={(-0.6cm,-0.4cm)},y={(1cm,-0.2cm)},z={(0cm,0.9cm)}}]
	\pgfmathsetmacro{\yy}{0.2}
	\coordinate (T) at (0.5, 0.4, 0);
	\coordinate (L) at (0.5, 0, 0);
	\coordinate (R1) at (0.3, 1, 0);
	\coordinate (R2) at (0.7, 1, 0);
	\coordinate (1T) at (0.5, 0.4, 1);
	\coordinate (1L) at (0.5, 0, 1);
	\coordinate (1R1) at (0.3, 1, );
	\coordinate (1R2) at (0.7, 1, );
	%
	\coordinate (p3) at (0.1, 0.1, 0.5);
	\coordinate (p2) at (0.5, 0.95, 0.5);
	\coordinate (p1) at (0.9, 0.1, 0.5);
	%
	\fill [red!50,opacity=0.545] (L) -- (T) -- (1T) -- (1L);
	\fill [red!50,opacity=0.545] (R1) -- (T) -- (1T) -- (1R1);
	\fill [red!50,opacity=0.545] (R2) -- (T) -- (1T) -- (1R2);
	\fill[color=blue!60!black] (0.5,0.25,0.15) circle (0pt) node[left] (0up) { {\scriptsize$k$} };
	\fill[color=blue!60!black] (0.15,0.95,0.04) circle (0pt) node[left] (0up) { {\scriptsize$i$} };
	\fill[color=blue!60!black] (0.55,0.95,0.04) circle (0pt) node[left] (0up) { {\scriptsize$j$} };
	\fill[color=blue!60!black] (0.5,0.25,0.75) circle (0pt) node[left] (0up) { {\scriptsize$\circlearrowleft$} };
	\fill[color=blue!60!black] (0.15,0.9,0.64) circle (0pt) node[left] (0up) { {\scriptsize$\circlearrowleft$} };
	\fill[color=blue!60!black] (0.55,0.9,0.64) circle (0pt) node[left] (0up) { {\scriptsize$\circlearrowleft$} };
	%
	\draw[green!60!black, very thick] (T) -- (1T);
	\fill[color=green!60!black] (0.5,0.43,0.5) circle (0pt) node[left] (0up) { {\scriptsize$e$} };
	%
	\draw [black,opacity=1, very thin] (1T) -- (1L) -- (L) -- (T);
	\draw [black,opacity=1, very thin] (1T) -- (1R1) -- (R1) -- (T);
	\draw [black,opacity=1, very thin] (1T) -- (1R2) -- (R2) -- (T);
	\end{tikzpicture}}
\ee
for some $i,j,k\in I$. 
Recall that regions of~$P$ are oriented, but edges are not. 
The region coloured by~$k$ in~\eqref{eq:unorie} induces an orientation on the edge~$e$
(upwards in the diagram in our convention), and we denote the corresponding \textsl{oriented} edge by~$e^+$. The oppositely oriented edge is denoted~$e^-$.
To these the construction of \cite{TVire} associates vector spaces 
\be 
H_c(e^+) \cong \Hom_{\Cat{S}}(i\otimes j,k) \, , 
\qquad 
H_c(e^-) \cong \Hom_{\Cat{S}}(k, i\otimes j) \, .
\ee 
Since the pairing $H_c(e^+) \otimes_\Bbbk H_c(e^-) \to \Bbbk$ of~\eqref{eq:tracepairing} is nondegenerate, there is a unique dual copairing $\gamma\colon \Bbbk \to H_c(e^-) \otimes_\Bbbk H_c(e^+)$ given by $\gamma(1) = \sum_\lambda \widehat{\lambda} \otimes \lambda$. 
The \textsl{contraction map}  
$*_e \colon H_c(e^+)^* \otimes_\Bbbk H_c(e^-)^*  \to \Bbbk $
is defined to be the dual map~$\gamma^*$ composed with the canonical isomorphisms $\Bbbk \cong \Bbbk^*$ and $(V\otimes_\Bbbk W)^*\cong W^* \otimes_\Bbbk V^*$ for all $V,W \in \text{vect}_\Bbbk$. 
Thus we have (for basis elements $\lambda,\mu \in \Hom_{\Cat{S}}(i\otimes j,k)$) 
\begin{align}\label{eq:starcontract}
*_e \colon H_c(e^+)^* \otimes_\Bbbk H_c(e^-)^*  \lra \Bbbk 
\, , \qquad 
\lambda^*\otimes \widehat{\mu}^* \lmt \delta_{\lambda,\mu} \, ,
\end{align}
where we use 
\be\label{eq:lstar}
\lambda^* := 
\tikzzbox{\begin{tikzpicture}[very thick,scale=0.6,color=blue!50!black, baseline=-0.1cm]
	\draw[-dot-] (0,0) .. controls +(0,1) and +(0,1) .. (1,0);
	\draw[-dot-] (1,0) .. controls +(0,-1) and +(0,-1) .. (0,0);
	\draw (0.5,0.7) -- (0.5,1.2);
	\draw[directed] (0.5,1.2) .. controls +(0,1) and +(0,1) .. (-1,1.2);
	\draw (-1,1.2) -- (-1,-1.2);
	\draw[directed] (-1,-1.2) .. controls +(0,-1) and +(0,-1) .. (0.5,-1.2);
	\draw (0.5,-0.7) -- (0.5,-1.2);
	%
	\fill (0.5,-0.3) circle (0pt) node (meet2) {{\scriptsize$\widehat{\lambda}$}};
	\draw (0.5,0.7) node[white, fill=white, inner sep=4pt,draw, rounded corners=1pt] (R) {{\scriptsize$\;-\;$}};
	\draw[line width=1pt, color=black] (0.5,0.7) node[inner sep=4pt,draw, rounded corners=1pt] (R) {{\scriptsize$\;-\;$}};
	\end{tikzpicture}}
\, , \qquad 
\widehat{\mu}^* := 
\tikzzbox{\begin{tikzpicture}[very thick,scale=0.6,color=blue!50!black, baseline=-0.1cm]
	\draw[-dot-] (0,0) .. controls +(0,1) and +(0,1) .. (1,0);
	\draw[-dot-] (1,0) .. controls +(0,-1) and +(0,-1) .. (0,0);
	\draw (0.5,0.7) -- (0.5,1.2);
	\draw[directed] (0.5,1.2) .. controls +(0,1) and +(0,1) .. (-1,1.2);
	\draw (-1,1.2) -- (-1,-1.2);
	\draw[directed] (-1,-1.2) .. controls +(0,-1) and +(0,-1) .. (0.5,-1.2);
	\draw (0.5,-0.7) -- (0.5,-1.2);
	%
	\fill (0.5,0.3) circle (0pt) node (meet2) {{\scriptsize$\mu$}};
	\draw (0.5,-0.7) node[white, fill=white, inner sep=4pt,draw, rounded corners=1pt] (R) {{\scriptsize$\;-\;$}};
	\draw[line width=1pt, color=black] (0.5,-0.7) node[inner sep=4pt,draw, rounded corners=1pt] (R) {{\scriptsize$\;-\;$}};
	\end{tikzpicture}}
\, . 
\ee

We can now describe the Turaev--Viro invariants $\zz_\cat^{\text{TV}}(M) \in \Bbbk$ for closed 3-manifolds~$M$ \cite{TVire}: 
\be\label{eq:ZTVclosed}
\zz^{\text{TV},\Cat{S}}(M) = 
\big(\! \dim({\Cat{S}})\big)^{-|P_3|} 
	\sum_{c\colon P_2\to I} \Big( \prod_{r\in P_2} d_{c(r)}^{\chi(r)} \Big)
	\Big( \bigotimes_{e\in P_1} *_e \Big) \Big( \bigotimes_{x\in P_0} \mathds{F}_{\Cat{S}}(\Gamma_x) \Big)
\ee
for any oriented stratified 2-polyhedron~$P$ of~$M$, where~$P_3$ denotes the set of connected components in $M\setminus P$ and 
$\chi(r)$ denotes the Euler characteristic of the 2-stratum~$r$. 
Note that for every edge $e\in P_1$ its two oriented versions $e^+,e^-$ correspond to precisely two tensor factors in $\bigotimes_{x\in P_0} \mathds{F}_{\Cat{S}}(\Gamma_x)$, since every edge has two endpoints in~$P_0$, and for every $x\in P_0$ its incident edges are all treated as outgoing in the definition of $\mathds{F}_{\Cat{S}}(\Gamma_x)$, 

On objects and arbitrary morphisms the functor $\zz^{\text{TV},\Cat{S}}$ is defined  along the same lines. 
For a surface~$\Sigma$ with an embedded oriented graph~$\Gamma$ as above, we set 
\be
\big| \Gamma; \Sigma \big|^0 = \bigoplus_{c\colon \Gamma_1\to I} H(\Gamma, c; \Sigma)
\ee
with $H(\Gamma, c; \Sigma)$ as in~\eqref{eq:HGcS}. 
For a bordism $M\colon \emptyset \to \Sigma$ we choose an extension of the graph~$\Gamma$ to an oriented stratified 2-polyhedron~$P$ of~$M$. 
Let $\text{Col}(P,c)$ be the set of ${\Cat{S}}$-colourings $\widetilde{c}\colon P_2 \to I$ with $\widetilde{c}(r_e) = c(e)$ for the region~$r_e$ with $e \subset r_e\cap \partial M$ and all edges~$e$ in~$\Gamma$.  
We write $P_1^{\text{int}}$ for the set of edges in~$P$ without endpoints in $\partial M$, and $P_1^\partial$ for the set of edges in~$P$ with precisely one endpoint in $\partial M$. 
Such endpoints correspond to vertices $v\in \Gamma_0$ and we have 
\be 
\bigotimes_{e\in P_1^\partial} H_{\widetilde{c}}(e^{\text{out}})^* 
\cong
H(\Gamma^{\text{op}}, c; \Sigma^{\text{op}}) 
\ee
where $e^{\text{out}}$ denotes the edge~$e$ with orientation towards $\partial M$. 
Thus by contracting along interior edges $e\in P_1^{\text{int}}$ we obtain a vector 
\be 
\big| M; \Gamma, c \big|^0 
= 
\big(\! \dim({\Cat{S}})\big)^{-|P_3|} 
\sum_{\widetilde{c}\in \text{Col}(P,c)} \Big( \prod_{r\in P_2} d_{\widetilde{c}(r)}^{\chi(r)} \Big)
\Big( \bigotimes_{e\in P_1^{\text{int}}} *_e \Big) \Big( \bigotimes_{x\in P_0} \mathds{F}_{\Cat{S}}(\Gamma_x) \Big)
\ee 
in $H(\Gamma^{\text{op}}, c; \Sigma^{\text{op}})^* \cong H(\Gamma, c; \Sigma)$, generalising~\eqref{eq:ZTVclosed}. 

If $(\Sigma,\Gamma) = (\Sigma'^{\text{op}},\Gamma'^{\text{op}}) \sqcup (\Sigma'', \Gamma'')$ we can view~$M$ as a bordism $\Sigma' \to \Sigma''$. 
Writing
\be 
	 \Upsilon \colon H(\Gamma', c; \Sigma')^* \otimes_\Bbbk H(\Gamma''^{\text{op}}, c; \Sigma''^{\text{op}})^*
\stackrel{\cong}{\lra} 
	 \Hom_\Bbbk\big( H(\Gamma', c; \Sigma'), H(\Gamma'', c; \Sigma'') \big) 
\ee 
for the canonical isomorphism, we obtain a linear map 
\be
\label{eq:lin-mapTV}
\big| M; \Sigma', \Gamma', \Sigma'', \Gamma'', c \big|^0 
= 
\frac{\big(\! \dim({\Cat{S}})\big)^{|\Sigma''\setminus \Gamma''|}}{\prod_{e\in\Gamma''_1} d_{c(e)}} 
	\cdot \Upsilon\Big( \big| M; \Gamma, c\big|^0\Big) \, ,
\ee 
where $|\Sigma''\setminus \Gamma''|$ denotes the number of components of $\Sigma''\setminus \Gamma''$.
Restricting to $\Sigma' = \Sigma''$ and the cylinder $M=\Sigma' \times [0,1]$, by summing over all ${\Cat{S}}$-colourings $c\colon (\Gamma' \sqcup \Gamma'')_1 \to I$, we obtain a projective system 
\be
\label{eq:proj-sys-TV}
p(\Gamma', \Gamma'') \colon \big| \Gamma', \Sigma' \big|^0 \lra \big|\Gamma'', \Sigma'\big|^0 \, . 
\ee 
Then by definition 
\be
\zz^{\text{TV},\Cat{S}}(\Sigma') = \varprojlim  \, p(\Gamma', \Gamma'')
\ee
and $\zz^{\text{TV},\Cat{S}}$ acts on arbitrary bordism classes as the induced linear maps.

\subsection{Orbifold data for the trivial Reshetikhin--Turaev theory}
\label{subsec:orbidatatrivial}

By the (3-dimensional) \textsl{trivial defect TQFT} $\zz^{\text{triv}} \colon \Bord_3(\D^{\text{triv}}) \to \Vectk$ we mean the Reshetikhin--Turaev defect TQFT constructed (in Section~\ref{sec:resh-tura-defect}) from the ``trivial'' modular tensor category $\Vectk$: 
\be
\zz^{\text{triv}} := \zz^{\Vectk} \, . 
\ee
Hence $\zz^{\text{triv}}(\Sigma) = \Bbbk$ for every unstratified surface $\Sigma \in \Bord_3(\D^{\text{triv}})$, while 2- and 1-strata of bordisms in $\Bord_3(\D^{\text{triv}})$ are labelled by $\Delta$-separable symmetric Frobenius $\Bbbk$-algebras and their cyclic modules in $\Vectk$, respectively. 
In this section we will construct orbifold data for $\zz^{\text{triv}}$: 

\begin{proposition}
\label{prop:orbidata}
	 Given a spherical fusion category~${\Cat{S}}$, the following is an orbifold datum for $\zz^{\text{{\normalfont triv}}}$, denoted~$\Aca$: 
\begin{align}
\Cat{C} & := \Vectk \, , 
\\
A & := \bigoplus_{i\in I} \Bbbk 
	\qquad (\text{direct sum of trivial Frobenius algebras } \Bbbk) \, , 
\\
T & := \bigoplus_{i,j,k\in I} \Hom_{\Cat{S}}(i\otimes j,k) \, , 
\\ 
\alpha & \colon \lambda \otimes \mu \lmt \sum_{d,\lambda',\mu'} d_d^{-1} F^{\lambda\lambda'}_{\mu\mu'} \cdot \lambda' \otimes \mu' \, , 
	\label{eq:A0plussum}
\\
\bar\alpha & \colon \lambda' \otimes \mu' \lmt \sum_{c,\lambda'',\mu''} d_c^{-1} (F^{-1})^{\lambda'\lambda''}_{\mu'\mu''} \cdot \lambda'' \otimes \mu'' \, , 
	\label{eq:A0minussum}
\\
\psi^2 & := \text{diag}(d_1,d_2,\dots,d_{|I|})
\qquad (\text{$\psi$ is a choice of square root}) \, , 
\label{eq:psi-TVorb}
  \\
	 \phi^2 & := \Big( \sum_{i\in I} d_i^2 \Big)^{-1} = (\dim\,\Cat{S})^{-1} \qquad (\text{$\phi$ is a choice of square root}) \, , 
\end{align}
where the basis elements and sums in~\eqref{eq:A0plussum} and~\eqref{eq:A0minussum} are as in~\eqref{eq:Fsummation} and~\eqref{eq:Finvsummation} while $\alpha(\lambda\otimes\mu) \stackrel{\text{def}}{=} 0 \stackrel{\text{def}}{=} \bar\alpha(\lambda'\otimes\mu')$ if $\lambda,\mu$ and $\lambda',\mu'$ are not compatible as in ~\eqref{eq:Fsummation} and~\eqref{eq:Finvsummation}, 
respectively, 
and the action of~$A$ on $\Hom_{\Cat{S}}(i\otimes j,k)$ in~$T$ is such that only the $k$-th summand~$\Bbbk_k$ acts non-trivially from the left, and only $\Bbbk_i \otimes \Bbbk_j$ acts non-trivially from the right.
Different choices of square root in~\eqref{eq:psi-TVorb} give equivalent orbifold data in the sense of Definition~\ref{def:Moritatransport}.
\end{proposition}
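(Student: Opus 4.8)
\textbf{Proof plan for Proposition~\ref{prop:orbidata}.}
The bulk of the proposition is the verification that the listed tuple $\Aca=(\Vectk,A,T,\alpha,\bar\alpha,\psi,\phi)$ satisfies the conditions \eqref{eq:347n}--\eqref{eq:351n} of Proposition~\ref{prop:internal-orb}, together with (i)--(vii) of that proposition. First I would record the bookkeeping: since $A=\bigoplus_{i\in I}\Bbbk_i$ with each $\Bbbk_i$ the trivial Frobenius algebra, a right $(A\otimes A)$-module is just an $(I\times I)$-graded vector space, and $T$ is precisely the graded space with $(i,j,k)$-component $\Hom_{\Cat{S}}(i\otimes j,k)$, with left $A$-action on the $k$-index and right $A\otimes A$-action on the $(i,j)$-indices; this makes $T$ a multi-module as required by (iii). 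For (iv)--(v), the maps $\alpha,\bar\alpha$ of \eqref{eq:A0plussum}--\eqref{eq:A0minussum} are manifestly module maps because $F^{\lambda\lambda'}_{\mu\mu'}$ is nonzero only when the boundary labels match, so the compatibility conditions \eqref{eq:alphaactioncomp}--\eqref{eq:alphamodulemap} and their $\bar\alpha$-analogues are automatic from the grading; and $\psi\in\End_{AA}(A)\cong\bigoplus_i\Bbbk$ is the diagonal matrix whose square is $\mathrm{diag}(d_1,\dots,d_{|I|})$, invertible since $d_i\neq0$, giving (vi), while $\phi\in\End_{\Vectk}(\one)=\Bbbk$ with $\phi^2=(\dim\Cat{S})^{-1}\neq0$ gives (vii).

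The heart of the argument is then to translate each of \eqref{eq:347n}--\eqref{eq:351n} into an identity among $F$-matrix elements and quantum dimensions, and to deduce it from the pentagon identity \eqref{eq:pentagon-two-bases-2} and the basis-sum rules of Lemma~\ref{lem:recall}. Concretely: \eqref{eq:347n} is the pentagon-type relation for $\alpha$; after stripping off the $d_d^{-1}$ normalisations and the $\psi_0^2$-insertion (which on the relevant $A$-component is multiplication by the appropriate $d$), \eqref{eq:347n} should become exactly \eqref{eq:pentagon-two-bases-2}, possibly after one application of Lemma~\ref{lem:recall}(i) to close up diagrams. The two identities in \eqref{eq:348n} express that $\bar\alpha$ is a one-sided inverse of $\alpha$ up to the $\psi_0^2$-twist; these follow from the definition of $F^{-1}$ together with \eqref{eq:basis-sum-rule-i}, the $d_d^{-1}$ and $d_c^{-1}$ factors and the $\psi_0^2$ weight conspiring to cancel. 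The identities \eqref{eq:349n} and \eqref{eq:350n} involve the duals $T^\dagger$; here the plan is first to identify $T^\dagger$ as the graded space with components built from $\Hom_{\Cat{S}}(k,i\otimes j)$ (using that $A$ is symmetric Frobenius, so $(A\otimes A)$-module duals are computed componentwise, and $\Bbbk^*\cong\Bbbk$ canonically), and then to see that the zig-zag/cup-cap moves in these diagrams reduce, via Lemma~\ref{lem:recall}(ii)--(iii), to rewriting $F$-matrix elements with shifted legs — exactly the content of the vertically-reflected versions of the lemma. Finally \eqref{eq:351n}: the two "theta-like" closed diagrams on the left evaluate, on the $A_k=\Bbbk_k$ component, to $\sum_{i,j}\psi_{A_i}^2\psi_{A_j}^2\,\dim\Hom_{\Cat{S}}(i\otimes j,k)$-type sums, i.e. $\sum_{i,j}d_id_j\,N_{ij}^k$ with $\psi_{A_i}^2=d_i$; by the fusion-dimension identity $\sum_{i,j}d_id_jN_{ij}^k=\sum_{i,j}d_id_jN_{ij}^k$ and sphericality one gets $d_k\sum_i d_i^2=d_k\dim\Cat{S}$, so this equals $\phi^{-2}\psi_{A_k}^2$ as demanded; concretely one can derive this from \eqref{eq:basis-sum-rule-ii} by taking traces, or directly from $\sum_i d_i\cdot(i\otimes-)\cong\dim\Cat{S}\cdot\bigoplus(\text{each simple with multiplicity }d)$.

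For the last sentence of the proposition — that different choices of square root in \eqref{eq:psi-TVorb} give equivalent orbifold data in the sense of Definition~\ref{def:Moritatransport} — the plan is to observe that the two orbifold data involved share the same $\Cat{C}$, $A$, $T$, $\alpha$, $\bar\alpha$, $\phi$ and differ only in $\psi$, both of whose squares equal $\mathrm{diag}(d_1,\dots,d_{|I|})$; so they are related by the Morita transport along the identity Morita module $X=A$ (with the tautological isomorphisms $A^*\otimes_A A\cong A\cong A\otimes_A A^*$), for which Definition~\ref{def:Moritatransport} gives $T^X\cong T$, $\alpha^X=\alpha$, $\bar\alpha^X=\bar\alpha$, $\phi$ unchanged, and $(\psi^X)^2$ equal to the same diagonal matrix by \eqref{eq:psiXsquared} and \eqref{eq:moddims} — so any chosen square root is a valid $\psi^X$. (Alternatively one notes the two data are related by the $T$-compatible isomorphism of Definition~\ref{definition:T-iso} given by a suitable diagonal rescaling of $T$, which is what one actually wants if one later invokes Lemma~\ref{lem:Tisomequi}; I would state it in whichever of the two forms is cleanest, but the Morita-transport-along-$A$ phrasing is the one literally matching Definition~\ref{def:Moritatransport}.) In all cases Proposition~\ref{prop:Mtransportequi} or Lemma~\ref{lem:Tisomequi} then upgrades this to an isomorphism of orbifold TQFTs, which is the content claimed.

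\textbf{Main obstacle.} The genuinely delicate part is \eqref{eq:349n}--\eqref{eq:350n}: keeping track of the orientations, the $T$ versus $T^\dagger$ strands, and the positions of the various $\psi_i^{\pm2}$-insertions while converting the 3-dimensional defect pictures into planar string diagrams in $\Vectk$ and then into $F$-matrix identities. The normalisation factors $d_d^{-1}$, $d_c^{-1}$ and the dimensions $\psi_{A_i}^2=d_i$ must be shown to combine so that everything is consistent; getting these bookkeeping factors exactly right — rather than off by some product of $d_i$'s — is where the real care is needed, and it is essentially forced once one fixes conventions, but it is the step most prone to error. The pentagon input \eqref{eq:pentagon-two-bases-2} and Lemma~\ref{lem:recall} do all the conceptual work; the labour is purely in the translation.
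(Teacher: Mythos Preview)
Your proposal is correct and follows essentially the same approach as the paper: reduce \eqref{eq:347n} to the pentagon identity \eqref{eq:pentagon-two-bases-2}, reduce \eqref{eq:348n} to $\alpha$ and $\bar\alpha$ being mutual inverses up to dimension factors, handle \eqref{eq:349n}--\eqref{eq:350n} by explicit computation with $T^\dagger$ and repeated use of Lemma~\ref{lem:recall}, and verify \eqref{eq:351n} via $\sum_{i,j} d_i d_j N_{ij}^k = d_k \dim\Cat{S}$. The paper carries out the computations for \eqref{eq:349n} and \eqref{eq:350n} in full detail (these are indeed the laborious steps, exactly as you anticipated), and it does not spell out the final sentence about square roots at all, so your Morita-transport-along-$A$ argument actually supplies more than the paper does there.
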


\medskip 

As preparation for the proof of Proposition~\ref{prop:orbidata} we spell out composition and adjunctions for 2-morphisms in $\mathcal{T}_{\zz^{\text{triv}}}$. 
Using the isomorphism 
\be
\Hom_{\Cat{S}}(k, i\otimes j) \stackrel{\cong}{\lra} \Hom_{\Cat{S}} (i\otimes j,k)^*
\, , \qquad 
\widehat{\lambda} \lmt 
\tikzzbox{\begin{tikzpicture}[very thick,scale=0.6,color=blue!50!black, baseline=-0.1cm]
	\draw[-dot-] (0,0) .. controls +(0,1) and +(0,1) .. (1,0);
	\draw[-dot-] (1,0) .. controls +(0,-1) and +(0,-1) .. (0,0);
	\draw (0.5,0.7) -- (0.5,1.2);
	\draw[directed] (0.5,1.2) .. controls +(0,1) and +(0,1) .. (-1,1.2);
	\draw (-1,1.2) -- (-1,-1.2);
	\draw[directed] (-1,-1.2) .. controls +(0,-1) and +(0,-1) .. (0.5,-1.2);
	\draw (0.5,-0.7) -- (0.5,-1.2);
	%
	\fill (0.5,-0.3) circle (0pt) node (meet2) {{\scriptsize$\widehat{\lambda}$}};
	\draw (0.5,0.7) node[white, fill=white, inner sep=4pt,draw, rounded corners=1pt] (R) {{\scriptsize$\;-\;$}};
	\draw[line width=1pt, color=black] (0.5,0.7) node[inner sep=4pt,draw, rounded corners=1pt] (R) {{\scriptsize$\;-\;$}};
	\end{tikzpicture}}
\ee
we exhibit the $(A\otimes A)$-$A$-bimodule 
\be
{T}^\dagger := \bigoplus_{i,j,k\in I} \Hom_{\Cat{S}}(k,i\otimes j)
\ee
as the adjoint of~${T}$ via the maps 
\begin{align}
\text{ev}_{{T}} \colon {T}^\dagger \otimes_{{A}} {T} = \bigoplus_{a,b,i,j,k} \Hom_{\Cat{S}}(k,a\otimes b) \otimes_\Bbbk \Hom_{\Cat{S}}(i\otimes j,k) 
	& \lra {A} \otimes_\Bbbk {A}
	\nonumber
\\
\Hom_{\Cat{S}}(k,a\otimes b) \otimes_\Bbbk \Hom_{\Cat{S}}(i\otimes j,k) \;\ni\; \widehat{\mu} \otimes \lambda 
	& \lmt \delta_{a,i} \, \delta_{b,j} \, \delta_{\lambda,\mu} \cdot 1_i \otimes 1_j
	\nonumber
\end{align}
and 
\begin{align}
\text{coev}_{{T}} \colon {A} 
	& \lra {T} \otimes_{{A} \otimes_\Bbbk {A}} {T}^\dagger = \bigoplus_{i,j,k} \Hom_{\Cat{S}}(i\otimes j,k) \otimes_\Bbbk \Hom_{\Cat{S}}(k,i\otimes j) 
\\ 
1_k 
	& \lmt \sum_{i,j, \lambda} \lambda \otimes \widehat{\lambda}
\end{align}
where $1_k$ denotes $1\in \Bbbk$ in the $k$-th copy of~$\Bbbk$ in~${A}$. 
Note how tensor products over the direct sum algebra~${A}$ turn into tensor products over~$\Bbbk$ of matching summands. 
Similarly, we have adjunction maps 
\begin{align}
\widetilde{\text{ev}}_{{T}} \colon {T} \otimes_{{A} \otimes_\Bbbk {A}} {T}^\dagger 
	 \lra {A} \, , 
\qquad
\widetilde{\text{coev}}_{{T}} \colon {A} \otimes_\Bbbk {A} 
	 \lra {T}^\dagger \otimes_{{A}} {T} \, . 
\end{align}

\medskip 

\begin{proof}[Proof of Proposition~\ref{prop:orbidata}]
We will show that the data $(\Vectk, {A}, {T}, \alpha, \bar\alpha, \psi, \phi)$ satisfy the constraints in Proposition~\ref{prop:internal-orb}.

The constraints \eqref{eq:347n} and \eqref{eq:348n} reduce to the pentagon axiom for~$\Cat{S}$ (expressed in terms of $F$-symbols) and to the fact that up to dimension factors, $\alpha$ is the inverse of~$\bar\alpha$. 
For example, writing~$c$ for the symmetric braiding of $\Vect$, the left-hand side of \eqref{eq:347n} becomes
\be
\begin{tikzpicture}[
baseline=(current bounding box.base), 
>=stealth,
descr/.style={fill=white,inner sep=3.5pt}, 
normal line/.style={->}
] 
\matrix (m) [matrix of math nodes, row sep=4.0em, column sep=4.0em, text height=1.8ex, text depth=0.1ex] {%
	{T \otimes T \otimes T}  
	& 
	\displaystyle{\sum_{x',y',\lambda',\mu',\nu'} d_{x'}^{-1} d_{y'}^{-1} 
		\sum_\delta F_{\mu\nu'}^{\lambda\delta} F_{\nu\mu'}^{\delta\lambda'} \cdot
		\lambda' \otimes \mu' \otimes \nu'}
	\\
	{T \otimes T \otimes T}  
	&
	\displaystyle{\sum_{x',\nu',\delta} d_{x'}^{-1} F^{\lambda\delta}_{\mu\nu'}
		\cdot \delta \otimes \nu \otimes \nu'} 
	\\
	{T \otimes T \otimes T}
	&
	\displaystyle{\sum_{x',\nu',\delta} d_{x'}^{-1} F^{\lambda\delta}_{\mu\nu'}
		\cdot \delta \otimes \nu' \otimes \nu} 
	\\
	{T \otimes T \otimes T}  
	& 
	\lambda \otimes \mu \otimes \nu  
	\\
};
\path[font=\footnotesize] (m-4-1) edge[->] node[auto] {$\alpha \otimes \id$} (m-3-1);
\path[font=\footnotesize] (m-3-1) edge[->] node[auto] {$\id_T \otimes c_{T,T}^{-1}$} (m-2-1);
\path[font=\footnotesize] (m-2-1) edge[->] node[auto] {$\alpha \otimes \id$} (m-1-1);
\path[font=\footnotesize] (m-4-2) edge[|->] node[auto] {} (m-3-2);
\path[font=\footnotesize] (m-3-2) edge[|->] node[auto] {} (m-2-2);
\path[font=\footnotesize] (m-2-2) edge[|->] node[auto] {} (m-1-2);
\end{tikzpicture}
\ee
which is the left-hand side of \eqref{eq:pentagon-two-bases-2}, up to the dimension factors which cancel against corresponding factors on the right-hand side of \eqref{eq:347n} together with the factors coming from $\psi^{2}$.

\begin{figure}[!htbp]
	\begin{center} 
		$$
		\begin{tikzpicture}[very thick, x=2cm, y=5cm, scale=0.75, color=blue!50!black, baseline=0.75cm]
		\draw (-0.15,2) -- (-0.15,3.5); 
		\draw[directed] (0.15,2.0) .. controls +(0,1) and +(0,1) .. (1.25,2.0);
		\draw[redirected] (0.15,2) .. controls +(0,-1) and +(0,-1) .. (1.75,2);
		\draw (1.75,2) -- (1.75,3.5);
		\draw (-0.15,0) -- (-0.15,2); 
		\draw[redirected] (0.15,0) .. controls +(0,-1) and +(0,-1) .. (1.25,0);
		\draw[directed] (0.15,0) .. controls +(0,1) and +(0,1) .. (1.75,0);
		\draw (1.75,0) -- (1.75,-1.5);
		\draw[color=white, line width=4pt] (1.25,0) -- (1.25,2);
		\draw (1.25,0) -- (1.25,2);
		\draw (-0.15,-1.5) -- (-0.15,0); 
		%
		%
		\fill[color=white] (0,0) node[inner sep=5pt,draw, rounded corners=1pt, fill, color=white] (R2) {{\scriptsize$\;\alpha\;$}};
		\draw[line width=1pt, color=black] (0,0) node[inner sep=5pt, draw, semithick, rounded corners=1pt] (R) {{\scriptsize$\;\alpha\;$}};
		%
		%
		\fill[color=white] (0,2) node[inner sep=5pt,draw, rounded corners=1pt, fill, color=white] (R2) {{\scriptsize$\;\alpha\;$}};
		\draw[line width=1pt, color=black] (0,2) node[inner sep=5pt, draw, semithick, rounded corners=1pt] (R) {{\scriptsize$\;\bar\alpha\;$}};
		%
		%
		\fill[color=black] (-0.15,0.75) circle (2.9pt) node[left] (meet) {{\scriptsize$\!\psi_2^2\!$}};
		%
		\fill (0.15,3.5) circle (0pt) node (M) {{\scriptsize${}_l T_{a,k}$}};
		\fill (1.35,3.5) circle (0pt) node (M) {{\scriptsize$({}_iT_{a,b})^\dagger$}};
		\fill (0.5,2.3) circle (0pt) node (M) {{\scriptsize${}_{z'} T_{b,x}$}};
		\fill (0.5,-0.3) circle (0pt) node (M) {{\scriptsize${}_k T_{j,x}$}};
		\fill (0.85,1) circle (0pt) node (M) {{\scriptsize$({}_k T_{j,x})^\dagger$}};
		\fill (0.5,0.3) circle (0pt) node (M) {{\scriptsize${}_y T_{m,j}$}};
		\fill (0.15,-1.5) circle (0pt) node (M) {{\scriptsize${}_l T_{m,k}$}};
		\fill (1.35,-1.5) circle (0pt) node (M) {{\scriptsize$({}_iT_{m,j})^\dagger$}};
		\fill (-0.4,0.3) circle (0pt) node (M) {{\scriptsize${}_l T_{y,x}$}};
		\fill[black] (2.5,3.25) circle (0pt) node[right] (M) 
		{$\displaystyle{\sum_{x,\mu,\lambda'} \sum_{a,\nu,\lambda''}} d_x d_i^{-1} d_{k}^{-1} F_{\mu\widetilde{\lambda}}^{\lambda\lambda'} (F^{-1})_{\nu\mu}^{\lambda'\lambda''} \cdot \lambda'' \otimes \widehat{\nu}$};
		\draw[semithick, black, |->] (2.9,2.6) -- (2.9,3.05);
		\fill[black] (2.5,2.4) circle (0pt) node[right] (M) 
		{$\displaystyle{\sum_{x,\mu,\lambda'} \sum_{a,b,\nu} \sum_{z',\lambda'',\nu'}} d_x d_i^{-1} d_{z'}^{-1} F_{\mu\widetilde{\lambda}}^{\lambda\lambda'} (F^{-1})_{\nu\nu'}^{\lambda'\lambda''} \cdot \lambda'' \otimes \nu' \otimes \widehat{\mu} \otimes \widehat{\nu}$};
		\draw[semithick, black, |->] (2.9,1.8) -- (2.9,2.2);
		\fill[black] (2.5,1.6) circle (0pt) node[right] (M) 
		{$\displaystyle{\sum_{x,\mu,\lambda'} \sum_{a,b,\nu}} d_x d_i^{-1} F_{\mu\widetilde{\lambda}}^{\lambda\lambda'} \cdot \lambda' \otimes \nu \otimes \widehat{\mu} \otimes \widehat{\nu}$};
		\draw[semithick, black, |->] (2.9,1.2) -- (2.9,1.4);
		\fill[black] (2.5,1) circle (0pt) node[right] (M) 
		{$\displaystyle{\sum_{x,\mu,\lambda'}} d_x d_i^{-1} F_{\mu\widetilde{\lambda}}^{\lambda\lambda'} \cdot \lambda' \otimes \widehat{\mu}$};
		\draw[semithick, black, |->] (2.9,0.6) -- (2.9,0.8);
		\fill[black] (2.5,0.4) circle (0pt) node[right] (M) 
		{$\displaystyle{\sum_{x,\mu} \sum_{y,\lambda',\mu'}} d_y^{-1} F_{\mu\mu'}^{\lambda\lambda'} \cdot \lambda' \otimes \mu' \otimes \widehat{\mu} \otimes \widehat{\widetilde{\lambda}}$};
		\draw[semithick, black, |->] (2.9,-0.2) -- (2.9,0.2);
		\fill[black] (2.5,-0.4) circle (0pt) node[right] (M) 
		{$\displaystyle{\sum_{x,\mu}} \lambda \otimes \mu \otimes \widehat{\mu} \otimes \widehat{\widetilde{\lambda}}$};
		\draw[semithick, black, |->] (2.9,-1.15) -- (2.9,-0.6);
		\fill[black] (2.5,-1.25) circle (0pt) node[right] (M) 
		{$\lambda \otimes \widehat{\widetilde{\lambda}}$};
		\end{tikzpicture} 
		$$
	\end{center}
	\caption{Computing the left-hand side of condition~\eqref{eq:349n} for $\Aca$.
		Note that this is a string diagram in $\Vectk$, so there is no need to distinguish between over- and under-crossings, but we prefer to keep the notation from Proposition~\ref{prop:internal-orb}.} 
	\label{fig:349} 
\end{figure}
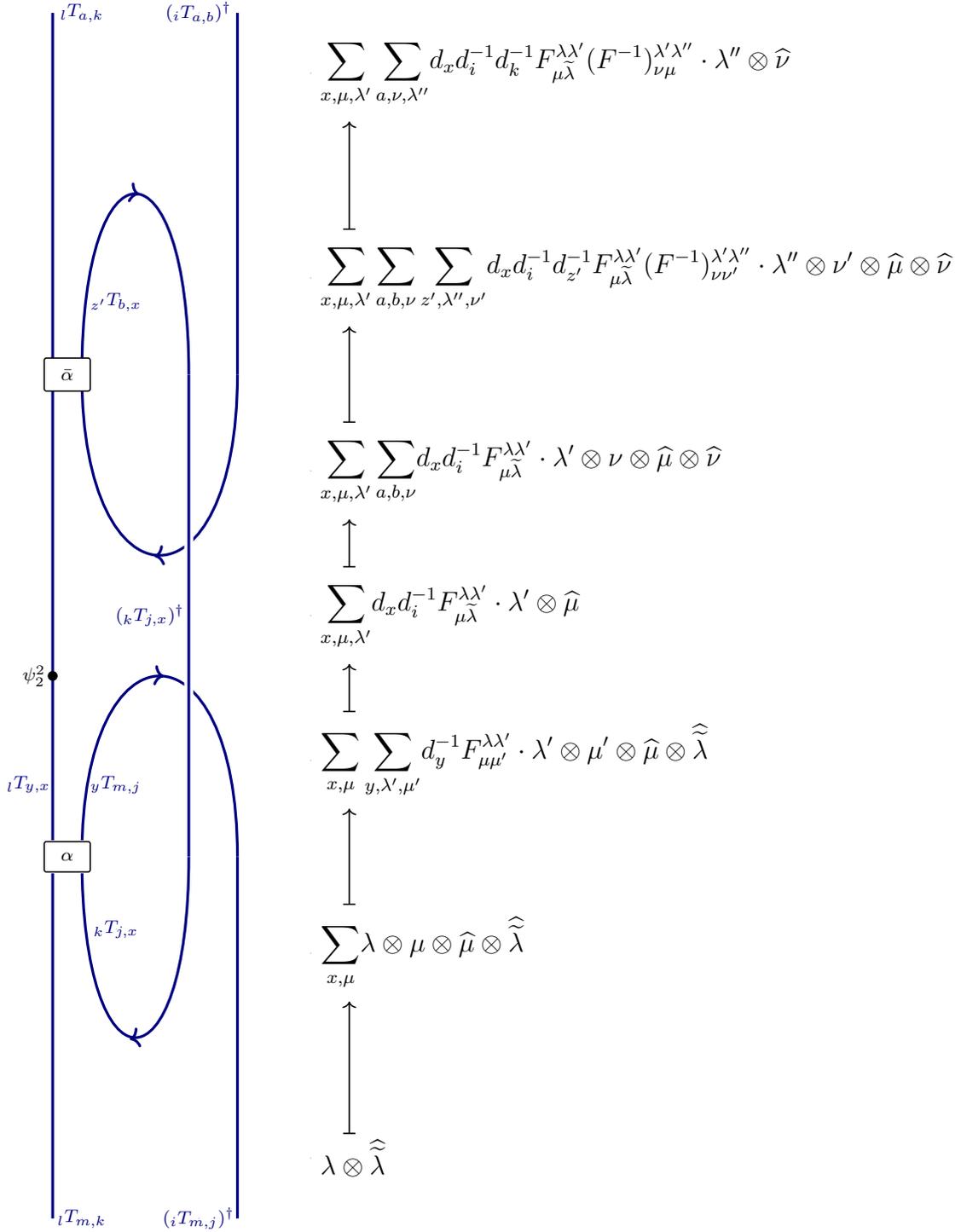

We now turn to the first condition of~\eqref{eq:349n}, which is an identity of linear maps on 
\begin{align}
{T} \otimes_{{A}} {T}^\dagger 
& = 
\bigoplus_{i,j,k,l,m} \Hom_{\Cat{S}}(m\otimes k,l) \otimes_\Bbbk \Hom_{\Cat{S}}(i,m\otimes j) 
\nonumber \\
& = 
\bigoplus_{i,j,k,l,m}
{}_l T_{m,k} \otimes_\Bbbk ({}_i T_{m,j})^\dagger
\, .
\end{align}
Here and below we use the abbreviations 
\be 
{}_k T_{i,j} := \Hom_{\Cat{S}}(i \otimes j, k) 
\, , \quad 
({}_k T_{i,j})^\dagger := \Hom_{\Cat{S}}(k, i \otimes j) \, . 
\ee 
Hence it is sufficient to show that for all fixed $i,j,k,l,m \in I$ and for all basis elements $\lambda \in \Hom_{\Cat{S}}(m\otimes k,l)$, $\widehat{\widetilde{\lambda}} \in \Hom_{\Cat{S}}(i,m\otimes j)$ the left-hand side of~\eqref{eq:349n} acts as the identity times~$d_m^{-1}$ on $\lambda\otimes \widehat{\widetilde{\lambda}}$. 
This action on $\lambda\otimes \widehat{\widetilde{\lambda}}$ is computed in Figure~\ref{fig:349}, where the Roman summation indices $a,b,x,y,z'$ range over~$I$ while Greek indices range over chosen bases elements: 
\begin{align}
\mu \in &  \Hom_{\Cat{S}}(j\otimes x, k) \, , \nonumber
\\
\lambda' \in & \Hom_{\Cat{S}}(y\otimes x, l) \, , \nonumber
\\
\mu' \in & \Hom_{\Cat{S}}(m\otimes j, y) \, , \nonumber
\\
\nu \in & \Hom_{\Cat{S}}(a\otimes b, i) \, ,  \nonumber
\\
\lambda'' \in & \Hom_{\Cat{S}}(a\otimes z', l) \, , \nonumber
\\
\nu' \in & \Hom_{\Cat{S}}(b\otimes x, z') \, . 
\end{align}
The outcome 
\be 
\sum_{x,\mu,\lambda'} \sum_{a,\nu,\lambda''} d_x \big(d_i^{-1}  F_{\mu\widetilde{\lambda}}^{\lambda\lambda'}\big) \cdot \big(d_{k}^{-1} (F^{-1})_{\nu\mu}^{\lambda'\lambda''}\big) \cdot \lambda'' \otimes \widehat{\nu}
\label{eq:TV-orb-aux1}
\ee 
of the computation in Figure~\ref{fig:349} can be further simplified:
\begin{align*}
\eqref{eq:TV-orb-aux1} \stackrel{\eqref{eq:Fmat}}{=}  
& 
\sum_{a,x,\nu,\lambda',\lambda'',\mu} 
d_x 
\cdot 
\tikzzbox{\begin{tikzpicture}[very thick,scale=0.5,color=blue!50!black, baseline=0cm]
	\draw[-dot-] (0,0) .. controls +(0,1) and +(0,1) .. (-1,0);
	\draw[-dot-] (-1,0) .. controls +(0,-1) and +(0,-1) .. (-2,0);
	\draw[-dot-] (-0.5,0.7) .. controls +(0,1.25) and +(0,1.25) .. (-2,0.7);
	\draw (-2,0.7) -- (-2,0);
	\draw[-dot-] (0,-0.7) .. controls +(0,-1.25) and +(0,-1.25) .. (-1.5,-0.7);
	\draw (0,-0.7) -- (0,0);
	\draw[directed] (-1.25,1.5) .. controls +(0,1) and +(0,1) .. (-2.75,1.5);
	\draw (-2.75,1.5) -- (-2.75,-1.6);
	\draw[directed] (-2.75,-1.6) .. controls +(0,-1.25) and +(0,-1.25) .. (-0.75,-1.6);
	\fill (-0.2,-2.05) circle (0pt) node (meet2) {{\scriptsize$\widehat{\lambda'}$}};
	\fill (-1.5,-0.25) circle (0pt) node (meet2) {{\scriptsize$\widehat{\widetilde{\lambda}}$}};
	\fill (-0.9,2) circle (0pt) node (meet2) {{\scriptsize$\lambda$}};
	\fill (-0.5,0.4) circle (0pt) node (meet2) {{\scriptsize$\mu$}};
	%
	%
	\fill (-1.15,0.35) circle (0pt) node (meet2) {{\scriptsize$j$}};
	\fill (-1.6,-1.4) circle (0pt) node (meet2) {{\scriptsize$i$}};
	\fill (-0.25,-0.5) circle (0pt) node (meet2) {{\scriptsize$x$}};
	\fill (-1.3,2.4) circle (0pt) node (meet2) {{\scriptsize$l$}};
	\fill (-2.25,0.5) circle (0pt) node (meet2) {{\scriptsize$m$}};
	\fill (-0.45,1.45) circle (0pt) node (meet2) {{\scriptsize$k$}};
	\end{tikzpicture}}
\; 
\tikzzbox{\begin{tikzpicture}[very thick,scale=0.5,color=blue!50!black, baseline=0cm]
	\draw[-dot-] (0,0) .. controls +(0,1) and +(0,1) .. (1,0);
	\draw[-dot-] (1,0) .. controls +(0,-1) and +(0,-1) .. (2,0);
	\draw[-dot-] (0.5,0.7) .. controls +(0,1.25) and +(0,1.25) .. (2,0.7);
	\draw (2,0.7) -- (2,0);
	\draw[-dot-] (0,-0.7) .. controls +(0,-1.25) and +(0,-1.25) .. (1.5,-0.7);
	\draw (0,-0.7) -- (0,0);
	\draw[directed] (1.25,1.5) .. controls +(0,1.25) and +(0,1.25) .. (-0.75,1.5);
	\draw (-0.75,1.5) -- (-0.75,-1.6);
	\draw[directed] (-0.75,-1.6) .. controls +(0,-1) and +(0,-1) .. (0.75,-1.6);
	%
	\fill (1.2,-2.05) circle (0pt) node (meet2) {{\scriptsize$\widehat{\lambda''}$}};
	\fill (2,-1.1) circle (0pt) node (meet2) {{\scriptsize$\widehat{\mu}$}};
	\fill (1.6,2) circle (0pt) node (meet2) {{\scriptsize$\lambda'$}};
	\fill (0.5,0.3) circle (0pt) node (meet2) {{\scriptsize$\nu$}};
	%
	%
	\fill (0.8,-0.3) circle (0pt) node (meet2) {{\scriptsize$j$}};
	\fill (1.15,-1.15) circle (0pt) node (meet2) {{\scriptsize$k$}};
	\fill (0.25,-0.5) circle (0pt) node (meet2) {{\scriptsize$a$}};
	\fill (1.1,2.5) circle (0pt) node (meet2) {{\scriptsize$l$}};
	\fill (1.7,0.5) circle (0pt) node (meet2) {{\scriptsize$x$}};
	\fill (0.45,1.55) circle (0pt) node (meet2) {{\scriptsize$i$}};
	\end{tikzpicture}}
\cdot \lambda'' \otimes \widehat{\nu}
\\ 
\stackrel{\eqref{eq:basis-sum-rule-iii}}{=} 
& 
\sum_{a,x,\nu,\lambda'',\mu} d_x 
\cdot 
\tikzzbox{\begin{tikzpicture}[very thick,scale=0.5,color=blue!50!black, baseline=-1cm, xscale=-1]
	\draw[-dot-] (0,0) .. controls +(0,1) and +(0,1) .. (1,0);
	\draw[-dot-] (1,0) .. controls +(0,-1) and +(0,-1) .. (2,0);
	\draw[-dot-] (0.5,0.7) .. controls +(0,1.25) and +(0,1.25) .. (2,0.7);
	\draw (2,0.7) -- (2,0);
	\draw (0,-3) -- (0,0);
	\draw (1.5,-0.7) -- (1.5,-2.3);
	\draw[-dot-] (1,-3) .. controls +(0,1) and +(0,1) .. (2,-3);
	\draw[-dot-] (1,-3) .. controls +(0,-1) and +(0,-1) .. (0,-3);
	\draw[-dot-] (0.5,-3.7) .. controls +(0,-1.25) and +(0,-1.25) .. (2,-3.7);
	\draw (2,-3.7) -- (2,-3);
	%
	\draw[directed] (1.25,1.5) .. controls +(0,1.25) and +(0,1.25) .. (3,1.5);
	\draw[redirected] (1.25,-4.5) .. controls +(0,-1.25) and +(0,-1.25) .. (3,-4.5);	
	\draw (3,1.5) -- (3,-4.5);
	%
	\fill (1.5,-0.2) circle (0pt) node (meet2) {{\scriptsize$\widehat{\widetilde{\lambda}}$}};
	\fill (1.5,-2.6) circle (0pt) node (meet2) {{\scriptsize$\nu$}};
	\fill (0.5,-3.3) circle (0pt) node (meet2) {{\scriptsize$\widehat{\mu}$}};
	\fill (1.25,1.25) circle (0pt) node (meet2) {{\scriptsize$\lambda$}};
	\fill (1.25,-4.1) circle (0pt) node (meet2) {{\scriptsize$\widehat{\lambda''}$}};
	\fill (0.5,0.3) circle (0pt) node (meet2) {{\scriptsize$\mu$}};
	%
	%
	\fill (0.8,-0.3) circle (0pt) node (meet2) {{\scriptsize$j$}};
	\fill (1.2,-3.3) circle (0pt) node (meet2) {{\scriptsize$j$}};
	\fill (2.25,-3.3) circle (0pt) node (meet2) {{\scriptsize$a$}};
	\fill (0.3,-4.25) circle (0pt) node (meet2) {{\scriptsize$k$}};
	\fill (1.7,-1.5) circle (0pt) node (meet2) {{\scriptsize$i$}};
	\fill (0.25,-1.5) circle (0pt) node (meet2) {{\scriptsize$x$}};
	\fill (1.4,2.5) circle (0pt) node (meet2) {{\scriptsize$l$}};
	\fill (2.4,0.5) circle (0pt) node (meet2) {{\scriptsize$m$}};
	\fill (0.45,1.55) circle (0pt) node (meet2) {{\scriptsize$k$}};
	\end{tikzpicture}}
 \cdot \lambda'' \otimes \widehat{\nu}
\;\; \stackrel{\eqref{eq:basis-sum-rule-ii}}{=} \;\;
\sum_{a,\nu,\lambda''} 
\tikzzbox{\begin{tikzpicture}[very thick,scale=0.5,color=blue!50!black, baseline=-1cm,xscale=-1]
	\draw[-dot-] (1,0) .. controls +(0,-1) and +(0,-1) .. (2,0);
	\draw[-dot-] (0,0.7) .. controls +(0,1.25) and +(0,1.25) .. (2,0.7);
	\draw (2,0.7) -- (2,0);
	\draw (0,-3.7) -- (0,0.7);
	\draw (1.5,-0.7) -- (1.5,-2.3);
	\draw[-dot-] (1,-3) .. controls +(0,1) and +(0,1) .. (2,-3);
	\draw[-dot-] (0,-3.7) .. controls +(0,-1.25) and +(0,-1.25) .. (2,-3.7);
	\draw (2,-3.7) -- (2,-3);
    \draw[directed] (1,1.5) .. controls +(0,1.25) and +(0,1.25) .. (3,1.5);
	\draw[redirected] (1,-4.5) .. controls +(0,-1.25) and +(0,-1.25) .. (3,-4.5);	
	\draw (3,1.5) -- (3,-4.5);
	\draw[directed] (1,0) .. controls +(0,0.75) and +(0,0.75) .. (0.25,0);
	\draw[directed] (0.25,-3,0) .. controls +(0,-0.75) and +(0,-0.75) .. (1,-3);
	\draw (0.25,0) -- (0.25,-3);
	%
	\fill (1.5,-0.2) circle (0pt) node (meet2) {{\scriptsize$\widehat{\widetilde{\lambda}}$}};
	\fill (1.5,-2.6) circle (0pt) node (meet2) {{\scriptsize$\nu$}};
	\fill (1.25,1.25) circle (0pt) node (meet2) {{\scriptsize$\lambda$}};
	\fill (1.25,-4.1) circle (0pt) node (meet2) {{\scriptsize$\widehat{\lambda''}$}};
	%
	%
	\fill (0.8,-0.3) circle (0pt) node (meet2) {{\scriptsize$j$}};
	\fill (2.25,-3.3) circle (0pt) node (meet2) {{\scriptsize$a$}};
	\fill (1.7,-1.5) circle (0pt) node (meet2) {{\scriptsize$i$}};
	\fill (-0.25,-1.5) circle (0pt) node (meet2) {{\scriptsize$k$}};
	\fill (1.1,2.5) circle (0pt) node (meet2) {{\scriptsize$l$}};
	\fill (2.4,0.5) circle (0pt) node (meet2) {{\scriptsize$m$}};
	\end{tikzpicture}}
\!\! \cdot \lambda'' \otimes \widehat{\nu}
\\
\stackrel{\eqref{eq:basis-sum-rule-i}}{=} 
& 
\; \sum_{a,\nu,\lambda''} 
\frac{\delta_{a,m}}{d_m}  \, 
\delta_{\nu,\widetilde{\lambda}} \cdot 
\tikzzbox{\begin{tikzpicture}[very thick,scale=0.4,color=blue!50!black, baseline=-0.1cm]
	\draw[-dot-] (0,0) .. controls +(0,1) and +(0,1) .. (1,0);
	\draw[-dot-] (1,0) .. controls +(0,-1) and +(0,-1) .. (0,0);
	\draw (0.5,0.7) -- (0.5,1.2);
	\draw[directed] (0.5,1.2) .. controls +(0,1) and +(0,1) .. (-1,1.2);
	\draw (-1,1.2) -- (-1,-1.2);
	\draw[directed] (-1,-1.2) .. controls +(0,-1) and +(0,-1) .. (0.5,-1.2);
	\draw (0.5,-0.7) -- (0.5,-1.2);
	%
	\fill (1,-1.3) circle (0pt) node (meet2) {{\scriptsize$\widehat{\lambda''}$}};
	\fill (1,1.2) circle (0pt) node (meet2) {{\scriptsize$\lambda$}};
	\fill (-0.3,0) circle (0pt) node (meet2) {{\scriptsize$m\vphantom{k}$}};
	\fill (1.4,0) circle (0pt) node (meet2) {{\scriptsize$k\vphantom{k}$}};
	\fill (0.6,2) circle (0pt) node (meet2) {{\scriptsize$l$}};
	\end{tikzpicture}}
\! \cdot \lambda'' \otimes \widehat{\nu} \,
\stackrel{\eqref{eq:basis-sum-rule-i}}{=} 
\, 
\sum_{\lambda''} \frac{\delta_{\lambda,\lambda''}}{d_m} \cdot \lambda'' \otimes \widehat{\widetilde{\lambda}}
= 
\frac{1}{d_m} \cdot \lambda \otimes \widehat{\widetilde{\lambda}}
\ .
\end{align*}
Hence we have shown that the first identity in~\eqref{eq:349n} holds. 
The other identity is checked similarly. 

\begin{figure}[!htbp]
	\begin{center} 
		$$
		\begin{tikzpicture}[very thick, x=2cm, y=5cm, scale=0.75, color=blue!50!black, baseline=0.75cm]
		\draw[directed] (0.15,2) .. controls +(0,1) and +(0,1) .. (1.25,2);
		\draw (0.15,0) -- (0.15,2); 
		\draw[redirected] (0.15,0) .. controls +(0,-1) and +(0,-1) .. (1.25,0);
		\draw[color=white, line width=4pt] (1.25,0) -- (1.25,2);
		\draw (1.25,0) -- (1.25,2);
		\draw (-0.15,-1.5) -- (-0.15,0); 
		\draw (-0.15,2) -- (-0.15,3.5); 
		\draw[redirected] (-0.15,2) .. controls +(0,-1) and +(0,-1) .. (-1.25,2);
		\draw (-1.25,2) -- (-1.25,3.5);
		\draw[directed] (-0.15,0) .. controls +(0,1) and +(0,1) .. (-1.25,0);
		\draw (-1.25,0) -- (-1.25,-1.5);
		%
		\fill[color=white] (0,0) node[inner sep=5pt,draw, rounded corners=1pt, fill, color=white] (R2) {{\scriptsize$\;\alpha\;$}};
		\draw[line width=1pt, color=black] (0,0) node[inner sep=5pt, draw, semithick, rounded corners=1pt] (R) {{\scriptsize$\;\alpha\;$}};
		%
		\fill[color=white] (0,2) node[inner sep=5pt,draw, rounded corners=1pt, fill, color=white] (R2) {{\scriptsize$\;\alpha\;$}};
		\draw[line width=1pt, color=black] (0,2) node[inner sep=5pt, draw, semithick, rounded corners=1pt] (R) {{\scriptsize$\;\bar\alpha\;$}};
		%
		\fill[color=black] (0.15,0.75) circle (2.9pt) node[right] (meet) {{\scriptsize$\!\psi_2^2\!$}};
		%
		%
		\fill (0.15,3.5) circle (0pt) node (M) {{\scriptsize${}_z T_{i,j}$}};
		\fill (-0.85,3.5) circle (0pt) node (M) {{\scriptsize$({}_z T_{a,b})^\dagger$}};
		\fill (0.5,2.3) circle (0pt) node (M) {{\scriptsize${}_{z''} T_{c,b}$}};
		\fill (0.5,-0.3) circle (0pt) node (M) {{\scriptsize${}_j T_{c,d}$}};
		\fill (0.45,0.3) circle (0pt) node (M) {{\scriptsize${}_{y'} T_{i,c}$}};
		\fill (-0.5,0.3) circle (0pt) node (M) {{\scriptsize${}_{k} T_{y',d}$}};
		\fill (0.15,-1.5) circle (0pt) node (M) {{\scriptsize${}_k T_{i,j}$}};
		\fill (-0.85,-1.5) circle (0pt) node (M) {{\scriptsize$({}_k T_{a,b})^\dagger$}};
		\fill[black] (2.5,3.25) circle (0pt) node[right] (M) 
		{$\displaystyle{\sum_{c,\mu,\mu'} \sum_{z,\nu} \sum_{\nu''}} d_c d_a^{-1} d_{j}^{-1} F_{\mu\mu'}^{\lambda\widetilde{\lambda}} (F^{-1})_{\mu'\mu}^{\nu\nu''} \cdot \widehat{\nu} \otimes \nu''$};
		\draw[semithick, black, |->] (2.9,2.6) -- (2.9,3.05);
		\fill[black] (2.5,2.4) circle (0pt) node[right] (M) 
		{$\displaystyle{\sum_{c,\mu,\mu'} \sum_{z,\nu} \sum_{z'',\nu'',\mu''}} d_c d_a^{-1} d_{z''}^{-1} F_{\mu\mu'}^{\lambda\widetilde{\lambda}} (F^{-1})_{\mu'\mu''}^{\nu\nu''}$};
		\fill[black] (5.5,2.25) circle (0pt) node[right] (M) 
		{$\cdot \widehat{\nu} \otimes \nu'' \otimes \mu'' \otimes \widehat{\mu}$};
		\draw[semithick, black, |->] (2.9,1.8) -- (2.9,2.2);
		\fill[black] (2.5,1.6) circle (0pt) node[right] (M) 
		{$\displaystyle{\sum_{c,\mu,\mu'} \sum_{z,\nu}} d_c d_a^{-1} F_{\mu\mu'}^{\lambda\widetilde{\lambda}} \cdot \widehat{\nu} \otimes \nu \otimes \mu' \otimes \widehat{\mu}$};
		\draw[semithick, black, |->] (2.9,1.2) -- (2.9,1.4);
		\fill[black] (2.5,1) circle (0pt) node[right] (M) 
		{$\displaystyle{\sum_{c,\mu,\mu'}} d_c d_a^{-1} F_{\mu\mu'}^{\lambda\widetilde{\lambda}} \cdot \mu' \otimes \widehat{\mu}$};
		\draw[semithick, black, |->] (2.9,0.6) -- (2.9,0.8);
		\fill[black] (2.5,0.4) circle (0pt) node[right] (M) 
		{$\displaystyle{\sum_{c,d,\mu} \sum_{y',\lambda',\mu'}} d_{y'}^{-1} F_{\mu\mu'}^{\lambda\lambda'} \cdot \widehat{\widetilde{\lambda}} \otimes \lambda' \otimes \mu' \otimes \widehat{\mu}$};
		\draw[semithick, black, |->] (2.9,-0.2) -- (2.9,0.2);
		\fill[black] (2.5,-0.4) circle (0pt) node[right] (M) 
		{$\displaystyle{\sum_{c,d,\mu}} \widehat{\widetilde{\lambda}} \otimes \lambda \otimes \mu \otimes \widehat{\mu}$};
		\draw[semithick, black, |->] (2.9,-1.15) -- (2.9,-0.6);
		\fill[black] (2.5,-1.25) circle (0pt) node[right] (M) 
		{$\widehat{\widetilde{\lambda}} \otimes \lambda$};
		\end{tikzpicture} 
		$$
	\end{center}
	\caption{Computing the left-hand side of condition~\eqref{eq:350n} for $\Aca$.} 
	\label{fig:350} 
\end{figure}
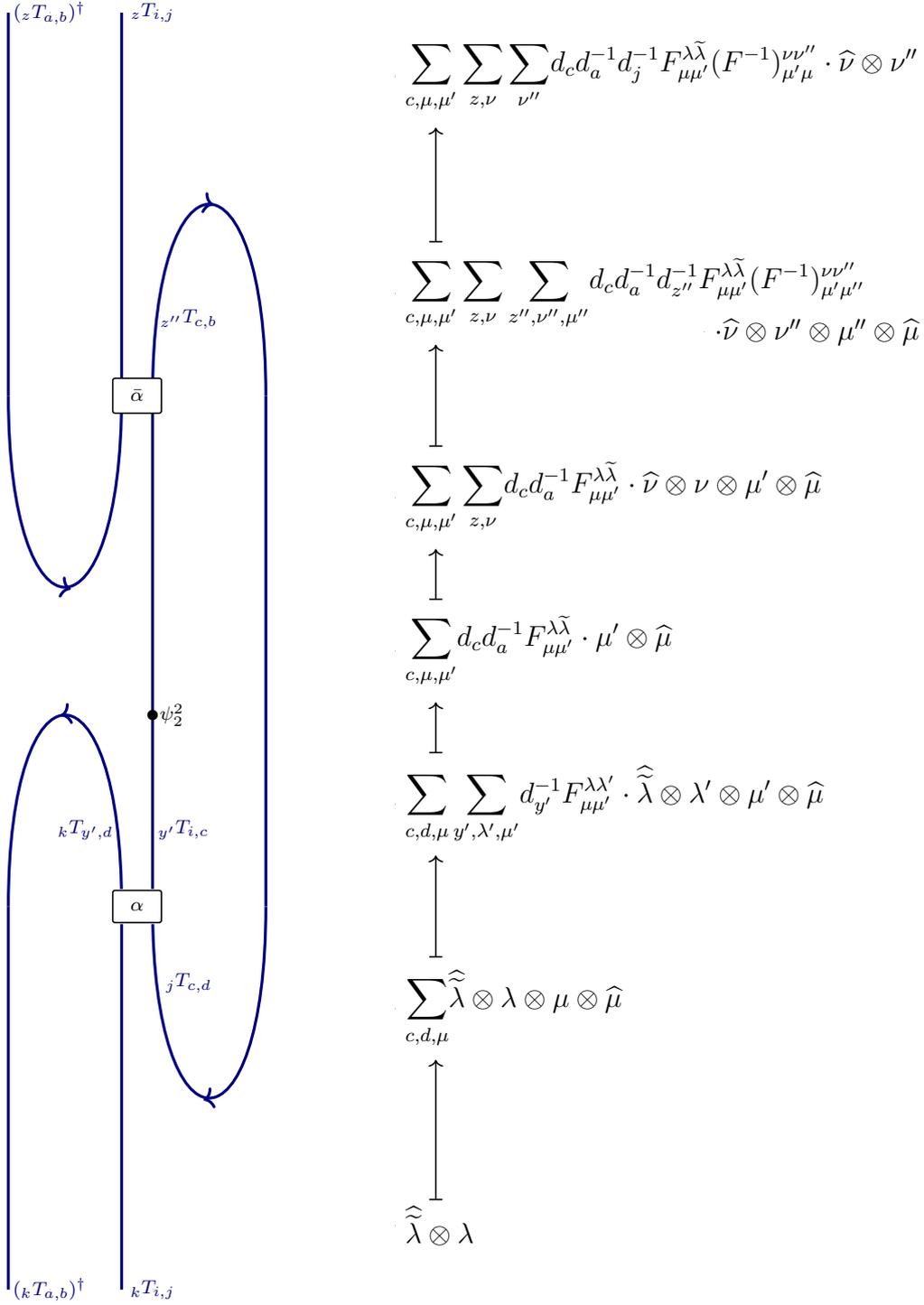

Next we turn to the first constraint in~\eqref{eq:350n}. 
Verifying that it holds for our orbifold data is similar to the case of~\eqref{eq:349n}: 
We have to show that the left-hand side of~\eqref{eq:350n} acts as the identity times $d_k^{-1}$ on $\widehat{\widetilde{\lambda}} \otimes \lambda$ for all elements $\lambda \in \Hom_{\Cat{S}}(i\otimes j, k)$ and $\widehat{\widetilde{\lambda}} \in \Hom_{\Cat{S}}(k,a\otimes b)$ of chosen bases for all $a,b,i,j,k \in I$. 
This action is computed in Figure~\ref{fig:350} to produce 
\be\label{eq:TV-orb-aux2}
\sum_{c,z,\nu,\nu',\mu,\mu'} d_c 
\big( d_a^{-1} F^{\lambda\widetilde{\lambda}}_{\mu\mu'} \big) \cdot 
\big( d_j^{-1} (F^{-1})^{\nu\nu''}_{\mu'\mu} \big) 
\cdot \widehat{\nu} \otimes \nu'' \, . 
\ee
This can be simplified to 
\begin{align*}
\eqref{eq:TV-orb-aux2} 
& 
= 
\sum_{c,z,\nu,\nu'',\mu,\mu'} d_c 
\cdot 
\tikzzbox{\begin{tikzpicture}[very thick,scale=0.5,color=blue!50!black, baseline=0cm]
	\draw[-dot-] (0,0) .. controls +(0,1) and +(0,1) .. (-1,0);
	\draw[-dot-] (-1,0) .. controls +(0,-1) and +(0,-1) .. (-2,0);
	\draw[-dot-] (-0.5,0.7) .. controls +(0,1.25) and +(0,1.25) .. (-2,0.7);
	\draw (-2,0.7) -- (-2,0);
	\draw[-dot-] (0,-0.7) .. controls +(0,-1.25) and +(0,-1.25) .. (-1.5,-0.7);
	\draw (0,-0.7) -- (0,0);
	\draw[directed] (-1.25,1.5) .. controls +(0,1) and +(0,1) .. (-2.75,1.5);
	\draw (-2.75,1.5) -- (-2.75,-1.6);
	\draw[directed] (-2.75,-1.6) .. controls +(0,-1.25) and +(0,-1.25) .. (-0.75,-1.6);
	\fill (-0.2,-2.05) circle (0pt) node (meet2) {{\scriptsize$\widehat{\widetilde{\lambda}}$}};
	\fill (-1.5,-0.25) circle (0pt) node (meet2) {{\scriptsize$\widehat{\mu'}$}};
	\fill (-0.9,2) circle (0pt) node (meet2) {{\scriptsize$\lambda$}};
	\fill (-0.5,0.3) circle (0pt) node (meet2) {{\scriptsize$\mu$}};
	%
	\fill (-1.15,0.35) circle (0pt) node (meet2) {{\scriptsize$c$}};
	\fill (-1.6,-1.4) circle (0pt) node (meet2) {{\scriptsize$a$}};
	\fill (-0.25,-0.5) circle (0pt) node (meet2) {{\scriptsize$b$}};
	\fill (-1.3,2.4) circle (0pt) node (meet2) {{\scriptsize$k$}};
	\fill (-2.25,0.5) circle (0pt) node (meet2) {{\scriptsize$i$}};
	\fill (-0.45,1.45) circle (0pt) node (meet2) {{\scriptsize$j$}};
	\end{tikzpicture}}
\; 
\tikzzbox{\begin{tikzpicture}[very thick,scale=0.5,color=blue!50!black, baseline=0cm]
	\draw[-dot-] (0,0) .. controls +(0,1) and +(0,1) .. (1,0);
	\draw[-dot-] (1,0) .. controls +(0,-1) and +(0,-1) .. (2,0);
	\draw[-dot-] (0.5,0.7) .. controls +(0,1.25) and +(0,1.25) .. (2,0.7);
	\draw (2,0.7) -- (2,0);
	\draw[-dot-] (0,-0.7) .. controls +(0,-1.25) and +(0,-1.25) .. (1.5,-0.7);
	\draw (0,-0.7) -- (0,0);
	\draw[directed] (1.25,1.5) .. controls +(0,1.25) and +(0,1.25) .. (-0.75,1.5);
	\draw (-0.75,1.5) -- (-0.75,-1.6);
	\draw[directed] (-0.75,-1.6) .. controls +(0,-1) and +(0,-1) .. (0.75,-1.6);
	%
	\fill (1.2,-2.05) circle (0pt) node (meet2) {{\scriptsize$\widehat{\nu''}$}};
	\fill (2,-1.1) circle (0pt) node (meet2) {{\scriptsize$\widehat{\mu}$}};
	\fill (1.6,2) circle (0pt) node (meet2) {{\scriptsize$\nu$}};
	\fill (0.5,0.3) circle (0pt) node (meet2) {{\scriptsize$\mu'$}};
	%
	%
	\fill (0.8,-0.3) circle (0pt) node (meet2) {{\scriptsize$c$}};
	\fill (1.15,-1.15) circle (0pt) node (meet2) {{\scriptsize$j$}};
	\fill (0.2,-0.5) circle (0pt) node (meet2) {{\scriptsize$i$}};
	\fill (1.1,2.5) circle (0pt) node (meet2) {{\scriptsize$z$}};
	\fill (1.75,0.5) circle (0pt) node (meet2) {{\scriptsize$b$}};
	\fill (0.45,1.55) circle (0pt) node (meet2) {{\scriptsize$a$}};
	\end{tikzpicture}}
\cdot 
\widehat{\nu} \otimes \nu'' 
\\ 
& 
\overset{(*)}= 
\sum_{c,z,\nu,\nu'',\mu'} d_c 
\cdot
\tikzzbox{\begin{tikzpicture}[very thick,scale=0.5,color=blue!50!black, baseline=0cm]
	\draw[directed] (-1,0.5) .. controls +(0,-0.35) and +(0,-0.35) .. (0,0.5);
	\draw[-dot-] (0,0.5) .. controls +(0,1) and +(0,1) .. (1,0.5);
	\draw[-dot-] (0.5,1.2) .. controls +(0,1) and +(0,1) .. (1.75,1.2);
	\draw[-dot-] (0.5,3.5) .. controls +(0,-1) and +(0,-1) .. (1.75,3.5);
	\draw[directed] (0.5,3.5) .. controls +(0,0.5) and +(0,0.5) .. (-1,3.5);
	\draw[directed] (1.75,3.5) .. controls +(0,1.5) and +(0,1.5) .. (-1.5,3.5);
	\draw (-1,3.5) -- (-1,0.5);
	\draw (1.125,2.8) -- (1.125,2);
	\draw (-1.5,3.5) -- (-1.5,-3.5);
	\draw (1,0.5) -- (1,-0.5);
	\draw (1.75,1.2) -- (1.75,-1.2);
	\draw[redirected] (-1,-0.5) .. controls +(0,0.35) and +(0,0.35) .. (0,-0.5);
	\draw[-dot-] (0,-0.5) .. controls +(0,-1) and +(0,-1) .. (1,-0.5);
	\draw[-dot-] (0.5,-1.2) .. controls +(0,-1) and +(0,-1) .. (1.75,-1.2);
	\draw[-dot-] (0.5,-3.5) .. controls +(0,1) and +(0,1) .. (1.75,-3.5);
	\draw[redirected] (0.5,-3.5) .. controls +(0,-0.5) and +(0,-0.5) .. (-1,-3.5);
	\draw[redirected] (1.75,-3.5) .. controls +(0,-1.5) and +(0,-1.5) .. (-1.5,-3.5);
	\draw (-1,-3.5) -- (-1,-0.5);
	\draw (1.125,-2.8) -- (1.125,-2);
	%
	\fill (0.5,0.8) circle (0pt) node (meet2) {{\scriptsize$\mu'$}};
	\fill (1.126,1.55) circle (0pt) node (meet2) {{\scriptsize$\nu$}};
	\fill (1.126,3.3) circle (0pt) node (meet2) {{\scriptsize$\widehat{\nu''}$}};
	\fill (0.5,-0.8) circle (0pt) node (meet2) {{\scriptsize$\widehat{\mu'}$}};
	\fill (1.126,-1.5) circle (0pt) node (meet2) {{\scriptsize$\widehat{\widetilde{\lambda}}$}};
	\fill (1.126,-3.1) circle (0pt) node (meet2) {{\scriptsize$\lambda$}};
	\fill (0.75,0) circle (0pt) node (meet2) {{\scriptsize$c$}};
	\fill (1.5,0) circle (0pt) node (meet2) {{\scriptsize$b$}};
	\fill (1.8,2.8) circle (0pt) node (meet2) {{\scriptsize$j$}};
	\fill (1.8,-2.8) circle (0pt) node (meet2) {{\scriptsize$j$}};
	\fill (1.4,2.4) circle (0pt) node (meet2) {{\scriptsize$z$}};
	\fill (1.4,-2.4) circle (0pt) node (meet2) {{\scriptsize$k$}};
	\fill (0.5,1.9) circle (0pt) node (meet2) {{\scriptsize$a$}};
	\fill (0.5,-2) circle (0pt) node (meet2) {{\scriptsize$a$}};
	\fill (0.5,2.8) circle (0pt) node (meet2) {{\scriptsize$i$}};
	\fill (0.5,-2.8) circle (0pt) node (meet2) {{\scriptsize$i$}};
	\end{tikzpicture}}
\cdot
\widehat{\nu} \otimes \nu'' 
=
\sum_{z,\nu,\nu''} 
\tikzzbox{\begin{tikzpicture}[very thick,scale=0.5,color=blue!50!black, baseline=0cm]
	\draw[-dot-] (0.5,1.2) .. controls +(0,1) and +(0,1) .. (1.75,1.2);
	\draw[-dot-] (0.5,3.5) .. controls +(0,-1) and +(0,-1) .. (1.75,3.5);
	\draw[directed] (0.5,3.5) .. controls +(0,0.5) and +(0,0.5) .. (-1,3.5);
	\draw[directed] (1.75,3.5) .. controls +(0,1.5) and +(0,1.5) .. (-1.5,3.5);
	\draw (-1,3.5) -- (-1,0.5);
	\draw (1.125,2.8) -- (1.125,2);
	\draw (-1.5,3.5) -- (-1.5,-3.5);
	\draw (0.5,1.2) -- (0.5,-1.2);
	\draw (-1,1.2) -- (-1,-1.2);
	\draw (1.75,1.2) -- (1.75,-1.2);
	\draw[-dot-] (0.5,-1.2) .. controls +(0,-1) and +(0,-1) .. (1.75,-1.2);
	\draw[-dot-] (0.5,-3.5) .. controls +(0,1) and +(0,1) .. (1.75,-3.5);
	\draw[redirected] (0.5,-3.5) .. controls +(0,-0.5) and +(0,-0.5) .. (-1,-3.5);
	\draw[redirected] (1.75,-3.5) .. controls +(0,-1.5) and +(0,-1.5) .. (-1.5,-3.5);
	\draw (-1,-3.5) -- (-1,-0.5);
	\draw (1.125,-2.8) -- (1.125,-2);
	%
	\fill (1.126,1.55) circle (0pt) node (meet2) {{\scriptsize$\nu$}};
	\fill (1.126,3.3) circle (0pt) node (meet2) {{\scriptsize$\widehat{\nu''}$}};
	\fill (1.126,-1.5) circle (0pt) node (meet2) {{\scriptsize$\widehat{\widetilde{\lambda}}$}};
	\fill (1.126,-3.1) circle (0pt) node (meet2) {{\scriptsize$\lambda$}};
	\fill (0.75,0) circle (0pt) node (meet2) {{\scriptsize$a\vphantom{b}$}};
	\fill (1.5,0) circle (0pt) node (meet2) {{\scriptsize$b$}};
	\fill (-1.75,0) circle (0pt) node (meet2) {{\scriptsize$j$}};
	\fill (1.4,2.4) circle (0pt) node (meet2) {{\scriptsize$z$}};
	\fill (1.4,-2.4) circle (0pt) node (meet2) {{\scriptsize$k$}};
	\fill (0.5,2.8) circle (0pt) node (meet2) {{\scriptsize$i$}};
	\end{tikzpicture}}
\cdot
\widehat{\nu} \otimes \nu'' 
\\ 
& = 
\sum_{z,\nu,\nu''} 
\frac{1}{d_k} \,\delta_{\nu,\widetilde{\lambda}}\, \delta_{k,z} \cdot  
\tikzzbox{\begin{tikzpicture}[very thick,scale=0.5,color=blue!50!black, baseline=0cm]
	\draw[-dot-] (0.5,1.5) .. controls +(0,-1) and +(0,-1) .. (1.75,1.5);
	\draw[directed] (0.5,1.5) .. controls +(0,0.5) and +(0,0.5) .. (-0.5,1.5);
	\draw[directed] (1.75,1.5) .. controls +(0,1.5) and +(0,1.5) .. (-1.5,1.5);
	\draw (1.125,0.8) -- (1.125,0);
	\draw (-1.5,1.5) -- (-1.5,-1.5);
	\draw (-0.5,1.5) -- (-0.5,-1.5);
	%
	\draw[-dot-] (0.5,-1.5) .. controls +(0,1) and +(0,1) .. (1.75,-1.5);
	\draw[redirected] (0.5,-1.5) .. controls +(0,-0.5) and +(0,-0.5) .. (-0.5,-1.5);
	\draw[redirected] (1.75,-1.5) .. controls +(0,-1.5) and +(0,-1.5) .. (-1.5,-1.5);
	\draw (1.125,-0.8) -- (1.125,0);
	%
	\fill (1.126,1.3) circle (0pt) node (meet2) {{\scriptsize$\widehat{\nu''}$}};
	\fill (1.126,-1.1) circle (0pt) node (meet2) {{\scriptsize$\lambda$}};
	\fill (-1.25,0) circle (0pt) node (meet2) {{\scriptsize$j\vphantom{b}$}};
	\fill (1.4,0) circle (0pt) node (meet2) {{\scriptsize$z$}};
	\fill (-0.25,0) circle (0pt) node (meet2) {{\scriptsize$i$}};
	\end{tikzpicture}}
\cdot 
\widehat{\nu} \otimes \nu'' 
= \frac{1}{d_k} 
\sum_\nu 
\tikzzbox{\begin{tikzpicture}[very thick,scale=0.4,color=blue!50!black, baseline=-0.1cm]
	\draw[-dot-] (0,0) .. controls +(0,1) and +(0,1) .. (1,0);
	\draw[-dot-] (1,0) .. controls +(0,-1) and +(0,-1) .. (0,0);
	\draw (0.5,0.7) -- (0.5,1.2);
	\draw[directed] (0.5,1.2) .. controls +(0,1) and +(0,1) .. (-1,1.2);
	\draw (-1,1.2) -- (-1,-1.2);
	\draw[directed] (-1,-1.2) .. controls +(0,-1) and +(0,-1) .. (0.5,-1.2);
	\draw (0.5,-0.7) -- (0.5,-1.2);
	%
	\fill (1,-1.3) circle (0pt) node (meet2) {{\scriptsize$\widehat{\nu''}$}};
	\fill (1,1.2) circle (0pt) node (meet2) {{\scriptsize$\lambda$}};
	\fill (-0.3,0) circle (0pt) node (meet2) {{\scriptsize$i$}};
	\fill (1.4,0) circle (0pt) node (meet2) {{\scriptsize$j\vphantom{k}$}};
	\fill (0.6,2) circle (0pt) node (meet2) {{\scriptsize$z$}};
	\end{tikzpicture}}
\cdot \widehat{\widetilde{\lambda}} \otimes \nu'' 
\\
& = \frac{1}{d_k} \cdot \widehat{\widetilde{\lambda}} \otimes \lambda \, , 
\end{align*}
where in $(*)$ first the basis element $\lambda$ in the left diagram and the element $\widehat{\nu''}$ in the right diagram are ``taken around'' by using the cyclicity of the trace, and then \eqref{eq:basis-sum-rule-iii} is used. 
The second identity in~\eqref{eq:350n} follows analogously. 

It remains to verify the constraints in~\eqref{eq:351n}. 
Writing again $1_k$ for the unit in the $k$-th copy of~$\Bbbk$ in ${A} = \bigoplus_{k\in I} \Bbbk$, the left-hand side of the first identity in~\eqref{eq:351n} is 
\be
\begin{tikzpicture}[
baseline=(current bounding box.base), 
>=stealth,
descr/.style={fill=white,inner sep=3.5pt}, 
normal line/.style={->}
] 
\matrix (m) [matrix of math nodes, row sep=1.0em, column sep=4.2em, text height=1.1ex, text depth=0.1ex] {%
	{A}  &  {T}\otimes_{{A} \otimes_\Bbbk {A}} {T}^\dagger  &  {A} 
	\\
	1_k  & \displaystyle{\sum_{i,j,\lambda} d_i d_j \cdot \lambda \otimes \widehat{\lambda}}  &  \displaystyle{\sum_{i,j} d_i d_j N_{ij}^k \cdot \id_{A_k}}
	\\
};
\path[font=\footnotesize] (m-1-1) edge[->] node[auto] {$\psi_1^2 \psi_2^2 \text{coev}_{{T}}$} (m-1-2);
\path[font=\footnotesize] (m-1-2) edge[->] node[auto] {$\widetilde{\text{coev}}_{{T}}$} (m-1-3);
\path[font=\footnotesize] (m-2-1) edge[|->] node[auto] {} (m-2-2);
\path[font=\footnotesize] (m-2-2) edge[|->] node[auto] {} (m-2-3);
\end{tikzpicture}
\ee
where $N_{ij}^k := \dim_\Bbbk \Hom_{\Cat{S}}(i\otimes j,k)$. 
We further compute 
\begin{align}
\sum_{i,j} d_i d_j  N_{ij}^k
&
 = \sum_{i,j} d_i d_{j^*}  N_{ik^*}^{j^*}
 = \sum_{i} d_i d_i d_{k}
 = 
	 \phi^{-2}
\cdot (\eta_A \circ \psi^2)  \big|_{A_k} \, ,
\end{align}
where in the first step we used that $N_{ij}^k = N_{ik^*}^{j^*}$ and that $d_j = d_{j^*}$. The second step is $\dim(i \otimes k^*) = \sum_l N_{ik^*}^l \dim(l)$.
\end{proof}

\begin{remark}
\label{rem:SFCinternal}
The orbifold datum~$\Aca$ of Definition~\ref{prop:orbidata} constructed from a spherical fusion category~$\mathcal S$ is expressed internally to the modular tensor category~$\Vectk$, in line with the general setup of Section~\ref{subsec:SODribbon}. 
Equivalently, $\Aca$ can be described internal to the 3-category with duals $\textrm{Bimod}_\Bbbk$ of spherical fusion categories, bimodule categories with module traces, bimodule functors and their natural transformations (studied in \cite{GregorDiss}), along the general lines of \cite[Sect.\,4.2]{CRS1}. 
In this formulation, 3-, 2-, 1- and 0-strata are labelled by $\Vectk$, the $\Vectk$-$\Vectk$-bimodule~$\mathcal S$, the functor $\otimes\colon \mathcal{S} \boxtimes \mathcal{S} \to \mathcal{S}$ and natural transformations constructed from the associator, respectively. 
Similarly, the $\phi$- and $\psi$-insertions are also natural transformations; for example, one can compute the right quantum dimension $\dim_{\textrm{r}}(\otimes)$ to be $\dim \mathcal S$ times the identity, which fixes 
	 $\phi^{2}$
to be $(\dim \mathcal S)^{-1} \cdot \id_{\textrm{Id}_{\Vectk}}$. 

In this way the orbifold datum~$\Aca$ is a spherical fusion category internal to the 3-category with duals $\mathcal T_{\zztriv} \subset \textrm{Bimod}_\Bbbk$ constructed from~$\zztriv$ as in \cite{CMS}. 
A related idea to use (spherical fusion categories viewed as) ``2-algebras'' to construct Turaev--Viro theory was outlined in \cite{prehistory}. 

In general, we can think of orbifold data for a 3-dimensional defect TQFT~$\zz$ as spherical fusion categories internal to~$\tz$. 
\end{remark}

\subsection{Turaev--Viro theory is an orbifold}
\label{subsec:TVorbifolds}

In this section we prove that for every spherical fusion category~${\Cat{S}}$, Turaev--Viro theory $\zz^{\text{TV},\Cat{S}}$ and the orbifold theory $\zz^{\text{triv}}_{\A^{{\Cat{S}}}}$ are isomorphic as TQFTs. 

\medskip 

We first show that $\zz^{\text{TV},\Cat{S}}$ and $\zz^{\text{triv}}_{\A^{{\Cat{S}}}}$ assign \textsl{identical} invariants to closed 3-manifolds. 
Let~$M$ be such a closed manifold, and let~$t$ be an oriented triangulation of~$M$. 
As recalled from \cite{CRS1} in Section~\ref{sec:orbif-defect-tqfts}, by decorating the Poincar\'{e} dual stratification with the orbifold datum $\Aca$ from Definition~\ref{prop:orbidata}, we obtain a morphism $M^{t,\Aca}\colon \emptyset \to \emptyset$ in $\Borddef_3(\mathds{D}^{\text{triv}})$. By definition, 
\be\label{eq:ZMZM}
\zz^{\text{triv}}_{\Aca}(M) = \zz^{\text{triv}}\big( M^{t,\Aca}\big) \, . 
\ee

To compute the right-hand side of~\eqref{eq:ZMZM}, we will denote the set of $j$-strata of $M^{t,\Aca}$ by $M_j$ for $j \in \{ 1,2,3\}$, while the sets of positively and negatively oriented 0-strata are denoted $M_0^+$ and $M_0^-$, respectively. 
By construction, the invariant $\zz^{\text{triv}}(M^{t,\Aca})$ is a single string diagram~$D$ in $\Vectk$. 
Using the decompositions ${A} = \bigoplus_{i} \Bbbk$ and ${T} = \bigoplus_{i,j,k} \Hom_{\Cat{S}} (i\otimes j,k)$, the diagram~$D$ can be written as a sum of string diagrams whose strings are labelled by simple objects in~$I$. 
The morphisms in these diagrams are either point insertions 
	 $\psi^2, \phi^2$, 
or duality maps 
\begin{align}
& 
\Hom_{\Cat{S}}(l,a\otimes b) \otimes_\Bbbk \Hom_{\Cat{S}}(i\otimes j,k) \ni \widehat{\lambda'} \otimes \lambda \lmt 
\delta_{a,i} \delta_{b,j} \delta_{k,l} \delta_{\lambda,\lambda'} \, , 
\\
& 
1_k \lmt \sum_\lambda \lambda \otimes \widehat{\lambda} \in \Hom_{\Cat{S}}(i\otimes j,k) \otimes_\Bbbk \Hom_{\Cat{S}}(k,i\otimes j) \, , 
\end{align}
or their tilded versions 
\begin{align}
& 
\Hom_{\Cat{S}}(i\otimes j,k) \otimes_\Bbbk \Hom_{\Cat{S}}(l,a\otimes b) \ni \lambda \otimes \widehat{\lambda'} \lmt 
\delta_{a,i} \delta_{b,j} \delta_{k,l} \delta_{\lambda,\lambda'} \, , 
\\
& 
1_k \lmt \sum_\lambda \widehat{\lambda} \otimes \lambda \in \Hom_{\Cat{S}}(k,i\otimes j) \otimes_\Bbbk \Hom_{\Cat{S}}(i\otimes j,k) 
\end{align}
as in Section~\ref{subsec:TVire}, or the component maps 
\begin{align}
\alpha \colon \lambda \otimes \mu 
	& \lmt \sum_{d,\lambda',\mu'} d_{d}^{-1} F^{\lambda\lambda'}_{\mu\mu'} \cdot \lambda' \otimes \mu' \, , 
\\
\bar\alpha \colon \lambda' \otimes \mu'
& \lmt \sum_{c,\lambda'',\mu''} d_{c}^{-1} (F^{-1})^{\lambda'\lambda''}_{\mu'\mu''} \cdot \lambda'' \otimes \mu''  
\end{align}
of Definition~\ref{prop:orbidata}, corresponding to 0-strata in $M_0^\pm$. 

As we will explain in the following, $\zz^{\text{triv}}( M^{t,\Aca})$ is equal to 
\be\label{eq:CIA}
	 \phi^{2|M_3|} 
\cdot
\sum_{\mathcal{I}=(i_1,\dots,i_{|M_2|}) \in I^{|M_2|}} d_{i_1} \ldots d_{i_{|M_2|}}
\sum_{\lambda_e^{\mathcal{I}}}
\Big( \prod_{x\in M_0} \mathds{F}_{\mathcal{C}}(\Gamma_x) \Big(\bigotimes_{e\in E_x} \lambda_e^{\mathcal{I}} \Big) \Big) \, . 
\ee
To arrive at this expression, first note that each 3-stratum in $M^{t,\Aca}$ carries a
	 $\phi^2$-insertion, 
leading to the global factor 
	 $\phi^{2|M_3|}$. 
Secondly, each ${A}$-labelled 2-stratum carries an insertion of $\psi^2 = \text{diag}(d_1,\dots,d_{|I|})$, leading to $\sum_{\mathcal{I}} d_{i_1} \ldots d_{i_{|M_2|}}$ when decomposing ${A} = \bigoplus_{i} \Bbbk$. 
Thirdly, for fixed $\mathcal{I} \in I^{|M_2|}$ and $e\in M_1$, $\lambda_e^{\mathcal{I}}$ ranges over a basis of $\Hom_{\Cat{S}} (i\otimes j,k)$ if a neighbourhood of~$e$ looks like\footnote{When we say that a 2-stratum is labelled with $i\in I$, here and below we mean that we consider the contribution of the $i$-th copy of~$\Bbbk$ in ${A}$.}
\be
\tikzzbox{\begin{tikzpicture}[thick,scale=2.321,color=blue!50!black, baseline=0.0cm, >=stealth, 
	style={x={(-0.6cm,-0.4cm)},y={(1cm,-0.2cm)},z={(0cm,0.9cm)}}]
	\pgfmathsetmacro{\yy}{0.2}
	\coordinate (T) at (0.5, 0.4, 0);
	\coordinate (L) at (0.5, 0, 0);
	\coordinate (R1) at (0.3, 1, 0);
	\coordinate (R2) at (0.7, 1, 0);
	\coordinate (1T) at (0.5, 0.4, 1);
	\coordinate (1L) at (0.5, 0, 1);
	\coordinate (1R1) at (0.3, 1, );
	\coordinate (1R2) at (0.7, 1, );
	%
	\coordinate (p3) at (0.1, 0.1, 0.5);
	\coordinate (p2) at (0.5, 0.95, 0.5);
	\coordinate (p1) at (0.9, 0.1, 0.5);
	%
	\fill [red!50,opacity=0.545] (L) -- (T) -- (1T) -- (1L);
	\fill [red!50,opacity=0.545] (R1) -- (T) -- (1T) -- (1R1);
	\fill [red!50,opacity=0.545] (R2) -- (T) -- (1T) -- (1R2);
	\fill[color=blue!60!black] (0.5,0.25,0.15) circle (0pt) node[left] (0up) { {\scriptsize$k$} };
	\fill[color=blue!60!black] (0.5,0.25,0.75) circle (0pt) node[left] (0up) { {\scriptsize$\circlearrowleft$} };
	\fill[color=blue!60!black] (0.15,0.95,0.04) circle (0pt) node[left] (0up) { {\scriptsize$i$} };
	\fill[color=blue!60!black] (0.15,0.9,0.64) circle (0pt) node[left] (0up) { {\scriptsize$\circlearrowleft$} };
	\fill[color=blue!60!black] (0.55,0.95,0.04) circle (0pt) node[left] (0up) { {\scriptsize$j$} };
	\fill[color=blue!60!black] (0.55,0.9,0.64) circle (0pt) node[left] (0up) { {\scriptsize$\circlearrowleft$} };
	%
	\draw[string=green!60!black, very thick] (T) -- (1T);
	\fill[color=green!60!black] (0.5,0.43,0.5) circle (0pt) node[left] (0up) { {\scriptsize$e$} };
	%
	\draw [black,opacity=1, very thin] (1T) -- (1L) -- (L) -- (T);
	\draw [black,opacity=1, very thin] (1T) -- (1R1) -- (R1) -- (T);
	\draw [black,opacity=1, very thin] (1T) -- (1R2) -- (R2) -- (T);
	\end{tikzpicture}}
. 
\ee
Fourthly, for $x\in M_0$ we write $E_x$ for the list of edges incident on~$x$. 
Then,
if for a fixed colouring~$\mathcal I$ the neighbourhood of $x \in M_0^+$ looks like
\be 
\tikzzbox{\begin{tikzpicture}[thick,scale=2.321,color=blue!50!black, baseline=0.0cm, >=stealth, 
	style={x={(-0.6cm,-0.4cm)},y={(1cm,-0.2cm)},z={(0cm,0.9cm)}}]
	\pgfmathsetmacro{\yy}{0.2}
	\coordinate (P) at (0.5, \yy, 0);
	\coordinate (R) at (0.625, 0.5 + \yy/2, 0);
	\coordinate (L) at (0.5, 0, 0);
	\coordinate (R1) at (0.25, 1, 0);
	\coordinate (R2) at (0.5, 1, 0);
	\coordinate (R3) at (0.75, 1, 0);
	\coordinate (Pt) at (0.5, \yy, 1);
	\coordinate (Rt) at (0.375, 0.5 + \yy/2, 1);
	\coordinate (Lt) at (0.5, 0, 1);
	\coordinate (R1t) at (0.25, 1, 1);
	\coordinate (R2t) at (0.5, 1, 1);
	\coordinate (R3t) at (0.75, 1, 1);
	\coordinate (alpha) at (0.5, 0.5, 0.5);
	%
	\draw[string=green!60!black, very thick] (alpha) -- (Rt);
	\fill [red!50,opacity=0.545] (L) -- (P) -- (alpha) -- (Pt) -- (Lt);
	\fill [red!50,opacity=0.545] (Pt) -- (Rt) -- (alpha);
	\fill [red!50,opacity=0.545] (Rt) -- (R1t) -- (R1) -- (P) -- (alpha);
	\fill [red!50,opacity=0.545] (Rt) -- (R2t) -- (R2) -- (R) -- (alpha);
	\draw[string=green!60!black, very thick] (alpha) -- (Rt);
	\fill[color=blue!60!black] (0.5,0.59,0.94) circle (0pt) node[left] (0up) { {\scriptsize$m$} };
	\fill[color=green!60!black] (0.5,0.83,0.77) circle (0pt) node[left] (0up) { {\scriptsize$e_2$} };
	\fill [red!50,opacity=0.545] (Pt) -- (R3t) -- (R3) -- (R) -- (alpha);
	\fill [red!50,opacity=0.545] (P) -- (R) -- (alpha);
	%
	\draw[string=green!60!black, very thick] (P) -- (alpha);
	\draw[string=green!60!black, very thick] (R) -- (alpha);
	\draw[string=green!60!black, very thick] (alpha) -- (Pt);
	%
	\fill[color=green!60!black] (alpha) circle (1.2pt) node[right] (0up) { {\scriptsize$x$} };
	\fill[color=blue!60!black] (0.5,0.19,0.15) circle (0pt) node[left] (0up) { {\scriptsize$k$} };
	\fill[color=blue!60!black] (0.5,0.5,0.05) circle (0pt) node[left] (0up) { {\scriptsize$i$} };
	\fill[color=blue!60!black] (0.5,0.89,-0.03) circle (0pt) node[left] (0up) { {\scriptsize$l$} };
	\fill[color=blue!60!black] (0.5,1.04,0.07) circle (0pt) node[left] (0up) { {\scriptsize$n$} };
	\fill[color=blue!60!black] (0.5,1.19,0.26) circle (0pt) node[left] (0up) { {\scriptsize$j$} };
	\fill[color=green!60!black] (0.5,0.37,0.28) circle (0pt) node[left] (0up) { {\scriptsize$e_3$} };
	\fill[color=green!60!black] (0.5,0.75,0.21) circle (0pt) node[left] (0up) { {\scriptsize$e_4$} };
	\fill[color=green!60!black] (0.5,0.4,0.71) circle (0pt) node[left] (0up) { {\scriptsize$e_1$} };
	%
	\draw [black,opacity=1, very thin] (Pt) -- (Lt) -- (L) -- (P);
	\draw [black,opacity=1, very thin] (Pt) -- (Rt);
	\draw [black,opacity=1, very thin] (Rt) -- (R1t) -- (R1) -- (P);
	\draw [black,opacity=1, very thin] (Rt) -- (R2t) -- (R2) -- (R);
	\draw [black,opacity=1, very thin] (Pt) -- (R3t) -- (R3) -- (R);
	\draw [black,opacity=1, very thin] (P) -- (R);
	\end{tikzpicture}}
\ee 
we have $E_x = (e_1, e_2, e_3, e_4)$, and
for fixed $\lambda^{\mathcal I}_{e_1}, \lambda^{\mathcal I}_{e_2}, \lambda^{\mathcal I}_{e_3}, \lambda^{\mathcal I}_{e_4}$ we have 
\be\label{eq:dF}
\mathds{F}_{\mathcal{C}}(\Gamma_x) \Big(\bigotimes_{e\in E_x} \lambda_e^{\mathcal{I}} \Big) = 
\tikzzbox{\begin{tikzpicture}[very thick,scale=0.5,color=blue!50!black, baseline=0cm]
	\draw[-dot-] (0,0) .. controls +(0,1) and +(0,1) .. (-1,0);
	\draw[-dot-] (-1,0) .. controls +(0,-1) and +(0,-1) .. (-2,0);
	\draw[-dot-] (-0.5,0.7) .. controls +(0,1.25) and +(0,1.25) .. (-2,0.7);
	\draw (-2,0.7) -- (-2,0);
	\draw[-dot-] (0,-0.7) .. controls +(0,-1.25) and +(0,-1.25) .. (-1.5,-0.7);
	\draw (0,-0.7) -- (0,0);
	\draw[directed] (-1.25,1.5) .. controls +(0,1) and +(0,1) .. (-2.75,1.5);
	\draw (-2.75,1.5) -- (-2.75,-1.6);
	\draw[directed] (-2.75,-1.6) .. controls +(0,-1.25) and +(0,-1.25) .. (-0.75,-1.6);
	\fill (-0.2,-2.1) circle (0pt) node (meet2)  {{\scriptsize$\widehat{\lambda_{e_1}^{\mathcal{I}}}$}};
	\fill (-1.95,-1.3) circle (0pt) node (meet2){{\scriptsize$\widehat{\lambda_{e_2}^{\mathcal{I}}}$}};};
	\fill (-0.8,2) circle (0pt) node (meet2) {{\scriptsize$\lambda_{e_3}^{\mathcal{I}}$}};
	\fill (-0.6,0.85) circle (0pt) node[right] (meet2) {{\scriptsize$\lambda_{e_4}^{\mathcal{I}}$}};
	%
	%
	\fill (-1.15,0.35) circle (0pt) node (meet2) {{\scriptsize$n$}};
	\fill (-1,-1.2) circle (0pt) node (meet2) {{\scriptsize$m$}};
	\fill (-0.25,-0.5) circle (0pt) node (meet2) {{\scriptsize$l$}};
	\fill (-1.35,2.45) circle (0pt) node (meet2) {{\scriptsize$k$}};
	\fill (-2.25,0.5) circle (0pt) node (meet2) {{\scriptsize$j$}};
	\fill (-0.45,1.45) circle (0pt) node (meet2) {{\scriptsize$i$}};
	\end{tikzpicture}
\, . 
\ee 
Similarly, for $y\in M_0^-$ we have that $\mathds{F}_{\mathcal{C}}(\Gamma_y) (\bigotimes_{e\in E_y} \lambda_e^{\mathcal{I}})$ is given by an appropriate evaluation of a functional as in~\eqref{eq:FCGinv}, i.\,e.\ a diagram of the form
\be\label{eq:dFinv}
\tikzzbox{\begin{tikzpicture}[very thick,scale=0.4,color=blue!50!black, baseline=0cm]
	\draw[-dot-] (0,0) .. controls +(0,1) and +(0,1) .. (1,0);
	\draw[-dot-] (1,0) .. controls +(0,-1) and +(0,-1) .. (2,0);
	\draw[-dot-] (0.5,0.7) .. controls +(0,1.25) and +(0,1.25) .. (2,0.7);
	\draw (2,0.7) -- (2,0);
	\draw[-dot-] (0,-0.7) .. controls +(0,-1.25) and +(0,-1.25) .. (1.5,-0.7);
	\draw (0,-0.7) -- (0,0);
	\draw[directed] (1.25,1.5) .. controls +(0,1.25) and +(0,1.25) .. (-0.75,1.5);
	\draw (-0.75,1.5) -- (-0.75,-1.6);
	\draw[directed] (-0.75,-1.6) .. controls +(0,-1) and +(0,-1) .. (0.75,-1.6);
	\end{tikzpicture}}
\, . 
\ee

In summary, the invariant $\zz^{\text{triv}}_{\Aca}(M)$ has the form 
\be 
\big( 
\!\dim(\mathcal{S})\big)^{-|M_3|} 
\sum_{M_2} d_{i_1} \ldots d_{i_{|M_2|}}
\sum_{M_1} 
\sum_{\{\lambda\}}
\Big( \prod_{M_0^+}  
\tikzzbox{\begin{tikzpicture}[thick,scale=0.25,color=blue!50!black, baseline=-0.04cm]
	\draw[-dot-] (0,0) .. controls +(0,1) and +(0,1) .. (-1,0);
	\draw[-dot-] (-1,0) .. controls +(0,-1) and +(0,-1) .. (-2,0);
	\draw[-dot-] (-0.5,0.7) .. controls +(0,1.25) and +(0,1.25) .. (-2,0.7);
	\draw (-2,0.7) -- (-2,0);
	\draw[-dot-] (0,-0.7) .. controls +(0,-1.25) and +(0,-1.25) .. (-1.5,-0.7);
	\draw (0,-0.7) -- (0,0);
	\draw[directed] (-1.25,1.5) .. controls +(0,1) and +(0,1) .. (-2.75,1.5);
	\draw (-2.75,1.5) -- (-2.75,-1.6);
	\draw[directed] (-2.75,-1.6) .. controls +(0,-1.25) and +(0,-1.25) .. (-0.75,-1.6);
	\end{tikzpicture}}
\Big)
\Big( \prod_{M_0^-}  
\tikzzbox{\begin{tikzpicture}[thick,scale=0.25,color=blue!50!black, baseline=0cm]
	\draw[-dot-] (0,0) .. controls +(0,1) and +(0,1) .. (1,0);
	\draw[-dot-] (1,0) .. controls +(0,-1) and +(0,-1) .. (2,0);
	\draw[-dot-] (0.5,0.7) .. controls +(0,1.25) and +(0,1.25) .. (2,0.7);
	\draw (2,0.7) -- (2,0);
	\draw[-dot-] (0,-0.7) .. controls +(0,-1.25) and +(0,-1.25) .. (1.5,-0.7);
	\draw (0,-0.7) -- (0,0);
	\draw[directed] (1.25,1.5) .. controls +(0,1.25) and +(0,1.25) .. (-0.75,1.5);
	\draw (-0.75,1.5) -- (-0.75,-1.6);
	\draw[directed] (-0.75,-1.6) .. controls +(0,-1) and +(0,-1) .. (0.75,-1.6);
	\end{tikzpicture}}
\Big) . 
\ee 

\medskip

\begin{proposition}
We have $\zz^{\text{triv}}_{\Aca}(M) = \zz^{\text{TV},\Cat{S}}(M)$ for all closed 3-manifolds~$M$. 
\end{proposition}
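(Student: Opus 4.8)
The plan is to compare the two scalars $\zz^{\text{triv}}_{\Aca}(M)$ and $\zz^{\text{TV},\Cat{S}}(M)$ term by term, using a fixed oriented triangulation $t$ of $M$ and its Poincar\'e dual. First I would observe that both invariants are sums over $\Cat{S}$-colourings of the same combinatorial data: the $2$-strata $M_2$ of the dual stratification are exactly the faces dual to the edges of $t$, hence are in bijection with $P_2$ in the Turaev--Virelizier setup, and the $1$-strata $M_1$ are in bijection with $P_1$, the $0$-strata with $P_0$, and the $3$-strata $M_3$ with $P_3$. Thus the outer structure of the two sums (a sum over colourings of $2$-strata, with an inner sum over basis elements attached to $1$-strata, and a product of local contributions at $0$-strata) is literally the same; what must be matched are the numerical prefactors and the local tensors.

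The key steps are then: (i) match the global prefactor. The orbifold side carries $\phi^{2|M_3|} = (\dim\Cat{S})^{-|M_3|}$ from the $\phi^2$-insertion on each $3$-stratum, which equals the $(\dim\Cat{S})^{-|P_3|}$ in~\eqref{eq:ZTVclosed}. (ii) Match the dimension weights on $2$-strata. On the orbifold side each $A$-labelled $2$-stratum carries a $\psi^2 = \operatorname{diag}(d_1,\dots,d_{|I|})$-insertion, contributing $d_{i_r}$ once the $i_r$-summand of $A$ is selected; since each such $2$-stratum is a disc, $\chi(r)=1$, so this matches $\prod_{r\in P_2} d_{c(r)}^{\chi(r)}$. (Here one uses that the orbifold construction, being applied to a closed manifold, inserts one $\psi^2$ per interior $2$-stratum --- as recalled at the end of Section~\ref{sec:orbif-defect-tqfts}.) (iii) Match the contraction along $1$-strata. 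Each interior $1$-stratum $e$ of $M^{t,\Aca}$ has a neighbourhood of the form depicted just before~\eqref{eq:dF}, and the dual triangulation sends it to an edge of $P$ with adjacent regions coloured $i,j,k$; the effect of the induction maps and the identification of $T\otimes_A T^\dagger$-type spaces is precisely to sum a basis element $\lambda^{\mathcal I}_e\in\Hom_{\Cat{S}}(i\otimes j,k)$ against its dual, which is exactly the contraction $*_e$ of~\eqref{eq:starcontract}. (iv) Match the local functionals at $0$-strata. A positively oriented $0$-stratum $x\in M_0^+$ is labelled by $\alpha$, and unwinding the definition of $\zzc=\zz^{\text{triv}}$ on the dual ribbon graph around $x$ --- one $A$-line per adjacent disc $2$-stratum --- produces exactly the closed string diagram~\eqref{eq:dF}, i.e.\ $\mathds{F}_{\Cat{S}}(\Gamma_{x^+})$ evaluated on the chosen basis elements as in~\eqref{eq:FCGinv}; similarly $\bar\alpha$ at $x\in M_0^-$ gives $\mathds{F}_{\Cat{S}}(\Gamma_{x^-})$, via~\eqref{eq:dFinv}. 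Assembling (i)--(iv), the summand-by-summand comparison shows $\zz^{\text{triv}}(M^{t,\Aca})$ equals the right-hand side of~\eqref{eq:ZTVclosed}, hence by~\eqref{eq:ZMZM} we get $\zz^{\text{triv}}_{\Aca}(M)=\zz^{\text{TV},\Cat{S}}(M)$.

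Much of this reduces to bookkeeping already carried out in the displays leading up to the statement: the paper has pre-computed that $\zz^{\text{triv}}_{\Aca}(M)$ has the form displayed in~\eqref{eq:CIA} and in the final summary display before the proposition, and that $\mathds{F}_{\mathcal C}(\Gamma_x)$ is given by the diagrams~\eqref{eq:dF} and~\eqref{eq:dFinv}. So the proof is essentially: recall those formulae, recall~\eqref{eq:ZTVclosed}, and observe the two expressions coincide after the dictionary $M_j \leftrightarrow P_j$, $\psi^2 \leftrightarrow d_{c(r)}$, $\phi^2 \leftrightarrow (\dim\Cat{S})^{-1}$, $\alpha\text{-}/\bar\alpha\text{-insertion} \leftrightarrow \mathds{F}_{\Cat{S}}(\Gamma_{x^\pm})$, and contraction-along-$1$-strata $\leftrightarrow *_e$.

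The main obstacle I expect is an orientation/conventions audit: making sure that the cyclic orderings of edges at a $0$-stratum induced by the orientation of $M\setminus B_x$ (in the Turaev--Virelizier convention) agree with the cyclic orderings of $A$-lines at the $\alpha$- and $\bar\alpha$-vertices used in defining $\zzc$, including the correct assignment of $\varepsilon(e_i)=\pm$ to incoming versus outgoing edges and the correct reading direction of string diagrams (bottom-to-top for $\Cat{C}$). Relatedly, one must check that the dualisation $T^\dagger = \bigoplus \Hom_{\Cat{S}}(k,i\otimes j)$ and the $\widetilde{\ev}/\widetilde{\coev}$ maps introduce no spurious dimension factors beyond those already accounted for by the $\psi^2$-insertions; this is where the normalisation of $\psi^2$ in~\eqref{eq:psi-TVorb} and of $\phi^2$ was chosen precisely to make everything balance, as visible in the final computation $\sum_{i,j} d_i d_j N_{ij}^k = \phi^{-2}(\eta_A\circ\psi^2)|_{A_k}$ in the proof of Proposition~\ref{prop:orbidata}. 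Once this audit is done, the equality of closed-manifold invariants is immediate; the extension to an isomorphism of TQFTs on all objects and bordisms is then handled in the subsequent part of Section~\ref{subsec:TVorbifolds} by upgrading this pointwise equality to a compatible family of isomorphisms of the relevant limit constructions.
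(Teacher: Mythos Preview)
Your proposal is correct and follows essentially the same approach as the paper's proof: both arguments reduce to recalling the pre-computed summary formula for $\zz^{\text{triv}}_{\Aca}(M)$ derived just before the proposition, recalling~\eqref{eq:ZTVclosed}, and matching the two expressions term by term via the dictionary $M_j \leftrightarrow P_j$, with the only point requiring any unpacking being that the contraction $*_e$ acts on the basis expansion of $\mathds{F}_{\Cat{S}}(\Gamma_x)$ exactly as a sum over paired basis elements --- which is your step (iii)/(iv) and the paper's sole displayed computation. The paper's proof is shorter only because it relies silently on the displays~\eqref{eq:CIA}--\eqref{eq:dFinv} already established in the preceding text, which you explicitly invoke.
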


\begin{proof} 
Recall from~\eqref{eq:ZTVclosed} that $\zz^{\text{TV},\Cat{S}}(M)$ is given by 
\be 
\zz^{\text{TV},\Cat{S}}(M) = 
\big(\! \dim({\Cat{S}})\big)^{-|P_3|} 
\sum_{c\colon P_2\to I} \Big( \prod_{r\in P_2} d_{c(r)}^{\chi(r)} \Big)
\Big( \bigotimes_{e\in P_1} *_e \Big) \Big( \bigotimes_{x\in P_0} \mathds{F}_{\Cat{S}}(\Gamma_x) \Big) , 
\ee 
where we choose the oriented stratified 2-polyhedron~$P$ associated to $M^{t,\Aca}$. 
In this case we have $\chi(r)=1$ for all $r\in P_2 = M_2$, so what remains to be verified is that for fixed $c\colon P \to I$ (and $\mathcal I \in I^{|M_2|}$) the number 
\be 
\Big( \bigotimes_{e\in P_1} *_e \Big) \Big( \bigotimes_{x\in P_0} \mathds{F}_{\Cat{S}}(\Gamma_x) \Big)
\ee 
is indeed the sum over all decorations of the string diagrams as in~\eqref{eq:dF} and~\eqref{eq:dFinv} corresponding to all vertices of~$P$. 

We first note that for a vertex $x\in M_0^+$ and a fixed colouring of the edges of~$\Gamma_x$, we have
\be 
\mathds{F}_{\Cat{S}}(\Gamma_x) 
= 
\sum_{\lambda, \lambda', \mu, \mu'} \;
\tikzzbox{\begin{tikzpicture}[very thick,scale=0.4,color=blue!50!black, baseline=0cm]
	\draw[-dot-] (0,0) .. controls +(0,1) and +(0,1) .. (-1,0);
	\draw[-dot-] (-1,0) .. controls +(0,-1) and +(0,-1) .. (-2,0);
	\draw[-dot-] (-0.5,0.7) .. controls +(0,1.25) and +(0,1.25) .. (-2,0.7);
	\draw (-2,0.7) -- (-2,0);
	\draw[-dot-] (0,-0.7) .. controls +(0,-1.25) and +(0,-1.25) .. (-1.5,-0.7);
	\draw (0,-0.7) -- (0,0);
	\draw[directed] (-1.25,1.5) .. controls +(0,1) and +(0,1) .. (-2.75,1.5);
	\draw (-2.75,1.5) -- (-2.75,-1.6);
	\draw[directed] (-2.75,-1.6) .. controls +(0,-1.25) and +(0,-1.25) .. (-0.75,-1.6);
	\fill (-0.2,-2.1) circle (0pt) node (meet2)  {{\scriptsize$\widehat{\lambda'}$}};
	\fill (-1.95,-1.3) circle (0pt) node (meet2) {{\scriptsize$\widehat{\mu'}$}};
	\fill (-0.8,2) circle (0pt) node (meet2) {{\scriptsize$\lambda$}};
	\fill (-0.6,0.85) circle (0pt) node[right] (meet2) {{\scriptsize$\mu$}};
	\end{tikzpicture}}
\cdot \lambda^* \otimes \mu^* \otimes \widehat{\mu'}^* \otimes \widehat{\lambda'}^* \, , 
\ee 
with $\lambda^*, \widehat{\mu'}^*$ defined in~\eqref{eq:lstar}. 
If $y\in M_0^-$ is a negatively oriented vertex, there is an analogous expression for $\mathds{F}_{\Cat{S}}(\Gamma_y)$. 
Each basis element $\lambda, \lambda', \mu, \mu'$ above corresponds to one of the edges incident on~$x$. 
For example, if~$\lambda$ corresponds to an edge~$e$ which has~$x$ as one endpoint and some vertex $z \in P_0$ as the other endpoint, and if the basis element corresponding to~$e$ at~$z$ is~$\widehat{\kappa}$, then the contraction map~$*_e$ of~\eqref{eq:starcontract} acts as $\lambda \otimes \widehat{\kappa} \mapsto \delta_{\lambda,\kappa}$. 
Hence for a given ${\Cat{S}}$-colouring~$c$, $(\bigotimes_{e\in P_1} *_e) ( \bigotimes_{x\in P_0} \mathds{F}_{\Cat{S}}(\Gamma_x))$ is the sum, over all elements of a basis which can be inserted at the vertices of all~$\Gamma_x$, of the product of the respective evaluations of all $\mathds{F}_{\Cat{S}}(\Gamma_x)$. 
Thus we see that indeed $\zz^{\text{triv}}_{\Aca}(M) = \zz^{\text{TV},\Cat{S}}(M)$.
\end{proof}

\medskip 

Let now~$M$ be an arbitrary 3-bordism. 
By comparing $\zz^{\text{triv}}_{\Aca}(M)$ and $\zz^{\text{TV},\Cat{S}}(M)$ analogously to the above discussion, one finds that the two constructions are identical, except for how they treat 2- and 3-strata of $M^{t,\Aca}$ (or the corresponding 2-polyhedron~$P$) which intersect with the boundary~$\partial M$: 
while the orbifold construction $\zz^{\text{triv}}_{\Aca}$ treats incoming and outgoing boundaries on an equal footing (leading to factors of $d_i^{1/2}$ and 
	 $(\dim\Cat{S})^{-1/2}$
for 2- and 3-strata, respectively), the construction $\zz^{\text{TV},\Cat{S}}$ of \cite{TVire} involves contributions only from the incoming boundary (leading to factors of~$d_i$ 
	 and~$(\dim\Cat{S})^{-1}$). 

This mismatch can be formalised in terms of Euler defect TQFTs, see \cite{Quinnlectures, CRS1}. 
Indeed, in the language of \cite[Ex.\,2.14]{CRS1} the choices for $\zz^{\text{TV},\Cat{S}}$ favouring the incoming boundary correspond to the choice $\lambda=1$ for the Euler TQFT, while the choice for $\zz^{\text{triv}}_{\Aca}$ corresponds to $\lambda=\tfrac{1}{2}$. 
Since both Euler TQFTs are isomorphic \cite{Quinnlectures}, Lemma~2.30 and Remark~3.14 of \cite{CRS1} imply that this isomorphism lifts directly to $\zz^{\text{TV},\Cat{S}}$ and $\zz^{\text{triv}}_{\Aca}$. 
	
	To describe the isomorphism in detail, let $M\colon \Sigma' \rightarrow \Sigma''$ be as in~\eqref{eq:lin-mapTV} with embedded graphs $\Gamma', \Gamma''$ on $\Sigma', \Sigma''$. 
	Let~$t$ be an oriented triangulation of~$M$ extending the duals of the graphs on~$\Sigma'$ and~$\Sigma''$, and for any surface~$\Sigma$ with embedded graph~$\Gamma$ set $f(\Sigma,\Gamma) := (\dim({\Cat{S}}))^{|\Sigma\setminus \Gamma|/2} \prod_{e\in\Gamma_1} d_{c(e)}^{-1/2}$. 
	Then by construction
	\be
	\label{eq:Ztriv-factor-ZTV}
	\zz( M^{t,\orb}) = \frac{f(\Sigma',\Gamma')}{f(\Sigma'',\Gamma'')} \, p(\Gamma', \Gamma'') \, ,
	\ee 
	so the factors $f(\Sigma,\Gamma)$ form an isomorphism between the projective system~\eqref{eq:proj-sys-TV} for $\zz^{\text{TV},\Cat{S}}$ and the corresponding projective system~\eqref{eq:proj-system-orb-bord} for $\zz^{\text{triv}}_{\Aca}$. 
	Thus we obtain an isomorphism between the corresponding limits, which by \eqref{eq:Ztriv-factor-ZTV} is the $\Sigma$-component of a natural isomorphism $\zz^{\text{triv}}_{\Aca} \to \zz^{\text{TV},\Cat{S}}$. 
	Since the map~$f$ is multiplicative under disjoint union by definition, the natural isomorphism is also monoidal. 
We have thus shown: 

\begin{theorem}
\label{thm:ZTVZtrivorb}
For any spherical fusion category~${\Cat{S}}$, there is a
	monoidal 
natural isomorphism between the Turaev--Viro TQFT $\zz^{\text{TV},\Cat{S}}$ and the $\Aca$-orbifold of the trivial 3-dimensional defect TQFT: 
\be 
\zz^{\text{triv}}_{\Aca} 
\cong 
\zz^{\text{TV},\Cat{S}}
\, . 
\ee 
\end{theorem}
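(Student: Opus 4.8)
The plan is to deduce the theorem from the preceding computations in two stages: first establish that $\zz^{\text{triv}}_{\Aca}$ and $\zz^{\text{TV},\Cat{S}}$ agree on closed $3$-manifolds, and then promote this to a monoidal natural isomorphism of functors. Both stages are already essentially assembled in the excerpt; my job is to state the bookkeeping carefully. For the closed case I would fix a closed $3$-manifold $M$ and an oriented triangulation $t$, pass to the dual stratification, and decorate it with the orbifold datum $\Aca$ from Proposition~\ref{prop:orbidata} to obtain $M^{t,\Aca}\colon\emptyset\to\emptyset$ in $\Borddef_3(\mathds{D}^{\text{triv}})$. By the definition of the orbifold theory (Section~\ref{sec:orbif-defect-tqfts}) one has $\zz^{\text{triv}}_{\Aca}(M)=\zz^{\text{triv}}(M^{t,\Aca})$ with no limit to take. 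Then I would evaluate the right-hand side explicitly: decompose $A=\bigoplus_i\Bbbk$ and $T=\bigoplus_{i,j,k}\Hom_{\Cat{S}}(i\otimes j,k)$, observe that the $\phi^2$-insertions on $3$-strata produce the global factor $(\dim\Cat{S})^{-|M_3|}$, the $\psi^2$-insertions on $A$-labelled $2$-strata produce $\sum_{\mathcal I}d_{i_1}\cdots d_{i_{|M_2|}}$, and the $\alpha,\bar\alpha$ together with the duality maps at the $0$-strata reproduce, for each colouring, exactly the functionals $\mathds{F}_{\Cat{S}}(\Gamma_{x^{\pm}})$ of~\eqref{eq:FCGinv} contracted along the contraction maps $*_e$ of~\eqref{eq:starcontract}. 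Matching this to~\eqref{eq:ZTVclosed} (using $\chi(r)=1$ for every $2$-stratum dual to a triangulation) gives $\zz^{\text{triv}}_{\Aca}(M)=\zz^{\text{TV},\Cat{S}}(M)$.

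For the functorial statement I would run the same comparison on a general bordism $M\colon\Sigma'\to\Sigma''$ and note that the only discrepancy between the two constructions concerns $2$- and $3$-strata meeting $\partial M$: the orbifold construction by design treats incoming and outgoing boundary symmetrically (contributing $d_i^{1/2}$ and $(\dim\Cat{S})^{-1/2}$ per boundary $2$- resp.\ $3$-stratum), while the Turaev--Virelizier construction weights only the incoming boundary (contributing $d_i$ and $(\dim\Cat{S})^{-1}$). This is precisely the difference between the Euler defect TQFTs with parameter $\lambda=\tfrac12$ and $\lambda=1$ in the sense of \cite[Ex.~2.14]{CRS1}. Since these Euler TQFTs are isomorphic \cite{Quinnlectures}, the isomorphism is witnessed by the explicit prefactors: setting $f(\Sigma,\Gamma):=(\dim\Cat{S})^{|\Sigma\setminus\Gamma|/2}\prod_{e\in\Gamma_1}d_{c(e)}^{-1/2}$, one checks by construction that $\zz(M^{t,\orb})=\tfrac{f(\Sigma',\Gamma')}{f(\Sigma'',\Gamma'')}\,p(\Gamma',\Gamma'')$, so the scalars $f(\Sigma,\Gamma)$ intertwine the projective system~\eqref{eq:proj-sys-TV} defining $\zz^{\text{TV},\Cat{S}}$ with the system~\eqref{eq:proj-system-orb-bord} defining $\zz^{\text{triv}}_{\Aca}$. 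Passing to limits yields the component $\nu_\Sigma$ of a natural isomorphism $\zz^{\text{triv}}_{\Aca}\to\zz^{\text{TV},\Cat{S}}$, and multiplicativity of $f$ under disjoint union makes it monoidal, which is the assertion of the theorem.

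The main obstacle is not conceptual but combinatorial: one must verify, for a generic vertex of the dual stratification, that the string diagram in $\Vectk$ produced by applying $\zz^{\text{triv}}$ to the $\Aca$-decorated neighbourhood of that vertex literally equals the Turaev--Virelizier tetrahedron functional $\mathds{F}_{\Cat{S}}(\Gamma_{x^{\pm}})$ of~\eqref{eq:FCGinv}, with the $A$-network on the $2$-strata collapsing tensor products over the direct-sum algebra $A$ into tensor products over $\Bbbk$ of matching simple summands, and the $\alpha/\bar\alpha$ maps at $0$-strata supplying the $F$-matrix factors $d_d^{-1}F^{\lambda\lambda'}_{\mu\mu'}$. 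This is exactly the kind of calculation carried out in Figures~\ref{fig:349} and~\ref{fig:350} and in the verification of~\eqref{eq:351n}, so in the write-up I would not reproduce it in full but cite the already-established identities of Lemma~\ref{lem:recall} and the explicit forms of $\ev_T,\coev_T$ and their tilded versions given just before the proof of Proposition~\ref{prop:orbidata}. A secondary point requiring care is the precise identification $\bigotimes_{e\in P_1^{\partial}}H_{\widetilde c}(e^{\text{out}})^*\cong H(\Gamma^{\text{op}},c;\Sigma^{\text{op}})$ and the compatibility of the two pairings on boundary $\Hom$-spaces, so that the limit identification is canonical rather than merely isomorphic; this is handled by the cone-isomorphism discussion recalled in Section~\ref{subsec:TVire}.
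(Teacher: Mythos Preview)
Your proposal is correct and follows essentially the same route as the paper: first matching the closed invariants by expanding the $\Aca$-decorated evaluation into the Turaev--Virelizier state sum~\eqref{eq:ZTVclosed} (using $\chi(r)=1$), and then handling general bordisms by recognising the boundary discrepancy as an Euler-defect rescaling ($\lambda=\tfrac12$ versus $\lambda=1$) witnessed by the explicit factors $f(\Sigma,\Gamma)=(\dim\Cat{S})^{|\Sigma\setminus\Gamma|/2}\prod_{e\in\Gamma_1}d_{c(e)}^{-1/2}$, whose multiplicativity under disjoint union yields monoidality. The paper organises this identically, with the closed case isolated as a separate proposition and the functorial upgrade via~\eqref{eq:Ztriv-factor-ZTV} spelled out just before the theorem statement.
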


\section{Group extensions of modular tensor categories}
\label{sec:Gextensions}

In this section we show that for every suitable $G$-extension of a ribbon fusion category~$\Cat{B}$ there is a corresponding orbifold datum for~$\B$
in the sense of Definition~\ref{def:sodribbon}.
One type of such extensions are $G$-extensions of modular tensor categories~$\Cat{C}$. 
Another interesting situation is when we have a ribbon functor $F \colon \Cat{B} \to \Cat{C}$ and a $G$-crossed extension of~$\B$, as this gives orbifold data in~$\Cat{C}$ (by Proposition~\ref{proposition:push-forward}). 
We consider examples of this where $F$ is the embedding of a  subcategory of~$\Cat{C}$.

In fact our second main result, Theorem~\ref{thm:Gcrossed} (which is Theorem B in the introduction), also holds for certain non-fusion (e.\,g.\ non-semisimple) ribbon categories, see Remark~\ref{rem:GextNonSS}.

\medskip 

We fix a finite group~$G$, a ribbon fusion category~$\B$, and a \textsl{ribbon crossed $G$-category} $\widehat{\Cat{B}} = \bigoplus_{g\in G} \Cat{B}_g$ such that $\B = \Cat{B}_1$ and $\Cat{B}_g \neq 0$ for all $g\in G$. 
Roughly, this means that the tensor product of~$\widehat{\B}$ is compatible with the $G$-grading, there is a monoidal functor $\varphi \colon \underline{G} \to \textrm{Aut}_\otimes(\widehat{\B})$ (where~$\underline{G}$ is~$G$ viewed as a discrete monoidal category), and the twist and braiding of~$\widehat{\B}$ are ``twisted'' by the $G$-action~$\varphi$. 
For details we refer to \cite[Sect.\,VI.2]{TuraevBook2010} from which we deviate in that for us~$G$ acts from the right, i.\,e. 
\be 
\varphi(g) (\B_h) \subset \B_{g^{-1}hg} \quad \text{for all } g,h\in G \, , 
\ee 
and the $G$-twisted braiding has components 
\begin{align}
c_{X,Y} &\equiv 
\begin{tikzpicture}[very thick,scale=0.5,color=blue!50!black, baseline]
\draw[color=blue!50!black] (-1,-1) node[below] (A1) {{\scriptsize$X$}};
\draw[color=blue!50!black] (1,-1) node[below] (A2) {{\scriptsize$Y$}};
\draw[color=blue!50!black] (-1,1) node[above] (B1) {{\scriptsize$Y\vphantom{\varphi}$}};
\draw[color=blue!50!black] (1,1) node[above] (B2) {{\scriptsize$\varphi(h)(X)$}};
\draw[color=blue!50!black] (A2) -- (B1);
\draw[color=white, line width=4pt] (A1) -- (B2);
\draw[color=blue!50!black] (A1) -- (B2);
\end{tikzpicture} 
\colon X\otimes Y \stackrel{\cong}{\lra} Y \otimes \varphi(h)(X) 
\quad \text{if } Y \in \B_h \, ,
\nonumber\\
\tilde c_{Y,X} &\equiv 
\begin{tikzpicture}[very thick,scale=0.5,color=blue!50!black, baseline]
\draw[color=blue!50!black] (-1,-1) node[below] (A1) {{\scriptsize$Y$}};
\draw[color=blue!50!black] (1,-1) node[below] (A2) {{\scriptsize$X$}};
\draw[color=blue!50!black] (-1,1) node[above] (B1) {{\scriptsize$\varphi(h^{-1})(X)$}};
\draw[color=blue!50!black] (1,1) node[above] (B2) {{\scriptsize$Y\vphantom{\varphi}$}};
\draw[color=blue!50!black] (A1) -- (B2);
\draw[color=white, line width=4pt] (A2) -- (B1);
\draw[color=blue!50!black] (A2) -- (B1);
\end{tikzpicture} 
\colon Y\otimes X \stackrel{\cong}{\lra} \varphi(h^{-1})(X) \otimes Y 
\quad \text{if } Y \in \B_h \, . 
\label{eq:Gtwistedbraiding}
\end{align}
Here we wrote~$\tilde c$ for the braiding describing the opposite crossing. 
Up to coherence isomorphism from the group action, the inverse $c_{X,Y}^{-1}$ of the braiding is given by $\tilde c_{Y,\varphi(h)(X)}$.

For every $g\in G$, we now choose a simple object $m_g \in \B_g$ such that $m_1 = \one$, and we set 
\be 
d_{m_g} := \dim(m_g) \in \Bbbk^\times 
\quad \text{for all } g\in G \, . 
\ee 
We furthermore pick a square root $d_{m_g}^{1/2}$.
It is straightforward to verify that 
\be 
\label{eq:AgGextension}
A_g := m_g^* \otimes m_g 
\, , \quad 
\begin{tikzpicture}[very thick,scale=0.9,color=red!50!black, baseline=.9cm]
\draw[line width=0pt] 
(3,0) node[line width=0pt] (D) {{\scriptsize$m_g^*$}}
(2,0) node[line width=0pt] (s) {{\scriptsize$m_g\vphantom{m_g^*}$}}; 
\draw[redirectedred] (D) .. controls +(0,1) and +(0,1) .. (s);
\draw[line width=0pt] 
(3.45,0) node[line width=0pt] (re) {{\scriptsize$m_g\vphantom{m_g^*}$}}
(1.55,0) node[line width=0pt] (li) {{\scriptsize$m_g^*$}}; 
\draw[line width=0pt] 
(2.7,2) node[line width=0pt] (ore) {{\scriptsize$m_g\vphantom{m_g^*}$}}
(2.3,2) node[line width=0pt] (oli) {{\scriptsize$m_g^*$}}; 
\draw (li) .. controls +(0,0.75) and +(0,-0.25) .. (2.3,1.25);
\draw (2.3,1.25) -- (oli);
\draw (re) .. controls +(0,0.75) and +(0,-0.25) .. (2.7,1.25);
\draw (2.7,1.25) -- (ore);
\end{tikzpicture}
\!\! , \quad 
\begin{tikzpicture}[very thick,scale=1.0,color=red!50!black, baseline=-.4cm,rotate=180]
\draw[line width=0pt] 
(3,0) node[line width=0pt] (D) {{\scriptsize$m_g^*$}}
(2,0) node[line width=0pt] (s) {{\scriptsize$m_g\vphantom{m_g^*}$}}; 
\draw[directedred] (D) .. controls +(0,1) and +(0,1) .. (s);
\end{tikzpicture}
\!\!, \quad 
\frac{1}{d_{m_g}}\cdot\!\!
\begin{tikzpicture}[very thick,scale=0.9,color=red!50!black, baseline=-0.9cm, rotate=180]
\draw[line width=0pt] 
(3,0) node[line width=0pt] (D) {{\scriptsize${m_g}\vphantom{m_g^*}$}}
(2,0) node[line width=0pt] (s) {{\scriptsize$m_g^*$}}; 
\draw[directedred] (s) .. controls +(0,1) and +(0,1) .. (D);
\draw[line width=0pt] 
(3.45,0) node[line width=0pt] (re) {{\scriptsize$m_g^*$}}
(1.55,0) node[line width=0pt] (li) {{\scriptsize${m_g}\vphantom{m_g^*}$}}; 
\draw[line width=0pt] 
(2.7,2) node[line width=0pt] (ore) {{\scriptsize$m_g^*$}}
(2.3,2) node[line width=0pt] (oli) {{\scriptsize$m_g\vphantom{m_g^*}$}}; 
\draw (li) .. controls +(0,0.75) and +(0,-0.25) .. (2.3,1.25);
\draw (2.3,1.25) -- (oli);
\draw (re) .. controls +(0,0.75) and +(0,-0.25) .. (2.7,1.25);
\draw (2.7,1.25) -- (ore);
\end{tikzpicture}
\!\!, \quad 
d_{m_g}\cdot\!\!
\begin{tikzpicture}[very thick,scale=1.0,color=red!50!black, baseline=.4cm]
\draw[line width=0pt] 
(3,0) node[line width=0pt] (D) {{\scriptsize$m_g\vphantom{m_g^*}$}}
(2,0) node[line width=0pt] (s) {{\scriptsize$m_g^*$}}; 
\draw[redirectedred] (s) .. controls +(0,1) and +(0,1) .. (D);
\end{tikzpicture}
\ee 
is a $\Delta$-separable symmetric Frobenius algebra in~$\B$ for all $g \in G$. 
Moreover we have $A_{gh}$-$(A_g\otimes A_h)$-bimodules $T_{g,h} \in \Cat{B}$ given by
\begin{align}
T_{g,h} :=m_{gh}^* \otimes m_g \otimes m_h 
\quad \text{with actions} \quad 
& 
\begin{tikzpicture}[very thick,scale=0.75,color=blue!50!black, baseline]
\draw (0,-1) node[below] (X) {{\scriptsize$T_{g,h}$}};
\draw[color=green!50!black] (-0.8,-1) node[below] (A1) {{\scriptsize$A_{gh}$}};
\draw (0,1) node[right] (Xu) {};
\draw[color=green!50!black] (A1) .. controls +(0,0.5) and +(-0.5,-0.5) .. (0,0.3);
\draw (0,-1) -- (0,1); 
\fill[color=blue!50!black] (0,0.3) circle (2.9pt) node (meet2) {};
\end{tikzpicture} 
\stackrel{\textrm{def}}{=}
\begin{tikzpicture}[very thick,scale=0.75,color=red!50!black, baseline]
\draw (-1.75,-1) node[below] (mghs) {{\scriptsize$m_{gh}^*$}};
\draw (-1,-1) node[below] (mgh) {{\scriptsize$m_{gh}\vphantom{m_{gh}^*}$}};
\draw (0.75,-1) node[below] (mg) {{\scriptsize$m_g\vphantom{m_{gh}^*}$}};
\draw (1.5,-1) node[below] (mh) {{\scriptsize$m_h\vphantom{m_{gh}^*}$}};
\draw (0,-1) node[below] (A1) {{\scriptsize$m_{gh}^*$}};
\draw[directedred] (-1,-1) .. controls +(0,0.75) and +(0,0.75) .. (0,-1);
\draw (-1.75,-1) to[out=90, in=-90] (0,1);
\draw (mg) -- (0.75,1); 
\draw (mh) -- (1.5,1); 
\end{tikzpicture} 
, 
\nonumber 
\\
&  
\begin{tikzpicture}[very thick,scale=0.75,color=blue!50!black, baseline]
\draw (0,-1) node[below] (X) {{\scriptsize$T_{g,h}$}};
\draw[color=green!50!black] (0.8,-1) node[below] (A1) {{\scriptsize$A_h$}};
\draw (0,1) node[right] (Xu) {};
\draw[color=green!50!black] (A1) .. controls +(0,0.5) and +(0.5,-0.5) .. (0,0.3);
\draw (0,-1) -- (0,1); 
\fill[color=blue!50!black] (0,0.3) circle (2.9pt) node (meet2) {};
\fill[color=black] (0.2,0.5) circle (0pt) node (meet) {{\tiny$2$}};
\end{tikzpicture} 
\stackrel{\textrm{def}}{=}
\begin{tikzpicture}[very thick,scale=0.75,color=red!50!black, baseline]
\draw (1.75,-1) node[below] (mghs) {{\scriptsize$m_{h}\vphantom{m_{gh}^*}$}};
\draw (1,-1) node[below] (mgh) {{\scriptsize$m_{h}^*\vphantom{m_{gh}^*}$}};
\draw (-0.75,-1) node[below] (mg) {{\scriptsize$m_g\vphantom{m_{gh}^*}$}};
\draw (-1.5,-1) node[below] (mh) {{\scriptsize$m_{gh}^*\vphantom{m_{gh}^*}$}};
\draw (0,-1) node[below] (A1) {{\scriptsize$m_{h}\vphantom{m_{gh}^*}$}};
\draw[redirectedred] (1,-1) .. controls +(0,0.75) and +(0,0.75) .. (0,-1);
\draw (1.75,-1) to[out=90, in=-90] (0,1);
\draw (mg) -- (-0.75,1); 
\draw (mh) -- (-1.5,1); 
\end{tikzpicture} 
, 
\nonumber 
\\ 
&  
\begin{tikzpicture}[very thick,scale=0.75,color=blue!50!black, baseline]
\draw (0,-1) node[below] (X) {{\scriptsize$T_{g,h}$}};
\draw[color=green!50!black] (0.8,-1) node[below] (A1) {{\scriptsize$A_g$}};
\draw (0,1) node[right] (Xu) {};
\draw[color=green!50!black] (A1) .. controls +(0,0.5) and +(0.5,-0.5) .. (0,0.3);
\draw (0,-1) -- (0,1); 
\fill[color=blue!50!black] (0,0.3) circle (2.9pt) node (meet2) {};
\fill[color=black] (0.2,0.5) circle (0pt) node (meet) {{\tiny$1$}};
\end{tikzpicture} 
\stackrel{\textrm{def}}{=}
\begin{tikzpicture}[very thick,scale=0.75,color=red!50!black, baseline]
\draw (1.75,-1) node[below] (mghs) {{\scriptsize$m_{g}\vphantom{m_{gh}^*}$}};
\draw (1,-1) node[below] (mgh) {{\scriptsize$m_{g}^*\vphantom{m_{gh}^*}$}};
\draw (-0.75,-1) node[below] (mg) {{\scriptsize$m_g\vphantom{m_{gh}^*}$}};
\draw (-1.5,-1) node[below] (mh) {{\scriptsize$m_{gh}^*\vphantom{m_{gh}^*}$}};
\draw (0,-1) node[below] (A1) {{\scriptsize$m_{h}\vphantom{m_{gh}^*}$}};
\draw[redirectedred] (1,-1) .. controls +(0,0.75) and +(0,0.75) .. (-0.75,-1);
\draw (1.75,-1) to[out=90, in=-90] (-0.75,1);
\draw[color=white, line width=4pt] (0,-1) to[out=90, in=-90] (0,1);
\draw (0,-1) to[out=90, in=-90] (0,1);
\draw (mh) -- (-1.5,1); 
\end{tikzpicture} 
.
\label{eq:Tghactions}
\end{align}
Above we use string diagram notation for morphisms in the $G$-crossed category $\widehat{\B}$. 
By \eqref{eq:Gtwistedbraiding}, the object label attached to a string changes at crossings. For example, the first crossing in the action of $A_g$ is the inverse braiding $m_g \otimes m_h \to \varphi(g^{-1})(m_h) \otimes m_g$, and the second crossing is the braiding $\varphi(g^{-1})(m_h) \otimes m_g \to m_g \otimes m_h$ (composed with a coherence isomorphism for~$\varphi$). 
Thus the string labelled $m_h$ at the bottom is labelled by $\varphi(g^{-1})(m_h) \in \B_{ghg^{-1}}$ between the crossings and again by~$m_h$ at the top.

One checks that indeed (cf.\ \eqref{eq:A1A2comp})
\be 
\begin{tikzpicture}[very thick,scale=0.75,color=blue!50!black, baseline,xscale=-1]
\draw (0,-1) node[below] (X) {{\scriptsize$T_{g,h}$}};
\draw[color=green!50!black] (-0.75,-1) node[below] (X) {{\scriptsize$A_g$}};	
\draw[color=green!50!black] (-1.5,-1) node[below] (X) {{\scriptsize$A_h$}};
\draw (0,1) node[right] (Xu) {};
\draw (0,-1) -- (0,1); 
\draw[color=green!50!black] (-0.75,-1) .. controls +(0,0.25) and +(-0.25,-0.25) .. (0,-0.25);
\draw[color=green!50!black] (-1.5,-1) .. controls +(0,0.5) and +(-0.5,-0.5) .. (0,0.6);
\fill[color=blue!50!black] (0,-0.25) circle (2.9pt) node (meet) {};
\fill[color=blue!50!black] (0,0.6) circle (2.9pt) node (meet2) {};
\end{tikzpicture} 
=
\begin{tikzpicture}[very thick,scale=0.75,color=blue!50!black, baseline,xscale=-1]
\draw (0,-1) node[below] (X) {{\scriptsize$T_{g,h}$}};
\draw[color=green!50!black] (-0.75,-1) node[below] (A1) {{\scriptsize$A_g$}};
\draw[color=green!50!black] (-1.5,-1) node[below] (A2) {{\scriptsize$A_h$}};
\draw (0,1) node[right] (Xu) {};
\draw[color=green!50!black] (A1) .. controls +(0,0.5) and +(-0.5,-0.5) .. (0,0.6);
\draw[color=white, line width=4pt] (A2) .. controls +(0,0.5) and +(-0.25,-0.25) .. (0,-0.25);
\draw[color=green!50!black] (A2) .. controls +(0,0.5) and +(-0.25,-0.25) .. (0,-0.25);
\draw (0,-1) -- (0,1); 
\fill[color=blue!50!black] (0,-0.25) circle (2.9pt) node (meet) {};
\fill[color=blue!50!black] (0,0.6) circle (2.9pt) node (meet2) {};
\end{tikzpicture} 
. 
\ee 
Setting $A:=\bigoplus_{g_\in G} A_g$, it follows that $T := \bigoplus_{g,h\in G} T_{g,h}$ is an $A$-$(A\otimes A)$-bimodule. 

Now we define component maps 
\begin{align}
\alpha_{g,h,k} \colon T_{g,hk} \otimes T_{h,k} \lra T_{gh,k} \otimes T_{g,h} \, , \quad
\bar\alpha_{g,h,k} \colon T_{gh,k} \otimes T_{g,h} \lra T_{g,hk} \otimes T_{h,k} 
\end{align}
by 
\be
\label{eq:alphacompG}
\alpha_{g,h,k}
\; \stackrel{\textrm{def}}{=} \; 
\begin{tikzpicture}[very thick,scale=0.75,color=red!50!black, baseline]
\draw[decoration={markings, mark=at position 0.5 with {\arrow{>}},}, postaction={decorate}] 
(0,1.5) -- (0,-1.5);
\draw[decoration={markings, mark=at position 0.5 with {\arrow{<}},}, postaction={decorate}] 
(0.5,1.5) .. controls +(0,-1) and +(0,-1) .. (2,1.5);
\draw[decoration={markings, mark=at position 0.5 with {\arrow{>}},}, postaction={decorate}] 
(1,-1.5) .. controls +(0,1) and +(0,1) .. (2,-1.5);
\draw[decoration={markings, mark=at position 0.5 with {\arrow{>}},}, postaction={decorate}] 
(0.5,-1.5) to[out=90, in=-90] (2.5,1.5);	
\draw[decoration={markings, mark=at position 0.75 with {\arrow{>}},}, postaction={decorate}]
(2.5,-1.5) to[out=90, in=-90] (3,1.5);	
\draw[color=white, line width=4pt] (3,-1.5) to[out=90, in=-55] (1,1.5);
\draw[decoration={markings, mark=at position 0.5 with {\arrow{>}},}, postaction={decorate}]
(3,-1.5) to[out=90, in=-55] (1,1.5);
%
\fill (1.1,0) circle (0pt) node (meet2) {{\scriptsize$g$}};
\fill (0.4,0) circle (0pt) node (meet2) {{\scriptsize$ghk$}};
\fill (2.1,0) circle (0pt) node (meet2) {{\scriptsize$k$}};
\fill (3.05,0.35) circle (0pt) node (meet2) {{\scriptsize$h$}};
\fill (1.5,-1.1) circle (0pt) node (meet2) {{\scriptsize$hk$}};
\fill (0.9,0.55) circle (0pt) node (meet2) {{\scriptsize$gh$}};
\end{tikzpicture} 
\, , \quad 
\bar\alpha_{g,h,k}
\; \stackrel{\textrm{def}}{=} \; 
\begin{tikzpicture}[very thick,scale=0.75,color=red!50!black, baseline]
\draw[decoration={markings, mark=at position 0.5 with {\arrow{>}},}, postaction={decorate}] 
(0,1.5) -- (0,-1.5);
\draw[decoration={markings, mark=at position 0.5 with {\arrow{<}},}, postaction={decorate}] 
(1,1.5) .. controls +(0,-1) and +(0,-1) .. (2,1.5);
\draw[decoration={markings, mark=at position 0.5 with {\arrow{>}},}, postaction={decorate}] 
(0.5,-1.5) .. controls +(0,1) and +(0,1) .. (2,-1.5);
\draw[decoration={markings, mark=at position 0.75 with {\arrow{>}},}, postaction={decorate}]
(2.5,-1.5) to[out=90, in=-90] (0.5,1.5);	
\draw[decoration={markings, mark=at position 0.5 with {\arrow{>}},}, postaction={decorate}]
(3,-1.5) to[out=90, in=-90] (2.5,1.5);
\draw[color=white, line width=4pt] (1,-1.5) to[out=55, in=-90] (3,1.5);	
\draw[decoration={markings, mark=at position 0.5 with {\arrow{>}},}, postaction={decorate}] 
(1,-1.5) to[out=55, in=-90] (3,1.5);	
%
\fill (0.4,0) circle (0pt) node (meet2) {{\scriptsize$ghk$}};
\fill (0.64,0.5) circle (0pt) node (meet2) {{\scriptsize$g$}};
\fill (1.5,1.1) circle (0pt) node (meet2) {{\scriptsize$hk$}};
\fill (2.15,0.05) circle (0pt) node (meet2) {{\scriptsize$k$}};
\fill (3,0.15) circle (0pt) node (meet2) {{\scriptsize$h$}};
\fill (0.8,-0.6) circle (0pt) node (meet2) {{\scriptsize$gh$}};
\end{tikzpicture} . 
\ee 
Here and below we use the following shorthand notation in labelling string diagrams. 
A label~$g$ on a string indicates that its source and target object is~$m_g$ (or~$m_g^*$, depending on orientation). 
We stress that this is independent of the position of the label~$g$ along the string. 
For example, passing along the string labelled~$k$ in the diagram for $\bar\alpha_{g,h,k}$, the components of the string in the complement of the crossings should be labelled by the objects~$m_k$, $\varphi(h^{-1}g^{-1})(m_k)$, $\varphi(h^{-1})(m_k)$ and~$m_k$, in this order.

The components $\alpha_{g,h,k}$ and $\bar\alpha_{g,h,k}$ assemble into module maps
$\alpha := \sum_{g,h,k\in G} \alpha_{g,h,k}$ 
and 
$\bar\alpha := \sum_{g,h,k\in G} \bar\alpha_{g,h,k}$, 
as can be checked by verifying identities as in~\eqref{eq:alphaactioncomp}--\eqref{eq:alphamodulemap}. 

Finally we define $\psi\in \End_{AA}(A)$ and $\phi\in \End(\one) = \Bbbk$ by 
\be 
\label{eq:psiphiG}
\psi\big|_{A_g} 
\stackrel{\textrm{def}}{=}
d_{m_g}^{-1/2} \cdot \text{id}_{A_g} 
\, , \quad 
	 \phi^2
:= \frac{1}{|G|} \, .
\ee 

\begin{theorem}
\label{thm:Gcrossed} 
Let~$\B = \B_1$ be the neutral component of a ribbon crossed $G$-category~$\widehat{\B}$ as above. 
Then for every choice of simple objects $\{ m_g \in \B_g\}_{g\in G}$ the tuple 
$\A^m := (A, T, \alpha, \bar\alpha, \psi, \phi)$
defined in \eqref{eq:AgGextension}--\eqref{eq:psiphiG} is a special orbifold datum for~$\B$. 
\end{theorem}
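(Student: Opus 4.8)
The plan is to verify the six conditions \eqref{eq:347n}--\eqref{eq:351n} of Proposition~\ref{prop:internal-orb} for the tuple $\A^m$, working entirely with string diagrams in the $G$-crossed category $\widehat\B$. Since $A = \bigoplus_g A_g$, $T = \bigoplus_{g,h} T_{g,h}$ and the maps $\alpha,\bar\alpha,\psi$ are block-diagonal with respect to the $G$-grading, each of the six equalities decomposes into a family of equalities of morphisms between objects of the form $m_{g_1}^{\pm*}\otimes m_{g_2}^{\pm*}\otimes\cdots$ labelled by tuples of group elements; the dimension factors $d_{m_g}^{\pm1/2}$ attached to $\psi$-insertions and the global factor $\phi^{-2}=|G|$ must be tracked throughout. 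First I would record two preliminary observations that will be used repeatedly: (a) the left/right duality maps in \eqref{eq:AgGextension} and the module actions in \eqref{eq:Tghactions} are built solely from evaluation/coevaluation morphisms for the simple objects $m_g$ together with (inverse) $G$-twisted braidings $c_{m_g,m_h}^{\pm1}$, so unwinding everything reduces each condition to an identity among such braidings and cup/cap morphisms; (b) the $G$-action coherence isomorphisms $\varphi(g)(m_h)\cong m_{g^{-1}hg}$ enter only as bookkeeping and can be suppressed (as the paper already does in labelling the diagrams), because they cancel in pairs.

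The core of the argument is the pentagon-type identity \eqref{eq:347n}: I would draw both sides with all $T$-strands resolved into $m_g$-strands, use that an $A_g$-strand glued to $T$-strands via $\mu$ can be slid through by $\Delta$-separability and the Frobenius relation, and then show that the resulting diagram on each side is a composite of inverse braidings arranged so that the two diagrams differ only by a sequence of (inverse) Reidemeister-II and naturality-of-braiding moves. The $\psi_0^2$-insertion on the right-hand side of \eqref{eq:347n} contributes exactly the factor $d_{m_{ghk}}^{-1}$ (from the left-module leg, which is an $A_{ghk}$-strand after resolving), which is what is needed to match the normalisations in the duality maps \eqref{eq:AgGextension}; I would check this bookkeeping explicitly once and then reuse it. Conditions \eqref{eq:348n} say, after resolving strands, that $\bar\alpha_{g,h,k}$ is a two-sided inverse of $\alpha_{g,h,k}$ up to the indicated $\psi_0^{\pm2}$ and cup/cap factors — this follows from $c\circ\tilde c = \mathrm{id}$ (up to $\varphi$-coherence) for the $G$-twisted braiding, i.e.\ the inverse braiding stated after \eqref{eq:Gtwistedbraiding}, plus the snake identities. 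Conditions \eqref{eq:349n}--\eqref{eq:350n} are the ``$T$ versus $T^\dagger$'' compatibilities; here I would introduce $T_{g,h}^\dagger = m_g^*\otimes m_h^*\otimes m_{gh}$ as the adjoint of $T_{g,h}$ (the adjunction morphisms being the obvious (co)evaluations for the $m$'s, with the $d_{m_g}$-normalisations matching those in \eqref{eq:AgGextension}), and then each identity again reduces to a braiding/snake computation; the $\psi_i^{\pm2}$-insertions supply the dimension factors relating the two normalisations on the $i$-th leg. Finally \eqref{eq:351n}: after resolving, the left-most diagram is $\sum_g d_{m_g}^{-1}(\text{trace of a $T_{g,h}$-loop})$ summed appropriately, which by the standard identity $\dim(m_g^*\otimes m_g\otimes m_h) = d_{m_g}^2 d_{m_h}$ and $\sum_{h} d_{m_h}^2 = |G|$ (each $\B_h$ contributing — here one uses that the $m_h$ are simple and that $\widehat\B$ is a crossed $G$-category so $\dim\B_h = \dim\B$ for all $h$, though in fact only the single chosen $m_h$ per component enters) collapses to $\phi^{-2}\cdot\psi^2\cdot\varepsilon_A$; I would model this computation on the corresponding final step of the proof of Proposition~\ref{prop:orbidata}, where the analogous identity $\sum_{i,j}d_id_jN^k_{ij} = (\dim\Cat S)\,d_k$ is used.

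The main obstacle I expect is \eqref{eq:347n}: getting the web of $G$-twisted braidings on the two sides into manifestly equal normal forms is delicate because the object labels on strand segments change at every crossing (via $\varphi$), so one must be scrupulous that the coherence isomorphisms really do cancel and that no anomalous sign or twist $\theta$ sneaks in. I would handle this by choosing, once and for all, a consistent reading convention for the diagrams (bottom to top, as in the paper), carefully invoking \cite[Sect.\,VI.2]{TuraevBook2010} for the hexagon-type axioms of $\widehat\B$ which guarantee exactly the braid relations needed, and treating the $\varphi$-coherences as strict (justified since they form a monoidal natural family). A secondary nuisance is the careful accounting of the $d_{m_g}^{1/2}$ half-integer powers: these always occur in pairs coming from the two ends of an interior $A_g$-strand, so they combine into integer powers $d_{m_g}^{\pm1}$ that cancel against the normalisations built into \eqref{eq:AgGextension} and \eqref{eq:Tghactions}; I would verify this cancellation pattern abstractly rather than condition-by-condition. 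One should also remark (cf.\ Remark~\ref{rem:GextNonSS}) that semisimplicity of $\B$ is used only to ensure the relevant idempotents split and the $m_g$ are simple with invertible dimension; the diagrammatic identities themselves are purely formal, which is why the same proof will cover the non-semisimple variant.
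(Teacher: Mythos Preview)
Your overall strategy---decompose into $G$-graded components and reduce each condition to identities among (co)evaluations and $G$-twisted braidings for the $m_g$---is exactly the paper's approach, and your treatment of \eqref{eq:347n}--\eqref{eq:350n} is a reasonable sketch of the required string-diagram manipulations.

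There is, however, a genuine error in your handling of \eqref{eq:351n}. You appeal to an identity ``$\sum_h d_{m_h}^2 = |G|$'', but this is false in general: the $m_h$ are \emph{arbitrary} simple objects chosen one per component, and their dimensions are unconstrained apart from $d_{m_1}=1$. (Already in Example~\ref{example:TY}(ii), the Ising case, one has $d_{m_{-1}}=\sqrt{2}$ and so $d_{m_1}^2+d_{m_{-1}}^2=3\neq 2$.) Your hedged parenthetical shows you sensed something was off; the analogy with the final step of Proposition~\ref{prop:orbidata} is misleading precisely because there one sums over \emph{all} simples and uses a fusion-ring identity, whereas here only one $m_g$ per component is in play.

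The correct mechanism is a cancellation followed by a count. On the component with outgoing $A_{gh}$-strand, resolving $T_{g,h}=m_{gh}^*\otimes m_g\otimes m_h$ produces closed $m_g$- and $m_h$-loops contributing $d_{m_g}d_{m_h}$, which cancel exactly against the $\psi_1^2\psi_2^2$-factor $d_{m_g}^{-1}d_{m_h}^{-1}$. What remains for each pair $(g,h)$ is the bare cap $\tilde{\ev}_{m_{gh}}$ on the $A_{gh}$-strand. Since $\varepsilon_{A_k}=d_{m_k}\cdot\tilde{\ev}_{m_k}$ by \eqref{eq:AgGextension}, this equals $d_{m_k}^{-1}\varepsilon_{A_k}=\psi^2|_{A_k}\cdot\varepsilon_{A_k}$ for $k=gh$. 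Now fix $k$ and sum over all pairs $(g,h)$ with $gh=k$: there are exactly $|G|$ such pairs, and this is where the factor $\phi^{-2}=|G|$ arises---from counting, not from dimensions.
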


\begin{proof}
We have to show that $\A^m$ satisfies the conditions \eqref{eq:347n}--\eqref{eq:351n}. 
Once the latter are written out in terms of the algebra actions~\eqref{eq:Tghactions}, the component maps~\eqref{eq:alphacompG} and the definition~\eqref{eq:psiphiG} of~$\psi$ and~$\phi$, the verification becomes a straightforward exercise in the graphical calculus for ribbon crossed $G$-categories. 
Here we provide details for only two conditions; the remainder is checked analogously. 

One of the more involved conditions is the second identity of~\eqref{eq:349n}. 
In components, its right-hand side is 
\be 
\label{eq:RHSInv2v2}
\begin{tikzpicture}[very thick,scale=0.75,color=blue!50!black, baseline=0.75cm]
\draw (0,-1.5) -- (0,3.5); 
\draw (2,-1.5) -- (2,3.5); 
%
\draw[-dot-, color=green!50!black] (0.5,2) .. controls +(0,-1) and +(0,-1) .. (1.5,2);
\draw[color=green!50!black] (0.5,2) to[out=90, in=-45] (0,3);
\draw[color=green!50!black] (1.5,2) to[out=90, in=145] (2,2); 
\draw[color=green!50!black] (1,0.75) node[Odot] (end) {}; 
\draw[color=green!50!black] (1,0.8) -- (1,1.3);
%
\fill[color=black] (0,0) circle (2.9pt) node[right] (meet) {{\scriptsize$\!\psi_2^{-2}$}};
\fill (0,3) circle (2.9pt) node[left] (meet) {};
\fill (2,2) circle (2.9pt) node[left] (meet) {};
%
\fill[color=black] (0.2,3.1) circle (0pt) node (meet) {{\tiny$2$}};
\fill[color=black] (1.8,1.9) circle (0pt) node (meet) {{\tiny$2$}};
\fill[color=green!50!black] (0.85,2.0) circle (0pt) node (M) {{\scriptsize$A_k$}};
\fill (0.55,-1.5) circle (0pt) node (M) {{\scriptsize$T_{gh,k}\vphantom{T^*_{ghk}}$}};
\fill (2.45,-1.5) circle (0pt) node (M) {{\scriptsize$T_{h,k}^*\vphantom{T^*_{ghk}}$}};
\end{tikzpicture} 
\; = \; 
\begin{tikzpicture}[very thick,scale=0.75,color=red!50!black, baseline=0.75cm]
\draw[decoration={markings, mark=at position 0.5 with {\arrow{>}},}, postaction={decorate}] 
(0,3.5) -- (0,-1.5);
\draw[decoration={markings, mark=at position 0.75 with {\arrow{>}},}, postaction={decorate}] 
(0.5,-1.5) -- (0.5,3.5);
\draw[decoration={markings, mark=at position 0.5 with {\arrow{<}},}, postaction={decorate}] 
(2,1.5) .. controls +(0,-1) and +(0,-1) .. (3,1.5);
\draw[decoration={markings, mark=at position 0.5 with {\arrow{>}},}, postaction={decorate}] 
(1.5,1.5) .. controls +(0,-1.75) and +(0,-1.75) .. (3.5,1.5);
\draw[decoration={markings, mark=at position 0.53 with {\arrow{<}},}, postaction={decorate}] 
(1.5,1.5) .. controls +(0,0.25) and +(0,0.25) .. (1,1.5);
\draw[decoration={markings, mark=at position 0.53 with {\arrow{>}},}, postaction={decorate}] 
(3.5,1.5) .. controls +(0,0.25) and +(0,0.25) .. (4,1.5);
\draw (1,1.5) -- (1,-1.5); 
\draw (4,1.5) -- (4,-1.5); 
\draw (2,1.5) to[out=90, in=-90] (1,3.5);	
\draw (3,1.5) to[out=90, in=-90] (4,3.5);	
\draw[decoration={markings, mark=at position 0.25 with {\arrow{>}},}, postaction={decorate}] 
(4.5,3.5) -- (4.5,-1.5);
\draw[decoration={markings, mark=at position 0.5 with {\arrow{>}},}, postaction={decorate}] 
(5,-1.5) -- (5,3.5);
%
\fill (-0.4,0.8) circle (0pt) node (meet2) {{\scriptsize$ghk$}};
\fill (0.8,2.4) circle (0pt) node (meet2) {{\scriptsize$gh$}};
\fill (2.5,-0.1) circle (0pt) node (meet2) {{\scriptsize$k$}};
\fill (2.5,1.15) circle (0pt) node (meet2) {{\scriptsize$k$}};
\fill (4.3,2.15) circle (0pt) node (meet2) {{\scriptsize$h$}};
\fill (5.3,1.1) circle (0pt) node (meet2) {{\scriptsize$hk$}};
\end{tikzpicture} 
\; = \; 
\begin{tikzpicture}[very thick,scale=0.75,color=red!50!black, baseline=0.75cm]
\draw[decoration={markings, mark=at position 0.5 with {\arrow{>}},}, postaction={decorate}] 
(0,3.5) -- (0,-1.5);
\draw[decoration={markings, mark=at position 0.75 with {\arrow{>}},}, postaction={decorate}] 
(0.5,-1.5) -- (0.5,3.5);
\draw[decoration={markings, mark=at position 0.5 with {\arrow{<}},}, postaction={decorate}] 
(1,3.5) .. controls +(0,-2.5) and +(0,-2.5) .. (4,3.5);
\draw[decoration={markings, mark=at position 0.5 with {\arrow{>}},}, postaction={decorate}] 
(1,-1.5) .. controls +(0,2.5) and +(0,2.5) .. (4,-1.5);
\draw[decoration={markings, mark=at position 0.25 with {\arrow{>}},}, postaction={decorate}] 
(4.5,3.5) -- (4.5,-1.5);
\draw[decoration={markings, mark=at position 0.5 with {\arrow{>}},}, postaction={decorate}] 
(5,-1.5) -- (5,3.5);
%
\fill (-0.4,0.8) circle (0pt) node (meet2) {{\scriptsize$ghk$}};
\fill (0.8,2.4) circle (0pt) node (meet2) {{\scriptsize$gh$}};
\fill (2.5,0.1) circle (0pt) node (meet2) {{\scriptsize$k$}};
\fill (2.3,1.9) circle (0pt) node (meet2) {{\scriptsize$k$}};
\fill (4.3,2.15) circle (0pt) node (meet2) {{\scriptsize$h$}};
\fill (5.3,1.1) circle (0pt) node (meet2) {{\scriptsize$hk$}};
\end{tikzpicture} 
\, , 
\ee 
while for the left-hand side we compute: 
\be 
\!\!\
\begin{tikzpicture}[very thick,scale=0.75,color=blue!50!black, baseline=0.75cm]
\draw (-0.25,2) -- (-0.25,5); 
\draw[directed] (0.25,2.2) .. controls +(0,1) and +(0,1) .. (1.75,2.2);
\draw[redirected] (0.25,1.8) .. controls +(0,-0.75) and +(0,-0.75) .. (1.25,1.8);
\draw (1.75,2.2) -- (1.75,-0.2);
\draw (-0.25,0) -- (-0.25,2); 
\draw[redirected] (0.25,-0.2) .. controls +(0,-1) and +(0,-1) .. (1.75,-0.2);
\draw[directed] (0.25,0.2) .. controls +(0,0.75) and +(0,0.75) .. (1.25,0.2);
\draw[color=white, line width=4pt] (1.25,0.2) -- (1.25,-4);

\draw (1.25,0.2) -- (1.25,-4);
\draw[color=white, line width=4pt] (1.25,1.8) -- (1.25,5);
\draw (1.25,1.8) -- (1.25,5);
\draw (-0.25,-4) -- (-0.25,0); 
%
\fill[color=white] (0,0) node[inner sep=4pt,draw, rounded corners=1pt, fill, color=white] (R2) {{\scriptsize$\alpha_{g,h,k}$}};
\draw[line width=1pt, color=black] (0,0) node[inner sep=4pt, draw, semithick, rounded corners=1pt] (R) {{\scriptsize$\bar\alpha_{g,h,k}$}};
%
%
\fill[color=white] (0,2) node[inner sep=4pt,draw, rounded corners=1pt, fill, color=white] (R2) {{\scriptsize$\alpha_{g,h,k}$}};
\draw[line width=1pt, color=black] (0,2) node[inner sep=4pt, draw, semithick, rounded corners=1pt] (R) {{\scriptsize$\alpha_{g,h,k}$}};
%
%
\fill[color=black] (-0.25,1) circle (2.9pt) node[left] (meet) {{\scriptsize$\!\psi_1^2\!$}};
%
\fill (0.3,-4) circle (0pt) node (M) {{\scriptsize$T_{gh,k}\vphantom{T^*_{gh}}$}};
\fill (1.7,-4) circle (0pt) node (M) {{\scriptsize$T^*_{h,k}\vphantom{T^*_{gh}}$}};
\end{tikzpicture} 
\!\!\!\!=
\frac{1}{d_{m_g}} \cdot
\begin{tikzpicture}[very thick,scale=0.75,color=red!50!black, baseline=0.75cm]
\draw[decoration={markings, mark=at position 0.5 with {\arrow{>}},}, postaction={decorate}] 
(0,5) -- (0,-4);
\draw (0.5,5) -- (0.5,3.5);
\draw (1,5) -- (1,3.5);
\draw (1,-2.5) -- (1,-4);
\draw (0.5,-2.5) -- (0.5,-4);
\draw[decoration={markings, mark=at position 0.1 with {\arrow{<}},}, postaction={decorate}] (0.5,1.5) -- (0.5,-0.5);
\draw (1,1.5) -- (1,-0.5);
%
%
\draw (0.5,1.5) to[out=90, in=-90] (2.5,3.5);	
\draw (2.5,1.5) to[out=90, in=-90] (3,3.5);	
\draw[decoration={markings, mark=at position 0.5 with {\arrow{<}},}, postaction={decorate}] 
(0.5,3.5) .. controls +(0,-1) and +(0,-1) .. (2,3.5);
\draw[decoration={markings, mark=at position 0.5 with {\arrow{>}},}, postaction={decorate}] 
(1,1.5) .. controls +(0,0.5) and +(0,0.5) .. (2,1.5);
\draw[color=white, line width=4pt] (3,1.5) to[out=90, in=-90] (1,3.5);
\draw (3,1.5) to[out=90, in=-90] (1,3.5);	
%
%
\draw[decoration={markings, mark=at position 0.5 with {\arrow{<}},}, postaction={decorate}] 
(1,-0.5) .. controls +(0,-0.5) and +(0,-0.5) .. (2,-0.5);
\draw[decoration={markings, mark=at position 0.5 with {\arrow{>}},}, postaction={decorate}] 
(0.5,-2.5) .. controls +(0,1) and +(0,1) .. (2,-2.5);
\draw (2.5,-2.5) to[out=90, in=-90] (0.5,-0.5);	
\draw (3,-2.5) to[out=90, in=-90] (2.5,-0.5);	
\draw[color=white, line width=4pt] (1,-2.5) to[out=90, in=-90] (3,-0.5);
\draw (1,-2.5) to[out=90, in=-90] (3,-0.5);	
%
%
\draw[decoration={markings, mark=at position 0.5 with {\arrow{>}},}, postaction={decorate}] (3,3.5) .. controls +(0,0.75) and +(0,0.75) .. (7,3.5);
\draw[decoration={markings, mark=at position 0.5 with {\arrow{>}},}, postaction={decorate}] (2.5,3.5) .. controls +(0,1.25) and +(0,1.25) .. (7.5,3.5);
\draw[decoration={markings, mark=at position 0.5 with {\arrow{<}},}, postaction={decorate}] (2,3.5) .. controls +(0,1.75) and +(0,1.75) .. (8,3.5);
\draw (8,3.5) -- (8,-2.5); 
\draw (7.5,3.5) -- (7.5,-2.5); 
\draw (7,3.5) -- (7,-2.5); 
\draw[decoration={markings, mark=at position 0.5 with {\arrow{>}},}, postaction={decorate}] (7,-2.5) .. controls +(0,-0.75) and +(0,-0.75) .. (3,-2.5);
\draw[decoration={markings, mark=at position 0.5 with {\arrow{>}},}, postaction={decorate}] (7.5,-2.5) .. controls +(0,-1.25) and +(0,-1.25) .. (2.5,-2.5);
\draw[decoration={markings, mark=at position 0.5 with {\arrow{<}},}, postaction={decorate}] (8,-2.5) .. controls +(0,-1.75) and +(0,-1.75) .. (2,-2.5);%
%
%
\draw[decoration={markings, mark=at position 0.5 with {\arrow{<}},}, postaction={decorate}] (3,1.5) .. controls +(0,-0.5) and +(0,-0.5) .. (5.5,1.5);
\draw[decoration={markings, mark=at position 0.5 with {\arrow{<}},}, postaction={decorate}] (2.5,1.5) .. controls +(0,-0.75) and +(0,-0.75) .. (6,1.5);
\draw[decoration={markings, mark=at position 0.5 with {\arrow{>}},}, postaction={decorate}] (2,1.5) .. controls +(0,-1) and +(0,-1) .. (6.5,1.5);%
\draw[color=white, line width=4pt] (5.5,1.5) -- (5.5,5); 
\draw[color=white, line width=4pt] (6,1.5) -- (6,5); 
\draw[color=white, line width=4pt] (6.5,1.5) -- (6.5,5); 
\draw[color=white, line width=4pt] (5.5,-0.5) -- (5.5,-4); 
\draw[color=white, line width=4pt] (6,-0.5) -- (6,-4); 
\draw[color=white, line width=4pt] (6.5,-0.5) -- (6.5,-4); 
\draw (5.5,1.5) -- (5.5,5); 
\draw (6,1.5) -- (6,5); 
\draw (6.5,1.5) -- (6.5,5); 
\draw (5.5,-0.5) -- (5.5,-4); 
\draw (6,-0.5) -- (6,-4); 
\draw (6.5,-0.5) -- (6.5,-4); 
\draw[decoration={markings, mark=at position 0.5 with {\arrow{>}},}, postaction={decorate}] (3,-0.5) .. controls +(0,0.5) and +(0,0.5) .. (5.5,-0.5);
\draw[decoration={markings, mark=at position 0.5 with {\arrow{>}},}, postaction={decorate}] (2.5,-0.5) .. controls +(0,0.75) and +(0,0.75) .. (6,-0.5);
\draw[decoration={markings, mark=at position 0.5 with {\arrow{<}},}, postaction={decorate}] (2,-0.5) .. controls +(0,1) and +(0,1) .. (6.5,-0.5);%
%
\fill (-0.4,0.8) circle (0pt) node (meet2) {{\scriptsize$ghk$}};
\fill (0.8,-1.65) circle (0pt) node (meet2) {{\scriptsize$gh$}};
\fill (4.3,-0.4) circle (0pt) node (meet2) {{\scriptsize$k$}};
\fill (4.05,1.4) circle (0pt) node (meet2) {{\scriptsize$k$}};
\fill (0.3,1.5) circle (0pt) node (meet2) {{\scriptsize$g$}};
\fill (5,3.7) circle (0pt) node (meet2) {{\scriptsize$h$}};
\fill (1.5,-0.5) circle (0pt) node (meet2) {{\scriptsize$hk$}};
\end{tikzpicture} 
=\!\!
\begin{tikzpicture}[very thick,scale=0.75,color=red!50!black, baseline=0.75cm]
\draw[decoration={markings, mark=at position 0.5 with {\arrow{>}},}, postaction={decorate}] 
(0,5) -- (0,-4);
\draw[decoration={markings, mark=at position 0.5 with {\arrow{<}},}, postaction={decorate}] 
(3.5,5) -- (3.5,-4);
\draw[decoration={markings, mark=at position 0.5 with {\arrow{<}},}, postaction={decorate}] 
(0.5,5) -- (0.5,-4);
\draw[decoration={markings, mark=at position 0.5 with {\arrow{<}},}, postaction={decorate}] (1,5) .. controls +(0,-2) and +(0,-2) .. (2.5,5);
\draw[decoration={markings, mark=at position 0.5 with {\arrow{>}},}, postaction={decorate}] (1,-4) .. controls +(0,2) and +(0,2) .. (2.5,-4);
\draw[decoration={markings, mark=at position 0.5 with {\arrow{<}},}, postaction={decorate}] (2.5,-1) .. controls +(0,0.75) and +(0,0.75) .. (1.5,-1);
\draw[decoration={markings, mark=at position 0.5 with {\arrow{<}},}, postaction={decorate}] (1.5,-1) .. controls +(0,-1) and +(0,-1) .. (3,-1);
\draw[color=white, line width=4pt] (3,-4) to[out=90, in=-90] (2.5,-1);	
\draw (3,-4) to[out=90, in=-90] (2.5,-1);	
\draw[decoration={markings, mark=at position 0.5 with {\arrow{>}},}, postaction={decorate}] (2.5,2) .. controls +(0,-0.75) and +(0,-0.75) .. (1.5,2);
\draw[decoration={markings, mark=at position 0.5 with {\arrow{>}},}, postaction={decorate}] (1.5,2) .. controls +(0,1) and +(0,1) .. (3,2);
\draw[color=white, line width=4pt] (3,5) to[out=-90, in=90] (2.5,2);	
\draw (3,5) to[out=-90, in=90] (2.5,2);	
\draw (3,-1) -- (3,2);
%
\fill (-0.4,0.8) circle (0pt) node (meet2) {{\scriptsize$ghk$}};
\fill (0.8,0.8) circle (0pt) node (meet2) {{\scriptsize$gh$}};
\fill (3.85,0.8) circle (0pt) node (meet2) {{\scriptsize$hk$}};
\fill (1.75,-2.8) circle (0pt) node (meet2) {{\scriptsize$k$}};
\fill (1.6,3.8) circle (0pt) node (meet2) {{\scriptsize$k$}};
\fill (1.8,-0.2) circle (0pt) node (meet2) {{\scriptsize$h$}};
\end{tikzpicture} 
\ee 
By the ribbon property, this expression is indeed equal to~\eqref{eq:RHSInv2v2}. 

We also show that the left- and right-hand side of~\eqref{eq:351n} agree: 
\begin{align}
\begin{tikzpicture}[very thick,scale=0.75,color=blue!50!black, baseline]
%
\draw[decoration={markings, mark=at position 0.5 with {\arrow{<}},}, postaction={decorate}] 
([shift=(0:1)]1,0) arc (0:180:1);
\draw[decoration={markings, mark=at position 0.5 with {\arrow{>}},}, postaction={decorate}] 
([shift=(0:1)]1,0) arc (0:-180:1);
%
\draw[color=green!50!black] (-1,-1.5) to[out=90, in=225] ([shift=(140:1)]1,0);
%
\fill[color=black] ([shift=(-180:1)]1,0) circle (2.9pt) node[right] (meet) {{\scriptsize$\!\psi_1^{2}$}};
\fill[color=black] ([shift=(-140:1)]1,0) circle (2.9pt) node[right] (meet) {{\scriptsize$\!\psi_2^{2}$}};
\fill ([shift=(140:1)]1,0) circle (2.9pt) node[right] (meet) {};
%
\fill[color=green!50!black] (-0.8,-1.5) circle (0pt) node (M) {{\scriptsize$A$}};
\fill (1.2,0.65) circle (0pt) node (M) {{\scriptsize$T$}};
\end{tikzpicture} 
& \;\; = \;
\sum_{g,h\in G} 
\begin{tikzpicture}[very thick,scale=0.75,color=blue!50!black, baseline]
%
\draw[decoration={markings, mark=at position 0.5 with {\arrow{<}},}, postaction={decorate}] 
([shift=(0:1)]1,0) arc (0:180:1);
\draw[decoration={markings, mark=at position 0.5 with {\arrow{>}},}, postaction={decorate}] 
([shift=(0:1)]1,0) arc (0:-180:1);
%
\draw[color=green!50!black] (-1,-1.5) to[out=90, in=225] ([shift=(140:1)]1,0);
%
\fill[color=black] ([shift=(-180:1)]1,0) circle (2.9pt) node[right] (meet) {{\scriptsize$\!\psi_1^{2}$}};
\fill[color=black] ([shift=(-140:1)]1,0) circle (2.9pt) node[right] (meet) {{\scriptsize$\!\psi_2^{2}$}};
\fill ([shift=(140:1)]1,0) circle (2.9pt) node[right] (meet) {};
%
\fill[color=green!50!black] (-0.6,-1.5) circle (0pt) node (M) {{\scriptsize$A_{gh}$}};
\fill (1.2,0.55) circle (0pt) node (M) {{\scriptsize$T_{g,h}$}};
\end{tikzpicture} 
\;\; = \;
\sum_{g,h\in G} d_{m_g}^{-1} d_{m_h}^{-1} \cdot \!\!\!\!
\begin{tikzpicture}[very thick,scale=0.75,color=red!50!black, baseline]
%
\draw[decoration={markings, mark=at position 0.65 with {\arrow{<}},}, postaction={decorate}] 
([shift=(0:1)]1,0) arc (0:140:1);
\draw[decoration={markings, mark=at position 0.65 with {\arrow{>}},}, postaction={decorate}] 
([shift=(0:1)]1,0) arc (0:-140:1);
%
\draw[decoration={markings, mark=at position 0.5 with {\arrow{>}},}, postaction={decorate}] 
([shift=(0:0.7)]1,0) arc (0:180:0.7);
\draw[decoration={markings, mark=at position 0.5 with {\arrow{<}},}, postaction={decorate}] 
([shift=(0:0.7)]1,0) arc (0:-180:0.7);
%
\draw[decoration={markings, mark=at position 0.5 with {\arrow{>}},}, postaction={decorate}] 
([shift=(0:0.3)]1,0) arc (0:180:0.3);
\draw[decoration={markings, mark=at position 0.5 with {\arrow{<}},}, postaction={decorate}] 
([shift=(0:0.3)]1,0) arc (0:-180:0.3);
%
\draw (-1,-1.5) to[out=90, in=225] ([shift=(140:1)]1,0);
\draw (-0.5,-1.5) to[out=90, in=140] ([shift=(-140:1)]1,0);
%
\fill (-1.3,0.-1.5) circle (0pt) node (M) {{\scriptsize$gh$}};
\fill (-0.2,0.-1.5) circle (0pt) node (M) {{\scriptsize$gh$}};
\fill (0.15,0) circle (0pt) node (M) {{\scriptsize$g\vphantom{h}$}};
\fill (0.55,0) circle (0pt) node (M) {{\scriptsize$h\vphantom{g}$}};
\end{tikzpicture} 
\nonumber 
\\
& \;\; = \;
\sum_{g\in G} \Big( \sum_{h\in G} 
\begin{tikzpicture}[very thick,scale=0.75,color=red!50!black, baseline=.2cm]
\draw[line width=0pt] 
(2.5,0.3) node[line width=0pt] (s) {{\scriptsize$h$}}; 
\draw[redirectedred] (2,0) .. controls +(0,1) and +(0,1) .. (3,0);
\end{tikzpicture}
\, \Big) 
= 
|G|\sum_{h\in G} d_{m_h}^{-1} 
\cdot 
\begin{tikzpicture}[very thick,scale=0.75,color=blue!50!black, baseline]
%
\draw[color=green!50!black] (0,-0.5) -- (0,0.5);
\draw[color=green!50!black] (0,0.5) node[Odot] (end) {}; 
%
\fill[color=green!50!black] (0.3,-0.5) circle (0pt) node (M) {{\scriptsize$A_h$}};
\end{tikzpicture} 
= 
	 \phi^{-2} 
\cdot 
\begin{tikzpicture}[very thick,scale=0.75,color=blue!50!black, baseline]
%
\draw[color=green!50!black] (0,-0.5) -- (0,0.5);
\draw[color=green!50!black] (0,0.5) node[Odot] (end) {}; 
%
\fill[color=black] (0,0) circle (2.9pt) node[left] (meet) {{\scriptsize$\!\psi^{2}$}};
%
\fill[color=green!50!black] (0.2,-0.5) circle (0pt) node (M) {{\scriptsize$A$}};
\end{tikzpicture} 
. 
\end{align}
\end{proof}

\begin{remark}
Note that in $\widehat{\Cat{B}}$, all the $A_g$, $g \in G$, are Morita equivalent to the algebra $\one$. In $\B_1$ typically only $A_1$ is Morita equivalent to $\one$, but the bimodules $T_{g,h}$ still exhibit $A_{gh}$ as Morita equivalent to $A_g \otimes A_h$ in $\B_1$. 
Thus $A_g \otimes A_{g^{-1}}$ is Morita equivalent to $\one$ in $\B_1$, that is, all $A_g$ are necessarily Azumaya algebras in $\B_1$, see \cite{VOZ} and e.\,g.\ \cite[Sect.\,10]{frsRibbon}.
Commutative separable algebras in braided tensor categories are Azumaya iff they are isomorphic to the tensor unit \cite[Thm.\,4.9]{VOZ}, and so in this sense the construction in Section~\ref{sec:commSSFR} is complementary to the one described here. 
\end{remark}

It is natural to ask to what extent Theorem~\ref{thm:Gcrossed} depends on the choice of simple objects $m_g \in \B_g$. 
To answer this question, let $\{ \widetilde m_g \in \B_g\}_{g\in G}$ be another choice of simple objects. 
Hence by setting 
\be 
\widetilde A_g := \widetilde m_g^* \otimes \widetilde m_g
\, , \quad 
\widetilde A := \bigoplus_{g\in G} \widetilde A_g 
\ee 
and similarly defining $\widetilde T, \widetilde{\alpha}, \widetilde{\bar\alpha}, \widetilde{\psi}$ in terms of $\{ \widetilde m_g\}$ along the lines of \eqref{eq:Tghactions}--\eqref{eq:psiphiG}, we have a second special orbifold datum 
\be 
\A^{\widetilde{m}} := \big( \, \widetilde{A}, \widetilde{T}, \widetilde{\alpha}, \widetilde{\bar\alpha}, \widetilde{\psi}, \phi \, \big)
\ee 
for~$\B$. 

To relate $\A^{\widetilde{m}}$ to $\A^{m}$, first note that~$\widetilde A_g$ and~$A_g$ are Morita equivalent (in $\B=\B_1$) for all $g\in G$: 
for $X_g := \widetilde m_g^* \otimes m_g$ we have 
$X_g^* \otimes_{\widetilde A_g} X_g \cong A_g$ 
and 
$X_g \otimes_{A_g} X_g^* \cong \widetilde A_g$. 
Hence 
\be 
X := \bigoplus_{g\in G} X_g
\ee 
is an $\widetilde{A}$-$A$-bimodule exhibiting a Morita equivalence between~$A$ and~$\widetilde{A}$, and Definition~\ref{def:Moritatransport} and Proposition~\ref{prop:Moritatransport} give us another special orbifold datum for~$\B$, the Morita transport of $\mathcal{A}^m$ along~$X$: 
\be 
X(\mathcal{A}^m) = \big( \, \widetilde{A}, T^X, \alpha^X, \bar\alpha^X, \psi^X, \phi \, \big) \, . 
\ee 

\begin{lemma}
	\label{lem:Tiso-for-other-choice}
	There is a $T$-compatible isomorphism from $X(\mathcal{A}^m)$ to $\A^{\widetilde{m}}$.
\end{lemma}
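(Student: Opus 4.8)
The plan is to write down the isomorphism $\rho\colon T^X \to \widetilde T$ componentwise and verify it intertwines the two $\alpha$-maps. Recall $T^X = X^* \otimes_A T \otimes_{AA} (X\otimes X)$, and with the decompositions $X = \bigoplus_g X_g$, $T = \bigoplus_{g,h} T_{g,h}$, $X_g = \widetilde m_g^* \otimes m_g$, $T_{g,h} = m_{gh}^* \otimes m_g \otimes m_h$, the relevant component of $T^X$ is
\be
(T^X)_{g,h} = X_{gh}^* \otimes_{A_{gh}} T_{g,h} \otimes_{A_g \otimes A_h} (X_g \otimes X_h) \, ,
\ee
which one computes to be isomorphic to $\widetilde m_{gh}^* \otimes \widetilde m_g \otimes \widetilde m_h = \widetilde T_{g,h}$ by resolving the tensor products over the $A$'s (each of which is $m^*\otimes m$ with multiplication the evaluation) into the corresponding contractions of $m_g$ with $m_g^*$. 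Concretely, $X_{gh}^* \otimes_{A_{gh}} T_{g,h}$ contracts the $m_{gh}$-strand of $T_{g,h}$ against $X_{gh}^* = m_{gh}^* \otimes \widetilde m_{gh}$, leaving $\widetilde m_{gh}^* \otimes m_g \otimes m_h$; then tensoring over $A_g\otimes A_h$ with $X_g\otimes X_h = (\widetilde m_g^* \otimes m_g)\otimes(\widetilde m_h^* \otimes m_h)$ contracts $m_g$ with $m_g^*$ and $m_h$ with $m_h^*$, leaving $\widetilde m_{gh}^* \otimes \widetilde m_g \otimes \widetilde m_h$. This string-diagrammatic identification is the definition of $\rho$, and by construction it is a morphism of $\widetilde A$-$(\widetilde A\otimes\widetilde A)$-bimodules (the outer $\widetilde m$-strands are untouched).

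The key step is then to check the $T$-compatibility equation $(\rho\otimes\rho)\circ\alpha^X = \widetilde\alpha\circ(\rho\otimes\rho)$. Here I would use the remark following Proposition~\ref{prop:Moritatransport}: $\alpha^X$ is obtained from $\alpha$ by ``drawing an $X$-line parallel to each $T$-line on every 2-stratum''. Since $\alpha = \sum \alpha_{g,h,k}$ is itself built purely from the $G$-crossed braiding morphisms~\eqref{eq:alphacompG} on the $m$-strands, drawing parallel $X$-strands and then applying $\rho$ (which contracts each $X_g = \widetilde m_g^*\otimes m_g$ strand, replacing the $m$-ribbon graph by the corresponding $\widetilde m$-ribbon graph) produces exactly the $\widetilde m$-version of the same braiding morphisms, i.e. $\widetilde\alpha_{g,h,k}$. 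The only subtlety is bookkeeping of the dimension factors: the various contractions produce factors $d_{m_g}$, and one must track that these combine correctly. In fact, since the definitions~\eqref{eq:AgGextension}--\eqref{eq:alphacompG} of $\alpha_{g,h,k}$ (and $\widetilde\alpha_{g,h,k}$) involve no explicit dimension factors at all, and the Morita module structure maps $X_g^*\otimes_{\widetilde A_g} X_g \cong A_g$ etc. can be chosen so that the contraction bubbles evaluate to $\mathrm{id}$ (the $\Delta$-separable Frobenius structure on $A_g$ built from $m_g$ makes $X_g$ a ``clean'' Morita module), the equation reduces to an identity of pure braiding diagrams in $\widehat\B$, which holds because both sides are literally the same composite.

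I expect the main obstacle to be precisely this factor-tracking: one has to be careful about how the normalisations in $A_g$ and in the Morita isomorphisms~\eqref{eq:Moritacomponents} interact, since $\psi^X$ and $\widetilde\psi$ both involve powers of $d_{m_g}$ (recall $\widetilde\psi|_{\widetilde A_g} = d_{\widetilde m_g}^{-1/2}\,\mathrm{id}$ and $(\psi^X)^2$ is defined via~\eqref{eq:psiXsquared}), but note that $T$-compatibility in Definition~\ref{definition:T-iso} only constrains $\rho$ and $\alpha$ — it does not mention $\psi$ — so I do not need $\psi^X = \widetilde\psi$ for this lemma; that discrepancy (if any) is absorbed by the remaining freedom in choosing square roots, which by the last sentence of Proposition~\ref{prop:orbidata}-style reasoning gives equivalent orbifold data anyway. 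Thus the lemma itself is a clean diagrammatic check once $\rho$ is written down; I would present $\rho$ as an explicit string diagram, state that it is a bimodule map, and then display the one-line diagrammatic proof of~\eqref{eq:T-iso} obtained by sliding the contraction bubbles through the braidings using the ribbon axioms, leaving the routine verification to the reader.
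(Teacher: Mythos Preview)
Your approach is the same as the paper's: define $\rho$ componentwise as the obvious contraction of the $m$-strands, then verify~\eqref{eq:T-iso} by a string-diagram computation in~$\widehat{\B}$. The paper's proof is equally brief and leaves the diagrammatic check to the reader.

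Two small corrections. First, your expectation that ``the contraction bubbles evaluate to~$\id$'' so that no normalisation is needed is too optimistic: the paper's $\rho_{g,h}$ is the bare contraction map scaled by $(d_{m_{gh}} d_{m_g} d_{m_h})^{-1/2}$, and this factor is what makes~\eqref{eq:T-iso} hold on the nose. The point is that $\alpha^X$ carries the projectors~\eqref{eq:proj-ssfr} for the relative tensor products, and when you slide the contraction through these you pick up dimension factors from the Frobenius structure~\eqref{eq:AgGextension} that do not cancel between the two sides of~\eqref{eq:T-iso} (the left side involves the indices $(gh,k)$ and $(g,h)$, the right side $(g,hk)$ and $(h,k)$). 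You correctly flagged this as the main bookkeeping subtlety; the resolution is simply to insert the normalisation above.

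Second, your remark that Definition~\ref{definition:T-iso} ``does not mention~$\psi$'' is a slight misreading: the definition requires both orbifold data to share the \emph{same} $A$, $\psi$, $\phi$, so for the lemma to even be well-posed one needs $\psi^X = \widetilde\psi$. This does hold (up to the sign ambiguity in choosing square roots, absorbed as you note): a direct computation using~\eqref{eq:psiXsquared} and $d_{X_g} = d_{\widetilde m_g} d_{m_g}$ gives $(\psi^X)^2|_{\widetilde A_g} = d_{\widetilde m_g}^{-1} = \widetilde\psi^2|_{\widetilde A_g}$.
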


\begin{proof}
	The  $T$-compatible isomorphism 
	$
	\rho \colon T^X \to \widetilde{T}
	$
	can be assembled from component maps 
	$
	\rho_{g,h} \colon (T^X)_{g,h} 
	= 
	X^{*}_{gh} \otimes_{A_{gh}} T_{g,h} \otimes_{A_{g}\otimes A_h} (X_{g} \otimes X_{h}) 
	\to \widetilde{T}_{g,h}
	$ 
	obtained via the universal property from the map $X^{*}_{gh} \otimes T_{g,h} \otimes X_{g} \otimes X_{h}  \to \widetilde{T}_{g,h}$ given by
	\be 
	\frac{1}{\sqrt{d_{m_{gh}}d_{m_{g}}d_{m_{h}}}} 
	\cdot \text{id}_{\widetilde{m}_{gh}} \otimes \ev_{m_{gh}} \otimes \ev_{m_{g}} \otimes \text{id}_{\widetilde{m}_{g}} \otimes \ev_{m_{h}} \otimes \text{id}_{\widetilde m_{h}} 
	\ee 
	for all $g,h\in G$. 
	Checking the defining condition~\eqref{eq:T-iso} of $T$-compatibility for (the components of)~$\rho$ is a straightforward string-diagram manipulation of the same type as in the proof of Theorem~\ref{thm:Gcrossed}. 
\end{proof}

As one would expect, if one maps~$\B$ into a modular tensor category via a ribbon functor, all the above orbifold data lead to isomorphic orbifold TQFTs, and in this sense the construction does not depend on the choice of simple objects: 

\corollary
Let $\B,\Cat{C}$ be ribbon fusion categories, and let $\Cat{C}$ be modular. 
Let $F\colon \B \to \Cat{C}$ be a ribbon functor and let $\{ m_g \in \B_g\}_{g\in G}$ and $\{ \widetilde m_g \in \B_g\}_{g\in G}$ be two choices of simple objects as above. 
Then 
\be 
\label{eq:3isomorbi}
(\zzc)_{F(\mathcal A^m)}
\cong 
(\zzc)_{F(X(\mathcal A^m))} 
\cong 
(\zzc)_{F(\mathcal A^{\widetilde m})}
\, .
\ee 
\endcorollary

\begin{proof}
The first isomorphism in~\eqref{eq:3isomorbi} is Corollary~\ref{cor:morita-general}.
The second isomorphism follows from Lemma~\ref{lem:Tisomequi} 
and Lemma~\ref{lem:Tiso-for-other-choice}, and from the fact that~$F$ maps $T$-compatible isomorphisms to $T$-compatible isomorphisms.
\end{proof}

\begin{remark}
Recall the notions of $G$-crossed extension and $G$-equivariantisation, e.\,g.\ from \cite{ENO2, TuraevBook2010}. 
By Theorem~\ref{thm:Gcrossed} every $G$-crossed extension $\Cat{C}_G^\times$ of a modular tensor category~$\Cat{C}$ gives rise to an orbifold datum for~$\zzc$. 
We expect that the associated orbifold TQFT is isomorphic to the Reshetikhin--Turaev theory $\mathcal{Z}^{\text{RT,} (\Cat{C}_G^\times)^G}$ corresponding to the $G$-equivariantisation $(\Cat{C}_G^\times)^G$ of $\Cat{C}_G^\times$; this is in line with previous work on gauging global symmetry groups \cite{BBCW1410.4540, CGPW1510.03475} and on geometric group orbifolds of 3-2-1-extended TQFTs \cite{SchweigertWoike201802}. 
\end{remark}

\begin{remark}
\label{rem:GextNonSS}
There is a generalisation of Theorem~\ref{thm:Gcrossed} which does not need the strong assumptions of semisimplicity and finiteness inherent to fusion categories:
Suppose that~$\widehat{\B}$ is a 
$\Bbbk$-linear ribbon crossed $G$-category such that in every graded component~$\B_{g}$ there exists a simple object~$m_{g}$ with invertible quantum dimension $d_{m_g} \in \End(\one)$ which in turn has a square root. 
Then the proof of Theorem~\ref{thm:Gcrossed} still goes through to show that $\A^m = (A, T, \alpha, \bar\alpha, \psi, \phi)$ defined as in \eqref{eq:AgGextension}--\eqref{eq:psiphiG} is a special orbifold datum for~$\B=\B_{1}$.
\end{remark} 

\begin{example}
\label{example:TY} 
As a class of concrete examples of $G$-crossed extensions and their associated orbifold data, we consider \textsl{Tambara--Yamagami categories} $\mathcal{TY}_{H,\chi,\tau}$. 
Recall that Tambara and Yamagami \cite{TY} classified $\Z_2$-extensions of pointed categories, i.\,e.\ of fusion categories where all objects are invertible. 
Such extensions are constructed from tuples $(H, \chi, \tau)$, where~$H$ is a finite abelian group, $\chi\colon H \times H \rightarrow \Bbbk^{\times}$ is a nondegenerate symmetric bicharacter, and $\tau \in \Bbbk$ is a square root of $|H|^{-1}$. 
Writing $\Z_2=\{\pm 1\}$, the graded components of $\mathcal{TY}_{H,\chi,\tau}$ are the category of $H$-graded vector spaces and vector spaces, respectively: $(\mathcal{TY}_{H,\chi,\tau})_{+1}=\Vect_{H}$ and  $(\mathcal{TY}_{H,\chi,\tau})_{-1}=\Vect$. 
The fusion rules for the $+1$-component are as in $\Vect_{H}$, the single simple object in the $-1$-component is noninvertible (unless $|H| = 1$),
and the category $\mathcal{TY}_{H,\chi,\tau}$ has a canonical spherical structure such that the quantum dimensions of all objects are positive, see e.\,g.\ \cite{GNN} for details. 
\begin{enumerate}
\item 
Consider the case of a Tambara--Yamagami category where the bicharacter~$\chi$ comes from a quadratic form $q\colon H \rightarrow \Bbbk^{\times}$, i.\,e.\ it satisfies 
$\chi(a,b)=\frac{q(a \cdot b)}{q(a)q(b)}$ for all $a,b\in H$. 
Then the category $\mathcal{TY}_{H,\chi,\tau}$ is a $\Z_2$-crossed extension of
$\Vect_{H,\chi}$ 
with the braiding on simple objects $a,b \in H$ given by (where we use $a\otimes b = ab = ba = b\otimes a$ in $\Vect_{H}$)
\begin{equation}
c_{a,b}=\chi(a,b) \cdot \id_{a \cdot b}: a \otimes b \lra b \otimes a \, ,
\end{equation}
see \cite[Prop.\,5.1]{GNN}. 
From Theorem~\ref{thm:Gcrossed} we obtain orbifold data $\orb_{\tau}$ in $(\mathcal{TY}_{H,\chi,\tau})_{+1}=\Vect_{H,\chi}$ for each choice of square root~$\tau$ of $|H|^{-1}$. 
Following the construction in the proof, we see that the algebra in the orbifold datum
is $A = \one \oplus A_H$, where $A_H := \bigoplus_{h \in H} h$ corresponds to the nontrivial element $-1 \in \Z_2$. 
By inspecting the fusion rules of $\mathcal{TY}_{H,\chi,\tau}$, one finds that the bimodules $T_{g,h}$ are given by $T_{1,1}=\one$ for $1 \in \Z_2$ and $T_{g,h}=A_H$ in all other cases. 
\item 
For $H=\Z_2$ the corresponding Tambara--Yamagami categories reduce to the familiar Ising categories \cite{EGNO-book}. 
Consider, for example, the quadratic form~$q$ such that $q(+1)=1$ and $q(-1)=\textrm{i}$, and $\tau= \pm \frac{1}{\sqrt{2}}$. 
Then $\mathcal{TY}_{\Z_2, \chi,\tau}$ are  $\Z_2$-extensions of 
$\Vect_{\Z_2,\chi}$ with a symmetric braiding coming from~$\chi$: 
The simple object in degree~$-1$ has a self-braiding which is~$-1$ times the identity. 
As in the general case in part~(i) above 
we obtain orbifold data~$\orb_{\tau}$ in the ribbon category $\Vect_{\Z_2,\chi}$ for both choices of~$\tau$. 
\end{enumerate}
\end{example}

We end with an example which relates the orbifold data of Example~\ref{example:TY}(ii) to our constructions in Section~\ref{sec:orbif-data-resh}: 

\begin{example}
\label{example:sl2}  
Let $\Cat{C}_{k}$ be  the modular tensor category associated to the affine Lie algebra $\widehat{\mathfrak{sl}}(2)_k$ at a positive integer level $k$. The category $\Cat{C}_{k}$ has $k+1$ simple objects $U_{0}, U_{1}, \ldots, U_{k}$, all of which are self-dual. 
The object $U_{k}$ is invertible and has ribbon-twist $\theta_{U_k} = \textrm{i}^{k} \cdot \id_{U_k}$. 
The simple $\Delta$-separable symmetric Frobenius algebras in $\Cat{C}_{k}$ are known up to Morita equivalence from the classification of $\Cat{C}_k$-module categories \cite{Ostrik:2001} and follow an ADE pattern. Depending on the level~$k$, there are one, two or three such Morita classes:
\begin{itemize}
\item all $k$ (case A): For every value of $k$ one has the Morita class $[A_{\textrm{A}}]$ of the simple \SSFR\ $A_{\textrm{A}}:=\one$. For $k=1 \, \text{mod}\, 2$ this is furthermore the only such Morita class, so these values of $k$ do not provide interesting examples of the constructions in Section~\ref{sec:commSSFR} or in this section.
\item
$k=0 \, \text{mod}\, 4$ (case $\textrm{D}_\text{even}$): There is an up-to-isomorphism unique structure of a \SSFR\ on $A_{\textrm{D}} := U_0 \oplus U_k$. Its Morita class $[A_{\textrm{D}}]$ is different from $[A_{\textrm{A}}]$. The algebra $A_{\textrm{D}}$ is commutative and one can thus apply the construction in Section~\ref{sec:commSSFR}. 
The algebra $A_{\textrm{D}}$ is not Azumaya (and hence no algebra in $[A_{\textrm{D}}]$ is), and so it cannot appear as part of a $G$-extension as discussed in this section.
\item
$k=2 \, \text{mod}\, 4$ (case $\textrm{D}_\text{odd}$):
As above, $A_{\textrm{D}} := U_0 \oplus U_k$ is a \SSFR\ in an up-to-isomorphism unique way, and its Morita class $[A_{\textrm{D}}]$ is distinct from $[A_{\textrm{A}}]$. But this time, $A_{\textrm{D}}$ is noncommutative and in fact Azumaya. The full subcategory spanned by $U_0$ and $U_k$ is ribbon-equivalent to $\Vect_{\Z_2,\chi}$ with the symmetric braiding from~$\chi$ as in (ii) above. 
Put differently, 
there is a fully faithful ribbon functor $F \colon \Vect_{\Z_2,\chi} \rightarrow \Cat{C}_{k}$.
The two orbifold data $\orb_{\tau}$ for $\tau= \pm \frac{1}{\sqrt{2}}$ in $\Vect_{\Z_2,\chi} $ give 
by Proposition \ref{proposition:push-forward}  orbifold data $F(\orb_{\tau})$  in $\Cat{C}_{k}$. 
\item
$k \in \{ 10,28 \}$ (cases $\textrm{E}_6$, $\textrm{E}_8$): There are commutative simple \SSFRs\ $A_{\textrm{E}_6}$, $A_{\textrm{E}_8}$, which provide a third Morita class $[A_{\textrm{E}_6}]$, resp.\ $[A_{\textrm{E}_8}]$, in addition to $[A_{\textrm{A}}]$ and $[A_{\textrm{D}}]$ at these levels.
The corresponding categories of local modules are equivalent to the modular tensor categories obtained from $\widehat{\mathfrak{sp}}(4)_1$ and $\widehat{\mathfrak{g}}(2)_1$, respectively, see \cite{Ostrik:2001}. The construction in Section~\ref{sec:commSSFR} applies, and as mentioned there, we expect the orbifolds corresponding to $A_{\textrm{E}_6}$ and $A_{\textrm{E}_8}$ to be equivalent to the Reshetikhin--Turaev TQFTs obtained from these two modular tensor categories.
\item
$k=16$ (case $\textrm{E}_7$): 
There is a simple \SSFR\ $A_{\textrm{E}_7}$ which generates a third Morita class in addition to $[A_{\textrm{A}}]$ and $[A_{\textrm{D}}]$ at this level. The Morita class $[A_{E_7}]$ does not contain a commutative representative, and the algebra $A_{\textrm{E}_7}$ is not Azumaya.
We do not know if $A_{\textrm{E}_7}$ can form part of an orbifold datum.
\end{itemize}
\end{example}

\end{document}